\newcommand\obullet[1]{\ThisStyle{\ensurestackMath{%
  \stackon[1pt]{\SavedStyle#1}{\SavedStyle\kern.6\LMpt\bullet}}}}
\newcommand\ocirc[1]{\ThisStyle{\ensurestackMath{%
  \stackon[1pt]{\SavedStyle#1}{\SavedStyle\kern.6\LMpt\circ}}}}
          \newtheorem{theorem}{Theorem}[section]
      \newtheorem{definition}[theorem]{Definition}
      \newtheorem{proposition}[theorem]{Proposition}
      \newtheorem{corollary}[theorem]{Corollary}
      \newtheorem{example}[theorem]{Example}
      \newtheorem{remark}[theorem]{Remark}
      \newcommand{\BB}{{\mathbb B}}
      \newcommand{\CC}{{\mathbb C}}
      \newcommand{\NN}{{\mathbb N}}
      \newcommand{\ZZ}{{\mathbb Z}}
      \newcommand{\DD}{{\mathbb D}}
      \newcommand{\RR}{{\mathbb R}}
      \newcommand{\FF}{{\mathbb F}}
\DeclareMathOperator{\Span}{span}
\newcommand{\HH}{{\mathbb H}}
      \newcommand{\cA}{{\mathcal A}}
      \newcommand{\cB}{{\mathcal B}}
      \newcommand{\cC}{{\mathcal C}}
      \newcommand{\cD}{{\mathcal D}}
      \newcommand{\cG}{{\mathcal G}}
      \newcommand{\cH}{{\mathcal H}}
      \newcommand{\cJ}{{\mathcal J}}
      \newcommand{\cK}{{\mathcal K}}
      \newcommand{\cL}{{\mathcal L}}
      \newcommand{\cM}{{\mathcal M}}
       \newcommand{\cO}{{\mathcal O}}
      \newcommand{\cQ}{{\mathcal Q}}
      \newcommand{\cN}{{\mathcal N}}
      \newcommand{\cP}{{\mathcal P}}
      \newcommand{\cR}{{\mathcal R}}
      \newcommand{\cS}{{\mathcal S}}
      \newcommand{\cU}{{\mathcal U}}
      \newcommand{\cV}{{\mathcal V}}
      \newdimen\expt
      \def\boxit#1{\setbox0\hbox{$\displaystyle{#1}$}
            \hbox{\lower.4\expt
       \hbox{\lower3\expt\hbox{\lower\dp0
            \hbox{\vbox{\hrule height.4\expt
       \hbox{\vrule width.4\expt\hskip3\expt
            \vbox{\vskip3\expt\box0\vskip2\expt}%
       \hskip3\expt\vrule width.4\expt}\hrule height.4\expt}}}}}}
\begin{document}
     
       \pagestyle{myheadings}
      \markboth{ Gelu Popescu}{ Noncommutative domains, universal operator models, and operator algebras }

      \title [ Noncommutative weighted shifts, joint similarity, and function theory ]
      {   Noncommutative weighted shifts, joint similarity, and function theory in several variables  }
        \author{Gelu Popescu}
      \date{October 4, 2023}
       \subjclass[2010]{Primary:  	47B37;  47A20;  46L07; 47A60, Secondary: 47A65; 47A67; 32A05; 32A65.
   }
      \keywords{Noncommutative weighted shifts,  Joint similarity,  Quasi-nilpotent tuple,  Hardy space, von Neumann inequality, Functional calculus, Function theory in several variables, $C^*$-algebra}
      \address{Department of Mathematics, The University of Texas
      at San Antonio \\ San Antonio, TX 78249, USA}
      \email{\tt gelu.popescu@utsa.edu}

\begin{abstract}      The goal of this paper is to study  the structure of noncommutative weighted shifts, their properties, and to  understand their  role as {\it models}  (up to similarity)   for   $n$-tuples of operators on Hilbert spaces  as well as  their implications to function theory on  noncommutative  (resp.commutative) Reinhardt  domains.
We  obtain a  Rota type similarity result  concerning the joint similarity  of $n$-tuples of operators to parts of noncommutative weighted multi-shifts and 
   provide a  noncommutative  multivariable   analogue of Foia\c s--Pearcy model for quasinilpotent operators. 
   The model noncommutative weighted multi-shift which  is studied in this paper  is the $n$-tuple $W=(W_1,\ldots, W_n)$, where  $W_i$ are weighted left creation operators of the full Fock space with $n$ generators associated with a  weight sequence  $\boldsymbol \mu=\{\mu_\beta\}_{|\beta|\geq 1}$ of nonnegative  numbers.
   
   We  also  represent the  injective weighted  multi-shifts 
$W_1,\ldots, W_n$ as ordinary multiplications by  $Z_1,\ldots, Z_n$ on a Hilbert space of  noncommutative  formal power series. This  leads naturally to analytic function theory in several complex variables.   One of the goal for the remainder  of the  paper is to analyze the extent to which  our noncommutative formal power series  represent analytic functions in  several noncommutative (resp. commutative)  variables and to develop a functional calculus for  arbitrary $n$-tuples of operators on a Hilbert space.

In particular, we show that,  for any   $n$-tuple of operators $T=(T_1,\ldots, T_n)$,   there exists   a weighted  multi-shift $W=(W_1,\ldots, W_n)$  such that  the map $p(W_1,\ldots, W_n)\mapsto p(T_1,\ldots, T_n)$ extends to a    $w^*$-continuous $F^\infty(\boldsymbol\mu)$-functional calculus $\Psi_T:F^\infty(\boldsymbol\mu)\to B(\cH)$ which is a completely bounded  algebra homomorphism and    $\|\Psi_T\|_{cb}\leq  \frac{\pi}{\sqrt{6}} $. Most of the results of the paper admit commutative versions.
 
\end{abstract}

      \maketitle

\section*{Contents}
{\it

\quad Introduction
\begin{enumerate}
   \item[1.]     Noncommutative weighted shifts
   \item[2.]     Joint similarity of tuples of operators to parts of weighted shifts
   \item[3.]      Multivariable   analogue of Foia\c s--Pearcy model for quasinilpotent operators
   \item[4.]      Noncommutative Hardy spaces associated with weighted shifts
   \item[5.]      Functional calculus for    arbitrary  $n$-tuples of operators
   \item[6.]       Function theory on Reinhardt domains and multiplier algebras
   \item[7.]      Functional calculus for   commuting  $n$-tuples of operators

      \end{enumerate}
    
    \quad References

}

\section*{Introduction}

 An excellent introduction to the theory of weighted shifts  and  analytic function theory  as well as an extensive bibliography related to this topic  can be found in  Shields' comprehensive survey \cite{Sh} and also  in  Halmos' book \cite{H1}.  The study of weighted shifts was initiated  by Kelly  in his thesis \cite{Ke}   and was  continued by 
Geller \cite{Ge1}, \cite{Ge2}, Jewell \cite{J}, Nikol'skii \cite{Ni}, 
Agler \cite{Ag2}, 
M\" uller \cite{M}, and  many others.

In the   multivariable  setting,     commuting weighted multi-shifts  were studied by Jewell and   Lubin \cite{JL} and used as universal operator models by Drury \cite{Dr},    M\" uller-Vasilescu \cite{MV},  Vasilescu \cite{Va},  Pott \cite{Pot},  Olofsson \cite{O2},  Arveson \cite{Arv3-acta}, Ambrozie-Englis-M\" uller \cite{AEM},  Arazy-Englis \cite{AE}, and  others.

In the noncommutative multivariable setting,     weighted multi-shifts on Fock spaces  first appear in the work of Arias and the author  in \cite{APo1} in connection with weighted noncommutative analytic Toeplitz algebras,  Poisson transforms, and noncommutative interpolation.  A class of periodic weighted shifts  was used by Kribs  \cite{Kr1}   in connection with noncommutative multivariable generalizations of the Bunce-Deddens $C^*$-algebras.   Weighted shifts have  played an important role in the author's  work in operator theory on noncommutative  regular domains  \cite{Po-Berezin}, \cite{Po-domains},  \cite{Po-invariant}, noncommutative Bergman spaces 
\cite{Po-Bergman}, and the more recent study on admissible domains  \cite{Po-NC}.  We should also mention the work of Arias \cite{A} and Arias-Latr\' emoli\' er \cite{AL} on  the classification of noncommutative domain algebras.

 Rota's model theorem Rota  \cite{R}  (see also \cite{H1})  asserts that any bounded linear operator on a Hilbert space with spectral radius less than one is similar to the adjoint of the unilateral shift of infinite multiplicity restricted to an invariant subspace. A refinement of this result for similarity  to  strongly stable contractions was  obtained by Foia\c s \cite{Fo} and by de Branges and Rovnyak \cite{BR}. 
  Analogues of Rota's similarity result were obtained by Herrero \cite{He} and Voiculescu \cite{Vo} for operators with spectrum   in a certain class of bounded open sets of the complex plane.
 Clark \cite{C} obtained a several variable version of Rota's model theorem for commuting strict contractions, and Ball  \cite{Ba} extended the result to a more general commutative setting. Joint similarity of $n$-tuples of operators to    parts   of  certain particular weighted shifts   were considered  by the author in \cite{Po-models} and  \cite{Po-similarity}. 

  It is well-known that any operator    $T\in B(\cH)$  similar to a contraction    is polynomially bounded, i.e., there is a
constant $C>0$ such that, for any polynomial $p$,
$$
\|p(T)\|\leq C \|p\|_\infty,
$$
 where $ \|p\|_\infty:= \sup_{|z|=1} |p(z)|$. A remarkable result  obtained
 by Paulsen \cite{Pa} shows that similarity  to a contraction is equivalent
 to complete polynomial boundedness. Halmos'  similarity problem \cite{H2}
   asked  whether any polynomially bounded operator is similar to a contraction.
   This long standing problem was answered  by Pisier
 \cite{Pi} in  a remarkable paper where  he shows that there are  polynomially
  bounded operators which are  not similar to  contractions. For more information
  on similarity problems, operator spaces  and completely bounded maps we refer the reader to the
   excellent books by Pisier \cite{Pi-book}, \cite{Pi-book2} and Paulsen \cite{Pa-book}.

The results mentioned above have inspired us  in writing the present paper on noncommutative weighted multi-shifts, joint similarity, and the implications to function theory is several variables. To present our results we need a few definitions.

Let $\cH$ be a separable infinite dimensional complex Hilbert space with orthonormal basis
$\{e_\alpha\}_{\alpha\in \FF_n^+}$, where $\FF_n^+$ is the unital free semigroup  with $n$ generators $g_1,\ldots, g_n$ and identity $g_0$. An $n$-tuple of bounded linear operators 
$T=(T_1,\ldots, T_n)\in B(\cH)^n$ is said to be a {\it weighted left multi-shift} if $
T_i e_\alpha=\mu_{g_i\alpha}e_{g_i\alpha}$  for $\alpha\in \FF^+_n,
$
where $\boldsymbol \mu=\{\mu_\beta\}_{|\beta|\geq 1}$ is a sequence of complex numbers.  
One can easily see that $T$ is jointly unitarily equivalent  to the weighted left multi-shift associated to the weights      $\{|\mu_\beta|\}_{|\beta|\geq 1}$.  Due to this reason,  throughout this paper, we assume  that $\mu_\beta\geq 0$. We mention that if $\mu_\beta=1$ for all $\beta$, then the corresponding multi-shift is jointly unitarily equivalent to the $n$-tuple 
$(S_1,\ldots, S_n)$ of left creation operators  on the full Fock space with $n$ generators, which has played  an important  role  in noncommutative multivariable operator theory and $C^*$-algebras.
In our study, we employ a  {\it model weighted (left) multi-shift} $W:=(W_1,\ldots, W_n)$  defined on the full Fock space  $F^2(H_n)$ with $n$ generators  by
$$
W_ie_\alpha:=\mu_{g_i\alpha} e_{g_i\alpha},\qquad i\in \{1,\ldots, n\},
$$
where  $\{e_\alpha\}_{\alpha\in \FF_n^+}$ is the orthonormal basis for $F^2(H_n)$.
Since $W_1,\ldots, W_n$ are bounded linear operators if and only if   $\sup_{\alpha\in \FF_n^+}   \mu_{g_i \alpha}<\infty$ for every $i\in \{1,\ldots, n\}$, the later condition is assumed throughout this paper. 

The present paper aims  at studing  the structure of noncommutative weighted shifts, their properties, and  their role as models (up to similarity) for $n$-tuples of operators on Hilbert spaces as well as their  implications to function theory on  noncommutative  (resp.commutative) Reinhardt  domains.

In Section 1, we present some basic results  concerning the structure of weighted   multi-shifts.  We prove (see Theorem \ref{decompo})   that  any weighted   multi-shift     admits   a decomposition   as   a direct sum of injective    multi-shifts (which is equivalent to $\mu_\beta >0$)  and some special truncations of injective  multi-shifts.  This result together with a  joint similarity criterion for noncommutative weighted multi-shifts (see Theorem \ref{similarity})   play an important role  in order to prove the main result (Theorem \ref{power bounded}) of this section which asserts that    a multi-shift $W=(W_1,\ldots, W_n)$ is power 
bounded, i.e. $\|W_\alpha\|\leq M$ for all $\alpha$, if and only if  it is jointly similar to a row contraction $T=(T_1,\ldots, T_n)$, i.e $T_1T_1^*+\cdots +T_nT_n^*\leq I$. The decomposition result mentioned above is  also used  to show that the $C^*$-algebra $C^*(W)$ generated by $W_1,\ldots, W_n$ and the identity is irreducible if and only if 
$W:=(W_1,\ldots, W_n)$ is injective. If $W$ is injective and  there is $i\in \{1,\ldots, n\}$  such that 
$\lim_{|\alpha|\to \infty} (\mu_{g_i\alpha}-\mu_\alpha)=0$, we prove that  $C^*(W)$ contains all compact operators  in $B(F^2(H_n))$. Using  the standard theory of representations of $C^*$-algebras \cite{Arv-book}, we obtain Wold  decompositions for the unital $*$-representations of $C^*$-algebras  (see Theorem \ref{wold}). This leads to an extension of Coburn  result \cite{Co} in our setting (see Theorem \ref{C*}).

Throughout the remainder of this  paper, unless otherwise specified, we assume that the  weighted multi-shifts  are injective. In Section 2, we  provide   sufficient conditions on an $n$-tuple 
$(T_1,\ldots, T_n)\in B(\cH)^n$ to be  jointly similar to the compression of a given  multi-shifts   $ (W_1\otimes I,\ldots, W_n\otimes I)$  
to  a jointly co-invariant subspace under the operators $W_i\otimes I $,  $i\in \{1,\ldots, n\}$. In particular, we obtain a Rota type model theorem in this setting. 

 The main result of this section   asserts that 
for any $n$-tuple $T=(T_1,\ldots, T_n)\in B(\cH)^n$,   there exists     a weighted multi-shift $W=(W_1,\ldots, W_n)$ with   the joint spectral radius  $r(W)= r(T)$ such that
    $(T_1,\ldots, T_n)$ is jointly similar to  $$(P_\cM(W_1\otimes I_\cH)|_\cM,\ldots, P_\cM(W_n\otimes I_\cH)|_\cM),$$
 where  $\cM\subset F^2(H_n)\otimes \cH$  is a joint invariant subspace under the operators $W_i^*\otimes I_\cH$,  $i\in \{1,\ldots, n\}$
   and    the map  defined by 
 $$\Phi_T(p(W_1,\ldots, W_n)):=p(T_1,\ldots, T_n)$$
  is completely bounded with $\|\Phi_T\|_{cb}\leq \frac{\pi}{\sqrt{6}} $.
 If  $T=(T_1,\ldots, T_n)$ is a nilpotent $n$-tuple of  index $m\geq 2$,  then there is a  truncated weighted multi-shift $W=(W_1,\ldots, W_n)$ which is nilpotent of index $m$ such that all the properties above hold and 
 $$\|\Phi_T\|_{cb}\leq \sqrt{\sum_{k=0}^{m-1}\frac{1}{(k+1)^2}}.
 $$

In Section 3, we   establish the existence   of a model (up to a similarity)  for every quasi-nilpotent $n$-tuple of operators on  a  Hilbert space. This is a multivariable noncommutative extension of  the Foia\c s--Pearcy model for quasinilpotent operators \cite{Fo-Pe}. More precisely, we prove that
 if  $T=(T_1,\ldots, T_n)\in B(\cH)^n$ is quasi-nilpotent, i.e. $r(T)=0$,   then there exists     a   quasi-nilpotent weighted multi-shift $W=(W_1,\ldots, W_n)$ of compact operators  and a joint invariant subspace $\cM\subset F^2(H_n)\otimes \cH$ under the operators $W_i^*\otimes I_\cH$,  $i\in \{1,\ldots, n\}$, such that    $(T_1,\ldots, T_n)$ is jointly similar to  
$$(P_\cM(W_1\otimes I_\cH)|_\cM,\ldots, P_\cM(W_n\otimes I_\cH)|_\cM).$$
Moreover, if $\cH$ is a separable infinite-dimensional Hilbert space, we  prove that
  $(T_1,\ldots, T_n) $    is jointly quasi-similar to an $n$-tuple $(L_1,\ldots, L_n)$, where each $L_i\in B(\cH\otimes F^2(H_n))$  is a compact operator. Some consequences  regarding the joint invariant subspaces  under $T_1,\ldots, T_n$ are also presented.

Section 4 is devoted to noncommutative Hardy spaces associated with weighted multi-shifts.
Let  $\boldsymbol\mu=\{\mu_\beta\}_{\beta\in \FF_n^+}$ be a sequence of strictly positive real numbers  with $\mu_{g_0}=1$
 and 
let $F^2(\boldsymbol\mu)$ be the Hilbert space of formal power series in indeterminates $Z_1,\ldots, Z_n$ with orthogonal basis $\{Z_\alpha:\ \alpha\in \FF_n^+\}$ such that 
$$\|Z_\alpha\|_{\boldsymbol\mu}:=\boldsymbol \mu(\alpha, g_0):=\mu_{g_{i_1}\cdots g_{i_k}} \mu_{g_{i_2}\cdots g_{i_k}} \cdots \mu_{g_{i_k}}\qquad \text{ if } \ \alpha=g_{i_1}\cdots g_{i_k}\in \FF_n^+,
$$
 and note that
$$
F^2(\boldsymbol\mu)=\left\{ \zeta=\sum_{\alpha\in \FF_n^+}c_\alpha Z_\alpha: \   \|\zeta\|_{\boldsymbol\mu}^2:=\sum_{\alpha\in \FF_n^+}\boldsymbol \mu(\alpha, g_0)^2 |c_\alpha|^2<\infty, \ c_\alpha\in \CC\right\}.
$$
 The {\it left multiplication} operator  $L_{Z_i}$ on $F^2(\boldsymbol\mu)$ is  defined by  $L_{Z_i}\zeta:=Z_i\zeta$ for all $\zeta\in F^2(\boldsymbol\mu)$.  Similarly, we define the {\it right multiplication} operator     by   setting $R_{Z_i}\zeta:=\zeta Z_i$ for all $\zeta$.
We remark that  the operator $U:F^2(H_n)\to F^2(\boldsymbol\mu)$,  defined by $U(e_\alpha):=\frac{1}{\boldsymbol \mu(\alpha, g_0)}Z_\alpha$ is unitary with
 $UW_i=L_{Z_i}U$ and $U\Lambda_i=R_{Z_i}U$ for any $i\in \{1,\ldots,n\},
 $
 where $W=(W_1,\ldots, W_n)$ is the weighted left multi-shift  and  ${\Lambda}=(\Lambda_1,\ldots, \Lambda_n)$  is the  corresponding weighted right multi-shift associated with the weight sequence
 $\boldsymbol\mu=\{\mu_\beta\}_{\beta\in \FF_n^+ }$.   Throughout  Section 4  we assume that both the weighted left and right  multi-shifts are bounded.
 The right (resp.~left)  multiplier algebra $\cM^r(F^2(\boldsymbol\mu))$    
(resp.~$\cM^\ell(F^2(\boldsymbol\mu))$) becomes a Banach algebra with respect to the multiplier norm. Similarly to the corresponding results from \cite{Po-Bergman} and \cite{Po-NC}, one can prove that
\begin{equation*}
\begin{split}
\{W_1,\ldots, W_n\}' &=R^\infty(\boldsymbol \mu):= {U}^* \cM^r (F^2(\boldsymbol\mu)) U,  \\
\{\Lambda_1,\ldots, \Lambda_n\}' &=F^\infty(\boldsymbol \mu):={U}^* \cM^l (F^2(\boldsymbol\mu)) U, 
\end{split}
\end{equation*}
and $(F^\infty(\boldsymbol \mu))''=F^\infty(\boldsymbol \mu)$, 
where ${}'$ stands for the commutant. Moreover,  
 the noncommutative  Hardy algebra  $F^\infty(\boldsymbol\mu)$  satisfies the relation
  $$
  F^\infty(\boldsymbol\mu)=\overline{\cP({W})}^{SOT}=\overline{\cP({W})}^{WOT}=\overline{\cP({W})}^{w*},
  $$
where $\cP({W})$ stands for the algebra of all polynomials in $W_1,\ldots, W_n$ and the identity.

Consequently,  we  can represent  injective weighted left multi-shifts 
$W_1,\ldots, W_n$ as ordinary multiplications by  $Z_1,\ldots, Z_n$ on a Hilbert space of noncommutative formal power series.   We will use these two viewpoints interchangeable  as convenient. One of the goal for the remainder  of the  paper is to analyze the extent to which  our noncommutative formal power series  represent analytic functions in  several noncommutative (resp. commutative)  variables.

Let $\lambda=(\lambda_1,\ldots, \lambda_n)\in \CC^n$ and define the linear functional  $\varphi_\lambda:\CC\left<Z_1,\ldots, Z_n\right>\to \CC$ of evaluation  at $\lambda$ by setting
$\varphi_\lambda(p):=p(\lambda)$ for $ p\in \CC\left<Z_1,\ldots, Z_n\right>.$
We say that $\lambda\in \CC^n$ is a {\it  bounded point   evaluation} on the Hilbert space $F^2(\boldsymbol\mu)$ if $\varphi_\lambda$ extends to a bounded linear functional   on
$F^2(\boldsymbol\mu)$. We prove that the set  
$$\cD(\boldsymbol\mu):=\{\lambda\in \CC^n:\   \varphi_\lambda \text{ is  a bounded  linear functional on } F^2(\boldsymbol\mu)\}
$$
coincides with   the joint point spectrum
$$
 \sigma_p(L_{Z_1}^*,\ldots, L_{Z_n}^*)=
\left\{ (\lambda_1,\ldots, \lambda_n)\in \CC^n : \  \sum_{\alpha\in \FF_n^+} \frac{|\lambda_\alpha|^2}{\boldsymbol \mu(\alpha, g_0)^2} <\infty\right\},
$$
which is a complete Reinhardt set in  $\CC^n$ containing the origin.  
We show  that  if    $\lambda=(\lambda_1,\ldots, \lambda_n)\in \CC^n$, then the series   
 $\sum_{k=0}^\infty\sum_{\alpha\in \FF_n^+, |\alpha|=k}c_\alpha \lambda_\alpha$ is convergent for any  element $f=\sum_{\alpha\in \FF_n^+}c_\alpha Z_\alpha$ is 
   $F^2(\boldsymbol\mu)$ if and only if 
  $\lambda\in \cD(\boldsymbol\mu)$. In this case, 
  the series
 $f(\lambda):=\sum_{\alpha\in \FF_n^+}c_\alpha \lambda_\alpha$ is absolutely convergent to $\varphi_\lambda(f)$ and 
 $$
 |f(\lambda)|\leq \|f\|_{\boldsymbol\mu}\left(\sum_{\alpha\in \FF_n^+} \frac{|\lambda_\alpha|^2}{\boldsymbol \mu(\alpha, g_0)^2}  \right)^{1/2}.
 $$
 The last result of Section 4 shows that 
 a map $\Phi: F^\infty(\boldsymbol\mu)\to
\CC$ is a $w^*$-continuous multiplicative linear functional  if and
only if there exists $\lambda\in \cD(\boldsymbol\mu)$  such that
$\Phi(\psi(W))= \psi(\lambda)$ for all $ \psi(W)\in F^\infty(\boldsymbol\mu).$ This is an extension of the corresponding result by Davidson-Pitts \cite{DP1}  (for $\mu_\beta=1$) and  by the author \cite{Po-domains} (for the weighted shifts which are universal models of regular domains).

Section 5  deals with a functional calculus for arbitrary  $n$-tuples of operators on a separable Hilbert space.
 Let  $\boldsymbol\mu=\{\mu_\beta\}_{\beta\in \FF_n^+}$ be a sequence of strictly positive real numbers such that the corresponding   weighted (left and right)   multi-shifts are bounded operators. Let 
     $F^\infty(\boldsymbol\mu)$  be the  noncommutative  Hardy algebra  associated  with $\boldsymbol\mu$ and 
     define the   set
     $$
     \cD_{\boldsymbol\mu}(\cH):=\left\{ (X_1,\ldots, X_n)\in B(\cH)^n:  \sum_{\alpha\in \FF_n^+} \frac{1}{\boldsymbol \mu(\alpha, g_0)^2} X_\alpha X_\alpha^* <\infty
     \right\},
     $$
     which is a noncommutative analogue of  $\cD(\boldsymbol\mu)$, the joint spectrum of $L_{Z_1}^*,\ldots, L_{Z_n}^*$.
In Section 5, we    
   present a $w^*$-continuous $F^\infty(\boldsymbol\mu)$-functional calculus  for the elements in the noncommutative Reinhardt set $\cD_{\boldsymbol\mu}(\cH)$, where $\cH$ is a separable Hilbert space.
 We prove that  if  $T=(T_1,\ldots, T_n)\in  \cD_{\boldsymbol\mu}(\cH)$ and    $\Psi_T:F^\infty(\boldsymbol\mu)\to B(\cH)$   is  defined  by 
 $$
 \Psi_T(\varphi(W))=\varphi(T)=: \text{\rm SOT-}\lim_{N\to\infty}\sum_{ |\alpha|\leq N} \left(1-\frac{|\alpha|}{N+1}\right) c_\alpha T_\alpha,
 $$
 where $\varphi(W)$ has the Fourier representation $\sum_{\alpha\in \FF_n}c_\alpha W_\alpha$, then $\Psi_T$ has the following properties.

 \begin{enumerate}
 \item[(i)]  
 $\Psi_T\left(\sum_{|\alpha|\leq m} c_\alpha W_\alpha\right)=\sum_{|\alpha|\leq m} c_\alpha T_\alpha,\qquad m\in \NN.
 $
 \item[(ii)]   $\Psi_T$ is sequentially WOT-(resp. SOT-)continuous.
 \item[(iii)]   $\Psi_T$ is  a completely bounded  algebra homomorphism.  
 \item[(iv)]  $\Psi_T$ is $w^*$-continuous.
 \item[(v)]  $r(\varphi(T))\leq r(\varphi(W))$ for any $\varphi\in F^\infty(\boldsymbol\mu)$,  where  $r(X)$ denotes the spectral radius of   $X$.
  \end{enumerate}
Quite surprisingly, using the results from the previous sections, we show that  for any   $n$-tuple of operators $T=(T_1,\ldots, T_n)\in B(\cH)^n$,   there exists   a weight sequence
 $\boldsymbol\mu=\{\mu_\beta\}_{\beta\in \FF_n^+}$    such that the corresponding weighted multi-shifts are bounded  and  $r(W)= r(T)$, and such that
   the map $\Psi_T:F^\infty(\boldsymbol\mu)\to B(\cH)$  has all the properties listed above and  $\|\Psi_T\|_{cb}\leq  \frac{\pi}{\sqrt{6}} $. 
   
    In this section, 
 we also present a spectral  version of the Schwarz lemma for the noncommutative Hardy algebra $F^\infty(\boldsymbol\mu)$.  Under the assumption that 
  $\mu_\alpha\geq M>0 $ for any $\alpha\in \FF_n^+$, we show that  
  if $\varphi(W)\in F^\infty(\boldsymbol\mu)$ has the properties that $\varphi(0)=0$ and $\|\varphi(W)\|\leq1$, then  
 $$
 r(\varphi(X))\leq r(X),\qquad X\in \cD_{\boldsymbol\mu}(\cH)
 $$
 Finally, if ${\rm Hol}_0(Z)$  is the algebra of all free holomorphic function  on  neighborhoods of the origin we present a ${\rm Hol}_0(Z)$-functional calculus  for the quasi-nilpotent $n$-tuples of operators.

In Section 6,  inspired by previous work by Arveson \cite{Arv3-acta}, Davidson-Pitts \cite{DP3} when $\mu_\beta=1$, we introduce the  {\it symmetric weighted  Fock space} $F_s^2(\boldsymbol\mu)$   associated with a weight sequence 
$\boldsymbol\mu=\{\mu_\beta\}_{|\beta|\geq 1}$ with the property that $\cD(\boldsymbol\mu)$ contains a neighborhood of the origin and prove that it can be
identified with the Hilbert space $\HH^2(\cD(\boldsymbol\mu))$ of all
functions $\varphi:\cD(\boldsymbol\mu)\to \CC$ which admit a power
series representation $\varphi(\lambda)=\sum_{{\bf k}\in \NN_0}
c_{\bf k} \lambda^{\bf k}$ with
$$
\|\varphi\|_2=\sum_{{\bf k}\in \NN_0}|c_{\bf
k}|^2\frac{1}{\omega_{\bf k}}<\infty,
$$
where $\omega_{\bf k}$ are precisely described in terms of $\boldsymbol\mu$.
We prove that $\HH^2(\cD(\boldsymbol\mu))$ is the reproducing kernel Hilbert space with kernel
  $\boldsymbol\kappa:\cD(\boldsymbol\mu)\times
\cD(\boldsymbol\mu)\to \CC$ defined by
$$
\boldsymbol\kappa(\zeta,\lambda):=\sum_{\beta\in \FF_n^+} \frac{1}{\boldsymbol\mu(\beta,g_0)^2}   \zeta_\beta\overline{\lambda}_\beta
\quad \text{ for all }\ \lambda,\zeta\in
\cD(\boldsymbol\mu).
$$
 We define the operators  $B_i\in B(F^2_s(\boldsymbol\mu))$ by setting $B_i:=P_{F^2_s(\boldsymbol\mu)} W_i|_{F^2_s(\boldsymbol\mu)}$ and show that the  
   $n$-tuple $(B_1,\ldots, B_n)$ is unitarily equivalent to $(M_{\lambda_1},\ldots, M_{\lambda_n})$, where   $M_{\lambda_i}$  is  the multiplication on $\HH^2(\cD(\boldsymbol\mu))$ by the coordinate   function $\lambda_i$.  
We also   introduce the Hardy algebra $F_s^\infty(\boldsymbol\mu)$  to be the $w^*$-closed non-self-adjoint algebra generated by  the operators $B_1,\ldots, B_n$ and the identity.   As main results, we prove that $F_s^\infty(\boldsymbol\mu)$ can be identified with the multiplier algebra $\cM(\HH^2(\cD(\boldsymbol\mu)))$ and that it 
 is  a reflexive algebra. 

In Section 7,  we show that  several results of this paper have    commutative versions.  In particular, we prove that   any $n$-tuple $(T_1,\ldots, T_n)\in B(\cH)^n$ of commuting operators admits a $w^*$-continuous  $F_s^\infty(\boldsymbol\mu)$-functional calculus for an appropriate weight sequence $\boldsymbol\mu$.

\bigskip

\newpage

\section{Noncommutative weighted shifts}

Let $H_n$ be an $n$-dimensional complex  Hilbert space with orthonormal
      basis
      $e_1,\dots,e_n$, where $n\in\NN$.        
      The full Fock space  of $H_n$ is defined by
      $$F^2(H_n):=\bigoplus_{k\geq 0} H_n^{\otimes k},$$
      where $H_n^{\otimes 0}:=\CC 1$ and $H_n^{\otimes k}$ is the (Hilbert)
      tensor product of $k$ copies of $H_n$.
      The {\it left creation
      operators} $S_i:F^2(H_n)\to F^2(H_n), \  i\in\{1,\dots, n\}$,  are given  by
      $$
       S_i\varphi:=e_i\otimes\varphi, \quad  \varphi\in F^2(H_n).
      $$
Let $\FF_n^+$ be the unital free semigroup on $n$ generators
$g_1,\ldots, g_n$ and the identity $g_0$.  The length of $\alpha\in
\FF_n^+$ is defined by $|\alpha|:=0$ if $\alpha=g_0$  and
$|\alpha|:=k$ if
 $\alpha=g_{i_1}\cdots g_{i_k}$, where $i_1,\ldots, i_k\in \{1,\ldots, n\}$. We set  $e_\alpha:=e_{g_{i_1}}\otimes \cdots \otimes e_{g_{i_k}}$ and  $e_{g_0}:=1$, and note that $\{e_\alpha\}_{\alpha\in \FF_n^+}$ is an othonormal basis for $F^2(H_n)$.

Let $\boldsymbol\mu=\{\mu_\beta\}_{\beta\in \FF_n^+, |\beta|\geq 1}$ be a sequence of   nonnegative real numbers and define the {\it weighted left creation  operators}
$W_i:F^2(H_n)\to F^2(H_n)$, $i\in \{1,\ldots, n\}$,  associated with  the weight sequence $\boldsymbol\mu$     by setting $W_i:=S_iD_i$, where
 $S_1,\ldots, S_n$ are the left creation operators on the full
 Fock space $F^2(H_n)$ and the diagonal operators $D_i:F^2(H_n)\to F^2(H_n)$
 are defined by the relation
$$
D_ie_\alpha:=\mu_{g_i \alpha}e_\alpha,\qquad
 \alpha\in \FF_n^+.
$$
Note that the weighted left shifts  $W_1,\ldots, W_n$ are  bounded operators if and only if $\sup_{\alpha\in \FF_n^+}   \mu_{g_i \alpha}<\infty$ for every $i\in \{1,\ldots, n\}$.   We call   the $n$-tuple $W=(W_1,\ldots, W_n)$    the {\it weighted  left multi-shift} associated with  the weight sequence  $\boldsymbol\mu=\{\mu_\beta\}_{|\beta|\geq 1}$. We remark that   the operators $W_1,\ldots, W_n$     have orthogonal ranges.  Throughout this paper, we assume that 
$$\mu_\beta\geq 0 \ \text{  and  } \  \sup_{\alpha\in \FF_n^+}   \mu_{g_i \alpha}<\infty\quad \text{  for any  } \quad i\in \{1,\ldots, n\}.
$$

 If $(T_1,\ldots, T_k)\in B(\cH)^n$,
we set $T_\alpha=T_{i_1}\cdots T_{i_k}$ if $\alpha=g_{i_1}\cdots g_{i_k}$, where $i_1,\ldots, i_k\in \{1,\ldots, n\}$,  and  $T_{g_0}:=I_\cH$.
  We recall from \cite{Po-models} that the {\it joint spectral radius} of an $n$-tuple $T=(T_1,\ldots, T_k)\in B(\cH)^n $ is defined by 
$$
r(T):=\lim_{k\to \infty} \left\|\sum_{\alpha\in \FF_n^+, |\alpha|=k} T_\alpha T_\alpha^*\right\|^{1/2k}.
$$
We note that $r(T)\leq \|T\|=\left\|\sum_{i=1}^n T_iT_i^*\right\|^{1/2}$. In what follows, we  denote by $[T_\alpha:\  |\alpha|=k]$  the row operator matrix with entries  $T_\alpha$, where $\alpha\in \FF_n^+$ with $ |\alpha|=k$. For simplicity,  we also denote by $(T_1,\ldots, T_n)$ the row operator acting from $\cH^{(n)}$,  the direct sum of $n$ copies  of $\cH$, to $\cH$.

\begin{proposition} \label{radius} If $W=(W_1,\ldots, W_n)$  is   the weighted left multi-shift  associated with  $\boldsymbol\mu:=\{\mu_\beta\}_{ |\beta|\geq 1}$, where $\mu_\beta\geq0$,  then the  the joint spectral  radius of $W$  satisfies the relation
$$
r(W)=\lim_{k\to \infty} \sup_{\alpha, \beta\in \FF_n^+, |\beta|=k} \boldsymbol\mu (\beta,\alpha)^{1/k},
$$
 where  
 $$\boldsymbol\mu (\beta,\alpha):=\begin{cases}\mu_{g_{i_1}\cdots g_{i_p}\alpha} \mu_{g_{i_2}\cdots g_{i_p}\alpha} \cdots \mu_{g_{i_p}\alpha} & \text{if}\  \beta=g_{i_1}\cdots g_{i_p}\in \FF_n^+\\
 1&\text{if} \ \beta=g_0
 \end{cases}
 $$
  for any   $\alpha\in \FF_n^+$. 
\end{proposition}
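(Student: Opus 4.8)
The plan is to recognize each operator $W_\beta$, $\beta\in\FF_n^+$, as itself a weighted shift on $F^2(H_n)$ and then to diagonalize the positive operators $\sum_{|\beta|=k}W_\beta W_\beta^*$ explicitly. First I would establish, by induction on $|\beta|$, the formula
$$W_\beta e_\alpha=\boldsymbol\mu(\beta,\alpha)\,e_{\beta\alpha},\qquad \alpha\in\FF_n^+;$$
the base case $|\beta|=1$ is the definition of the $W_i$, and the inductive step $W_{g_i\gamma}=W_iW_\gamma$ uses the evident multiplicativity identity $\boldsymbol\mu(g_i\gamma,\alpha)=\mu_{g_i\gamma\alpha}\,\boldsymbol\mu(\gamma,\alpha)$.

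Next I would compute, for fixed $k\in\NN$, the operator $\sum_{|\beta|=k}W_\beta W_\beta^*$. From the displayed formula, $W_\beta^* e_\gamma=\boldsymbol\mu(\beta,\alpha)\,e_\alpha$ when $\gamma=\beta\alpha$ for some $\alpha\in\FF_n^+$ (such $\alpha$ being uniquely determined by $\gamma$ and $\beta$), and $W_\beta^* e_\gamma=0$ otherwise; hence $W_\beta W_\beta^*$ acts diagonally by $\boldsymbol\mu(\beta,\alpha)^2$ on $e_{\beta\alpha}$ and annihilates every $e_\gamma$ with $\gamma$ not of the form $\beta\alpha$. Since for $|\beta|=k$ the map $(\beta,\alpha)\mapsto\beta\alpha$ is a bijection from $\{(\beta,\alpha):|\beta|=k,\ \alpha\in\FF_n^+\}$ onto $\{\gamma\in\FF_n^+:|\gamma|\ge k\}$, the operators $W_\beta W_\beta^*$ ($|\beta|=k$) have pairwise orthogonal ranges and $\sum_{|\beta|=k}W_\beta W_\beta^*$ is diagonal in the basis $\{e_\gamma\}_{\gamma\in\FF_n^+}$, acting by $\boldsymbol\mu(\beta,\alpha)^2$ on $e_{\beta\alpha}$ and by $0$ on $e_\gamma$ with $|\gamma|<k$. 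Taking operator norms,
$$\Bigl\|\sum_{|\beta|=k}W_\beta W_\beta^*\Bigr\|=\sup_{\alpha,\beta\in\FF_n^+,\ |\beta|=k}\boldsymbol\mu(\beta,\alpha)^2.$$

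Finally, inserting this into the definition $r(W)=\lim_{k\to\infty}\bigl\|\sum_{|\beta|=k}W_\beta W_\beta^*\bigr\|^{1/2k}$ and using that $t\mapsto t^{1/k}$ is nondecreasing on $[0,\infty)$ to interchange the $k$-th root with the supremum gives exactly the asserted identity. To see that the limit on the right-hand side genuinely exists, write $c_k:=\sup_{|\beta|=k,\ \alpha\in\FF_n^+}\boldsymbol\mu(\beta,\alpha)$; the factorization $\boldsymbol\mu(\beta_1\beta_2,\alpha)=\boldsymbol\mu(\beta_1,\beta_2\alpha)\,\boldsymbol\mu(\beta_2,\alpha)$ yields $c_{k+l}\le c_kc_l$, so $k\mapsto\log c_k$ is subadditive and Fekete's lemma applies (this also recovers the general submultiplicativity of $k\mapsto\|\sum_{|\beta|=k}W_\beta W_\beta^*\|^{1/2}$ underlying the definition of $r(W)$). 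I do not expect a serious obstacle; the only point requiring care is the index bookkeeping in the two displayed formulas — in particular, checking that $W_\beta$ does not shorten length and that distinct length-$k$ multi-indices $\beta$ produce operators $W_\beta W_\beta^*$ supported on disjoint families of basis vectors, which is what makes $\sum_{|\beta|=k}W_\beta W_\beta^*$ genuinely diagonal rather than merely block diagonal.
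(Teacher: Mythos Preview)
Your proposal is correct and follows essentially the same approach as the paper: compute $W_\beta e_\alpha=\boldsymbol\mu(\beta,\alpha)e_{\beta\alpha}$, deduce that $\sum_{|\beta|=k}W_\beta W_\beta^*$ is diagonal with entries $\boldsymbol\mu(\beta,\alpha)^2$, and read off the norm. Your added remark via Fekete's lemma that the limit genuinely exists is a nice touch the paper omits.
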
 
 \begin{proof} A simple calculation reveals that
\begin{equation}\label{WbWb}
W_i e_\alpha=
\mu_{g_i \alpha}
e_{g_i\alpha} \quad
\text{ and }\quad
W_i^* e_\alpha =\begin{cases}
\mu_{g_i \gamma}e_\gamma& \text{ if }
\alpha=g_i\gamma \\
0& \text{ otherwise }
\end{cases}
\end{equation}
 for every $\alpha  \in \FF_n^+$ and  every $i\in \{1,\ldots, n\}$. 
 Note that if $\beta=g_{i_1}\cdots g_{i_p}\in \FF_n^+$ and  $\alpha\in \FF_n^+$, then the relation  above  implies
 $$
 W_\beta e_\alpha=\mu_{g_{i_1}\cdots g_{i_p}\alpha} \mu_{g_{i_2}\cdots g_{i_p}\alpha} \cdots \mu_{g_{i_p}\alpha} e_{g_{i_1}\cdots g_{i_p}\alpha}.
 $$
 Therefore,
 $$
  W_\beta e_\alpha= \boldsymbol\mu (\beta,\alpha) e_{\beta\alpha},\qquad \alpha,\beta\in \FF_n^+,
 $$
 and, for any $\beta,\gamma\in \FF_n^+$,
 \begin{equation*}
 \left< W_\beta^* e_\gamma, e_\alpha\right>=\left< e_\gamma, \boldsymbol\mu (\beta,\alpha) e_{\beta\alpha}\right>
  =\begin{cases}
\boldsymbol\mu (\beta,\alpha)& \text{ if }
\gamma=\beta\alpha \\
0& \text{ otherwise}.
\end{cases}
\end{equation*}
  Hence, we deduce that 
  $$
  W_\beta^* e_\gamma 
  =\begin{cases}
\boldsymbol\mu (\beta,\alpha) e_\alpha& \text{ if }
\gamma=\beta\alpha \  \text{ for some} \  \alpha \in \FF_n^+\\
0& \text{ otherwise}
\end{cases}
$$
and, consequently, 
$$
W_\beta W_\beta^* e_\gamma
=\begin{cases}
\boldsymbol\mu (\beta,\alpha)^2 e_\gamma& \text{ if }
\gamma=\beta\alpha \  \text{ for some} \  \alpha \in \FF_n^+\\
0& \text{ otherwise}.
\end{cases}
$$
 The later relation implies 
 $$
 \left(\sum_{\beta\in \FF_n^+, |\beta|=k} W_\beta W_\beta^*\right) e_\gamma=\boldsymbol\mu (\beta,\alpha)^2 e_\gamma
 $$
if   $\gamma=\beta\alpha $  for some $ \beta \in \FF_n^+$ with $|\beta|=k$.  
 Otherwise, we have  $ \left(\sum_{\beta\in \FF_n^+, |\beta|=k} W_\beta W_\beta^*\right) e_\gamma=0$.
Since the operator $\sum_{\beta\in \FF_n^+, |\beta|=k} W_\beta W_\beta^*$ is diagonal, we deduce that
$$
\left\|\sum_{\beta\in \FF_n^+, |\beta|=k} W_\beta W_\beta^*\right\|=\sup_{\alpha, \beta\in \FF_n^+, |\beta|=k} \boldsymbol\mu (\beta,\alpha)^2.
$$
 Using the definition of the joint spectral radius  for $n$-tuples of operators, we complete the proof.
 \end{proof}

  We identify $M_m(B(\cH))$, the set of
$m\times m$ matrices with entries in $B(\cH)$, with
$B( \cH^{(m)})$, where $\cH^{(m)}$ is the direct sum of $m$ copies
of $\cH$.
Thus we have a natural $C^*$-norm on
$M_m(B(\cH))$. If $\cA$ is an operator space, i.e., a closed subspace
of $B(\cH)$, we consider $M_m(\cA)$ as a subspace of $M_m(B(\cH))$
with the induced norm.
Let $\cA, \cB$ be operator spaces and $u:\cA\to \cB$ be a linear map. Define
the map
$u_m:M_m(\cA)\to M_m(\cB)$ by
$$
u_m ([A_{ij}]_{m\times m}):=[u(A_{ij})]_{m\times m}.
$$
We say that $u$ is completely bounded   if
$$
\|u\|_{cb}:=\sup_{m\ge1}\|u_m\|<\infty.
$$
If $\|u\|_{cb}\leq1$
(resp. $u_m$ is an isometry for any $m\geq1$) then $u$ is completely
contractive (resp. isometric),
 and if $u_m$ is positive for all $m$, then $u$ is called
 completely positive. For more information  on completely bounded maps
  and  the classical von Neumann inequality \cite{vN}, we refer  the reader 
 to \cite{Pa-book} and \cite{Pi}.

\begin{definition} An $n$-tuple of operators $T=(T_1,\ldots, T_n)\in B(\cH)^n$ is called
\begin{enumerate}
\item[(i)] power bounded,
\item[(ii)] row power bounded,
\item[(iii)] polynomially bounded with respect to the weighted left multi-shift $(W_1,\ldots, W_n)$,
\item[(iv)] completely polynomially bounded with respect to $W_1,\ldots, W_n$,
\end{enumerate}
if  there is a constant $M>0$ such that 
\begin{enumerate}
\item[(i')] $\|T_\alpha\|\leq M$ for any $\alpha\in \FF_n^+$,
\item[(ii')] $\|[T_\alpha:\   |\alpha|=k]\|\leq M$ for any $k\in \NN$,
\item[(iii')] $\|p(T_1,\ldots, T_n)\|\leq M \|p(W_1,\ldots, W_n)\|$ for any polynomial $p\in \CC\left<Z_1,\ldots, Z_n\right>$,
\item[(iv')] $\|[p_{s,q}(T_1,\ldots, T_n)]_{k\times k}\|\leq M \|[p_{s,q}(W_1,\ldots, W_n)]_{k\times k}\|$ for any    $p_{s,q}\in \CC\left<Z_1,\ldots, Z_n\right>$, the polynomial algebra in noncommutative indeterminates $Z_1,\ldots, Z_n$,  and  $k\in \NN$,
\end{enumerate}
respectively.
\end{definition} 
 
 \begin{proposition} Let $T=(T_1,\ldots, T_n)\in B(\cH)^n$ with $n\geq 2$. Then the following statements hold.
 \begin{enumerate}
 \item[(i)] If $T$ is row power bounded, then it is  also power bounded and $r(T)\leq 1$.
 \item[(ii)] If  $r(T)\leq 1$, then $T$ is not necessarily  power bounded.
 \item[(iii)] There is a power bounded  $n$-tuple $T$ which is not  row power bounded and $r(T)>1$.
 \end{enumerate}
 \end{proposition}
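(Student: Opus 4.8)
I would treat the three parts separately; all of them rest on the identity
\[
\bigl\|[T_\alpha:\ |\alpha|=k]\bigr\|^{2}=\Bigl\|\sum_{\alpha\in\FF_n^+,\,|\alpha|=k} T_\alpha T_\alpha^*\Bigr\|,
\]
which holds because the row operator $R:=[T_\alpha:\ |\alpha|=k]$ acting from $\cH^{(n^k)}$ to $\cH$ satisfies $RR^*=\sum_{|\alpha|=k}T_\alpha T_\alpha^*$, together with the formula $r(T)=\lim_{k\to\infty}\|\sum_{|\alpha|=k}T_\alpha T_\alpha^*\|^{1/2k}$ recalled before the proposition. For (i), assume $T$ is row power bounded, say $\|[T_\alpha:\ |\alpha|=k]\|\le M$ for all $k$. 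For each $\beta\in\FF_n^+$ with $|\beta|=k$ we have $0\le T_\beta T_\beta^*\le\sum_{|\alpha|=k}T_\alpha T_\alpha^*$, hence $\|T_\beta\|^2=\|T_\beta T_\beta^*\|\le\|[T_\alpha:\ |\alpha|=k]\|^2\le M^2$; since every $\alpha\in\FF_n^+$ has finite length this already shows $T$ is power bounded. For the joint spectral radius,
\[
r(T)=\lim_{k\to\infty}\Bigl\|\sum_{|\alpha|=k}T_\alpha T_\alpha^*\Bigr\|^{1/2k}=\lim_{k\to\infty}\bigl\|[T_\alpha:\ |\alpha|=k]\bigr\|^{1/k}\le\lim_{k\to\infty}M^{1/k}=1.
\]

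For (ii) I would exhibit a single-operator counterexample padded with zeros: take $T_1=J$, where $J=\bigl(\begin{smallmatrix}1&1\\0&1\end{smallmatrix}\bigr)$ acts on a two-dimensional subspace of $\cH$, and $T_2=\cdots=T_n=0$. The only length-$k$ word with $T_\alpha\ne0$ is $\alpha=g_1^k$, and $T_{g_1^k}=J^k=\bigl(\begin{smallmatrix}1&k\\0&1\end{smallmatrix}\bigr)$, whose operator norm is of order $k$. Hence $\|T_{g_1^k}\|\to\infty$, so $T$ is not power bounded, whereas $\sum_{|\alpha|=k}T_\alpha T_\alpha^*=J^k(J^*)^k$ gives $r(T)=\lim_k\|J^k\|^{1/k}=1\le1$.

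For (iii) I would take $T_1=\cdots=T_n=P$, where $P$ is a rank-one orthogonal projection on $\cH$. Since $P=P^*=P^2$, every word of length $\ge1$ equals $P$, so $\|T_\alpha\|=1$ and $T$ is power bounded with $M=1$. However, there are $n^k$ words of length $k$, so $\sum_{|\alpha|=k}T_\alpha T_\alpha^*=n^kP$, whence $\|[T_\alpha:\ |\alpha|=k]\|=n^{k/2}\to\infty$ (so $T$ is not row power bounded) and $r(T)=\lim_k(n^k)^{1/2k}=\sqrt{n}>1$ because $n\ge2$. I would add the remark that power boundedness always forces $r(T)\le\sqrt n$, since $\|\sum_{|\alpha|=k}T_\alpha T_\alpha^*\|\le\sum_{|\alpha|=k}\|T_\alpha\|^2\le n^kM^2$; so this example realizes the extreme value $\sqrt n$.

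The arguments are elementary; the only genuine content is the choice of examples. The point that requires a moment's thought is (iii): one must notice that choosing the $T_i$ to be a common nonzero idempotent collapses all $n^k$ length-$k$ words to the single operator $P$, annihilating the individual word norms while the row operator still carries $n^k$ blocks and hence has norm $n^{k/2}$; item (ii) is then the classical observation that a Jordan block of eigenvalue $1$ has spectral radius $1$ yet fails to be power bounded. So I do not expect any serious obstacle beyond recognizing these two configurations and checking the elementary norm estimates.
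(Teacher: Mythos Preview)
Your proof is correct. Part (i) is essentially identical to the paper's argument. For parts (ii) and (iii), however, you use different examples than the paper does.

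For (ii), the paper stays within its weighted multi-shift framework: it takes the weights $\mu_\beta=\frac{|\beta|+1}{|\beta|}$ and computes $\bigl\|\sum_{|\beta|=k}W_\beta W_\beta^*\bigr\|^{1/2}=k+1$, so $W$ is not power bounded while $r(W)=1$. Your Jordan-block example is simpler and more classical; it works just as well but does not illustrate that the phenomenon already occurs among the injective weighted multi-shifts that are the paper's main objects. For (iii), the paper uses $(S_1^*,\ldots,S_n^*)$, the adjoints of the left creation operators: $\|S_\alpha^*\|=1$ while $\sum_{|\beta|=k}S_\beta^*S_\beta=n^kI$. Your example with $T_1=\cdots=T_n=P$ a rank-one projection is again more elementary, yields the same value $r(T)=\sqrt n$, and your closing remark that power boundedness forces $r(T)\le\sqrt n$ shows this value is extremal---a nice observation not in the paper. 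In short, the paper's examples keep the discussion inside the Fock-space/multi-shift setting, while yours are self-contained and require no such machinery.
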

 \begin{proof}  The first part of item (i) is quite obvious.
 To show that  $r(T)\leq 1$, note that 
  $$
 \left\|\sum_{\alpha\in \FF_n^+, |\alpha|=k} T_\alpha T_\alpha^*\right\|^{1/2}=\|[T_\alpha:\  |\alpha|=k]\|\leq M.
$$
  To prove item (ii), we  give an example of a weighted  left multi-shift  $W$ which is not power bounded  and  $r(W)=1$. Consider the weights  $\{\mu_\beta\}_{\beta\in \FF_n^+}$ defined  by
 $
 \mu_\beta:=\frac{|\beta|+1}{|\beta|}$ if $ |\beta|\geq 1$, 
and $\mu_{g_0}:=1$.   
Note that 
\begin{equation*}
 \begin{split}
\left\|\sum_{\beta\in \FF_n^+, |\beta|=k} W_\beta W_\beta^*\right\|^{1/2}
&= \sup_{\alpha, \beta\in \FF_n^+,  \beta=g_{i_1}\cdots g_{i_k}} \mu_{g_{i_1}\cdots g_{i_k}\alpha}\mu_{g_{i_2}\cdots g_{i_k}\alpha}\cdots \mu_{g_{i_k}\alpha}\\
&=\sup_{\alpha\in \FF_n} \frac{|\alpha|+k+1}{|\alpha|+1}=k+1.
\end{split}
\end{equation*}
Hence,  we deduce that $W$ is not power bounded and $r(W)=1$.
To prove item (iii), consider the $n$-tuple $(S_1^*,\ldots, S_n^*)$, where $S_1,\ldots, S_n$ are the left creation operators on the full Fock space.  Since $\|S_\alpha^*\|=1$ for any $\alpha\in \FF_n$ and 
$$
\left\|\sum_{\beta\in \FF_n^+, |\beta|=k} S_\beta^* S_\beta\right\|=n^k
$$
the result follows.
 \end{proof}
  
 Next, we solve an interpolation problem  for the norms
  $$
 \|\varphi_T^{k}\|=\left\|\sum\limits_{\sigma\in \FF_n^+, |\sigma|=
   k}T_\sigma T_\sigma^*\right\|
   $$
 of the completely positive linear maps $\varphi_T^{k}:=\varphi_T\circ\cdots\circ \varphi_T$ ($k$ times), where 
 $\varphi_T(X):=T_1XT_1^*+\cdots + T_nXT_n^*$.

 \begin{theorem}  A sequence $\{a_k\}_{k\in \NN}\subset \CC$  has the property that there is an $n$-tuple of operators $(T_1,\ldots, T_n)$   such that
 $$a_k=\left\|\sum\limits_{\sigma\in \FF_n^+, |\sigma|=
   k}T_\sigma T_\sigma^*\right\|^{1/2},\qquad k\in \NN,
   $$
   if and only if  $a_k\geq 0$ and $a_{k+m}\leq a_k a_m$ for any $k,m\in \NN$, and either one of the following conditions holds:
   \begin{enumerate}
   \item[(i)] there is $p\in \NN$ such that  $a_k>0$ if $k\leq p$ and $a_k=0$ if $k> p$,
    \item[(ii)] $a_k>0$ for any $k\in \NN$.
   \end{enumerate}
 \end{theorem}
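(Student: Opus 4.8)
The plan is to treat the two implications separately: necessity via the completely positive functional calculus $\varphi_T$, and sufficiency by exhibiting an explicit (possibly truncated) weighted left multi-shift whose level-$k$ norms are forced to be $a_k$ by a telescoping computation together with submultiplicativity. For necessity, suppose $a_k=\bigl\|\sum_{|\sigma|=k}T_\sigma T_\sigma^*\bigr\|^{1/2}=\|\varphi_T^k(I)\|^{1/2}$; since $\varphi_T^k(I)=\sum_{|\sigma|=k}T_\sigma T_\sigma^*\geq 0$ by an immediate induction, $a_k\geq 0$. Because $\varphi_T$ is completely positive --- hence positive and linear --- and $\varphi_T^{k+m}=\varphi_T^k\circ\varphi_T^m$, applying $\varphi_T^k$ to $\varphi_T^m(I)\leq a_m^2 I$ yields $\varphi_T^{k+m}(I)\leq a_m^2\,\varphi_T^k(I)$, whence $a_{k+m}^2\leq a_m^2 a_k^2$, i.e. $a_{k+m}\leq a_k a_m$. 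For the dichotomy: if $a_p=0$ for some $p$, then $\sum_{|\sigma|=p}T_\sigma T_\sigma^*=0$, and as each summand is positive this forces $T_\sigma=0$ for all words $\sigma$ of length $p$; since every word of length $k>p$ has such a $\sigma$ as an initial segment, $T_\tau=0$ for all $|\tau|=k$, so $a_k=0$ for $k\geq p$. Hence, unless all $a_k$ vanish (the degenerate case $T_1=\cdots=T_n=0$), either $a_k>0$ for every $k$ (case (ii)) or $\{k:a_k>0\}$ is a finite initial segment $\{1,\dots,p\}$ (case (i)).

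For sufficiency in case (ii), set $a_0:=1$ and define the weight sequence by $\mu_\beta:=a_{|\beta|}/a_{|\beta|-1}$ for $\beta\in\FF_n^+$ with $|\beta|\geq 1$; submultiplicativity gives $a_{|\beta|}\leq a_{|\beta|-1}a_1$, so $0<\mu_\beta\leq a_1<\infty$ and the associated weighted left multi-shift $W=(W_1,\dots,W_n)$ is bounded. For $\beta=g_{i_1}\cdots g_{i_k}$ and $|\alpha|=q$, the factors of $\boldsymbol\mu(\beta,\alpha)$ have subscripts of lengths $k+q,k-1+q,\dots,1+q$, so $\boldsymbol\mu(\beta,\alpha)=\prod_{\ell=0}^{k-1}\frac{a_{\ell+1+q}}{a_{\ell+q}}=\frac{a_{k+q}}{a_q}$ by telescoping. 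By the computation in the proof of Proposition~\ref{radius}, $\bigl\|\sum_{|\beta|=k}W_\beta W_\beta^*\bigr\|^{1/2}=\sup_{\alpha,\,|\beta|=k}\boldsymbol\mu(\beta,\alpha)=\sup_{q\geq 0}\frac{a_{k+q}}{a_q}$; this supremum is attained at $q=0$, where it equals $a_k/a_0=a_k$, since submultiplicativity gives $a_{k+q}/a_q\leq a_k$ for all $q\geq 1$. Thus $W$ realizes $\{a_k\}$.

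For case (i), keep $\mu_\beta:=a_{|\beta|}/a_{|\beta|-1}$ for $1\leq|\beta|\leq p$ and put $\mu_\beta:=0$ for $|\beta|>p$; this is again bounded. For $|\beta|=k$ and $|\alpha|=q$: when $k+q\leq p$ all the subscripts above have length $\leq p$ and all the $a$'s occurring are positive, so the telescoping still gives $\boldsymbol\mu(\beta,\alpha)=a_{k+q}/a_q$; when $k+q\geq p+1$ the factor $\mu_{g_{i_1}\cdots g_{i_k}\alpha}$ has a subscript of length $\geq p+1$ and vanishes, so $\boldsymbol\mu(\beta,\alpha)=0$. Hence for $k\leq p$ we get $\bigl\|\sum_{|\beta|=k}W_\beta W_\beta^*\bigr\|^{1/2}=\sup_{0\leq q\leq p-k}a_{k+q}/a_q=a_k$ (attained at $q=0$), and for $k>p$ the sum is $0$; so $W$ realizes $\{a_k\}$ (the all-zero sequence being realized trivially by $T=0$). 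I expect the only genuinely fiddly step to be the length bookkeeping in case (i) --- verifying that the zero weights kill exactly the over-long factors and hence exactly the levels $k>p$ --- whereas the crux of the whole argument is the identity $\boldsymbol\mu(\beta,\alpha)=a_{k+q}/a_q$ together with the observation that submultiplicativity is precisely what is needed, and all that is needed, to force the supremum over $q$ down to its value $a_k$ at $q=0$.
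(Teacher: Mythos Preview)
Your proof is correct and follows essentially the same approach as the paper: the same weighted multi-shift construction $\mu_\beta=a_{|\beta|}/a_{|\beta|-1}$ (truncated in case (i)), the same telescoping identity $\boldsymbol\mu(\beta,\alpha)=a_{k+q}/a_q$, and the same use of submultiplicativity to pin down the supremum at $q=0$. Your necessity argument is in fact more complete than the paper's, which only records the submultiplicativity inequality and leaves the dichotomy (i)/(ii) implicit.
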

 
  \begin{proof} The direct implication is due to the well-known fact that
  $$
  \left\|\sum\limits_{\sigma\in \FF_n^+, |\sigma|=
   k+m}T_\sigma T_\sigma^*\right\|\leq  \left\|\sum\limits_{\sigma\in \FF_n^+, |\sigma|=
   k}T_\sigma T_\sigma^*\right\|  \left\|\sum\limits_{\sigma\in \FF_n^+, |\sigma|=
   m}T_\sigma T_\sigma^*\right\|.
   $$
  To prove the converse, we construct a weighted left multi-shift 
  having the required property.  Assume that the condition in item (i) holds. Given  the sequence $\{a_k\}_{k\in \NN}$, we define the weights
  $$\mu_\beta:=\frac{a_{|\beta|}}{a_{|\beta|-1}},\qquad |\beta|\geq 1 \text{ and }  |\beta|\leq p,
  $$
  where $a_{0}:=1$, and set $\mu_\beta:=0$ if $|\beta|>p$. The sequence $\{\mu_\beta\}_{|\beta|\geq 1}$ is bounded since  $\mu_\beta\leq \frac{a_{|\beta|-1} a_1}{a_{|\beta|-1}}=a_1$.  Let   $W=(W_1,\ldots, W_n)$  be the     weighted left multi-shift associated with $\{\mu_\beta\}_{|\beta|\geq 1}$.
 According to the proof of Theorem \ref{radius}, if $k\leq p$,  we have 
 \begin{equation*}
 \begin{split}
\left\|\sum_{\beta\in \FF_n^+, |\beta|=k} W_\beta W_\beta^*\right\|^{1/2}
&=\sup_{\alpha, \beta\in \FF_n^+, |\beta|=k} \boldsymbol\mu (\beta,\alpha)\\
&= \sup_{\alpha, \beta\in \FF_n^+,  \beta=g_{i_1}\cdots g_{i_k}} \mu_{g_{i_1}\cdots g_{i_k}\alpha}\mu_{g_{i_2}\cdots g_{i_k}\alpha}\cdots \mu_{g_{i_k}\alpha}\\
&=\sup_{{\alpha, \beta\in \FF_n^+, |\beta|=k} \atop{|\alpha|+|\beta|\leq p}} \frac{a_{|\beta|+|\alpha|}}{a_{|\beta|+|\alpha|-1}}\cdot
 \frac{a_{|\beta|+|\alpha|-1}}{a_{|\beta|+|\alpha|-2}}\cdots  \frac{a_{ |\alpha|+1}}{a_ {|\alpha|}}\\
 &=\sup_{{\alpha, \beta\in \FF_n^+, |\beta|=k} \atop{|\alpha|+|\beta|\leq p}} \frac{a_{|\beta|+|\alpha|}}{a_ {|\alpha|}}
 \leq a_{|\beta|}.
\end{split}
\end{equation*} 
For any $\beta:=g_{i_1}\cdots g_{i_k}\in \FF_n^+$,  the definition of the weights  $\{\mu_\beta\}_{|\beta|\geq 1}$ implies 
$$a_{|\beta|}= \mu_{g_{i_1}\cdots g_{i_k}} a_{k-1}=\cdots=  \mu_{g_{i_1}\cdots g_{i_k}} \mu_{g_{i_2}\cdots g_{i_k}}\cdots  \mu_{ g_{i_k}}.
 $$ 
 Consequently, 
 $$
 \left\|\sum_{\beta\in \FF_n^+, |\beta|=k} W_\beta W_\beta^*\right\|^{1/2}\leq 
  \mu_{g_{i_1}\cdots g_{i_k}} \mu_{g_{i_2}\cdots g_{i_k}}\cdots  \mu_{ g_{i_k}}\leq 
   \left\|\sum_{\beta\in \FF_n^+, |\beta|=k} W_\beta W_\beta^*\right\|^{1/2},
 $$
which implies 
$$
 \left\|\sum_{\beta\in \FF_n^+, |\beta|=k} W_\beta W_\beta^*\right\|^{1/2}= \mu_{g_{i_1}\cdots g_{i_k}} \mu_{g_{i_2}\cdots g_{i_k}}\cdots  \mu_{ g_{i_k}}=a_k,\qquad k\leq p.
$$ 
If we take $p=\infty$ in the considerations above,  we obtain the corresponding  proof  if item (ii) holds.  
 This completes the proof.
  \end{proof}

  \begin{definition}
 We say that $T=(T_1,\ldots, T_n)\in   B(\cH)^n $ is a nilpotent $n$-tuple  if there is $m\in\NN$ such that $T_\alpha=0$ for any $\alpha\in \FF_n^+$ with $|\alpha|=m$.  If $m$ is the smallest natural number having this property, it is called the index of $T$.
    If  $r(T)=0$, we say that  $T$  is  a {\it quasi-nilpotent} $n$-tuple.  
 \end{definition}

We recall that $T=(T_1,\ldots, T_n)\in B(\cH)^n$ is  called row  contraction if $T_1T_1^*+\cdots +T_nT_n^*\leq I$.      A compact operator is said to be in the class $\cC^p \ (0<p<\infty)$ if the eigenvalues of $(A^*A)^{1/2}$ are in $\ell^p$. We also consider the lexicographic order on the unital free semigroup   $\FF_n^+$.

  \begin{proposition}  \label{proprieties}  Let $W=(W_1,\ldots, W_n)$  be   the weighted left multi-shift  associated with the weight sequence   $\boldsymbol\mu=\{\mu_\beta\}_{ |\beta|\geq 1}$ with $\mu_\beta\geq 0$.   Then the following statements hold.
  \begin{enumerate}
   \item[(i)]  The row operator $(W_1,\ldots, W_n)$ is injective if and only if   all the entries  $W_1,\ldots, W_n$ are injective operators, which is equivalent to  all the weights  being  strictly positive. 
  \item[(ii)] If  $W$ is  injective and  $\mu_\beta\to 0$ as 
  $|\beta|\to \infty$, then $r(W)=0$.
  \item[(iii)] $W_i$ is a compact operator if and only if  $\lim_{\alpha\to \infty}  \mu_{g_i\alpha}=0$. Moreover,   $W_i$ is in the class  $\cC^p$ if and only if  $\{\mu_{g_i\alpha}\}_{\alpha\in \FF_n^+}\in \ell^p$.
  \item[(iv)] $W$ is a row contraction if and only if $\mu_\alpha\leq 1$ for any $\alpha\in \FF_n^+$, $|\alpha|\geq 1$.
  \item[(v)] If  each $W_i$ is a compact operator, then $W=(W_1,\ldots, W_n)$ is  a
   norm-limit of nilpotent  weighted left multi-shifts.
  \item[(vi)]    If $W$ is  injective, then any non-trivial joint invariant subspace  under $W_1,\ldots, W_n$  is infinite dimensional.
  \item[(vii)] Every weighted multi-shift is the limit in norm of injective  weighted multi-shifts.
     \end{enumerate}
  \end{proposition}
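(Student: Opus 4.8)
The plan is to approximate an arbitrary weight sequence $\boldsymbol\mu=\{\mu_\beta\}_{|\beta|\geq 1}$ (with $\mu_\beta\geq 0$ and $\sup_{\alpha}\mu_{g_i\alpha}<\infty$ for each $i$) by strictly positive weight sequences in such a way that the associated weighted left multi-shifts converge in the operator norm. First I would, for each $\varepsilon>0$, define a new weight sequence $\boldsymbol\mu^{(\varepsilon)}=\{\mu_\beta^{(\varepsilon)}\}_{|\beta|\geq 1}$ by setting
\[
\mu_\beta^{(\varepsilon)}:=\mu_\beta+\varepsilon,\qquad |\beta|\geq 1.
\]
Then $\mu_\beta^{(\varepsilon)}>0$ for every $\beta$, so by part (i) the corresponding weighted left multi-shift $W^{(\varepsilon)}=(W_1^{(\varepsilon)},\ldots,W_n^{(\varepsilon)})$ is injective, and it is bounded since $\sup_\alpha \mu_{g_i\alpha}^{(\varepsilon)}\leq \sup_\alpha\mu_{g_i\alpha}+\varepsilon<\infty$.

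The key step is the norm estimate. Recall from \eqref{WbWb} that $W_i e_\alpha=\mu_{g_i\alpha}e_{g_i\alpha}$ and likewise $W_i^{(\varepsilon)}e_\alpha=(\mu_{g_i\alpha}+\varepsilon)e_{g_i\alpha}$, so the difference $W_i^{(\varepsilon)}-W_i$ acts by $e_\alpha\mapsto \varepsilon\, e_{g_i\alpha}$ for all $\alpha\in\FF_n^+$. Since the vectors $\{e_{g_i\alpha}\}_{\alpha\in\FF_n^+}$ are orthonormal (the operators $W_i$ have orthogonal ranges and each $W_i^{(\varepsilon)}-W_i=\varepsilon S_i$ where $S_i$ is the $i$-th left creation operator), the operator $W_i^{(\varepsilon)}-W_i=\varepsilon S_i$ is an isometry scaled by $\varepsilon$, hence $\|W_i^{(\varepsilon)}-W_i\|=\varepsilon$. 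Therefore
\[
\|W^{(\varepsilon)}-W\|=\left\|\sum_{i=1}^n (W_i^{(\varepsilon)}-W_i)(W_i^{(\varepsilon)}-W_i)^*\right\|^{1/2}
=\varepsilon\left\|\sum_{i=1}^n S_iS_i^*\right\|^{1/2}\leq \varepsilon,
\]
using that $\sum_{i=1}^n S_iS_i^*\leq I$. Letting $\varepsilon\to 0$ gives $W^{(\varepsilon)}\to W$ in the row operator norm, and each $W^{(\varepsilon)}$ is an injective weighted multi-shift, which proves the statement.

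I do not expect any serious obstacle here; the only point requiring a small amount of care is to compute $\|W^{(\varepsilon)}-W\|$ as a \emph{row} operator norm rather than entrywise, which is handled cleanly by the identification $W_i^{(\varepsilon)}-W_i=\varepsilon S_i$ and the fact that $(S_1,\ldots,S_n)$ is a row isometry. If one prefers to stay within the class of weight sequences normalized so that the multi-shift converges in the stronger sense of convergence of all the weights, one could alternatively take $\mu_\beta^{(\varepsilon)}:=\max\{\mu_\beta,\varepsilon\}$; the same estimate $\|W^{(\varepsilon)}-W\|\leq\varepsilon$ goes through since $|\mu_{g_i\alpha}^{(\varepsilon)}-\mu_{g_i\alpha}|\leq\varepsilon$ for all $\alpha$ and the difference is again supported on orthonormal vectors.
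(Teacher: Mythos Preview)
Your proof is correct and follows the same idea as the paper: the paper's proof of (vii) simply says ``it is enough to replace the zero weights by small positive numbers,'' and you have supplied the details, with the minor variation of adding $\varepsilon$ to \emph{all} weights (which makes the identification $W_i^{(\varepsilon)}-W_i=\varepsilon S_i$ especially clean). Your alternative construction $\mu_\beta^{(\varepsilon)}=\max\{\mu_\beta,\varepsilon\}$ is exactly what the paper has in mind.
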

  \begin{proof}   Part (i)   follows from the fact that $W_i:=S_iD_i$, where
 $S_1,\ldots, S_n$ are the left creation operators on the full
 Fock space $F^2(H_n)$ and the diagonal operator $D_i:F^2(H_n)\to F^2(H_n)$ is  defined by   relation
$
D_ie_\alpha:=\mu_{g_i \alpha}e_\alpha$,  $\alpha\in \FF_n^+. 
$
It is clear that $D_i$ is injective if and only if  $\mu_{g_i \alpha}>0$. Since $S_1,\ldots, S_n$ have orthogonal ranges, the result follows.
  
  To prove item(ii), we 
    recall from Proposition \ref{radius}  that,   for any $\alpha\in \FF_n^+$ and $\beta=g_{i_1}\cdots g_{i_k}\in \FF_n^+$, 
  $\boldsymbol\mu (\beta,\alpha)= \mu_{g_{i_1}\cdots g_{i_k}\alpha} \mu_{g_{i_2}\cdots g_{i_k}\alpha} \cdots \mu_{g_{i_k}\alpha}$.
  Setting  $b_k:=\max\{\mu_\beta: \ \beta\in \FF_n^+,  |\beta|=k\}$, we have  $b_k\to 0$ as $k\to \infty$. Due to a standard Stolz-Ces\` aro argument, we deduce that
  $$
  \lim_{k\to \infty} (b_{k+p} b_{k-1+p}\cdots b_{1+p})^{1/k}=0
  $$
 for each $p\in \NN$.   This implies 
 $$
 \lim_{k\to \infty} \sup_{|\beta|=k, \beta=g_{i_1}\cdots g_{i_k}} \left(\mu_{g_{i_1}\cdots g_{i_k}\alpha} \mu_{g_{i_2}\cdots g_{i_k}\alpha} \cdots \mu_{g_{i_k}\alpha}\right)^{1/k}=0
  $$
for each $\alpha\in \FF_n^+$.   Given $\epsilon>0$ and $\alpha\in \FF_n^+$, we choose  $k_0(\alpha, \epsilon)\in \NN$ such that  
$$\sup_{|\beta|=k, \beta=g_{i_1}\cdots g_{i_k}} \left(\mu_{g_{i_1}\cdots g_{i_k}\alpha} \mu_{g_{i_2}\cdots g_{i_k}\alpha} \cdots \mu_{g_{i_k}\alpha}\right)^{1/k}<\epsilon$$
for any $k\geq k_0(\alpha, \epsilon)$.
We also choose 
 $m_0\in \NN$ such that
$\mu_\beta <\epsilon$ for any $\beta\in \FF_n^+$ with $|\beta|\geq m_0$.
Note that if $k\geq m_0$ and  $k\geq k_0(\alpha, \epsilon)$ for any  $\alpha\in \FF_n^+$ with $|\alpha|\leq m_0$, then 
 $$
 \sup_{{\alpha, \beta\in \FF_n^+}\atop{ |\beta|=k, \beta=g_{i_1}\cdots g_{i_k}}}\left(\mu_{g_{i_1}\cdots g_{i_k}\alpha} \mu_{g_{i_2}\cdots g_{i_k}\alpha} \cdots \mu_{g_{i_k}\alpha}\right)^{1/k}<\epsilon
 $$ 
   On the other hand, if $|\alpha|\geq m_0$, then 
  $\left(\mu_{g_{i_1}\cdots g_{i_k}\alpha} \mu_{g_{i_2}\cdots g_{i_k}\alpha} \cdots \mu_{g_{i_k}\alpha}\right)^{1/k}<(\epsilon^k)^{1/k}=\epsilon$ as well. Using Proposition \ref{radius}, we complete the proof of item (ii).
  Part (iii) of this proposition is due to the fact that   $W_i^*W_ie_\alpha= \mu_{g_i\alpha}^2 e_\alpha$ for any $\alpha\in \FF_n^+$. Since   $\left(\sum_{j=1}^n W_jW_j^*\right)e_\alpha =\mu_\alpha^2 e_\alpha$ if $\alpha\in \FF_n^+$ with $|\alpha|\geq 1$ and  $0$ otherwise,  item (iv) follows.

To prove part (v), let $\cL_m:=\Span\{e_\alpha: |\alpha|\leq m\}$ and   consider the $n$-tuple $W^{(m)}=(W_1^{(m)},\ldots, W_n^{(m)})$ where $W_i^{(m)}:=P_{\cL_m} W_i$ for $i\in \{1,\ldots, n\}$. It is easy to see that $W^{(m)}$ is a nilpotent weighted left multi-shift  and
$\|W-W^{(m)}\|\leq \sup_{|\alpha|\geq m} \mu_{\alpha}$. Since $\mu_\beta\to 0$ as 
  $|\beta|\to \infty$, we deduce that $\|W-W^{(m)}\|\to 0$ as $m\to \infty$.
  
  To prove item (vi), assume that $\cM\subset F^2(H_n)$, $\cM\neq \{0\}$, is  a joint invariant subspace  under $W_1,\ldots, W_n$ and let $f\in \cM$, $f\neq 0$.  Since the operators $\{W_\beta\}_{|\beta|=k}$ have orthogonal ranges and $W_1,\ldots, W_n$ are injective operators, the vectors $\{W_\beta f:\ |\beta|=k\}$ are  orthogonal and  non-zero. Consequently, $\dim \cM\geq k$ for any $k\in \NN$.
  To prove part (vii),  it is enough to replace the zero weights by small positive numbers.
The proof is complete.
  \end{proof}

\begin{definition} 
We say that  an $n$-tuple $T=(T_1,\ldots, T_k)\in B(\cH)^n $
   is  jointly similar to    an $n$-tuple $A=(A_1,\ldots, A_n)\in B(\cK)^{n}$
  if there is an invertible operator 
$X:\cH\to \cK$ such that
$$
T_{i}= X^{-1} A_{i} X,\qquad i\in \{1,\ldots, n\}.
$$
\end{definition}
 
 We say that $W=(W_1,\ldots, W_n)$ is injective  if each weighted shift $W_i$ is injective. This is equivalent to all the weights  $\{\mu_\beta\}_{ |\beta|\geq 1}$ being different from zero.
 
\begin{definition}   Let $W=(W_1,\ldots, W_n)$  be  an injective weighted left multi-shift associated with  the weight sequence   $\boldsymbol\mu=\{\mu_\beta\}_{ |\beta|\geq 1}$.  Let $\Lambda$ be a  subset in $ \FF_n^+\backslash \{g_0\}$ and 
let $\Omega:=\{\alpha\gamma:\ \gamma\in \Lambda, \alpha\in \FF_n^+\}$. 
The weighted left multi-shift $V=(V_1,\ldots, V_n)$ associated with the weight sequence
${\bf v}=\{v_\beta\}_{|\beta|\geq 1}$  defined by
$$
v_\beta :=\begin{cases} \mu_\beta & \text{ if } \  \beta\notin \Omega\\
0& \text{ and } \ \beta\in \Omega
\end{cases}
$$
is called  truncation of the injective weighted left multi-shift $W$. If $\Lambda=\emptyset$, then $W=V$.
\end{definition}
\begin{remark}  \label{succesor}
For any   $\beta=g_{i_1} g_{i_2}\cdots g_{i_k}\in \FF_n^+$, if $v_\beta\neq 0$, then 
$v_{ g_{i_2}\cdots g_{i_k}}\neq 0$,  $v_{ g_{i_3}\cdots g_{i_k}}\neq 0$, $\ldots$,  $v_{g_k}\neq 0$.  Moreover,  this property characterizes the truncations  of injective weighted left multi-shifts.
\end{remark}

 \begin{theorem} \label{decompo}
 Any weighted  left multi-shift  is unitarily equivalent to  a direct sum of injective weighted  left multi-shifts and truncations of injective weighted  left multi-shifts.
 \end{theorem}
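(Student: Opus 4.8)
The plan is to split $F^2(H_n)$ into reducing subspaces indexed by the places where the weight sequence vanishes. For $\gamma=g_{i_1}\cdots g_{i_k}\in\FF_n^+$ consider the chain of right factors $\gamma=\gamma^{(0)},\gamma^{(1)},\dots,\gamma^{(k)}=g_0$, where $\gamma^{(j)}:=g_{i_{j+1}}\cdots g_{i_k}$, and let $\operatorname{src}(\gamma)$ be the first word of this chain that is $g_0$ or has weight $0$; equivalently, $\operatorname{src}(\gamma)$ is obtained from $\gamma$ by deleting leading letters one at a time as long as the current word has positive weight. Since by the computation in the proof of Proposition \ref{radius} one has $W_i^*e_{g_i\delta}=\mu_{g_i\delta}e_\delta$, this descent is precisely the one generated by the operators $W_i^*$, and $\operatorname{src}(\gamma)$ always belongs to $\cS:=\{g_0\}\cup\{\beta\in\FF_n^+:\mu_\beta=0\}$. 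First I would verify that $\operatorname{src}(g_i\alpha)=\operatorname{src}(\alpha)$ whenever $\mu_{g_i\alpha}>0$; together with the formulas $W_ie_\gamma=\mu_{g_i\gamma}e_{g_i\gamma}$ and $W_i^*e_\gamma=\mu_\gamma e_{\gamma'}$ (for $\gamma=g_i\gamma'$) this shows that each $\cH_\sigma:=\overline{\Span}\{e_\gamma:\operatorname{src}(\gamma)=\sigma\}$, $\sigma\in\cS$, is invariant under every $W_i$ and every $W_i^*$. Since $\operatorname{src}$ partitions $\FF_n^+$, we obtain an orthogonal decomposition $F^2(H_n)=\bigoplus_{\sigma\in\cS}\cH_\sigma$ reducing $W$, so that $W$ is unitarily equivalent to $\bigoplus_{\sigma\in\cS}(W_1|_{\cH_\sigma},\dots,W_n|_{\cH_\sigma})$.

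The heart of the proof is to recognize each of these summands. Every $\gamma$ with $\operatorname{src}(\gamma)=\sigma$ factors uniquely as $\gamma=\tau\sigma$, and the set $T_\sigma:=\{\tau:\tau\sigma\in\cH_\sigma\}$ is exactly $\{\tau:\mu_{\tau'\sigma}>0$ for every right factor $\tau'\neq g_0$ of $\tau\}$; in particular $g_0\in T_\sigma$ and $T_\sigma$ is closed under passing to right factors. Hence $e_{\tau\sigma}\mapsto e_\tau$ is a unitary from $\cH_\sigma$ onto $\overline{\Span}\{e_\tau:\tau\in T_\sigma\}$ that intertwines $W_i|_{\cH_\sigma}$ with the operator $e_\tau\mapsto\mu_{g_i\tau\sigma}e_{g_i\tau}$ (understood as $0$ when $\mu_{g_i\tau\sigma}=0$, i.e.\ when $g_i\tau\notin T_\sigma$). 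Because none of the weights $\mu_{\tau\sigma}$ with $\tau\in T_\sigma$ vanish, this operator is the restriction, to the reducing subspace $\overline{\Span}\{e_\tau:\tau\in T_\sigma\}$, of the weighted left multi-shift $V^\sigma$ on $F^2(H_n)$ with the bounded weight sequence given by $v^\sigma_\beta:=\mu_{\beta\sigma}$ for $\beta\in T_\sigma$ and $v^\sigma_\beta:=0$ for $\beta\notin T_\sigma$. Since $T_\sigma$ is closed under right factors, $v^\sigma$ satisfies the hypothesis of Remark \ref{succesor}; hence $V^\sigma$ is either injective (precisely when $T_\sigma=\FF_n^+$, i.e.\ $\mu_{\beta\sigma}>0$ for all $\beta$), or — taking $\Lambda_\sigma$ to be the set of words outside $T_\sigma$ that are minimal with respect to the right-factor order, and extending $v^\sigma$ to a strictly positive bounded weight sequence on all of $\FF_n^+$ — it is a truncation of an injective weighted left multi-shift. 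Carrying these identifications back through the unitaries of the first step displays $W$ as a direct sum of injective weighted left multi-shifts and truncations of such.

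The step I expect to be the main obstacle is the combinatorial bookkeeping of the second paragraph: one must check that $\operatorname{src}$ is a genuine complete invariant for the reducing pieces (so that the $\cH_\sigma$ exhaust $F^2(H_n)$ without overlaps), that the relabeling $\tau\leftrightarrow\tau\sigma$ is indeed a bijection intertwining the two shift structures, and — the truly delicate point — that when $T_\sigma$ is a proper subset of $\FF_n^+$ the component $W|_{\cH_\sigma}$ is matched with exactly the reducing part of $V^\sigma$ on which $V^\sigma$ is nonzero. Everything else is a routine verification resting only on the explicit action of $W_i$ and $W_i^*$ recorded in the proof of Proposition \ref{radius}.
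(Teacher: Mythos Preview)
Your argument is correct and reaches the same decomposition as the paper, but the route is genuinely different. The paper proceeds \emph{inductively on the length of the shortest zero weight}: it peels off the reducing subspaces $\cM_\beta=\overline{\Span}\{e_{\omega\beta}:\omega\in\FF_n^+\}$ for the minimal $\beta$ with $\mu_\beta=0$, observes that the leftover $\cN_k$ is again reducing, and then repeats the procedure inside each piece. Your construction instead gives the whole decomposition \emph{in one stroke} via the source map $\operatorname{src}:\FF_n^+\to\cS$, which records for each word the first right factor at which the descent under $W_i^*$ terminates. The advantages of your approach are that the labeling of the summands by $\cS$ is explicit from the start, the reducing property reduces to the single identity $\operatorname{src}(g_i\alpha)=\operatorname{src}(\alpha)$ when $\mu_{g_i\alpha}>0$, and the verification that each piece satisfies the right-factor condition of Remark~\ref{succesor} is immediate from the description of $T_\sigma$. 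The paper's inductive argument, by contrast, has the virtue of making the splitting mechanism visible at each stage, which is perhaps closer in spirit to the classical one-variable argument.

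One remark worth making explicit (it affects both your write-up and the paper's): when $T_\sigma\neq\FF_n^+$, the summand $W|_{\cH_\sigma}$ is unitarily equivalent only to the restriction of the truncation $V^\sigma$ to its nonzero reducing part $\overline{\Span}\{e_\tau:\tau\in T_\sigma\}$, not to $V^\sigma$ on all of $F^2(H_n)$; the complement carries the zero tuple. You flag this in your final paragraph, and it is exactly the same situation as the paper's $\cN_k$-pieces, so your proof matches the level of precision of the original.
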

 \begin{proof}
 Let $k\geq 1$ be the smallest natural number such that there exists $\beta\in \FF_n^+$ with $|\beta|=k$ and $\mu_\beta=0$. For such a word  $\beta$, we consider the subspace 
 $$
  \cM_\beta:=\Span \{e_{\omega\beta}: \ \omega\in \FF_n^+\}\subset F^2(H_n)
 $$
 and the set $\Lambda_\beta:=\{\omega\beta : \ \omega\in \FF_n^+\}$. Note that $\cM_\beta$ is a reducing subspace under $W_1,\ldots, W_n$. Indeed, if $|\omega|\geq 1$,
 then $W_i^* e_{\omega\beta}\in \cM_\beta$ for any $i\in \{1,\ldots, n\}$.  Setting $\beta=g_{i_1}\cdots g_{i_k}$, we have $W_{i_1}^*e_\beta=\mu_\beta e_{g_{i_2}\cdots g_{i_k}}=0$ and $W_j^* e_\beta=0$ if   $j \in \{1,\ldots, n\}$ and $j\neq i_1$. Since $W_i \cM_\beta\subset \cM_\beta$,  our assertion follows.
 
 Consider the set $\Lambda_k:=\{ \beta\in \FF_n^+: \ |\beta|=k \ \text{ and }\ \mu_\beta=0\}$ and note that if $\beta_1,\beta_2\in \Lambda_k$, $\beta_1\neq \beta_2$, then $\cM_{\beta_1}\perp \cM_{\beta_2}$.
 Due to the considerations above, we have the orthogonal decomposition 
 \begin{equation}\label{decomp}
 F^2(H_n)=\bigoplus_{\beta\in \Lambda_k}\cM_\beta \oplus \cN_k,
 \end{equation}
 where $\cM_\beta $ and $\cN_k$ are reducing subspaces under the operators $W_1,\ldots, W_n$. We also have
 $$
 \cN_k=\overline{\Span}\{e_\alpha: \ \alpha\in \FF_n^+\backslash \{\omega\beta:\ \beta\in \Lambda_k, \omega\in \FF_n^+\}\}
 $$
 Now, if $\beta\in \Lambda_k$ and there is no word $\sigma\in \Lambda_\beta$ such that $\mu_\sigma=0$, then  the $n$-tuple $(W_1|_{\cM_\beta},\ldots, W_n|_{\cM_\beta})$ is unitarily equivalent to an injective weighted left multi-shift. Indeed, consider the injective multi-shift $(V_1,\ldots, V_n)$ defined by $V_ie_\omega:=\mu_{g_i\omega\beta} e_{g_i\omega}$ for $\omega\in \FF_n$ and $i\in\{1,\ldots, n\}$, and the unitary operator $U:F^2(H_n)\to \cM_\beta$ defined by  $Ue_\omega:= e_{\omega\beta}$ for all $\omega\in \FF_n^+$. Since  $V_i=U^* W_i U$ for any $i\in \{1,\ldots, n\}$, our assertion follows.
Similarly, one can show that if there is no word $\sigma\in \FF_n^+$ such that   $e_\sigma\in \cN_k$ and $\mu_\sigma=0$, then  the $n$-tuple $(W_1|_{\cN_k},\ldots, W_n|_{\cN_k})$ is unitarily equivalent to a truncation of an  injective weighted left multi-shift.
 
The next step is to consider the smallest natural number $N>k$ such that there exists $\sigma\in \FF_n^+$ with $|\sigma|=N$ and $\mu_\sigma=0$. If there is no such $N$,  the proof of the theorem is complete. Otherwise, we fix such a $\sigma$ and note that $e_\sigma$ belongs to a unique subspace $\cM_\beta$ or $\cN_k$ in the decomposition \eqref{decomp}.    
 First, assume that $e_\sigma\in \cM_\beta$ for some $\beta\in \Lambda_k$. Then $\sigma=\omega\beta$ for some $\omega\in \FF_n^+$ with $|\omega|=N-k$. Consider  the subspace  of $\cM_\beta$ defined by
$$
\cM_\beta^\sigma:=\overline{\Span}\{ e_{\gamma\sigma}: \ \gamma\in \FF_n^+\}.
$$
 As in the first part of the proof, since $\mu_\sigma=0$, one can show that 
 $\cM_\beta^\sigma$ is a reducing subspace for $W_1,\ldots, W_n$. Consequently,
 $\cM_\beta=\cM_\beta^\sigma\oplus \cN_N^\sigma$ and $\cN_N^\sigma$ is also  a reducing subspace for $W_1,\ldots, W_n$.
 
 When $e_\sigma\in \cN_k$, a similar argument leads  to an orthogonal decomposition of $\cN_k$  into two reducing subspaces under $W_1,\ldots, W_n$.
 Consequently, the orthogonal decomposition \eqref{decomp} is refined  as we apply our procedure for each $\sigma\in \FF_n^+$ with $|\sigma|=N$ and $\mu_\sigma=0$. Since we have at most countable many zero weights, and inductive  argument leads to an orthogonal decomposition of the full Fock space $F^2(H_n)$  having at most countably many reducing subspaces $\cM$, $\cN$ under $W_1,\ldots, W_n$  with the property that  the
 $n$-tuple $(W_1|_{\cM },\ldots, W_n|_{\cM })$ is unitarily equivalent to an injective weighted left multi-shift and $(W_1|_{\cN },\ldots, W_n|_{\cN })$ is unitarily equivalent to a truncated injective weighted left multi-shift.
 The proof is complete.
   \end{proof}

The following joint similarity criterion  for noncommutative weighted left multi-shifts extends the corresponding classical result for weighted unilateral shifts \cite{H1}, \cite{Sh}, and the extension to the  particular class of weighted shifts which are universal models for regular noncommutative domains \cite{Po-domains}.
 
 \begin{theorem}\label{similarity} Let $W=(W_1,\ldots, W_n)$ and $W'=(W'_1,\ldots, W'_n)$ be truncated weighted left multi-shifts associated with  the weights  $\{\mu_\beta\}_{ |\beta|\geq 1}$ and $\{\mu'_\beta\}_{ |\beta|\geq 1}$, respectively. Assume that,  for each  
 $\beta\in \FF_n^+$  with $ |\beta|\geq 1$,  $\mu_\beta=0$  if and only if $\mu'_\beta=0$.  Then  the following statements hold.
 \begin{enumerate}
 \item[(i)]
 $W$ is jointly similar to $W'$ if and only if there are constants $C_1,C_2$ such that
 $$
 0<C_1\leq  \frac{\mu'_{g_{i_1}\cdots g_{i_p}} \mu'_{g_{i_2}\cdots g_{i_p}} \cdots \mu'_{g_{i_p}}}{\mu_{g_{i_1}\cdots g_{i_p}} \mu_{g_{i_2}\cdots g_{i_p}} \cdots \mu_{g_{i_p}}}\leq C_2
 $$
 for any $\sigma=g_{i_1}\cdots g_{i_p}\in \FF_n^+$ and $p\in \NN$ such $\mu_\sigma\neq 0$. Moreover, the operator $A$ implementing  the similarity can be chosen such that $\|A\|\|A^{-1}\|\leq   \frac{C_2}{C_1}$.
 \item[(ii)]
 $W$ is unitarily equivalent to $W'$  if and only if $\mu_\beta=\mu_\beta'$ for any 
 $\beta\in \FF_n^+$  with $ |\beta|\geq 1$.
 \end{enumerate}
 \end{theorem}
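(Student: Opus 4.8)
The strategy is to work in the Fock space picture and build the similarity operator $A$ explicitly as a diagonal operator with respect to the orthonormal basis $\{e_\alpha\}_{\alpha\in\FF_n^+}$. Concretely, I would try the ansatz $A e_\alpha := \lambda_\alpha e_\alpha$ where $\lambda_{g_0}:=1$ and, for $\alpha = g_{i_1}\cdots g_{i_p}$ with $\mu_\alpha\neq 0$, set
\[
\lambda_\alpha := \frac{\mu'_{g_{i_1}\cdots g_{i_p}}\,\mu'_{g_{i_2}\cdots g_{i_p}}\cdots\mu'_{g_{i_p}}}{\mu_{g_{i_1}\cdots g_{i_p}}\,\mu_{g_{i_2}\cdots g_{i_p}}\cdots\mu_{g_{i_p}}}
= \frac{\boldsymbol\mu'(\alpha,g_0)}{\boldsymbol\mu(\alpha,g_0)},
\]
using the notation of Section~1; when $\mu_\alpha=0$ (equivalently $\mu'_\alpha=0$) the basis vector $e_\alpha$ is in a part of the space that, by Remark~\ref{succesor} and the hypothesis that the zero sets coincide, can be handled separately (either $\lambda_\alpha$ is irrelevant because $e_\alpha$ lies in the "truncated away" summand, or one sets it to $1$). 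The point of this choice is that a one-step computation using \eqref{WbWb} gives, whenever $\mu_{g_i\alpha}\neq 0$,
\[
A W_i e_\alpha = \mu_{g_i\alpha}\lambda_{g_i\alpha} e_{g_i\alpha}, \qquad
W'_i A e_\alpha = \lambda_\alpha \mu'_{g_i\alpha} e_{g_i\alpha},
\]
and from the telescoping definition $\lambda_{g_i\alpha}/\lambda_\alpha = \mu'_{g_i\alpha}/\mu_{g_i\alpha}$, so that $A W_i e_\alpha = W'_i A e_\alpha$ on all basis vectors; the cases where a weight vanishes are checked directly (both sides are $0$). Hence $AW_i = W'_i A$ for all $i$ as soon as $A$ is bounded with bounded inverse.

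The next step is to control $\|A\|$ and $\|A^{-1}\|$. Since $A$ is diagonal, $\|A\| = \sup_\alpha |\lambda_\alpha|$ and $\|A^{-1}\| = \sup_\alpha |\lambda_\alpha|^{-1}$, with the supremum taken over those $\alpha$ with $\mu_\alpha\neq 0$ (the rest contributing the harmless factor $1$). The two-sided bound $0 < C_1 \le \lambda_\alpha \le C_2$ is exactly the hypothesis of part~(i); it gives $\|A\|\le C_2$ and $\|A^{-1}\|\le 1/C_1$, hence $A$ is invertible with $\|A\|\,\|A^{-1}\| \le C_2/C_1$, proving the "if" direction together with the norm estimate. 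For the converse, suppose $X$ is an invertible operator with $XW_i = W'_i X$. Applying this to $X W_\sigma = W'_\sigma X$ for a word $\sigma$ with $\mu_\sigma\neq 0$, evaluate at $e_{g_0}$: using $W_\sigma e_{g_0} = \boldsymbol\mu(\sigma,g_0)\, e_\sigma$ and $W'_\sigma e_{g_0} = \boldsymbol\mu'(\sigma,g_0)\, e_\sigma$, one gets $\boldsymbol\mu(\sigma,g_0)\, X e_\sigma = \boldsymbol\mu'(\sigma,g_0)\, X e_{g_0}$. Taking the component along $e_\sigma$ (or just norms, together with a lower bound coming from invertibility of $X$) yields
\[
\frac{\boldsymbol\mu'(\sigma,g_0)}{\boldsymbol\mu(\sigma,g_0)} = \frac{\langle X e_\sigma, e_\sigma\rangle}{\langle X e_{g_0}, e_\sigma\rangle}\quad\text{-- more robustly,}\quad
\|X^{-1}\|^{-1}\|e_{g_0}\| \le \frac{\boldsymbol\mu'(\sigma,g_0)}{\boldsymbol\mu(\sigma,g_0)}\,\|X e_{g_0}\| \le \|X\|,
\]
so $C_1 := \|X^{-1}\|^{-1}/\|Xe_{g_0}\|$ and $C_2 := \|X\|/\|Xe_{g_0}\|$ (adjusting so $C_1>0$ using that $Xe_{g_0}\neq 0$) give the required bounds on the ratio. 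Finally, part~(ii): the "only if" is the case $C_1=C_2=1$ forced by $X$ unitary, which via the telescoping relation forces $\lambda_\alpha=1$ for all $\alpha$ with $\mu_\alpha\neq0$, hence (reading off consecutive ratios) $\mu_\beta=\mu'_\beta$ for every $\beta$ with $\mu_\beta\neq 0$; combined with the standing hypothesis that the zero-sets agree, this gives $\mu_\beta=\mu'_\beta$ for all $\beta$ with $|\beta|\ge 1$. The "if" direction is immediate since equal weights give literally the same operators.

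The main obstacle I anticipate is the bookkeeping around the vanishing weights in the truncated case: one must verify that the reducing-subspace decomposition of Theorem~\ref{decompo} is compatible for $W$ and $W'$ simultaneously (which it is, precisely because $\mu_\beta=0 \iff \mu'_\beta=0$), so that the diagonal operator $A$ genuinely intertwines and so that on the "truncated" summands — where both multi-shifts act as weighted shifts with a common zero pattern — the argument reduces to the injective case applied summand by summand. A secondary subtlety in the converse direction is extracting the lower bound on $|\langle Xe_\sigma, e_\sigma\rangle|$ type quantities from mere invertibility of $X$; the clean way is to note that the displayed relation $\boldsymbol\mu(\sigma,g_0) X e_\sigma = \boldsymbol\mu'(\sigma,g_0) X e_{g_0}$ together with injectivity of $X$ (so $Xe_{g_0}\neq 0$) and boundedness of $X, X^{-1}$ directly pins the ratio $\boldsymbol\mu'(\sigma,g_0)/\boldsymbol\mu(\sigma,g_0)$ between two positive constants independent of $\sigma$, with no need to isolate matrix entries. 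Everything else is the routine diagonal-operator computation sketched above.
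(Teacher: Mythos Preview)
Your construction of the diagonal intertwiner $A$ for the ``if'' direction of part (i) is exactly what the paper does, and the norm estimate $\|A\|\,\|A^{-1}\|\le C_2/C_1$ follows as you say.

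The gap is in the converse. The displayed vector identity
\[
\boldsymbol\mu(\sigma,g_0)\, X e_\sigma \;=\; \boldsymbol\mu'(\sigma,g_0)\, X e_{g_0}
\]
is false: from $XW_\sigma = W'_\sigma X$ evaluated at $e_{g_0}$ you get $\boldsymbol\mu(\sigma,g_0)\,X e_\sigma = W'_\sigma\bigl(Xe_{g_0}\bigr)$, and $W'_\sigma$ acts on the \emph{vector} $Xe_{g_0}$, not on $e_{g_0}$. There is no reason for $W'_\sigma(Xe_{g_0})$ to equal $\boldsymbol\mu'(\sigma,g_0)\,Xe_{g_0}$ unless $Xe_{g_0}\in\CC e_{g_0}$, which you have not shown. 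Consequently your ``more robustly'' norm inequality, and the constants $C_1,C_2$ you extract from it, are not justified. The correct route is the one you mention in passing but do not carry out: pair with $e_\sigma$. Since $(W'_\sigma)^* e_\sigma=\boldsymbol\mu'(\sigma,g_0)\,e_{g_0}$, one obtains the \emph{scalar} identity
\[
\boldsymbol\mu(\sigma,g_0)\,\langle Xe_\sigma,e_\sigma\rangle
\;=\;\boldsymbol\mu'(\sigma,g_0)\,\langle Xe_{g_0},e_{g_0}\rangle,
\]
which is precisely the diagonal-entry relation the paper derives. To turn this into a two-sided bound on the ratio you still need $\langle Xe_{g_0},e_{g_0}\rangle\neq 0$; this does \emph{not} follow from mere injectivity of $X$ and requires the additional observation (which the paper makes) that the intertwining forces $\langle Xe_\sigma,e_{g_0}\rangle=0$ for all $|\sigma|\ge 1$, so that invertibility would fail if the $(g_0,g_0)$ entry also vanished. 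The lower bound then comes from running the same argument with $X^{-1}$ in place of $X$ (using $X^{-1}W'_i=W_iX^{-1}$).

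Your sketch of part (ii) inherits the same problem, and the claim that ``$X$ unitary forces $C_1=C_2=1$'' is not substantiated: from $|\langle Xe_\sigma,e_\sigma\rangle|\le 1$ alone you only get an upper bound on $\lambda_\sigma\,|\langle Xe_{g_0},e_{g_0}\rangle|$. The paper's argument uses that $X^{-1}=X^*$ satisfies the reverse intertwining $X^*W'_i=W_iX^*$, which yields a second diagonal relation $\overline{\langle Xe_\sigma,e_\sigma\rangle}=\lambda_\sigma^{-1}\,\overline{\langle Xe_{g_0},e_{g_0}\rangle}$; combining the two forces $\lambda_\sigma^2=1$, hence $\lambda_\sigma=1$, and telescoping gives $\mu_\beta=\mu'_\beta$.
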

 \begin{proof}
 Assume that $A\in B(F^2(H_n))$ is an invertible operator having the matrix representation $[a_{\alpha, \beta}]$ and such that $AW_i=W_i' A$ for any $i\in \{1,\ldots,n\}$.
 Using the definition of the  weighted left multi-shifts, we deduce that
 $$\left<AW_ie_\alpha, e_\beta\right>=\left<A\mu_{g_i\alpha} e_{g_i\alpha}, e_\beta\right>
 =\mu_{g_i\alpha}a_{\beta,g_i\alpha}
 $$
 and 
 \begin{equation*} \begin{split}
  \left< W_i'A e_\alpha, e_\beta\right>
 &=\begin{cases}
 \left<Ae_\alpha, \mu'_{g_i\gamma} e_\gamma \right>  &\text{ if } \beta=g_i\gamma \\
 0 &\text{ otherwise } 
 \end{cases}\\
 &=\begin{cases}
  \mu'_{g_i\gamma}a_{\gamma, \alpha}  &\text{ if } \beta=g_i\gamma\\
 0 &\text{ otherwise. }
 \end{cases}
\end{split}
 \end{equation*}
 Consequently, we must have  $\mu_{g_i\alpha}a_{\beta,g_i\alpha}=\mu'_{g_i\gamma}a_{\gamma, \alpha}$ if  $\beta=g_i\gamma$ for some $\gamma\in \FF_n^+$, and  $a_{\beta,g_i\alpha}=0$  otherwise.  Hence, we have $a_{g_0,\sigma}=0$ for any  $\sigma\in \FF_n^+$ with $|\sigma|\geq 1$, and 
 $$
 \mu_{g_i\alpha}a_{g_i\gamma, g_i \alpha}=\mu'_{g_i\gamma} a_{\gamma, \alpha}, \qquad i\in \{1,\ldots, n\}.
 $$
Since $A$ is invertible, we must have $a_{g_0,g_0}\neq 0$.  On the other hand, the relation above implies 
\begin{equation}
\label{sisi}
a_{\sigma,\sigma}=\frac{\mu'_{g_{i_1}\cdots g_{i_k}}}{\mu_{g_{i_1}\cdots g_{i_k}}}\cdot \frac{\mu'_{g_{i_2}\cdots g_{i_k}}}{\mu_{g_{i_2}\cdots g_{i_k}}}\cdots  \frac{\mu'_{g_{i_k}}}{\mu_{g_{i_k}}}a_{g_0,g_0}
\end{equation}
for any $\sigma=g_{i_1}\cdots g_{i_k}\in \FF_n^+$ and $k\in \NN$ such that $\mu_\sigma\neq 0$. Here we use  Remark \ref{succesor} which asserts that if $\mu_\sigma\neq 0$, then $\mu_{g_{i_2}\cdots g_{i_k}}\neq 0$, $\mu_{g_{i_3}\cdots g_{i_k}}\neq 0$, $\ldots$, $\mu_{g_{i_k}}\neq 0$. On the other hand, according to the hypothesis, $\mu_\beta=0$  if and only if $\mu'_\beta=0$. Consequently, we also have  $\mu'_{g_{i_2}\cdots g_{i_k}}\neq 0$, $\mu'_{g_{i_3}\cdots g_{i_k}}\neq 0$, $\ldots$, $\mu'_{g_{i_k}}\neq 0$.

 Using the fact that  $|a_{\sigma,\sigma}|=|\left<Ae_\sigma, e_\sigma\right>|\leq \|A\|$  and $a_{g_0,g_0}\neq 0$, we deduce one of the inequalities  in the theorem. The other inequality can be obtained  in a similar manner using the relation   $A^{-1}W_i'=W_iA^{-1}$, $i\in \{1,\ldots, n\}$.

 To prove the converse, define the diagonal operator $A\in B(F^2(H_n))$ by setting 
 $De_\alpha:=d_\alpha e_\alpha$,  where  $d_{g_0}:=1$ and 
 \begin{equation}\label{d}
 d_\sigma:=\frac{\mu'_{g_{i_1}\cdots g_{i_k}}}{\mu_{g_{i_1}\cdots g_{i_k}}}\cdot \frac{\mu'_{g_{i_2}\cdots g_{i_k}}}{\mu_{g_{i_2}\cdots g_{i_k}}}\cdots  \frac{\mu'_{g_{i_k}}}{\mu_{g_{i_k}}} d_{g_0}
 \end{equation}
 for any $\sigma=g_{i_1}\cdots g_{i_k}\in \FF_n^+$ and $k\in \NN$ such that 
 $\mu_\sigma\neq 0$. On the other hand, if $\alpha\in \FF_n^+$ is such that $\mu_\alpha=0$, we set $d_\alpha:=1$. If $D$ has the    the matrix representation 
 $[d_{\alpha, \beta}]$, then $d_{\alpha,\beta} =0$ if $\alpha\neq \beta$ and $d_{\alpha,\alpha}=d_\alpha$.
As in the first part of the proof, one needs to  check that $DW_i=W_i' D$ for any 
$i\in \{1,\ldots, n\}$. Consequently, we need to show that  $\mu_{g_i\alpha}d_{\beta,g_i\alpha}=\mu'_{g_i\gamma}d_{\gamma, \alpha}$ if  $\beta=g_i\gamma$ for some $\gamma\in \FF_n^+$, and  $d_{\beta,g_i\alpha}=0$  otherwise.  According to the definition of $D$, it is clear that $d_{\beta,g_i\alpha}=0$ if   $\beta\neq g_i\gamma$ for some $\gamma\in \FF_n^+$.  It remains to prove that
$\mu_{g_i\alpha}d_{g_i\gamma, g_i\alpha}=\mu'_{g_i\gamma}d_{\gamma, \alpha}$ for any $\alpha,\gamma\in \FF_n^+$ and $i\in \{1,\ldots, n\}$.

If $\gamma\neq \alpha$, we have  $d_{g_i\gamma, g_i\alpha}=d_{\gamma, \alpha}=0$. Consequently,  the relation above holds. Now, consider the case when $\gamma= \alpha$. 
If $\mu_{g_i\alpha}=0$, then $\mu'_{g_i\alpha}=0$, so the relation above holds. In case  $\mu_{g_i\alpha}\neq 0$,  we have $\mu'_{g_i\alpha}\neq 0$ and  relation \eqref{d} implies   $d_{g_i \alpha}=\frac{\mu'_{g_i\alpha}}{\mu_{g_i\alpha}} d_\alpha$ which completes the proof.

To prove part (ii),  assume that the operator $A$  employed in the proof of  item (i) is unitary.
According to the relations above, we have $a_{\beta,\gamma}=0$ for any $\beta, \gamma\in \FF_n^+$, $\beta\neq \gamma$, and relation  \eqref{sisi} holds.
On the other hand, since  $A^{-1}=A^*$, the matrix of $A^*$ is  $[c_{\alpha, \beta}]$  where the entries satisfy relation $c_{\alpha, \beta}=\bar a_{\beta, \alpha}$.
Since $A^*W_i'=W_i A^*$ for any  $i\in \{1,\ldots, n\}$, we obtain, as above, that
\begin{equation*}
c_{\sigma, \sigma}=
\frac{\mu_{g_{i_1}\cdots g_{i_k}}}{\mu'_{g_{i_1}\cdots g_{i_k}}}\cdot \frac{\mu_{g_{i_2}\cdots g_{i_k}}}{\mu'_{g_{i_2}\cdots g_{i_k}}}\cdots  \frac{\mu_{g_{i_k}}}{\mu'_{g_{i_k}}}c_{g_0,g_0}\neq 0
\end{equation*}
for any $\sigma=g_{i_1}\cdots g_{i_k}\in \FF_n^+$ and $k\in \NN$ such that 
 $\mu_\sigma\neq 0$. 
Using also relation \eqref{sisi}, we conclude that
 $$
 \mu_{g_{i_1}\cdots g_{i_k}} \mu_{g_{i_2}\cdots g_{i_k}}\cdots \mu_{g_{i_k}}
 =\mu'_{g_{i_1}\cdots g_{i_k}} \mu'_{g_{i_2}\cdots g_{i_k}}\cdots \mu'_{g_{i_k}}
 $$
for any $\sigma=g_{i_1}\cdots g_{i_k}\in \FF_n^+$ and $k\in \NN$ such that 
 $\mu_\sigma\neq 0$. 
  The relation above is equivalent to $\mu_\alpha=\mu'_\alpha$ for any $\alpha\in \FF_n^+$ with $|\sigma|\geq 1$ such that 
 $\mu_\sigma\neq 0$. Since  $\mu_\beta=0$  if and only if $\mu'_\beta=0$,
the proof is complete.
 \end{proof}

   We say that  an $n$-tuple $T=(T_1,\ldots, T_k)\in B(\cH)^n $ is {\it pure} if $\text{\rm SOT-}\lim_{k\to \infty}\sum_{\alpha\in \FF_n^+, |\alpha|=k} T_\alpha T_\alpha^*=0$. 
   In what follows, we show that  a weighted left multi-shift is jointly similar to a row contraction if and only  if  it is power bounded.

 \begin{theorem}  \label{power bounded} Let $W=(W_1,\ldots, W_n)$  be  a  weighted left multi-shift  associated with the weight sequence    $\boldsymbol\mu=\{\mu_\beta\}_{  |\beta|\geq 1}$. 
 Then the following statements are equivalent.
 \begin{enumerate}
 \item[(i)] $W$ is a row power bounded $n$-tuple of operators.
 \item[(ii)] There exists $M>0$ such that 
 $$\sup_{\alpha, \beta\in \FF_n^+, |\beta|=k} \boldsymbol\mu (\beta,\alpha)\leq M\quad \text{  for any } \ k\in \NN,
 $$
where  $ \boldsymbol\mu (\beta,\alpha)$ is defined in Proposition \ref{radius}.
 \item[(iii)] $W$ is jointly similar to a row contraction.
 \item[(iv)]  $W$ is jointly similar to a pure row contraction.
  \item[(v)]  $W$ is   completely polynomially  bounded with respect to the left creation 
  operators $S_1,\ldots, S_n$.
  \item[(vi)]  $W$ is     polynomially  bounded with respect to the left creation operators.
 \end{enumerate}
 In this case, the operator $A$ implementing  the similarity can be chosen such that $\|A\|\|A^{-1}\|\leq  M$.
 \end{theorem}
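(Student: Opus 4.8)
The plan is to prove the chain of implications
$$(i)\Longleftrightarrow(ii)\Longrightarrow(iv)\Longrightarrow(iii)\Longrightarrow(v)\Longrightarrow(vi)\Longrightarrow(i),$$
so that all six statements become equivalent, and to read off the similarity bound from the step $(ii)\Rightarrow(iv)$. The equivalence $(i)\Leftrightarrow(ii)$ is immediate from the proof of Proposition \ref{radius}: there it is shown that $\sum_{|\beta|=k}W_\beta W_\beta^*$ is a diagonal operator of norm $\sup_{\alpha,\beta\in\FF_n^+,\,|\beta|=k}\boldsymbol\mu(\beta,\alpha)^2$, so that $\|[W_\alpha:\,|\alpha|=k]\|=\sup_{\alpha,\beta,\,|\beta|=k}\boldsymbol\mu(\beta,\alpha)$; hence row power boundedness with constant $M$ is exactly condition (ii) with the same $M$.

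The core of the proof is $(ii)\Rightarrow(iv)$. Assume (ii), and (harmlessly) take $M\ge 1$. For $\alpha\in\FF_n^+$ set
$$d_\alpha:=\sup_{\beta\in\FF_n^+}\boldsymbol\mu(\beta,\alpha).$$
Taking $\beta=g_0$ shows $d_\alpha\ge 1$, and (ii) shows $d_\alpha\le M$. Let $A\in B(F^2(H_n))$ be the diagonal operator $Ae_\alpha:=d_\alpha e_\alpha$; then $A$ is invertible with $\|A\|\le M$ and $\|A^{-1}\|\le 1$. A direct computation gives $AW_iA^{-1}e_\alpha=\tfrac{d_{g_i\alpha}}{d_\alpha}\mu_{g_i\alpha}e_{g_i\alpha}$, so $T:=(AW_1A^{-1},\dots,AW_nA^{-1})$ is the weighted left multi-shift with weights $\mu'_{g_i\alpha}:=\tfrac{d_{g_i\alpha}}{d_\alpha}\mu_{g_i\alpha}$. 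Now the elementary identity
$$\mu_{g_i\alpha}\,\boldsymbol\mu(\beta,g_i\alpha)=\boldsymbol\mu(\beta g_i,\alpha)\le d_\alpha,\qquad \beta\in\FF_n^+,$$
which records how $\boldsymbol\mu(\cdot,\cdot)$ behaves when a generator is appended, yields, on taking the supremum over $\beta$, that $\mu_{g_i\alpha}d_{g_i\alpha}\le d_\alpha$, i.e. $\mu'_{g_i\alpha}\le 1$ for all $i$ and $\alpha$. By Proposition \ref{proprieties}(iv), $T$ is a row contraction. It is moreover \emph{pure}: being a weighted shift, $\sum_{|\beta|=k}T_\beta T_\beta^*$ annihilates each $e_\gamma$ as soon as $k>|\gamma|$, and these operators are uniformly bounded by $1$, so they tend to $0$ in the strong operator topology. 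Thus $W=A^{-1}TA$ is jointly similar to a pure row contraction, which proves (iv); and since $\|A\|\|A^{-1}\|\le M$, this also gives the final assertion of the theorem.

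The remaining implications are routine. $(iv)\Rightarrow(iii)$ is trivial. For $(iii)\Rightarrow(v)$: if $W_i=A^{-1}T_iA$ with $T$ a row contraction, then for any $k\times k$ matrix $[p_{s,q}]$ of noncommutative polynomials each entry satisfies $p_{s,q}(W)=A^{-1}p_{s,q}(T)A$, so $\|[p_{s,q}(W)]_{k\times k}\|\le\|A^{-1}\|\,\|[p_{s,q}(T)]_{k\times k}\|\,\|A\|\le\|A\|\|A^{-1}\|\,\|[p_{s,q}(S)]_{k\times k}\|$ by the noncommutative von Neumann inequality for row contractions in its completely contractive form. $(v)\Rightarrow(vi)$ is the case $k=1$. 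For $(vi)\Rightarrow(i)$: testing the polynomial bound on a monomial $p=Z_\beta$ gives $\|W_\beta\|\le M\|S_\beta\|=M$ for every $\beta\in\FF_n^+$, and since $\|W_\beta\|=\sup_{\alpha}\boldsymbol\mu(\beta,\alpha)$ (again from the proof of Proposition \ref{radius}), this is precisely $\sup_{\alpha,\beta,\,|\beta|=k}\boldsymbol\mu(\beta,\alpha)\le M$, i.e. (ii), hence (i).

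The main obstacle is the construction in $(ii)\Rightarrow(iv)$: one must locate the correct diagonal conjugator and then verify that it renormalizes all the weights to be at most $1$. The identity $\mu_{g_i\alpha}\boldsymbol\mu(\beta,g_i\alpha)=\boldsymbol\mu(\beta g_i,\alpha)$ is exactly what makes the choice $d_\alpha=\sup_\beta\boldsymbol\mu(\beta,\alpha)$ work, and, as a bonus, it simultaneously produces the sharp similarity constant $\|A\|\|A^{-1}\|\le M$. Everything else is bookkeeping plus one appeal to Popescu's completely contractive von Neumann inequality.
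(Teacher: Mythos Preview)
Your proof is correct. The overall logical structure matches the paper's, and the implications $(i)\Leftrightarrow(ii)$, $(iv)\Rightarrow(iii)$, $(iii)\Rightarrow(v)$, $(v)\Rightarrow(vi)$, $(vi)\Rightarrow(i)$ are handled essentially the same way in both.

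The real difference is in the core step $(ii)\Rightarrow(iv)$. The paper first reduces to the case of an injective multi-shift or a truncation thereof via the decomposition result (Theorem \ref{decompo}), then builds new weights $v_\beta\le 1$ by a somewhat intricate inductive procedure (tracking the products $\Gamma(g_{i_k}\cdots g_{i_1})$ and showing at each step that they stay in $[1,M]$), and finally invokes the similarity criterion (Theorem \ref{similarity}) to conclude. Your argument bypasses all of this: the single explicit choice $d_\alpha:=\sup_{\beta}\boldsymbol\mu(\beta,\alpha)$ and the identity $\mu_{g_i\alpha}\,\boldsymbol\mu(\beta,g_i\alpha)=\boldsymbol\mu(\beta g_i,\alpha)$ immediately force the conjugated weights to satisfy $\mu'_{g_i\alpha}\le 1$, and this works uniformly regardless of whether some $\mu_\beta$ vanish, so no decomposition is needed. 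You also get $\|A\|\|A^{-1}\|\le M$ and purity directly. This is a genuinely more elementary and more transparent argument than the paper's; what the paper's route buys, by contrast, is that it exhibits the similarity criterion and the decomposition theorem as structural tools of independent interest, rather than circumventing them.
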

 \begin{proof}  First, we note the the equivalence of (i) with (ii) is due to the proof of Proposition \ref{radius}, where we showed that
 $$
\left\|\sum_{\beta\in \FF_n^+, |\beta|=k} W_\beta W_\beta^*\right\|^{1/2}=\sup_{\alpha, \beta\in \FF_n^+, |\beta|=k} \boldsymbol\mu (\beta,\alpha),\qquad k\in \NN.
$$
In order to prove that (ii)$\implies$(iii), we  consider first the case when $W$ is either an injective   left multi-shift or  a truncation of  an  injective   left multi-shift.
 Now, assume that there exists $M>0$ such that 
 $\sup_{\alpha, \beta\in \FF_n^+, |\beta|=k} \boldsymbol\mu (\beta,\alpha)\leq M$   for any  $  k\in \NN$.
 Then 
 \begin{equation}
 \label{M}
 \mu_{g_{i_k}\cdots g_{i_1}\alpha} \mu_{g_{i_{k-1}}\cdots g_{i_1}\alpha} \cdots \mu_{g_{i_1}\alpha} \leq M
  \end{equation}
 for any $\alpha\in \FF_n^+$, $g_{i_1},\ldots, g_{i_k}\in \{1,\ldots, n\}$, and  $ k\in \NN$.
In what follows, we define inductively  a sequence of weights ${\bf v}=\{v_{\beta}\}_{|\beta|\geq 1}$ such that   
\begin{equation} \label{simi}
1\leq \Gamma(g_{i_k}\cdots g_{i_1}):=\frac{\mu_{g_{i_k}\cdots g_{i_1}} \mu_{g_{i_{k-1}}\cdots g_{i_1}} \cdots \mu_{g_{i_1}}}{v_{g_{i_k}\cdots g_{i_1}} v_{g_{i_{k-1}}\cdots g_{i_1}}   \cdots v_{g_{i_1}}}\leq M
\end{equation}
for any $\sigma=g_{i_k}\cdots g_{i_1}\in \FF_n^+$ and $k\in \NN$ such that $\mu_\sigma\neq 0$.  First, we define $v_{g_{i_1}}$ for any $i_1\in \{1,\ldots, n\}$. There are to cases to consider. If $\mu_{g_{i_1}}=0$, we set $v_{g_{i_1}}=0$.  If  $\mu_{g_{i_1}}\neq 0$
we set
$$
v_{g_{i_1}}:=\begin{cases}
1& \text{if } \ \mu_{g_{i_1}}\geq 1\\
\mu_{g_i}& \text{if }\   \mu_{g_{i_1}} < 1
\end{cases} 
\   \text{ and note that  }\ 
 \Gamma(g_{i_1})=\begin{cases}
 \mu_{g_{i_1}}& \text{if } \ \mu_{g_{i_1}}\geq 1\\
1& \text{if }\   \mu_{g_{i_1}} < 1.
\end{cases} 
$$
Assume that $u_{g_{i_{k+1}}\cdots g_{i_1}}>0, u_{g_{i_{k}}\cdots g_{i_1}}>0,   \ldots, u_{g_{i_1}}>0$  and  that we have defined the weights $v_{g_{i_k}\cdots g_{i_1}}$, $v_{g_{i_{k-1}}\cdots g_{i_1}}$,  $ \ldots$, $v_{g_{i_1}}$.
Now, we define
$$
v_{g_{i_{k+1}}\cdots g_{i_1}}
:=\begin{cases}
1& \text{ if } \quad \mu_{g_{i_{k+1}}\cdots g_{i_1}}\Gamma(g_{i_k}\cdots g_{i_1})\geq 1\\
\mu_{g_{i_{k+1}}\cdots g_{i_1}}\Gamma(g_{i_k}\cdots g_{i_1})& \text{ if }\quad   \mu_{g_{i_{k+1}}\cdots g_{i_1}}\Gamma(g_{i_k}\cdots g_{i_1}) < 1.
\end{cases} 
$$
On the other hand, if $\mu_\sigma=0$, then we set $v_\sigma:= 0$.
In what follows, we prove by induction over $k\in \NN$ that, for any $g_{i_1},\ldots, g_{i_k}\in \FF_n^+$   such that $\mu_{g_{i_k}\cdots g_{i_1}}\neq 0$,   $\Gamma(g_{i_k}\cdots g_{i_1})$ is always  a product  of one the following  forms
$$
\begin{cases} \mu_{g_{i_k}\cdots g_{i_{p+1}}g_{i_p}\cdots g_{i_1}}  \mu_{g_{i_{k-1}}\cdots g_{i_{p+1}}g_{i_p}\cdots g_{i_1}}\cdots \mu_{g_{i_{p+1}g_{i_p}\cdots g_{i_1}}}, &\   \text { where } \ p<k, \text{ or }\\
\mu_{g_{i_k}\cdots g_{i_1}}, &  
\end{cases} 
$$
or  $\Gamma(g_{i_k}\cdots g_{i_1})=1$, and 
$1\leq \Gamma(g_{i_k}\cdots g_{i_1})\leq M$.

Indeed, due to relation \eqref{M} and the definition of $v_{g_{i_1}}$, we know that $ \Gamma(g_{i_1})$ satisfies the conditions  above if $ v_{g_{i_1}}\neq 0$ and also that  
$1\leq  \Gamma(g_{i_1})\leq M$. Assume that  $\mu_{g_{i_k}\cdots g_{i_1}}\neq 0$ and  $ \Gamma(g_{i_k}\cdots g_{i_1})$ satisfies the conditions  above and    $1\leq \Gamma(g_{i_k}\cdots g_{i_1})\leq M$.  Assume that $\mu_{g_{i_{k+1}}\cdots g_{i_1}}\neq 0$.
Since
$$
\Gamma(g_{i_{k+1}}\cdots g_{i_1})=\frac{\mu_{g_{i_{k+1}}\cdots g_{i_1}}}{v_{g_{i_{k+1}}\cdots g_{i_1}}} \cdot \Gamma(g_{i_{k}}\cdots g_{i_1}), 
$$
we can take  into account the definition of  $v_{g_{i_{k+1}}\cdots g_{i_1}}
$ and the induction hypothesis, to  deduce that 
$$
\Gamma(g_{i_{k+1}}\cdots g_{i_1})
:=\begin{cases}
\mu_{g_{i_{k+1}}\cdots g_{i_1}} \Gamma(g_{i_{k}}\cdots g_{i_1})& \text{ if } \quad \mu_{g_{i_{k+1}}\cdots g_{i_1}}\Gamma(g_{i_k}\cdots g_{i_1})\geq 1\\
1& \text{ if }\quad   \mu_{g_{i_{k+1}}\cdots g_{i_1}}\Gamma(g_{i_k}\cdots g_{i_1}) < 1
\end{cases} 
$$
and, consequently,  $ \Gamma(g_{i_{k}+1}\cdots g_{i_1})$ satisfies the conditions  above. Using     relation \eqref{M},  we conclude that     $1\leq \Gamma(g_{i_{k+1}}\cdots g_{i_1})\leq M$  for any $g_{i_1},\ldots, g_{i_{k+1}}\in \FF_n^+$ such that $\mu_{g_{i_{k+1}}\cdots g_{i_1}}\neq 0$. This proves our assertion.  

Since relation \eqref{simi} holds, we apply  Theorem  \ref{similarity} and  deduce that $W=(W_1,\ldots, W_n)$ is jointly similar to the weighted left multi-shift $V=(V_1,\ldots, V_n)$ associated with the weights
  ${\bf v}=\{v_{\beta}\}_{|\beta|\geq 1}$. On the other hand, since $v_{\beta}\leq 1$ for any 
  $|\beta|\geq1$,  Proposition \ref{proprieties} shows that $V$ is a row contraction. Therefore, item  (iii) holds. Moreover, due to Theorem \ref{similarity},  the operator $A$ implementing  the similarity can be chosen such that $\|A\|\|A^{-1}\|\leq   M$.
 
 Now, in order to prove the implication (ii)$\implies$(iii)   when $W$ is an arbitrary weighted  left multi-shift, we apply Theorem \ref{decompo} and decompose  $W$     into  a direct sum $\bigoplus_n W^{(n)}:=(\bigoplus_n W^{(n)}_1,\ldots, \bigoplus_n W^{(n)}_n)$ of injective   left multi-shifts and truncations of injective   left multi-shifts.    Applying the result above  to each of the truncated multi-shifts $W^{(n)}:=(W^{(n)}_1,\ldots, W^{(n)}_n)$ in this direct sum, we find row contractions 
 $C^{(n)}:=(C^{(n)}_1,\ldots, C^{(n)}_n)$ and invertible operators  $A^{(n)}$ such that
 $W^{(n)}_i=(A^{(n)})^{-1} C^{(n)}_i A^{(n)}$ for any $i\in \{1,\ldots, n\}$, and such that $\|A^{(n)}\|\|(A^{(n)})^{-1}\|\leq   M$   for all $n$. Taking
  $C:=(\bigoplus_n C^{(n)}_1,\ldots , \bigoplus_n C^{(n)}_n)$ and
    $A:=\bigoplus_n  A^{(n)}_n$, it is clear that $A$ is invertible  $C$ is a row contraction and 
    $$
    \bigoplus_n W^{(n)}_i=\left(\bigoplus_n A^{(n)}\right)^{-1}   \left(\bigoplus_n C^{(n)}_i\right) \left(\bigoplus_n A^{(n)}\right),\qquad i\in \{1,\ldots, n\},
    $$
which completes the proof.

   To prove the implication (iii)$\implies$(i), let $X$ be an invertible operator  and $W_i =X^{-1}V_i  X$  for any $i\in \{1,\ldots, n\}$, where $V=(V_1,\ldots, V_n)$ is a row contraction. Note that 
\begin{equation*}
\begin{split}
\left\|\sum_{\beta\in \FF_n^+, |\beta|=k} W_\beta W_\beta^*\right\|
&=  
\left\|\sum_{\beta\in \FF_n^+, |\beta|=k}X^{-1} V_\beta X X^* V_\beta^* (X^{-1})^*\right\|\\
&\leq
 \|X\|^2 \|X^{-1}\|^2 \left\|\sum_{\beta\in \FF_n^+, |\beta|=k} V_\beta   V_\beta^*  \right\|\leq  \|X\|^2 \|X^{-1}\|^2
\end{split}
\end{equation*}
for any $k\in \NN$. Therefore, item (i) holds.

 Due to the definition of the weighted left multi-shift, for any $\alpha\in \FF_n^+$, we have 
 $
 \sum_{\beta\in \FF_n^+, |\beta|=k} W_\beta W_\beta^* e_\alpha=0
$
if $k>|\alpha|$. If we assume that $\left\|\sum_{\beta\in \FF_n^+, |\beta|=k} W_\beta W_\beta^*\right\|^{1/2}\leq M$ for any $k\in \NN$, then a standard approximation  argument shows that  
$$
\text{\rm SOT-}\lim_{k\to \infty} \sum_{\beta\in \FF_n^+, |\beta|=k} W_\beta W_\beta^*=0,
$$
Conversely, if the later relation holds, then using the principle of uniform boundedness,
we conclude that  there is $M>0$ such that $\left\|\sum_{\beta\in \FF_n^+, |\beta|=k} W_\beta W_\beta^*\right\|^{1/2}\leq M$ for any $k\in \NN$.
 This proves  that item  (i) is equivalent to (iv).

The equivalence of item (iii) with (v) was proved in \cite{Po-disc} using  the noncommutative von Neumann inequality for row contractions \cite{Po-von}  and Paulsen's similarity criterion \cite{Pa}.     Since the implications (v)$\implies$(vi) and  (vi)$\implies$(i)  are obvious,
the proof is complete.
 \end{proof}
We remark that, according  to the proof of Theorem \ref{power bounded},  a  weighted left multi-shift  is pure if and only if it is row power bounded.

  We recall that the noncommutative disc algebra $\cA_n$ is the norm-closed  algebra generated by the left creation operators $S_1,\ldots, S_n$ and the identity (see \cite{Po-disc}).

 \begin{corollary}  If $W=(W_1,\ldots, W_n)$  is   a row power bounded injective weighted left multi-shift with bound $M>0$, then the map $\Phi_W:\cA_n\to B(F^2(H_n)$ defined by $\Phi_W(p(S_1,\ldots, S_n)):=p(W_1,\ldots, W_n)$ is completely bounded and $\|\Phi_W\|_{cp}\leq M$.
 \end{corollary}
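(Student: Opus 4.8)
The plan is to read this off from Theorem \ref{power bounded} together with the noncommutative von Neumann inequality for row contractions. Since $W$ is row power bounded with bound $M$, condition (ii) of Theorem \ref{power bounded} holds with this same $M$, and by the implication (i)$\Longrightarrow$(iv) (together with the final assertion of that theorem) there is a pure row contraction $V=(V_1,\dots,V_n)$ on $F^2(H_n)$ and an invertible operator $A\in B(F^2(H_n))$ with $\|A\|\,\|A^{-1}\|\le M$ such that $W_i=A^{-1}V_iA$ for every $i\in\{1,\dots,n\}$. (Because $W$ is assumed injective, the direct-sum decomposition of Theorem \ref{decompo} is not even needed here.) Hence, for every polynomial $p\in\CC\langle Z_1,\dots,Z_n\rangle$ one has $p(W_1,\dots,W_n)=A^{-1}\,p(V_1,\dots,V_n)\,A$.

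Next I would invoke the noncommutative von Neumann inequality \cite{Po-von}: since $V=(V_1,\dots,V_n)$ is a row contraction, the map $p(S_1,\dots,S_n)\mapsto p(V_1,\dots,V_n)$ is completely contractive, i.e.\ for every $k\in\NN$ and every matrix $[p_{s,q}]_{k\times k}$ of polynomials,
$$
\bigl\|[p_{s,q}(V_1,\dots,V_n)]_{k\times k}\bigr\|\le \bigl\|[p_{s,q}(S_1,\dots,S_n)]_{k\times k}\bigr\|.
$$

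Combining the two facts, at the level of $k\times k$ matrices we have $[p_{s,q}(W)]_{k\times k}=(I_k\otimes A^{-1})\,[p_{s,q}(V)]_{k\times k}\,(I_k\otimes A)$, so
$$
\bigl\|[p_{s,q}(W)]_{k\times k}\bigr\|\le \|A^{-1}\|\,\|A\|\,\bigl\|[p_{s,q}(V)]_{k\times k}\bigr\|\le M\,\bigl\|[p_{s,q}(S)]_{k\times k}\bigr\|.
$$
Taking the supremum over $k$ shows that the map $p(S_1,\dots,S_n)\mapsto p(W_1,\dots,W_n)$ is completely bounded on the (non-closed) algebra of polynomials in $S_1,\dots,S_n$ with cb-norm at most $M$. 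Since $\cA_n$ is by definition the norm closure of this polynomial algebra, this map extends uniquely to a completely bounded homomorphism $\Phi_W:\cA_n\to B(F^2(H_n))$ with $\|\Phi_W\|_{cb}\le M$, as claimed.

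There is no real obstacle in this argument; the substantive work has already been absorbed into Theorem \ref{power bounded}. The only point that requires a little care is bookkeeping of the constant: one must use that the $M$ in the hypothesis ``row power bounded with bound $M$'' is precisely the constant in condition (ii) of Theorem \ref{power bounded} and hence, by the last line of that theorem, the similarity $A$ can be taken with $\|A\|\,\|A^{-1}\|\le M$ — this is what yields the sharp estimate $\|\Phi_W\|_{cb}\le M$ rather than merely finiteness.
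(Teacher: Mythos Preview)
Your proof is correct and follows essentially the same route as the paper: invoke Theorem \ref{power bounded} to obtain similarity to a row contraction via an invertible $A$ with $\|A\|\,\|A^{-1}\|\le M$, apply the noncommutative von Neumann inequality to pass from the row contraction to $S_1,\dots,S_n$, and then extend by density to $\cA_n$. The only cosmetic difference is that you single out a \emph{pure} row contraction (item (iv)) and remark that injectivity makes the decomposition of Theorem \ref{decompo} unnecessary, whereas the paper simply cites item (iii); neither point affects the argument.
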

 \begin{proof}  Due to Thorem \ref{power bounded}, the weighted  multi-shilt  $W:=(W_1,\ldots, W_n)$ is  similar to a row contraction $T=(T_1,\ldots, T_n)$  and the operator $A$ implementing  the similarity can be chosen such that $\|A\|\|A^{-1}\|\leq  M$. 
 Hence, $\|[p_{st}(W_1,\ldots, W_n)]\| \leq M \|[p_{st}(T_1,\ldots, T_n)]\|$ for any  matrices $[p_{st}(Z_1,\ldots, Z_n)]$ of polynomials in  
 $  \CC\left<Z_1,\ldots, Z_n\right>$, the polynomial algebra in  noncommutative indeterminates $Z_1,\ldots, Z_n$. Applying the  noncommutative von Neumann inequality \cite{Po-von}, \cite{Po-disc} we obtain
 $$
 \|[p_{st}(W_1,\ldots, W_n)]\| \leq M \|[p_{st}(S_1,\ldots, S_n)]\|.
 $$
 The extension  to the noncommutative disc algebra $\cA_n$ follows  easily by  a standard approximation   argument.
 \end{proof}

We denote by $C^*(W)$ the $C^*$-algebra generated by $W_1,\ldots W_n$ and the  identity. 

\begin{theorem}  \label{irreducible} If $W=(W_1,\ldots, W_n)$  is  a weighted left multi-shift  associated with  $\{\mu_\beta\}_{ |\beta|\geq 1}$, then  the following statements hold.
\begin{enumerate}
\item[(i)] The $C^*$-algebra $C^*(W)$ is irreducible if and only if $\mu_\beta>0$  for any $\beta\in \FF_n^+ $. 
\item[(ii)] If   $W$ is injective and   there is $i\in \{1,\ldots, n\}$ such that
$$
\lim_{\alpha\to \infty} \left(\mu_{g_i\alpha}-\mu_{\alpha}\right) =0\quad   
$$
then  the $C^*$-algebra $C^*(W)$ contains all the compact operators in $B(F^2(H_n))$. In particular, if $\{\mu_\alpha\}_\alpha$ is a convergent sequence, then the condition above holds for any  $i\in \{1,\ldots, n\}$.
   \end{enumerate}
\end{theorem}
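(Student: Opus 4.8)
The plan is to treat the two parts separately: part (i) by a direct argument with the action formulas \eqref{WbWb} together with the reducing subspaces constructed in the proof of Theorem \ref{decompo}, and part (ii) by exhibiting a single nonzero compact operator in $C^*(W)$ and then invoking the standard fact that an irreducible $C^*$-subalgebra of $B(\cH)$ which meets $\cK(\cH)$ nontrivially must contain all of $\cK(\cH)$ (see \cite{Arv-book}).

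For (i), one direction is immediate: if $\mu_\beta=0$ for some $\beta\in\FF_n^+$ with $|\beta|\geq1$, then the same computation as in the proof of Theorem \ref{decompo} shows that the subspace $\cM_\beta:=\overline{\Span}\{e_{\omega\beta}:\omega\in\FF_n^+\}$ reduces $W_1,\dots,W_n$; since $\cM_\beta$ contains $e_\beta$ but not $e_{g_0}$, it is a nontrivial reducing subspace, so $C^*(W)$ is not irreducible. Conversely, assume all weights are strictly positive and let $T\in C^*(W)'$, i.e.\ $T$ commutes with each $W_i$ and each $W_i^*$. From \eqref{WbWb} I would first check that $\bigcap_{i=1}^n\ker W_i^*=\CC e_{g_0}$: if $W_i^*f=0$ for all $i$, then matching coefficients gives $\mu_{g_i\gamma}\langle f,e_{g_i\gamma}\rangle=0$, hence $\langle f,e_{g_i\gamma}\rangle=0$ for every $\gamma$ and every $i$, so $f\in\CC e_{g_0}$. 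Because $T$ commutes with each $W_i^*$ it leaves $\ker W_i^*$ invariant, hence leaves $\CC e_{g_0}$ invariant, so $Te_{g_0}=\lambda e_{g_0}$ for some $\lambda\in\CC$. Finally, for any $\beta\in\FF_n^+$ we have $e_\beta=\boldsymbol\mu(\beta,g_0)^{-1}W_\beta e_{g_0}$ with $\boldsymbol\mu(\beta,g_0)\neq0$, whence $Te_\beta=\boldsymbol\mu(\beta,g_0)^{-1}W_\beta Te_{g_0}=\lambda e_\beta$. Thus $T=\lambda I$, $C^*(W)'=\CC I$, and $C^*(W)$ is irreducible.

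For (ii), the key step is to produce a nonzero compact operator in $C^*(W)$; given that, part (i) makes $C^*(W)$ irreducible and the cited $C^*$-algebra lemma yields $\cK(F^2(H_n))\subset C^*(W)$. I would take $C:=W_i^*W_i-\sum_{j=1}^n W_jW_j^*\in C^*(W)$. Using \eqref{WbWb}, $W_i^*W_i e_\alpha=\mu_{g_i\alpha}^2 e_\alpha$ and $\big(\sum_j W_jW_j^*\big)e_\alpha=\mu_\alpha^2 e_\alpha$ for $|\alpha|\geq1$, while $\sum_j W_jW_j^*$ annihilates $e_{g_0}$; hence $C$ is diagonal in $\{e_\alpha\}$ with $Ce_{g_0}=\mu_{g_i}^2 e_{g_0}$ and $Ce_\alpha=(\mu_{g_i\alpha}^2-\mu_\alpha^2)e_\alpha$ for $|\alpha|\geq1$. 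Since the weights are bounded, $\mu_{g_i\alpha}^2-\mu_\alpha^2=(\mu_{g_i\alpha}-\mu_\alpha)(\mu_{g_i\alpha}+\mu_\alpha)\to0$ as $\alpha\to\infty$ by hypothesis, so $C$ is a diagonal operator with entries tending to $0$, hence compact; and $C\neq0$ since $\mu_{g_i}>0$ because $W$ is injective. The final ``in particular'' assertion is then clear: if $\{\mu_\alpha\}_\alpha$ converges to $L$, then $\mu_{g_i\alpha}\to L$ and $\mu_\alpha\to L$ for every $i$, so $\mu_{g_i\alpha}-\mu_\alpha\to0$ and the hypothesis of (ii) holds for every $i$.

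I expect the only genuinely delicate point to be the compactness of $C$: one must read the hypothesis $\lim_{\alpha\to\infty}(\mu_{g_i\alpha}-\mu_\alpha)=0$ as convergence along the Fréchet filter on $\FF_n^+$ (for each $\varepsilon>0$ only finitely many $\alpha$ give $|\mu_{g_i\alpha}-\mu_\alpha|\geq\varepsilon$), which is precisely what makes the diagonal operator $C$ compact, and one uses boundedness of the weights --- part of the standing hypothesis --- to absorb the factor $\mu_{g_i\alpha}+\mu_\alpha$. Everything else is a routine computation with \eqref{WbWb} together with Theorem \ref{decompo} and the quoted $C^*$-algebra fact.
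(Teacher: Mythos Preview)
Your proof is correct. Part (ii) is essentially identical to the paper's argument: both exhibit the diagonal operator $C=W_i^*W_i-\sum_{j=1}^nW_jW_j^*$, compute its diagonal entries, and conclude compactness plus irreducibility forces $\cK(F^2(H_n))\subset C^*(W)$. Your explicit factorization $\mu_{g_i\alpha}^2-\mu_\alpha^2=(\mu_{g_i\alpha}-\mu_\alpha)(\mu_{g_i\alpha}+\mu_\alpha)$ together with boundedness of the weights is slightly more careful than the paper, which simply asserts the equivalence of the two limit conditions.

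For part (i), the ``not injective $\Rightarrow$ reducible'' direction is the same as the paper (both invoke the reducing subspace $\cM_\beta$ from Theorem \ref{decompo}), but your ``injective $\Rightarrow$ irreducible'' argument is genuinely different and more economical. The paper takes $A\in C^*(W)'$, writes out the Fourier expansion of $A(1)$, uses $AW_i=W_iA$ to compute $Ae_\alpha$, does the analogous computation for $A^*$, and then pairs $\langle Ae_\alpha,e_\sigma\rangle=\langle e_\alpha,A^*e_\sigma\rangle$ to force all higher coefficients to vanish. Your route---observing that $\bigcap_i\ker W_i^*=\CC e_{g_0}$ is preserved by any $T\in C^*(W)'$, so $Te_{g_0}=\lambda e_{g_0}$, and then propagating via $e_\beta=\boldsymbol\mu(\beta,g_0)^{-1}W_\beta e_{g_0}$---is shorter and avoids the bookkeeping with $A^*$. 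What it buys is brevity; what the paper's approach buys is that the same Fourier-coefficient machinery is reused elsewhere (e.g.\ Theorem \ref{similarity}).
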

\begin{proof} If $W$ is not injective, then there exists $\beta\in \FF_n^+$ with  $|\beta|\geq 1$ such that $\mu_\beta=0$. As we saw in the proof of Theorem \ref{decompo}, in this case, 
  there exist   nontrivial reducing subspaces   under $W_1,\ldots, W_n$.

Now, assume that $W$ is  injective.
Let $A\in B(F^2(H_n))$  be  commuting with each operator in $C^*(W)$.
Since $A(1)\in F^2(H_n)$, we have
$A(1)=\sum_{\beta\in \FF_n^+} c_{
\beta}\boldsymbol\mu(\beta,g_0) e_{ \beta}$ for some
coefficients $\{c_\beta\}_{\FF_n^+}\subset \CC$ with $\sum_{\beta\in\FF_n^+}
|c_\beta|^2\boldsymbol\mu(\beta,g_0)^2<\infty$. On the other hand, since $
AW_i=W_iA$ for any  $i\in \{1,\ldots,n\}$, we deduce that 
\begin{equation*}
\begin{split}
Ae_\alpha &=\frac{1}{\boldsymbol\mu(\alpha,g_0)}AW_{\alpha}(1)=\frac{1}{\boldsymbol\mu(\alpha,g_0)}W_{\alpha}
A(1)\\
&=\sum_{\beta\in \FF_n^+} c_{ \beta}
\frac{\boldsymbol\mu(\beta,g_0)\boldsymbol\mu(\alpha,\beta)}{ \boldsymbol\mu(\alpha,g_0)} e_{\alpha\beta}\\
&=\sum_{\beta\in \FF_n^+} c_{ \beta}
\frac{\boldsymbol\mu(\alpha\beta, g_0)}{ \boldsymbol\mu(\alpha,g_0)} e_{\alpha\beta}. 
 \end{split}
\end{equation*}
Similarly, since  $A^*W_i=W_iA^*$ for  any $i\in\{1,\ldots,n\}$, there are  some scalars
 $\{c_\gamma'\}_{\gamma\in \FF_n^+}\subset \CC$ with $\sum_{\gamma\in\FF_n^+}
|c_\gamma' |^2 \boldsymbol\mu(\gamma,g_0)^2<\infty$   such that 
$$
A^* e_\sigma=\sum_{\gamma\in \FF_n^+} c_{ \gamma}'
\frac{\boldsymbol\mu(\sigma\gamma, g_0)}{ \boldsymbol\mu(\sigma,g_0)} e_{\sigma\gamma}.
 $$
Since $\left< Ae_\alpha, e_\sigma\right>=\left< e_\alpha, A^* e_\sigma\right>$, the relations above imply
$$
\left< \sum_{\beta\in \FF_n^+} c_{ \beta}
\frac{\boldsymbol\mu(\alpha\beta, g_0)}{ \boldsymbol\mu(\alpha,g_0)} e_{\alpha\beta}, e_\sigma\right>=\left< e_\alpha, \sum_{\gamma\in \FF_n^+} c_{ \gamma}'
\frac{\boldsymbol\mu(\sigma\gamma, g_0)}{ \boldsymbol\mu(\sigma,g_0)} e_{\sigma\gamma}\right>
$$
for any $\alpha,\sigma\in \FF_n^+$.
Assume that $\sigma=\alpha\beta_0$ where $\beta_0\in \FF_n^+$ and $|\beta_0|\geq 1$.
The relation above becomes
$$
c_{\beta_0}\frac{\boldsymbol\mu(\alpha\beta_0, g_0)}{ \boldsymbol\mu(\alpha,g_0)} =\left< e_\alpha,\sum_{\gamma\in \FF_n^+} c_{ \gamma}'
\frac{\boldsymbol\mu(\alpha\beta_0\gamma, g_0)}{ \boldsymbol\mu(\alpha\beta_0,g_0)} e_{\alpha\beta_0\gamma}\right>=0.
$$
Hence $c_{\beta_0}=0$ for any  $\beta_0\in \FF_n^+$ with $|\beta_0|\geq 1$.
Therefore, $Ae_\alpha=c_{g_0}e_\alpha$ for any $\alpha\in \FF_n^+$, thus $A=c_{g_0}I$. This shows that 
$C^*(W)$ is irreducible. Consequently, item (i) holds.
 
 To prove item (ii),  note that, due to the proof of Proposition \ref{radius}, we have 
 $\left(\sum_{j=1}^n W_jW_j^*\right)e_\alpha =\mu_\alpha^2 e_\alpha$ if $\alpha\in \FF_n^+$ with $|\alpha|\geq 1$ and  $0$ otherwise. On the other hand, $W_i^*W_ie_\alpha= \mu_{g_i\alpha}^2 e_\alpha$ for any $\alpha\in \FF_n^+$. Consequently,  
  \begin{equation*}
 \left( W_i^*W_i-\sum_{j=1}^n W_jW_j^*\right)
e_\alpha =\begin{cases}(\mu_{g_i\alpha}^2-\mu_\alpha^2)e_\alpha& \text{ if
}
|\alpha|\geq 1 \\
\mu_{g_i}& \text{ if } \alpha=g_0.
\end{cases}
\end{equation*}
Now, it clear that the  condition 
  $
\lim_{\alpha\to \infty} \left(\mu_{g_i\alpha}^2-\mu_{\alpha}^2\right) =0$  is equivalent to the fact that the diagonal operator    
  $W_i^*W_i-\sum_{j=1}^n W_jW_j^*$ is    compact. 
Since the $C^*$-algebra $C^*(W)$ is irreducible, we conclude that it  contains all the compact operators in $B(F^2(H_n))$.
This completes the proof.
\end{proof}

Due  the standard theory of
representations of  $C^*$-algebras \cite{Arv-book}, we obtain the following  Wold  \cite{W} decomposition for  the unital 
$*$-representations  of  the $C^*$-algebra $C^*(W)$. 

\begin{theorem}  \label{wold}  Let $W=(W_1,\ldots, W_n)$  be  an injective     weighted left multi-shift  associated with  the weights $\{\mu_\beta\}_{ |\beta|\geq 1}$ having the property that there is a nonempty subset  $\Omega\subset \{1,\ldots, n\}$  such that  
$$
\lim_{\alpha\to \infty} \left(\mu_{g_i\alpha}-\mu_{\alpha}\right) =0,\qquad i\in\Omega.
$$
 If   
$\pi:C^*(W)\to B(\cK)$ is  a unital
$*$-representation  of $C^*(W)$ on a separable Hilbert
space  $\cK$, then $\pi$ decomposes into a direct sum
$$
\pi=\pi_0\oplus \pi_1 \  \text{ on  } \ \cK=\cK_0\oplus \cK_1,
$$
where $\pi_0$ and  $\pi_1$  are disjoint representations of
$C^*(W)$ on the Hilbert spaces
$$\cK_0:=\overline{\text{\rm span}}\{\pi(X)\cK:\ X\in  \boldsymbol\cK(F^2(H_n))\}
\quad \text{ and  }\quad  \cK_1:=\cK_0^\perp,
$$
 respectively, where $\boldsymbol\cK(F^2(H_n))$ is the ideal of compact operators in $B(F^2(H_n))$. Moreover, up to an isomorphism,
\begin{equation*}
\cK_0\simeq F^2(H_n)\otimes \cG, \quad  \pi_0(X)=X\otimes I_\cG \quad
\text{ for } \  X\in C^*(W),
\end{equation*}
 and $\pi_1$ is a $*$-representation  which annihilates the compact operators  in $C^*(W)$ and 
 $$
  V_i^*V_i=\sum_{j=1}^n V_jV_j^*,\qquad i\in \Omega,
  $$
  where $V_j:=\pi_1(W_j)$.
    If $\pi'$ is another $*$-representation  of $C^*(W)$  on a separable Hilbert space $\cK'$, then $\pi$ is unitarily equivalent to  $\pi'$ if and only if $\dim \cG=\dim \cG'$ and $\pi_1$ is unitarily equivalent to $\pi_1'$.
 \end{theorem}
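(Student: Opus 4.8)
The plan is to derive the decomposition from the classical structure theory of representations of a $C^*$-algebra that contains the ideal of compact operators, feeding in the compactness facts already isolated in the proof of Theorem \ref{irreducible}. Throughout write $\boldsymbol\cK:=\boldsymbol\cK(F^2(H_n))$. Since $W$ is injective and the hypothesis on $\Omega$ holds, Theorem \ref{irreducible} gives $\boldsymbol\cK\subseteq C^*(W)$, and $\boldsymbol\cK$ acts irreducibly on $F^2(H_n)$. First I would check that $\cK_0=\overline{\text{\rm span}}\{\pi(X)\cK:X\in\boldsymbol\cK\}$ reduces $\pi(C^*(W))$: since $\boldsymbol\cK$ is an ideal, $\pi(Y)\pi(X)\cK=\pi(YX)\cK\subseteq\cK_0$ for all $Y\in C^*(W)$ and $X\in\boldsymbol\cK$, so $\pi(Y)\cK_0\subseteq\cK_0$, and applying the same to $Y^*$ shows that $\cK_0$ (hence $\cK_1:=\cK_0^\perp$) reduces $\pi$. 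Set $\pi_0:=\pi|_{\cK_0}$ and $\pi_1:=\pi|_{\cK_1}$.

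Next I would analyze $\pi_0$. By construction $\cK_0$ is the essential subspace of $\pi|_{\boldsymbol\cK}$, so $\pi_0|_{\boldsymbol\cK}$ is non-degenerate; since every non-degenerate representation of the $C^*$-algebra of compact operators on a separable Hilbert space is unitarily equivalent to an amplification $X\mapsto X\otimes I_\cG$ of the identity representation, there exist a separable Hilbert space $\cG$ and a unitary $U:\cK_0\ra F^2(H_n)\otimes\cG$ with $U\pi_0(X)U^*=X\otimes I_\cG$ for all $X\in\boldsymbol\cK$. To promote this to all of $C^*(W)$, I would fix a contractive approximate unit $\{E_\lambda\}$ of $\boldsymbol\cK$; for $Y\in C^*(W)$ one has $YE_\lambda\in\boldsymbol\cK$, $YE_\lambda\to Y$ strongly on $F^2(H_n)$, and (using non-degeneracy of $\pi_0|_{\boldsymbol\cK}$) $\pi_0(YE_\lambda)\to\pi_0(Y)$ strongly on $\cK_0$, so passing to the limit in $U\pi_0(YE_\lambda)U^*=(YE_\lambda)\otimes I_\cG$ yields $U\pi_0(Y)U^*=Y\otimes I_\cG$ for every $Y\in C^*(W)$. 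This is the asserted form of $\pi_0$, and I expect the approximate-unit step — the fact that the representation of the compacts already pins down $\pi$ on $\cK_0$ — to be the only point requiring real care; everything else rests directly on the standard theory in \cite{Arv-book}.

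For $\pi_1$ I would argue as follows. Since $\cK_1$ reduces $\pi$ and $\pi(X)\cK\subseteq\cK_0$ for $X\in\boldsymbol\cK$, we get $\pi(X)\cK_1\subseteq\cK_0\cap\cK_1=\{0\}$, so $\pi_1$ annihilates $\boldsymbol\cK$, hence all compacts in $C^*(W)$. By the computation in the proof of Theorem \ref{irreducible}, for $i\in\Omega$ the operator $W_i^*W_i-\sum_{j=1}^n W_jW_j^*$ is diagonal with entries $\mu_{g_i\alpha}^2-\mu_\alpha^2$ (and $\mu_{g_i}^2$ at $g_0$); as the weights are bounded and $\mu_{g_i\alpha}-\mu_\alpha\to0$, these entries tend to $0$, so this operator is compact and lies in $C^*(W)$. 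Applying $\pi_1$ gives $V_i^*V_i=\sum_{j=1}^n V_jV_j^*$ for $i\in\Omega$, where $V_j:=\pi_1(W_j)$. Disjointness of $\pi_0$ and $\pi_1$ is then immediate: an operator $T:\cK_0\to\cK_1$ with $T\pi_0(Y)=\pi_1(Y)T$ for all $Y$ satisfies $T\pi_0(X)=\pi_1(X)T=0$ for $X\in\boldsymbol\cK$, and since $\pi_0(\boldsymbol\cK)\cK_0$ spans a dense subspace of $\cK_0$ this forces $T=0$.

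Finally, for uniqueness I would note that $\cK_0$ and the restriction $\pi|_{\boldsymbol\cK}$ are canonically attached to $\pi$, and $\dim\cG$ is precisely the multiplicity of the identity representation of $\boldsymbol\cK$ in $\pi|_{\boldsymbol\cK}$, hence a complete unitary invariant of $\pi_0$. If a unitary implements $\pi\simeq\pi'$, it must carry $\cK_0$ onto $\cK_0'$, whence $\pi_0\simeq\pi_0'$ (equivalently $\dim\cG=\dim\cG'$) and $\pi_1\simeq\pi_1'$; conversely, $\dim\cG=\dim\cG'$ forces $\pi_0\simeq\pi_0'$, and together with $\pi_1\simeq\pi_1'$ this gives $\pi=\pi_0\oplus\pi_1\simeq\pi_0'\oplus\pi_1'=\pi'$.
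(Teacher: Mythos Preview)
Your proposal is correct and follows essentially the same route as the paper: invoke Theorem \ref{irreducible} to get $\boldsymbol\cK(F^2(H_n))\subset C^*(W)$, then apply the standard decomposition of a representation relative to this ideal of compacts, identifying $\pi_0$ with an amplification and observing that $\pi_1$ kills compacts (hence kills $W_i^*W_i-\sum_jW_jW_j^*$ for $i\in\Omega$). The paper's proof is much terser---it simply cites ``standard theory of representations of $C^*$-algebras'' from \cite{Arv-book} for the decomposition, the form of $\pi_0$, disjointness, and uniqueness, without writing out the approximate-unit extension or the explicit disjointness argument that you supply---and it does not even spell out the derivation of $V_i^*V_i=\sum_jV_jV_j^*$, leaving that implicit in the phrase ``annihilates the compact operators.'' So your write-up is a faithful, more detailed execution of exactly the argument the paper intends.
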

 \begin{proof}
 According to Theorem \ref{irreducible},   all the
compact operators $ \boldsymbol\cK(F^2(H_n))$ in $B(F^2(H_n))$ are contained in the
$C^*$-algebra $C^*(W)$.
 Due to  standard theory of
representations of  $C^*$-algebras,
the representation $\pi$ decomposes into a direct sum
$\pi=\pi_0\oplus \pi_1$ on $ \cK=\cK_0\oplus \cK_1$,
where $\pi_0$, $\pi_1$  are disjoint representations of
$C^*({\bf W})$ on the Hilbert spaces
$$\cK_0:=\overline{\text{\rm span}}\{\pi(X)\cK:\ X\in  \boldsymbol\cK(F^2(H_n))\}
\quad \text{ and  }\quad  \cK_1:=\cK_0^\perp,
$$
respectively,
such that
 $\pi_1$ annihilates  the compact operators in $B(F^2(H_n))$ and
  $\pi_0$ is uniquely determined by the action of $\pi$ on the
  ideal $\boldsymbol \cK(F^2(H_n))$ of compact operators.
Since every representation of $  \boldsymbol\cK(F^2(H_n))$ is equivalent to a
multiple of the identity representation, we deduce
 that
\begin{equation*}
\cK_0\simeq\cN_J\otimes \cG, \quad  \pi_0(X)=X\otimes I_\cG, \quad
X\in C^*({\bf W}),
\end{equation*}
 for some Hilbert space $\cG$.
 The uniqueness of the decomposition is  also due 
  to the standard theory of representations of  $C^*$-algebras.
  \end{proof}
We remark that taking  $\mu_\alpha=1$ for any $\alpha\in \FF_n^+$ in Theorem  \ref{wold},
we find again the Wold decomposition for row isometries \cite{Po-isometric}.  
  
 \begin{corollary}  Let $W=(W_1,\ldots, W_n)$  be  an injective weighted left multi-shift  associated with  the weights $\{\mu_\beta\}_{|\beta|\geq 1 }$ having the property that 
$\mu_\alpha\to 1$.  If   
$\pi:C^*(W)\to B(\cK)$ is  a unital
$*$-representation  of $C^*(W)$ on a separable Hilbert
space  $\cK$  and $
\pi=\pi_0\oplus \pi_1$ is the Wold decomposition of Theorem \ref{wold}, then
$[\pi_1(W_1)\cdots \pi_1(W_n)]$ is a Cuntz isometry. 
 \end{corollary}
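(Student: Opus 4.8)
The plan is to verify directly that the operators $V_i:=\pi_1(W_i)$, $i\in\{1,\ldots,n\}$, are isometries with mutually orthogonal ranges whose sum is the identity of $\cK_1$, which is exactly the assertion that the row $[\pi_1(W_1)\cdots\pi_1(W_n)]$ is a Cuntz isometry. First I would observe that the hypothesis $\mu_\alpha\to 1$ makes $\{\mu_\alpha\}_{\alpha\in\FF_n^+}$ a convergent sequence, so by Theorem \ref{irreducible}(ii) the condition $\lim_{\alpha\to\infty}(\mu_{g_i\alpha}-\mu_\alpha)=0$ holds for every $i\in\{1,\ldots,n\}$; hence Theorem \ref{wold} applies with $\Omega=\{1,\ldots,n\}$ and provides the Wold decomposition $\pi=\pi_0\oplus\pi_1$ on $\cK=\cK_0\oplus\cK_1$ in which $\pi_1$ annihilates the ideal $\boldsymbol\cK(F^2(H_n))$ of compact operators and $V_i^*V_i=\sum_{j=1}^n V_jV_j^*$ for all $i$.

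The decisive step is the remark that $I-\sum_{j=1}^n W_jW_j^*$ is a compact operator. I would recall from the proof of Proposition \ref{radius} (and of Theorem \ref{irreducible}) that $\left(\sum_{j=1}^n W_jW_j^*\right)e_\alpha=\mu_\alpha^2 e_\alpha$ for $|\alpha|\geq 1$ and $0$ for $\alpha=g_0$; thus $I-\sum_{j=1}^n W_jW_j^*$ is the diagonal operator whose diagonal entries are $1-\mu_\alpha^2$ for $|\alpha|\geq 1$ and $1$ at $g_0$. Since $\mu_\alpha\to 1$, these entries tend to $0$, so the operator belongs to $\boldsymbol\cK(F^2(H_n))$. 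Applying the unital representation $\pi_1$, which annihilates the compacts, then yields $\sum_{j=1}^n V_jV_j^*=\pi_1(I)=I_{\cK_1}$.

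It remains to assemble the pieces. From $V_i^*V_i=\sum_{j=1}^n V_jV_j^*=I_{\cK_1}$ each $V_i$ is an isometry; and since $W_1,\ldots,W_n$ have mutually orthogonal ranges we have $W_j^*W_i=0$ for $i\neq j$, so that $V_j^*V_i=\pi_1(W_j^*W_i)=0$ for $i\neq j$. Therefore $V_i^*V_j=\delta_{ij}I_{\cK_1}$ together with $\sum_{j=1}^n V_jV_j^*=I_{\cK_1}$, i.e.\ $[\pi_1(W_1)\cdots\pi_1(W_n)]$ is a Cuntz isometry. I expect the only nontrivial point to be the identification of $I-\sum_j W_jW_j^*$ as a compact operator; once that is in place, the rest is just the $*$-homomorphism property of $\pi_1$ applied to the relations already recorded in Theorem \ref{wold} and to the orthogonality of the ranges of the $W_i$.
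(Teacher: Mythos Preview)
Your proof is correct and follows the same approach as the paper: identify $I-\sum_{j=1}^n W_jW_j^*$ as a compact diagonal operator via $\mu_\alpha\to 1$, then use that $\pi_1$ annihilates the compacts together with the relation $V_i^*V_i=\sum_j V_jV_j^*$ from Theorem \ref{wold}. You spell out a few additional details (verifying the hypothesis of Theorem \ref{wold} and explicitly checking isometry and orthogonality of ranges), but the argument is the same.
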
 
  \begin{proof} Set $V_j:=\pi_1(W_j)$ for $j\in \{1,\ldots, n\}$. Due to Theorem \ref{wold}, we have   $V_i^*V_i=\sum_{j=1}^n V_jV_j^*$ for any $i\in \{1,\ldots, n\}$. On the other hand,
we have
 \begin{equation*}
 \left(I-\sum_{j=1}^n W_jW_j^*\right)
e_\alpha =\begin{cases}(1-\mu_\alpha^2)e_\alpha& \text{ if
}
|\alpha|\geq 1 \\
1& \text{ if } \alpha=g_0.
\end{cases}
\end{equation*}  
Since $\mu_\alpha\to 1$,    the operator $I-\sum_{j=1}^n W_jW_j^*$ is compact and using again Theorem \ref{wold}, we deduce that $\sum_{j=1}^n V_j V_j^*=I$. This completes the proof.
  \end{proof}

A   unital
$*$-representation   $\pi$ of $C^*(W)$ on a separable Hilbert
space  $\cK$ is said to be of   Cuntz type  \cite{Cu} if   $\cK_0=\{0\}$ in  the Wold decomposition of $\pi$.

\begin{theorem} \label{C*}   Let $W=(W_1,\ldots, W_n)$  be  an injective weighted left multi-shift and   let $\pi$ be   a  unital
$*$-representation  of $C^*(W)$ on a separable Hilbert
space  $\cK$. If   $V=(V_1,\ldots, V_n)$, where $V_{i}:=\pi({W}_{i})$, and 
   $\pi$ is not a Cuntz type  $*$-representation, then  the $C^*$-algebras $C^*(W)$  and   $C^*(V)$ are $*$-isomorphic.
\end{theorem}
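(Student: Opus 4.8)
The plan is to prove that $\pi$ is faithful. Granting that, $\pi$ is automatically isometric, and since $\pi$ is a unital $*$-homomorphism sending $W_i$ to $V_i$, it carries $C^*(W)$ onto $C^*(V)$; hence $C^*(W)$ and $C^*(V)$ are $*$-isomorphic, which is exactly the assertion.

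To prove faithfulness I would first assemble two structural facts. Since $W$ is injective, Theorem~\ref{irreducible}(i) shows that $C^*(W)$ is an \emph{irreducible} $C^*$-subalgebra of $B(F^2(H_n))$, and for such an algebra there is the classical dichotomy: it either contains no nonzero compact operator at all, or it contains the entire ideal $\boldsymbol\cK(F^2(H_n))$ of compact operators (one obtains a nonzero minimal, hence rank-one, projection in the algebra via functional calculus and a rank-minimality argument, and then the closed two-sided ideal it generates under the irreducible action is all of $\boldsymbol\cK(F^2(H_n))$). Secondly, once $C^*(W)\supseteq \boldsymbol\cK(F^2(H_n))$, \emph{every} nonzero closed two-sided ideal $\mathcal{I}$ of $C^*(W)$ already contains $\boldsymbol\cK(F^2(H_n))$: compressing a nonzero positive element of $\mathcal{I}$ by a suitable rank-one projection yields a nonzero element of $\mathcal{I}\cap\boldsymbol\cK(F^2(H_n))$, and $\boldsymbol\cK(F^2(H_n))$ is simple.

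Next I would unwind the hypothesis. By definition, ``$\pi$ of Cuntz type'' means $\cK_0=\{0\}$ in the Wold-type decomposition, i.e.\ $\pi$ annihilates the compact operators lying in $C^*(W)$; since $\pi$ is \emph{not} of Cuntz type, there is a nonzero compact operator $K\in C^*(W)$ with $\pi(K)\neq 0$. In particular $C^*(W)$ contains a nonzero compact operator, so by the dichotomy $C^*(W)\supseteq\boldsymbol\cK(F^2(H_n))$. Now $\ker\pi$ is a closed two-sided ideal of $C^*(W)$ that does not contain $K$; were it nonzero it would contain $\boldsymbol\cK(F^2(H_n))\ni K$, a contradiction. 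Hence $\ker\pi=\{0\}$, $\pi$ is isometric, and $C^*(W)\cong\pi(C^*(W))=C^*(V)$.

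The only substantive point is the passage from ``$\pi$ not of Cuntz type'' to ``$C^*(W)$ contains all compacts,'' which rests entirely on the irreducibility furnished by Theorem~\ref{irreducible}(i) together with the structure theory of irreducible $C^*$-algebras acting on a Hilbert space; everything else is soft ideal bookkeeping. I would also take a moment to confirm that the subspace $\cK_0$ underlying the definition of ``Cuntz type'' is meaningful here without the extra limit condition appearing in Theorem~\ref{wold}: it is, because $\cK_0$ is built solely from the compact operators that genuinely belong to $C^*(W)$, and that is all the argument uses.
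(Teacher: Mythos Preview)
Your argument is correct. You show $\pi$ is faithful by an ideal-theoretic route: irreducibility of $C^*(W)$ plus the classical dichotomy forces $\boldsymbol\cK(F^2(H_n))\subset C^*(W)$ once a single nonzero compact is present, and then any nonzero ideal of $C^*(W)$ must contain the compacts, so $\ker\pi=\{0\}$.

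The paper proceeds differently. It invokes the Wold decomposition (Theorem~\ref{wold}) to write $V_i=(W_i\otimes I_\cD)\oplus V'_i$ with $\dim\cD\geq 1$ (this is exactly what ``not of Cuntz type'' supplies), and then computes directly, for every $*$-polynomial $q$,
\[
\|q(V,V^*)\|=\max\big\{\|q(W,W^*)\|,\ \|q(V',V'^*)\|\big\}=\|q(W,W^*)\|,
\]
using that $\pi|_{\cK^{(1)}}$ is a $*$-representation. So the paper establishes the isometry of $\pi$ by an explicit norm comparison on generators, whereas you obtain it from the structure of ideals in an irreducible $C^*$-algebra containing the compacts. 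Your approach is softer and, as you note, does not formally rely on the extra limit hypothesis appearing in Theorem~\ref{wold}; the paper's argument is more concrete in that it exhibits the copy of $W$ inside $V$ via $\pi_0$ and reads off the norm equality from that direct summand.
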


\begin{proof} According to  Theorem \ref{wold}, we have the Wold decomposition $\cK=\cK^{(0)}\oplus \cK^{(1)}$ and 
$$
V_{i}=({W}_{i}\otimes I_\cD)\bigoplus V_{i}',\qquad i\in  \{1,\ldots, k\},
$$
 where   $V'_{i}:=V_{i}|_{\cK^{(1)}}$  and 
$ \cD$ is a Hilbert space with $\dim \cD\geq 1$.
  As a consequence, we deduce that
 $$
 q(V, V^*)=\left(q(W, W^*)\otimes I_\cD\right)
 \bigoplus q(V',
  V'^*)
 $$
 for any noncommutative polynomial  $q ({Z}, {Y})$  in noncommutative indeterminates ${Z}=\{Z_1,\ldots, Z_n\}$ and ${Y}=\{Y_1,\ldots, Y_n\}$.   Since   $\pi|_{\cK^{(1)}}$ is a $*$-representation,  we have
$
\|q(V', V'^*)\|\leq  \|q(W, W^*)\|.
$
On the other hand, since  $\cD\neq \{0\}$, we have
$$
\| q(V, V^*)\|=\max\left\{ \| q(W, W^*))\|,
 \|  q(V',V'^*))\|\right\}=\| q(W, W)\|.
$$
 Due to the fact that  $\pi: C^*(W)\to C^*(V)$ is surjective,  we have  
$\|\pi(g)\|=\|g\|$ for all $g\in  C^*(W)$. Therefore,   $\pi$ is a  $*$-isomorphism of $C^*$-algebras.
The proof is complete.
\end{proof}

 \bigskip
 
 \section{Joint similarity of  tuples of operators  to parts of  weighted shifts}

Throughout the remainder of this  paper, unless otherwise specified, we assume that the  weighted multi-shifts  are injective, i.e.  all the   weights are strictly positive.  
Let 
$\cA(\boldsymbol\mu)$  be the norm-closed nonself-adjoint algebra generated by the weighted  left  multi-shifts $W_1,\ldots, W_n$ and the identity.

 \begin{theorem} \label{Rota}  Let  $\boldsymbol\mu=\{\mu_\beta\}_{ |\beta|\geq 1}$ be a  weight sequence and  let  $W=(W_1,\ldots, W_n)$  be the associated    weighted left multi-shift.  If $T=(T_1,\ldots, T_n) \in B(\cH)^n$  and   there is a positive invertible operator $Q\in B(\cH)$  and positive constants $0<a\leq b$ such that 
$$
a I\leq  \sum_{k=0}^\infty\sum_{\alpha\in \FF_n^+, |\alpha|=k} \frac{1}{\boldsymbol\mu (\alpha,g_0)^2} T_\alpha Q T_\alpha^*\leq bI,
$$
then 
    $(T_1,\ldots, T_n)$ is jointly similar to  $$(P_\cM(W_1\otimes I_\cH)|_\cM,\ldots, P_\cM(W_n\otimes I_\cH)|_\cM),$$
 where  $\cM\subset F^2(H_n)\otimes \cH$  is a joint invariant subspace under the operators $W_i^*\otimes I_\cH$,  $i\in \{1,\ldots, n\}$.
 In this case, we have $r(T)\leq r(W)$ and   the map $\Phi_T:\cA(\boldsymbol\mu)\to B(F^2(H_n))$ defined by 
 $$\Phi_T(p(W_1,\ldots, W_n)):=p(T_1,\ldots, T_n)$$
  is completely bounded and $\|\Phi_T\|_{cb}\leq \sqrt{\frac{b}{a}} $.
 \end{theorem}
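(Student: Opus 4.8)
The plan is to represent $T$ concretely as a similarity of a piece of $W\otimes I_\cH$ by means of a weighted noncommutative Poisson-type kernel, and then to read off all three assertions from the intertwining property of that kernel.

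\emph{Construction of the kernel.} Put $P:=\sum_{k=0}^\infty\sum_{\alpha\in\FF_n^+,\,|\alpha|=k}\frac1{\boldsymbol\mu(\alpha,g_0)^2}T_\alpha QT_\alpha^{*}$; the hypothesis guarantees this sum converges in the strong operator topology to a positive operator with $aI\le P\le bI$. Define $K\colon\cH\to F^2(H_n)\otimes\cH$ by
$$Kh:=\sum_{\alpha\in\FF_n^+}\frac1{\boldsymbol\mu(\alpha,g_0)}\,e_\alpha\otimes Q^{1/2}T_\alpha^{*}h .$$
The summands are mutually orthogonal and $\sum_{\alpha}\bigl\|\frac1{\boldsymbol\mu(\alpha,g_0)}e_\alpha\otimes Q^{1/2}T_\alpha^{*}h\bigr\|^2=\langle Ph,h\rangle$, so the series converges and $K^{*}K=P$. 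Hence $K$ is bounded with $\|K\|\le\sqrt b$, injective, and bounded below by $\sqrt a$; in particular $\cM:=\ran K$ is a closed subspace of $F^2(H_n)\otimes\cH$ and $K\colon\cH\to\cM$ is invertible with $\|K^{-1}\|\le 1/\sqrt a$.

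\emph{Intertwining and joint similarity.} The computational core is the identity $(W_i^{*}\otimes I_\cH)K=KT_i^{*}$, $i\in\{1,\dots,n\}$. Applying the bounded operator $W_i^{*}\otimes I_\cH$ to $Kh$ termwise, only the indices $\alpha=g_i\gamma$ survive, and using $W_i^{*}e_{g_i\gamma}=\mu_{g_i\gamma}e_\gamma$, the factorization $\boldsymbol\mu(g_i\gamma,g_0)=\mu_{g_i\gamma}\,\boldsymbol\mu(\gamma,g_0)$, and $T_{g_i\gamma}^{*}=T_\gamma^{*}T_i^{*}$, the surviving sum collapses to $\sum_{\gamma}\frac1{\boldsymbol\mu(\gamma,g_0)}e_\gamma\otimes Q^{1/2}T_\gamma^{*}T_i^{*}h=K(T_i^{*}h)$. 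Thus $\cM$ is invariant under each $W_i^{*}\otimes I_\cH$, i.e. co-invariant for $W\otimes I_\cH$, and conjugation by $K$ gives $(W_i^{*}\otimes I_\cH)|_\cM=KT_i^{*}K^{-1}$. Taking Hilbert-space adjoints of these operators on $\cM$ and using that the adjoint of the restriction of $W_i^{*}\otimes I_\cH$ to the co-invariant subspace $\cM$ is $P_\cM(W_i\otimes I_\cH)|_\cM$, one obtains $T_i=K^{*}\bigl(P_\cM(W_i\otimes I_\cH)|_\cM\bigr)(K^{*})^{-1}$, which is exactly the asserted joint similarity, implemented by $K^{*}$.

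\emph{Spectral radius and complete boundedness.} The joint spectral radius is invariant under similarity, since $\bigl\|\sum_{|\alpha|=k}(X^{-1}A_\alpha X)(X^{-1}A_\alpha X)^{*}\bigr\|\le\|X\|^2\|X^{-1}\|^2\bigl\|\sum_{|\alpha|=k}A_\alpha A_\alpha^{*}\bigr\|$ and, by symmetry, the reverse bound holds as well; hence $r(T)=r\bigl(P_\cM(W\otimes I_\cH)|_\cM\bigr)$. Because $\cM$ is semi-invariant, writing $W_i\otimes I_\cH$ in upper triangular form with respect to $\cM^{\perp}\oplus\cM$ shows $\sum_{|\alpha|=k}V_\alpha V_\alpha^{*}=P_\cM\bigl(\sum_{|\alpha|=k}W_\alpha W_\alpha^{*}\otimes I_\cH\bigr)|_\cM$ for $V_i:=P_\cM(W_i\otimes I_\cH)|_\cM$, whence $r(T)\le r(W\otimes I_\cH)=r(W)$. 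The same upper triangular form gives $P_\cM\,p(W\otimes I_\cH)|_\cM=p\bigl(P_\cM(W\otimes I_\cH)|_\cM\bigr)$ for every noncommutative polynomial $p$, so for any $k\times k$ matrix $[p_{st}]$ of polynomials,
$$[p_{st}(T)]=(I_k\otimes K^{*})\,\bigl[P_\cM\,p_{st}(W\otimes I_\cH)|_\cM\bigr]\,(I_k\otimes (K^{*})^{-1}),$$
and therefore $\bigl\|[p_{st}(T)]\bigr\|\le\|K\|\,\|K^{-1}\|\,\bigl\|[p_{st}(W)]\bigr\|\le\sqrt{\frac{b}{a}}\,\bigl\|[p_{st}(W)]\bigr\|$. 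Since the polynomials in $W_1,\dots,W_n$ and the identity are norm-dense in $\cA(\boldsymbol\mu)$, this estimate shows $\Phi_T$ is well defined and extends to a completely bounded (indeed multiplicative) map on $\cA(\boldsymbol\mu)$ with $\|\Phi_T\|_{cb}\le\sqrt{b/a}$. I expect the only genuine work to be the first two steps: checking rigorously that the series defining $K$ yields a bounded operator with the two-sided bounds $\sqrt{a}\,I\le(K^{*}K)^{1/2}\le\sqrt{b}\,I$, and that the termwise manipulation producing $(W_i^{*}\otimes I_\cH)K=KT_i^{*}$ is legitimate, which amounts to careful bookkeeping over $\FF_n^+$ together with the multiplicativity $\boldsymbol\mu(g_i\gamma,g_0)=\mu_{g_i\gamma}\boldsymbol\mu(\gamma,g_0)$ of the weight function; once this identity is in hand, everything else is formal.
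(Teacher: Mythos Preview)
Your proposal is correct and follows essentially the same approach as the paper: construct the Poisson-type kernel $K$, use the hypothesis $aI\le K^*K\le bI$ to see $K$ is bounded and bounded below, verify the intertwining $(W_i^*\otimes I_\cH)K=KT_i^*$ via the factorization $\boldsymbol\mu(g_i\gamma,g_0)=\mu_{g_i\gamma}\boldsymbol\mu(\gamma,g_0)$, and read off the similarity, the spectral-radius bound, and the cb-estimate. The only differences are cosmetic: the paper takes the codomain of $K$ to be $F^2(H_n)\otimes\overline{Q^{1/2}\cH}$ rather than $F^2(H_n)\otimes\cH$, and it bounds $r(T)$ directly by inserting $X,X^{-1}$ into $\sum_{|\alpha|=k}T_\alpha T_\alpha^*$, whereas you pass through the upper-triangular/co-invariant description; both routes are equivalent.
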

 \begin{proof}
  
  Define the operator
 $K_{\boldsymbol\mu}:\cH\to F^2(H_n)\otimes \cD$ by setting
 $$
 K_{\boldsymbol\mu} h:= \sum_{\alpha\in \FF_n^+} \frac{1}{\boldsymbol\mu (\alpha,g_0)} e_\alpha \otimes Q^{1/2}T_\alpha^* h, \qquad h\in \cH, 
 $$
 where $\cD:=\overline{Q^{1/2}\cH}$ and $\boldsymbol\mu (\alpha,g_0)$ is defined in Proposition \ref{radius}.
 Since 
 $$
 \|K_{\boldsymbol\mu} h\|^2=\sum_{k=0}^\infty\sum_{\alpha\in \FF_n^+, |\alpha|=k} \frac{1}{\boldsymbol\mu (\alpha,g_0)^2} \left<T_\alpha Q T_\alpha^*h,h\right>,
 $$
 it is clear that  $K_{\boldsymbol\mu}$  is  a bounded operator.   On the other hand, since  $\boldsymbol\mu (g_0,g_0):=1$, we  also have $\|K_{\boldsymbol\mu} h\|\geq \|Q^{1/2}h\|\geq \frac{1}{\|Q^{-1/2}\|}\|h\|$, which shows that $K_\mu$ has closed range.
 Since 
 $$W_i^* e_\alpha =\begin{cases}
\mu_{g_i \gamma}e_\gamma& \text{ if }
\alpha=g_i\gamma \\
0& \text{ otherwise }
\end{cases}
 $$
 for every $\alpha  \in \FF_n^+$ and  every $i\in \{1,\ldots, n\}$,
 we deduce that
 \begin{equation*}
 \begin{split}
(W_i^*\otimes I)K_{\boldsymbol\mu} h&=\sum_{\alpha\in \FF_n^+} \frac{1}{\boldsymbol\mu (\alpha,g_0)} W_i^* e_\alpha \otimes  Q^{1/2}T_\alpha^* h\\
&= \sum_{\gamma\in \FF_n^+} \frac{1}{\boldsymbol\mu (g_i\gamma, g_0)}  \mu_{g_i \gamma} e_\gamma\otimes Q^{1/2} T_{g_i \gamma}^* h.
 \end{split}
 \end{equation*}
Taking into account that $K_{\boldsymbol\mu} T_i^*h=  \sum_{\gamma\in \FF_n^+} \frac{1}{\boldsymbol\mu (\gamma,g_0)} e_\gamma\otimes Q^{1/2}T_\gamma^* T_i^*h$ and $\boldsymbol\mu(g_i\gamma, g_0)= \boldsymbol\mu(\gamma, g_0) \mu_{g_i\gamma} $, we conclude that
$$
K_{\boldsymbol\mu} T_i^*=(W_i^*\otimes I)K_{\boldsymbol\mu},\qquad i\in \{1,\ldots, n\}.
$$
Consequently,  $\cM:=K_{\boldsymbol\mu} \cH$  is a joint invariant subspace under   the operators $W_i^*\otimes I$, where $i\in \{1,\ldots, n\}$.  Define $X:\cH\to \cM$ by setting $Xh:=K_{\boldsymbol\mu} h$, $h\in \cH$. Note that $X$ is an invertible operator  and $T_i^*=X^{-1}(W_i^*\otimes I)|_\cM X$  for any $i\in \{1,\ldots, n\}$ and 
 $ \|X^{-1}\|\|X\|\leq \sqrt{\frac{b}{a}}$. Hence, it is easy to see that  the map $\Phi_T:\cA(\boldsymbol\mu)\to B(F^2(H_n))$ defined by 
 $$\Phi_T(p(W_1,\ldots, W_n)):=p(T_1,\ldots, T_n)$$
  is completely bounded and $\|\Phi_T\|_{cp}\leq \sqrt{\frac{b}{a}} $.
On the other hand, we have
 \begin{equation*}
 \begin{split}
\left\| \sum_{|\alpha|=k} T_\alpha T_\alpha^*\right\|&=\left\|X^* \left(\sum_{|\alpha|=k}P_\cM W_\alpha |_\cM  (X^*)^{-1}X^{-1} P_\cM W_\alpha^*|_\cM\right)X\right\|\\
&\leq
 \|X^{-1}\|^2\|X\|^2
 \left\|\sum_{|\alpha|=k}P_\cM W_\alpha   W_\alpha^*|_\cM\right\|
 \leq  \|X^{-1}\|^2\|X\|^2
 \left\|\sum_{|\alpha|=k}W_\alpha   W_\alpha^*\right\|
 \end{split}
 \end{equation*}
 for any $k\in \NN$.
Hence, we deduce that  $r(T)\leq r(W)$. 
 This completes the proof.
 \end{proof}

\begin{corollary} \label{T1}
Let  $\boldsymbol\mu=\{\mu_\beta\}_{ |\beta|\geq 1}$ be a  weight sequence and  let  $W=(W_1,\ldots, W_n)$  be the associated    weighted left multi-shift.
 If
 $T=(T_1,\ldots, T_n)\in B(\cH)^n$  is  such that  the series 
$$
\Sigma_{\boldsymbol\mu}(T,T^*):=\sum_{k=0}^\infty\sum_{\alpha\in \FF_n^+, |\alpha|=k} \frac{1}{\boldsymbol\mu (\alpha,g_0)^2} T_\alpha T_\alpha^*
$$
is convergent in the weak operator topology,
then there is a joint invariant subspace $\cM\subset F^2(H_n)\otimes \cH$ under the operators $W_i^*\otimes I_\cH$,  $i\in \{1,\ldots, n\}$, such that  the $n$-tuple $(T_1,\ldots, T_n)$ is jointly similar to  
$$(P_\cM(W_1\otimes I_\cH)|_\cM,\ldots, P_\cM(W_n\otimes I_\cH)|_\cM).$$
In this case,  the map $\Phi_T:\cA(\boldsymbol\mu)\to B(F^2(H_n)$ defined by 
 $$\Phi_T(p(W_1,\ldots, W_n)):=p(T_1,\ldots, T_n)$$
  is completely bounded and $\|\Phi_T\|_{cb}\leq  \|\Sigma_{\boldsymbol\mu}(T,T^*)\|^{1/2} $.
\end{corollary}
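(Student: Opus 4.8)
The plan is to obtain Corollary \ref{T1} as an immediate specialization of Theorem \ref{Rota}, taking the positive invertible operator there to be $Q=I_\cH$. With this choice, the operator $\sum_{k=0}^\infty\sum_{\alpha\in \FF_n^+, |\alpha|=k}\boldsymbol\mu(\alpha,g_0)^{-2}\,T_\alpha Q T_\alpha^*$ appearing in the hypothesis of Theorem \ref{Rota} is exactly $\Sigma_{\boldsymbol\mu}(T,T^*)$, so the whole task reduces to verifying that the hypothesis of the corollary puts us inside the framework of that theorem, i.e.\ that it supplies a genuine two-sided operator bound.

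The one point requiring justification is therefore the passage from weak operator convergence to a norm bound. I would argue as follows. The partial sums $S_N:=\sum_{k=0}^N\sum_{\alpha\in \FF_n^+,\,|\alpha|=k}\boldsymbol\mu(\alpha,g_0)^{-2}\,T_\alpha T_\alpha^*$ form an increasing sequence of positive operators in $B(\cH)$. If $S$ denotes their weak operator limit, then $\langle S_N h,h\rangle\le\langle S h,h\rangle$ for every $h\in\cH$ and every $N$, so $S_N\le S$ for all $N$; hence $\{S_N\}$ is norm bounded, the sequence in fact converges to $S$ in the strong operator topology, and $S=\Sigma_{\boldsymbol\mu}(T,T^*)$ is a bounded positive operator with $b:=\|S\|<\infty$. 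Moreover, since $\boldsymbol\mu(g_0,g_0)=1$ and $T_{g_0}=I_\cH$, the term of index $k=0$ in the series equals $I_\cH$, so $I_\cH\le S$. Thus one may take $a=1$, $Q=I_\cH$ in Theorem \ref{Rota}, for which $Q$ is trivially positive and invertible and $aI_\cH\le S\le bI_\cH$ holds.

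With these choices, Theorem \ref{Rota} directly produces the joint invariant subspace $\cM\subset F^2(H_n)\otimes\cH$ under the operators $W_i^*\otimes I_\cH$, the joint similarity of $(T_1,\ldots,T_n)$ to $(P_\cM(W_1\otimes I_\cH)|_\cM,\ldots,P_\cM(W_n\otimes I_\cH)|_\cM)$, and the complete boundedness of $\Phi_T$ with $\|\Phi_T\|_{cb}\le\sqrt{b/a}=\|\Sigma_{\boldsymbol\mu}(T,T^*)\|^{1/2}$, which is the assertion to be proved. The main (indeed only) obstacle is the elementary monotone-convergence observation that an increasing sequence of positive operators converging weakly is norm bounded and converges strongly; once that is recorded, the corollary is a one-line consequence of Theorem \ref{Rota}.
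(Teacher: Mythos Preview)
Your proposal is correct and follows exactly the paper's approach: the paper's proof is the single line ``Taking $Q=I$ in Theorem \ref{Rota}, the result follows.'' Your additional justification that the weakly convergent increasing sequence of positive partial sums yields a bounded operator $S$ with $I\leq S\leq \|S\|I$ (so that $a=1$, $b=\|\Sigma_{\boldsymbol\mu}(T,T^*)\|$) simply makes explicit what the paper leaves to the reader.
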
 
 \begin{proof}  Taking $Q=I$ in Theorem \ref{Rota}, the result follows.
 \end{proof}

 \begin{corollary} \label{Rota1}$($\cite{Po-models}$)$  Let  
    $T=(T_1,\ldots, T_n)\in B(\cH)^n$  be an $n$-tuple of operators with the joint spectral radius $r(T)<1$ and let $S_1,\ldots, S_n$ be the left creation operators on the full Fock space.  Then  there is a joint invariant subspace $\cM\subset F^2(H_n)\otimes \cH$ under the operators $S_i^*\otimes I_\cH$,  $i\in \{1,\ldots, n\}$, such that    $(T_1,\ldots, T_n)$ is jointly similar to  
  $$(P_\cM(S_1\otimes I_\cH)|_\cM,\ldots, P_\cM(S_n\otimes I_\cH)|_\cM),$$
  where $S_1,\ldots, S_n$   are the left creation operators on the full Fock space $F^2(H_n)$. 
  In this case,  the map $\Phi_T:\cA_n\to B(F^2(H_n)$ defined by 
 $$\Phi_T(p(S_1,\ldots, S_n)):=p(T_1,\ldots, T_n)$$
  is completely bounded and $\|\Phi_T\|_{cb}\leq \left\|\sum_{k=0}^\infty\sum_{\alpha\in \FF_n^+, |\alpha|=k}  T_\alpha T_\alpha^*\right\|^{1/2} $.

 \end{corollary}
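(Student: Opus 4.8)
The plan is to obtain this as the special case $\mu_\beta\equiv 1$ of Corollary \ref{T1}. First I would take the constant weight sequence $\boldsymbol\mu=\{\mu_\beta\}_{|\beta|\geq 1}$ with $\mu_\beta=1$ for every $\beta$. Then $\boldsymbol\mu(\alpha,g_0)=1$ for all $\alpha\in\FF_n^+$, the associated weighted left multi-shift $W=(W_1,\ldots,W_n)$ is precisely the left creation tuple $(S_1,\ldots,S_n)$ on $F^2(H_n)$, the algebra $\cA(\boldsymbol\mu)$ coincides with the noncommutative disc algebra $\cA_n$, and the operator $\Sigma_{\boldsymbol\mu}(T,T^*)$ of Corollary \ref{T1} reduces to $\sum_{k=0}^\infty\sum_{|\alpha|=k}T_\alpha T_\alpha^*$.

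The only hypothesis of Corollary \ref{T1} that needs checking is the weak-operator convergence of this series, and this is exactly where $r(T)<1$ is used. Recall from Section 1 that $r(T)=\lim_{k\to\infty}\bigl\|\sum_{\alpha\in\FF_n^+,|\alpha|=k}T_\alpha T_\alpha^*\bigr\|^{1/2k}$. Since $r(T)<1$, I would choose $\rho$ with $r(T)^2<\rho<1$ and pick $k_0\in\NN$ so that $\bigl\|\sum_{|\alpha|=k}T_\alpha T_\alpha^*\bigr\|\leq \rho^k$ for all $k\geq k_0$; the tail of the series is then dominated in norm by the geometric series $\sum_{k\geq k_0}\rho^k$, so the series converges in operator norm, and a fortiori in the weak operator topology — its partial sums being an increasing, norm-bounded sequence of positive operators, this last point is in any case immediate.

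With the hypothesis verified, Corollary \ref{T1} applied to this $\boldsymbol\mu$ (it is already the $Q=I$ instance of Theorem \ref{Rota}) yields at once the joint invariant subspace $\cM\subset F^2(H_n)\otimes\cH$ under the operators $S_i^*\otimes I_\cH$, the joint similarity of $(T_1,\ldots,T_n)$ to $(P_\cM(S_1\otimes I_\cH)|_\cM,\ldots,P_\cM(S_n\otimes I_\cH)|_\cM)$, and the complete boundedness of $\Phi_T:\cA_n\to B(F^2(H_n))$ with $\|\Phi_T\|_{cb}\leq\|\Sigma_{\boldsymbol\mu}(T,T^*)\|^{1/2}=\bigl\|\sum_{k=0}^\infty\sum_{|\alpha|=k}T_\alpha T_\alpha^*\bigr\|^{1/2}$; the passage from the polynomial algebra to all of $\cA_n$ is the usual density-and-boundedness argument. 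There is essentially no obstacle here: the whole proof is the reduction to Corollary \ref{T1}, and the single estimate involved — that $r(T)<1$ forces the defining series to converge — is immediate from the definition of the joint spectral radius. This recovers the result of \cite{Po-models}.
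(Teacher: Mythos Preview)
Your proposal is correct and follows essentially the same route as the paper: both take the constant weight sequence $\mu_\beta\equiv 1$, observe that then $\boldsymbol\mu(\alpha,g_0)=1$ and $W=S$, verify via $r(T)<1$ that $\sum_{k=0}^\infty\bigl\|\sum_{|\alpha|=k}T_\alpha T_\alpha^*\bigr\|<\infty$, and then invoke Corollary~\ref{T1}. Your write-up simply spells out the root-test estimate and the identification $\cA(\boldsymbol\mu)=\cA_n$ in a bit more detail than the paper does.
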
 
   \begin{proof} Since $r(T)<1$, we have   $
\sum_{k=0}^\infty\left\|\sum_{\alpha\in \FF_n^+, |\alpha|=k}  T_\alpha T_\alpha^*\right\|<\infty
$.
 Note that if we take  $\mu_\beta=1$ for any $\beta\in \FF_n^+, |\beta|\geq 1$,  then $\boldsymbol\mu (\alpha,g_0)=1$, $\alpha\in \FF_n^+$, and the condition in Corollary \ref{T1} is satisfied.     
 Consequently, the results follows.
\end{proof}

 \begin{corollary} Let  $\boldsymbol\mu=\{\mu_\beta\}_{ |\beta|\geq 1}$ be a  weight sequence and  let  $W=(W_1,\ldots, W_n)$  be the associated    weighted left multi-shift. If  
   $T=(T_1,\ldots, T_k)\in B(\cH)^n $ is a  row power bounded $n$-tuple of operators and 
   $$ \sum_{k=0}^\infty \max_{|\alpha|=k} \frac{1}{\boldsymbol\mu (\alpha,g_0)^2}<\infty,
 $$   
 then there is a joint invariant subspace $\cM\subset F^2(H_n)\otimes \cH$ under the operators $W_i^*\otimes I_\cH$,  $i\in \{1,\ldots, n\}$, such that    $(T_1,\ldots, T_n)$ is jointly similar to  
$$(P_\cM(W_1\otimes I_\cH)|_\cM,\ldots, P_\cM(W_n\otimes I_\cH)|_\cM).$$
 \end{corollary}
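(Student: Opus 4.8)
The plan is to deduce this corollary directly from Corollary \ref{T1}; the only thing requiring an argument is that the series
$$\Sigma_{\boldsymbol\mu}(T,T^*)=\sum_{k=0}^\infty\sum_{\alpha\in \FF_n^+,\,|\alpha|=k}\frac{1}{\boldsymbol\mu(\alpha,g_0)^2}T_\alpha T_\alpha^*$$
converges in the weak operator topology. First I would use row power boundedness to fix a constant $M>0$ with $\big\|\sum_{|\alpha|=k}T_\alpha T_\alpha^*\big\|\leq M^2$ for every $k\in\NN$. Since each $T_\alpha T_\alpha^*$ is positive and $0\leq \boldsymbol\mu(\alpha,g_0)^{-2}\leq \max_{|\beta|=k}\boldsymbol\mu(\beta,g_0)^{-2}$ whenever $|\alpha|=k$, this yields the operator inequality
$$0\leq \sum_{|\alpha|=k}\frac{1}{\boldsymbol\mu(\alpha,g_0)^2}T_\alpha T_\alpha^*\leq \Big(\max_{|\alpha|=k}\frac{1}{\boldsymbol\mu(\alpha,g_0)^2}\Big)\sum_{|\alpha|=k}T_\alpha T_\alpha^*\leq M^2\Big(\max_{|\alpha|=k}\frac{1}{\boldsymbol\mu(\alpha,g_0)^2}\Big)I$$
for every $k$.

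Next, summing over $0\leq k\leq N$, the partial sums $\Sigma_N:=\sum_{k=0}^N\sum_{|\alpha|=k}\boldsymbol\mu(\alpha,g_0)^{-2}T_\alpha T_\alpha^*$ form an increasing sequence of positive operators, and by the previous display together with the hypothesis $\sum_{k=0}^\infty\max_{|\alpha|=k}\boldsymbol\mu(\alpha,g_0)^{-2}<\infty$ one has the uniform bound $\Sigma_N\leq M^2\big(\sum_{k=0}^\infty\max_{|\alpha|=k}\boldsymbol\mu(\alpha,g_0)^{-2}\big)I$ for all $N$. A monotone sequence of self-adjoint operators that is bounded above converges in the strong operator topology, so $\Sigma_{\boldsymbol\mu}(T,T^*)=\text{\rm SOT-}\lim_{N\to\infty}\Sigma_N$ exists; in particular the defining series converges in the weak operator topology.

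With this in hand, Corollary \ref{T1} applies verbatim and produces a joint invariant subspace $\cM\subset F^2(H_n)\otimes\cH$ under the operators $W_i^*\otimes I_\cH$, $i\in\{1,\ldots,n\}$, together with the joint similarity of $(T_1,\ldots,T_n)$ to $(P_\cM(W_1\otimes I_\cH)|_\cM,\ldots,P_\cM(W_n\otimes I_\cH)|_\cM)$; one also obtains for free that $\Phi_T$ is completely bounded with $\|\Phi_T\|_{cb}\leq \|\Sigma_{\boldsymbol\mu}(T,T^*)\|^{1/2}$, though this is not needed for the stated conclusion. I do not anticipate a genuine obstacle: the whole proof is the single observation that row power boundedness converts the hypothesis on the weights into a summable operator bound on the blocks of $\Sigma_{\boldsymbol\mu}(T,T^*)$, after which one invokes the standard strong convergence of bounded monotone nets of positive operators and then the black box Corollary \ref{T1}. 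The one point worth stating carefully is the passage from the norm bound on $\sum_{|\alpha|=k}T_\alpha T_\alpha^*$ to the corresponding order bound, which is legitimate precisely because these blocks are positive operators.
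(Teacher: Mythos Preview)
Your proposal is correct and follows essentially the same approach as the paper: bound each homogeneous block $\sum_{|\alpha|=k}\boldsymbol\mu(\alpha,g_0)^{-2}T_\alpha T_\alpha^*$ by $\big(\max_{|\alpha|=k}\boldsymbol\mu(\alpha,g_0)^{-2}\big)$ times the row-power bound, sum, and invoke Corollary~\ref{T1}. The only cosmetic difference is that the paper observes the sum of the block norms is finite (yielding norm convergence directly), whereas you phrase the convergence via the monotone bounded sequence of positive partial sums; both routes are immediate and amount to the same estimate.
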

 \begin{proof}  Assume that  there is a constant $M>0$ such that $
 \left\|\sum_{\alpha\in \FF_n^+, |\alpha|=k} T_\alpha T_\alpha^*\right\|\leq M$ for any $k\in \NN$.   Then 
 $$
\sum_{k=0}^\infty\left\|\sum_{\alpha\in \FF_n^+, |\alpha|=k} \frac{1}{\boldsymbol\mu (\alpha,g_0)^2} T_\alpha T_\alpha^*\right\|\leq   \sum_{k=0}^\infty \max_{|\alpha|=k} \frac{1}{\boldsymbol\mu (\alpha,g_0)^2} M<\infty
$$
and, using Corollary \ref{T1}, we complete the proof.  The particular case is quite obvious.
 \end{proof}

 \begin{theorem}  \label{main1} If $T=(T_1,\ldots, T_n)\in B(\cH)^n$  is not a nilpotent $n$-tuple of operators, then there exists     an injective  weighted multi-shift $W=(W_1,\ldots, W_n)$ with  the following properties:
 \begin{enumerate}
\item[(i)] $\|\sum_{|\alpha|=k} W_\alpha W_\alpha^*||^{1/2}\leq (k+1) \|\sum_{|\alpha|=k} T_\alpha T_\alpha^*||^{1/2}$ for  any $k\in \NN$;
\item[(ii)] $r(W)= r(T)$;
\item[(iii)]    $(T_1,\ldots, T_n)$ is jointly similar to  $$(P_\cM(W_1\otimes I_\cH)|_\cM,\ldots, P_\cM(W_n\otimes I_\cH)|_\cM),$$
 where  $\cM\subset F^2(H_n)\otimes \cH$  is a joint invariant subspace under the operators $W_i^*\otimes I_\cH$,  $i\in \{1,\ldots, n\}$;
  \item[(iv)] $r(p(T_1,\ldots, T_n))\leq r(p(W_1,\ldots, W_n))$ and    the map $\Phi_T:\cA(\boldsymbol\mu)\to B(F^2(H_n)$ defined by 
 $$\Phi_T(p(W_1,\ldots, W_n)):=p(T_1,\ldots, T_n)$$
  is completely bounded and $\|\Phi_T\|_{cb}\leq \frac{\pi}{\sqrt{6}} $.
 \end{enumerate}
 If  $T=(T_1,\ldots, T_n)$ is a nilpotent $n$-tuple of  index $m\geq 2$,  then there is a  truncated weighted multi-shift $W=(W_1,\ldots, W_n)$ which is nilpotent of index $m$ such that all the properties above hold and 
 $$\|\Phi_T\|_{cb}\leq \sqrt{\sum_{k=0}^{m-1}\frac{1}{(k+1)^2}}.
 $$
 \end{theorem}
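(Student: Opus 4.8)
The plan is to establish both statements simultaneously by choosing, for the given $T$, an explicit weight sequence that depends only on word length and then invoking Corollary \ref{T1} (equivalently Theorem \ref{Rota} with $Q=I$). Put $b_k:=\left\|\sum_{|\alpha|=k}T_\alpha T_\alpha^*\right\|^{1/2}$; then $b_0=1$ (assuming $\cH\neq\{0\}$), the sequence is submultiplicative, $b_{k+m}\le b_kb_m$, and $r(T)=\lim_k b_k^{1/k}=\inf_k b_k^{1/k}$. Suppose first $T$ is not nilpotent, so $b_k>0$ for every $k$. Define $\boldsymbol\mu$ by $\mu_\beta:=c_{|\beta|}$ where $c_k:=\dfrac{(k+1)b_k}{k\,b_{k-1}}$ for $k\ge 1$; the inequality $b_k\le b_1b_{k-1}$ gives $c_k\le\frac{k+1}{k}b_1\le 2b_1$, so the weights are bounded and strictly positive, i.e.\ $W=(W_1,\dots,W_n)$ is a well-defined injective weighted left multi-shift. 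Writing $\gamma_k:=c_1\cdots c_k$ (with $\gamma_0:=1$), a telescoping gives $\gamma_k=(k+1)b_k$; hence $\boldsymbol\mu(\alpha,g_0)=\gamma_{|\alpha|}$ and, more generally, $\boldsymbol\mu(\beta,\alpha)=\gamma_{|\beta|+|\alpha|}/\gamma_{|\alpha|}$ for all $\alpha,\beta\in\FF_n^+$.

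I would first read off (i) and (ii). By Proposition \ref{radius}, $\left\|\sum_{|\beta|=k}W_\beta W_\beta^*\right\|^{1/2}=\sup_{j\ge 0}\gamma_{k+j}/\gamma_j=\sup_{j\ge 0}\frac{(k+j+1)b_{k+j}}{(j+1)b_j}$; submultiplicativity $b_{k+j}\le b_kb_j$ together with $\frac{k+j+1}{j+1}\le k+1$ (the sup being attained at $j=0$) yields a bound $(k+1)b_k$, which is (i). The $j=0$ term shows this quantity is also $\ge\gamma_k=(k+1)b_k$, so taking $k$-th roots and letting $k\to\infty$ gives $r(W)=\lim_k\big((k+1)b_k\big)^{1/k}=r(T)$, which is (ii).

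For (iii) and the $\mathrm{cb}$-bound in (iv), I would apply Corollary \ref{T1}. Since $\boldsymbol\mu(\alpha,g_0)=\gamma_k$ whenever $|\alpha|=k$, the relevant series is $\Sigma_{\boldsymbol\mu}(T,T^*)=\sum_{k\ge 0}\gamma_k^{-2}\sum_{|\alpha|=k}T_\alpha T_\alpha^*$, a sum of positive operators whose $k$-th term has norm $b_k^2/\gamma_k^2=(k+1)^{-2}$; the partial sums therefore increase and are bounded in norm by $\sum_{k\ge 0}(k+1)^{-2}=\pi^2/6$, hence the series converges in the strong operator topology with $\|\Sigma_{\boldsymbol\mu}(T,T^*)\|\le\pi^2/6$. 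Corollary \ref{T1} then produces the joint invariant subspace $\cM\subset F^2(H_n)\otimes\cH$ for the operators $W_i^*\otimes I_\cH$, the joint similarity of $T$ to $(P_\cM(W_i\otimes I_\cH)|_\cM)_{i=1}^n$, and $\|\Phi_T\|_{cb}\le\|\Sigma_{\boldsymbol\mu}(T,T^*)\|^{1/2}\le\pi/\sqrt6$. The spectral-radius estimate $r(p(T))\le r(p(W))$ in (iv) follows from semi-invariance of $\cM$: the compression $p(W)\mapsto P_\cM(p(W)\otimes I_\cH)|_\cM$ is multiplicative on words in the $W_i\otimes I_\cH$ restricted to $\cM$, so $p(T)$ is similar to $P_\cM(p(W)\otimes I_\cH)|_\cM$, which is the lower-right corner of the block upper-triangular operator $p(W)\otimes I_\cH$ with respect to $\cM^\perp\oplus\cM$; therefore $\big\|(P_\cM(p(W)\otimes I_\cH)|_\cM)^j\big\|\le\big\|(p(W)\otimes I_\cH)^j\big\|$ for all $j$, giving $r(p(T))\le r(p(W)\otimes I_\cH)=r(p(W))$.

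Finally, the nilpotent case. If $T$ has index $m\ge 2$, then $b_k>0$ for $k\le m-1$ and $b_k=0$ for $k\ge m$; I would keep $c_k:=\frac{(k+1)b_k}{k\,b_{k-1}}$ for $1\le k\le m-1$ and set $\mu_\beta:=0$ for $|\beta|\ge m$. Since the zero set $\{\beta:|\beta|\ge m\}$ has the form $\{\alpha\gamma:\gamma\in\Lambda,\ \alpha\in\FF_n^+\}$ with $\Lambda$ the set of words of length $m$, this $W$ is a truncation of an injective weighted left multi-shift (Remark \ref{succesor} holds trivially), and it is nilpotent of exact index $m$ because $\gamma_{m-1}=m\,b_{m-1}>0$. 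Properties (i) and (ii) hold exactly as above, with both sides vanishing for $k\ge m$ since $W_\alpha=0$ for $|\alpha|=m$. For (iii)--(iv) one uses the truncated analogue of the operator $K_{\boldsymbol\mu}$ from the proof of Theorem \ref{Rota}, namely the finite sum $K_{\boldsymbol\mu}h:=\sum_{|\alpha|\le m-1}\gamma_{|\alpha|}^{-1}\,e_\alpha\otimes T_\alpha^*h$; one checks $K_{\boldsymbol\mu}T_i^*=(W_i^*\otimes I_\cH)K_{\boldsymbol\mu}$ directly, the only new point being that the would-be boundary terms of length $m-1$ vanish because $T_\alpha=0$ when $|\alpha|=m$. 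Then $\Sigma_{\boldsymbol\mu}(T,T^*)=\sum_{k=0}^{m-1}\gamma_k^{-2}\sum_{|\alpha|=k}T_\alpha T_\alpha^*$ is a finite sum of positive operators with $\|\Sigma_{\boldsymbol\mu}(T,T^*)\|\le\sum_{k=0}^{m-1}(k+1)^{-2}$, whence $\|\Phi_T\|_{cb}\le\sqrt{\sum_{k=0}^{m-1}(k+1)^{-2}}$. The one genuinely delicate point throughout is the choice of weights: requiring (i) for all $k$ already forces any length-only normalization $\gamma_k=\lambda_kb_k$ to satisfy $\lambda_k\le k+1$, while convergence of $\Sigma_{\boldsymbol\mu}$ needs $\sum\lambda_k^{-2}<\infty$, so $\lambda_k=k+1$ is the extremal choice; the remaining work is merely verifying that it yields bounded weights and, in the truncated case, a multi-shift still covered by the similarity machinery.
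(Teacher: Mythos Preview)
Your proof is correct and follows essentially the same construction as the paper: the same length-only weight $\mu_\beta=\frac{|\beta|+1}{|\beta|}\cdot\frac{b_{|\beta|}}{b_{|\beta|-1}}$, the same telescoping to $\boldsymbol\mu(\alpha,g_0)=(|\alpha|+1)b_{|\alpha|}$, and the same appeal to Corollary~\ref{T1} for (iii)--(iv) and to the truncated $K_{\boldsymbol\mu}$ in the nilpotent case. Your treatment is in fact a bit more explicit than the paper's in two places: you observe that $\sup_j\gamma_{k+j}/\gamma_j$ is actually \emph{equal} to $(k+1)b_k$ (so (i) is an equality and (ii) follows immediately without first passing through the similarity), and you spell out the semi-invariance/block-triangular argument for $r(p(T))\le r(p(W))$, which the paper simply absorbs into ``Applying Corollary~\ref{T1}, we deduce items (iii) and (iv).''
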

  \begin{proof}  First, we assume that $T=(T_1,\ldots, T_n)$ is not a nilpotent $n$-tuple of operators. Note that this is equivalent to  
  $\left\|\sum_{\alpha\in \FF_n^+, |\alpha|=k} T_\alpha T_\alpha^*\right\|\neq 0$ for any $k\in \NN$.
   For each $\beta\in \FF_n^+$ with $|\beta|\geq 1$, we define
  \begin{equation}
  \label{frac}
  \mu_\beta:=\frac{|\beta|+1}{|\beta|}\frac {\left\|\sum\limits_{\sigma\in \FF_n^+, |\sigma|=
  |\beta|}T_\sigma T_\sigma^*\right\|^{1/2}} {\left\|\sum\limits_{\sigma\in \FF_n^+, |\sigma|=|\beta|-1}
 T_\sigma T_\sigma^*\right\|^{1/2}}. 
  \end{equation}
 Let $W=(W_1,\ldots, W_n)$ be the  weighted multi-shift associated with the weights  $\{\mu_\beta\}_{ |\beta|\geq 1}.
$
 Note that 
if   $\beta=g_{i_1}\cdots g_{i_p}\in \FF_n^+$ and  $\alpha\in \FF_n^+$, then the relation  above   and the inequality
$$
\left\|\sum\limits_{\sigma\in \FF_n^+, |\sigma|=
  |\alpha|+|\beta|}T_\sigma T_\sigma^*\right\|\leq 
\left\|\sum\limits_{\sigma\in \FF_n^+, |\sigma|=
  |\alpha|}T_\sigma T_\sigma^*\right\| \left\|\sum\limits_{\sigma\in \FF_n^+, |\sigma|=
  |\beta|}T_\sigma T_\sigma^*\right\|
  $$
imply
\begin{equation*}
\begin{split}
\boldsymbol\mu (\beta,\alpha) &=\mu_{g_{i_1}\cdots g_{i_p}\alpha} \mu_{g_{i_2}\cdots g_{i_p}\alpha} \cdots \mu_{g_{i_p}\alpha}\\
&=
\frac{|\alpha|+|\beta|+1}{|\alpha|+|\beta|} \cdot 
 \frac{|\alpha|+|\beta|}{|\alpha|+|\beta|-1}\cdots  \frac{|\alpha|+2}{|\alpha|+1}
 \\
 &\times 
  \frac {\left\|\sum\limits_{\sigma\in \FF_n^+, |\sigma|=
  |\alpha|+|\beta|}T_\sigma T_\sigma^*\right\|^{1/2}} {\left\|\sum\limits_{\sigma\in \FF_n^+, 
  |\sigma|=|\alpha|+|\beta|-1}
 T_\sigma T_\sigma^*\right\|^{1/2}} \cdot
   \frac {\left\|\sum\limits_{\sigma\in \FF_n^+, |\sigma|=
  |\alpha|+|\beta|-1}T_\sigma T_\sigma^*\right\|^{1/2}} {\left\|\sum\limits_{\sigma\in \FF_n^+, 
  |\sigma|=|\alpha|+|\beta|-2}
 T_\sigma T_\sigma^*\right\|^{1/2}}\cdots 
\frac {\left\|\sum\limits_{\sigma\in \FF_n^+, |\sigma|=
  |\alpha|+1}T_\sigma T_\sigma^*\right\|^{1/2}} {\left\|\sum\limits_{\sigma\in \FF_n^+, 
  |\sigma|=|\alpha|}
 T_\sigma T_\sigma^*\right\|^{1/2}}\\
 &= \frac{|\alpha|+|\beta|+1}{|\alpha|+1} \cdot\frac {\left\|\sum\limits_{\sigma\in \FF_n^+, |\sigma|=
  |\alpha|+|\beta|}T_\sigma T_\sigma^*\right\|^{1/2}} {\left\|\sum\limits_{\sigma\in \FF_n^+, 
  |\sigma|=|\alpha|}
 T_\sigma T_\sigma^*\right\|^{1/2}}\\
 &\leq (|\beta|+1) \left\|\sum\limits_{\sigma\in \FF_n^+, |\sigma|=
   |\beta|}T_\sigma T_\sigma^*\right\|^{1/2}
\end{split}
\end{equation*}
for any $\alpha\in \FF_n^+$.
Consequently, we obtain
$$
\sup_{\alpha, \beta\in \FF_n^+, |\beta|=p} \boldsymbol\mu (\beta,\alpha)
\leq (p+1)\left\|\sum\limits_{\sigma\in \FF_n^+, |\sigma|=p}T_\sigma T_\sigma^*\right\|^{1/2}.
$$ 
Using the proof of Proposition \ref{radius}, we conclude that
$$
\left\|\sum_{\beta\in \FF_n^+, |\beta|=p} W_\beta W_\beta^*\right\|=\sup_{\alpha, \beta\in \FF_n^+, |\beta|=p} \boldsymbol\mu (\beta,\alpha)^2\leq (p+1)\left\|\sum\limits_{\sigma\in \FF_n^+, |\sigma|=p}T_\sigma T_\sigma^*\right\|, 
$$
which proves item (i), 
and 
$$
r(W)=\lim_{p\to\infty} \sup_{\alpha, \beta\in \FF_n^+, |\beta|=p} \boldsymbol\mu (\beta,\alpha)^{1/p}
\leq \lim_{p\to\infty} (p+1)^{1/p}\left\|\sum\limits_{\sigma\in \FF_n^+, |\sigma|=
   p}T_\sigma T_\sigma^*\right\|^{1/2p}=r(T).
$$ 
On the other hand, 
  we have 
$$
 \boldsymbol\mu (\beta,g_0)=\mu_{g_{i_1}\cdots g_{i_p}} \mu_{g_{i_2}\cdots g_{i_p}} \cdots \mu_{g_{i_p}}=(|\beta|+1) \left\|\sum\limits_{\sigma\in \FF_n^+, |\sigma|=
   |\beta|}T_\sigma T_\sigma^*\right\|^{1/2}
$$
for any $\beta=g_{i_1}\cdots g_{i_p}\in \FF_n^+$. Now, we deduce that
 $$
\sum_{k=0}^\infty\left\|\sum_{\alpha\in \FF_n^+, |\alpha|=k} \frac{1}{\boldsymbol\mu (\alpha,g_0)^2} T_\alpha T_\alpha^*\right\|=\sum_{k=0}^\infty  \frac{1}{(k+1)^2}<\infty.
$$
Applying Corollary \ref{T1},   we deduce  items (iii) and (iv). Hence,  we also  deduce that $r(T)\leq r(W)$.   Therefore, item (ii) holds.

Now, we  assume that $T=(T_1,\ldots, T_n)$ is a nilpotent $n$-tuple of operators. Let $m\in \NN$, $m\geq 2$,  be with the property that $T_\alpha=0$ for any $\alpha\in \FF_n^+$ with $|\alpha|=m$ and   there is $\beta\in \FF_n^+$ with $|\beta|=m-1$ such that  $T_\beta\neq 0$.
 Define $\mu_\beta>0$ as in relation \eqref{frac} for any $\beta\in \FF_n^+$ with $1\leq| \beta|\leq m-1$ and $\mu_\beta:=0$ if  $|\beta|\geq m$.   Let $W=(W_1,\ldots, W_n)$ be the  truncated weighted multi-shift associated with the weights  $\{\mu_\beta\}_{  |\beta|\geq 1}$ and note that $W$ is nilpotent $n$-tuple of operators of index $m$. In this case we have $\boldsymbol\mu(\sigma, g_0)>0$ if  $|\sigma|\leq m-1$ and  $\boldsymbol\mu(\sigma, g_0)=0$  if  $|\sigma|\geq m$. Moreover, we have  $r(W)=r(T)=0$.
 Modifying the proof of Theorem \ref{Rota}, we define the operator
 $K_\mu:\cH\to F^2(H_n)\otimes \cH$ by setting
 $$
 K_\mu h:= \sum_{\alpha\in \FF_n^+,|\alpha|\leq m-1}   \frac{1}{\boldsymbol\mu (\alpha,g_0)}e_\alpha \otimes T_\alpha^* h.\qquad h\in \cH.
 $$
Note also that, since  $T_\alpha=0$ for any $\alpha\in \FF_n^+$ with $|\alpha|=m$, and 
$\boldsymbol\mu(g_i\gamma, g_0)= \boldsymbol\mu(\gamma, g_0) \mu_{g_i\gamma} $, we have
 \begin{equation*}
 \begin{split}
(W_i^*\otimes I)K_\mu h&=\sum_{\alpha\in \FF_n^+, |\alpha|\leq m-1}   \frac{1}{\boldsymbol\mu (\alpha,g_0)} W_i^* e_\alpha \otimes T_\alpha^* h\\
&= \sum_{\gamma\in \FF_n^+, |\gamma|\leq m-2}   \frac{1}{\boldsymbol\mu (g_i\gamma, g_0)}  \mu_{g_i \gamma}  e_\gamma\otimes T_{g_i \gamma}^* h\\
&= \sum_{\gamma\in \FF_n^+, |\gamma|\leq m-1}   \frac{1}{\boldsymbol\mu (\gamma,g_0)}  e_\gamma\otimes T_{g_i \gamma}^* h\\
&=K_\mu (T_i^*h)
 \end{split}
 \end{equation*}
for any $h\in \cH$.  Since $K_\mu$ has closed range, the relation above shows that  $\cM:=K_{\boldsymbol\mu} H$  is a joint invariant subspace under  $W_i^*\otimes I$, where $i\in \{1,\ldots, n\}$.  Hence, item (iii) follows. Note that  if $\alpha, \beta\in \FF_n^+$    and $|\alpha|+|\beta|<m$, then
$$
\boldsymbol\mu (\beta,\alpha)=\frac{|\alpha|+|\beta|+1}{|\alpha|+1} \cdot\frac {\left\|\sum\limits_{\sigma\in \FF_n^+, |\sigma|=
  |\alpha|+|\beta|}T_\sigma T_\sigma^*\right\|^{1/2}} {\left\|\sum\limits_{\sigma\in \FF_n^+, 
  |\sigma|=|\alpha|}
 T_\sigma T_\sigma^*\right\|^{1/2}}
$$
and $\boldsymbol\mu (\beta,\alpha)=0$ otherwise. Now, as above, one can prove items (i) and (iv). Moreover, in this case  we have
$$
\|K_{\boldsymbol\mu} \|^2\leq \sum_{k=0}^{m-1}\left\|\sum_{\alpha\in \FF_n^+, |\alpha|=k} \frac{1}{\boldsymbol\mu (\alpha,g_0)^2} T_\alpha T_\alpha^*\right\|=\sum_{k=0}^{m-1}  \frac{1}{(k+1)^2} .
$$
Since $\|K_{\boldsymbol\mu}^{-1}\|\leq 1$, we can use item (iii) to   complete the proof.
 \end{proof}
 At the moment we don't now whether the constant $\frac{\pi}{\sqrt{6}}$ is sharp. 
\begin{corollary}   Let  $\{\mu_\beta\}_{ |\beta|\geq 1}$ be a sequence of    real numbers with $\mu_\beta>0$ if $|\beta|\leq m-1$ and $0$ otherwise, and let  $W=(W_1,\ldots, W_n)$  be the associated    weighted left multi-shift.  
If $T=(T_1,\ldots, T_n) \in B(\cH)^n$ is a nilpotent $n$-tuple    such  that $T_\alpha=0$ for any $\alpha\in \FF_n^+$ with $|\alpha|=m\geq 2$,   then $T$ is jointly similar  to 
$$(P_\cM(W_1\otimes I_\cH)|_\cM,\ldots, P_\cM(W_n\otimes I_\cH)|_\cM),$$
 where  $\cM\subset F^2(H_n)\otimes \cH$  is  a  joint invariant subspace under the operators $W_i^*\otimes I_\cH$,  $i\in \{1,\ldots, n\}$.
 As a
 consequence, any nilpotent $n$-tuple is similar to a nilpotent row operator  of arbitrarily  small norm.
\end{corollary}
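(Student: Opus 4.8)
The plan is to imitate the nilpotent part of the proof of Theorem \ref{main1}, the only difference being that here the truncated weight sequence is given in advance rather than manufactured from $T$; the delicate estimates of Theorem \ref{main1} become irrelevant and all that survives is the similarity construction. First I would introduce the operator $K_{\boldsymbol\mu}\colon\cH\to F^2(H_n)\otimes\cH$ defined by
$$
K_{\boldsymbol\mu}h:=\sum_{\alpha\in\FF_n^+,\ |\alpha|\le m-1}\frac{1}{\boldsymbol\mu(\alpha,g_0)}\,e_\alpha\otimes T_\alpha^*h,\qquad h\in\cH.
$$
Only finitely many words contribute, and each weight $\mu_\beta$ with $|\beta|\le m-1$ is strictly positive, so $K_{\boldsymbol\mu}$ is a bounded operator; the summand with $\alpha=g_0$, where $\boldsymbol\mu(g_0,g_0)=1$, yields $\|K_{\boldsymbol\mu}h\|\ge\|h\|$, so $K_{\boldsymbol\mu}$ is bounded below, hence injective with closed range.

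Next I would check the intertwining identity $(W_i^*\otimes I)K_{\boldsymbol\mu}=K_{\boldsymbol\mu}T_i^*$ for each $i\in\{1,\dots,n\}$. Using $W_i^*e_\alpha=\mu_{g_i\gamma}e_\gamma$ when $\alpha=g_i\gamma$ and $0$ otherwise, together with $\boldsymbol\mu(g_i\gamma,g_0)=\boldsymbol\mu(\gamma,g_0)\mu_{g_i\gamma}$, applying $W_i^*\otimes I$ to $K_{\boldsymbol\mu}h$ reindexes the sum to $\sum_{|\gamma|\le m-2}\tfrac{1}{\boldsymbol\mu(\gamma,g_0)}e_\gamma\otimes T_{g_i\gamma}^*h$; the crucial observation is that the missing terms with $|\gamma|=m-1$ would carry $T_{g_i\gamma}^*h$ with $|g_i\gamma|=m$, which vanishes by the nilpotency hypothesis, so the expression coincides with $K_{\boldsymbol\mu}(T_i^*h)$. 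Therefore $\cM:=K_{\boldsymbol\mu}\cH$ is a joint invariant subspace for $W_1^*\otimes I,\dots,W_n^*\otimes I$, and regarding $X:=K_{\boldsymbol\mu}$ as an invertible operator from $\cH$ onto $\cM$ gives $T_i^*=X^{-1}(W_i^*\otimes I)|_\cM X$, equivalently $T_i=X^*\bigl(P_\cM(W_i\otimes I)|_\cM\bigr)(X^*)^{-1}$, which is the asserted joint similarity.

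For the last sentence I would exploit the freedom in the weights. Given a nilpotent $n$-tuple $T$ of index at most $m$ and $\varepsilon>0$, set $\mu_\beta:=\varepsilon/2$ for $1\le|\beta|\le m-1$ and $\mu_\beta:=0$ for $|\beta|\ge m$, let $W$ be the associated truncated weighted multi-shift, and apply the first part to obtain a joint similarity of $T$ with $B:=\bigl(P_\cM(W_1\otimes I)|_\cM,\dots,P_\cM(W_n\otimes I)|_\cM\bigr)$. Since $\cM$ is coinvariant under each $W_i\otimes I$, compression to $\cM$ is multiplicative, so $B_\alpha=P_\cM(W_\alpha\otimes I)|_\cM=0$ whenever $|\alpha|=m$ (because $W$ is nilpotent of index $m$), i.e.\ $B$ is a nilpotent row operator; and by the diagonal computation in the proof of Proposition \ref{radius}, $\bigl\|\sum_j W_jW_j^*\bigr\|=\sup_{|\alpha|\ge1}\mu_\alpha^2=(\varepsilon/2)^2$, whence $\bigl\|[B_1,\dots,B_n]\bigr\|\le\bigl\|[W_1\otimes I,\dots,W_n\otimes I]\bigr\|=\varepsilon/2<\varepsilon$.

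The verifications of boundedness and bounded-belowness of $K_{\boldsymbol\mu}$ and of the norm estimate are bookkeeping. The one place demanding attention is the intertwining computation: one must follow precisely how $W_i^*\otimes I$ acts on the top-length words and confirm that the nilpotency of $T$ kills the boundary terms — the same mechanism that forces $\mu_\beta=0$ for $|\beta|\ge m$ in the definition of $W$ — so the only real care needed is in displaying that cancellation cleanly.
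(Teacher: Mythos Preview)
Your proof is correct and follows exactly the approach the paper intends: the paper's own proof is simply ``The proof can be extracted from the proof of Theorem \ref{main1},'' and you have carried out precisely that extraction, reproducing the truncated Poisson-type kernel $K_{\boldsymbol\mu}$, the intertwining identity (with the nilpotency of $T$ absorbing the boundary terms), and then specializing the weights to obtain arbitrarily small row norm.
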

\begin{proof} The proof can be extracted from the proof of Theorem \ref{main1}.
\end{proof}

Note that in  the particular case  when  $r(T)<1$,  Theorem \ref{main1} provides a  Rota type similarity result which is different from that  of Corollary \ref{Rota1}.  

 \begin{corollary}  If $T=(T_1,\ldots, T_n)\in B(\cH)^n$ is a quasi-nilpotent $n$-tuple of operators, then there exists     a   quasi-nilpotent weighted multi-shift $W=(W_1,\ldots, W_n)$ and a joint invariant subspace $\cM\subset F^2(H_n)\otimes \cH$ under the operators $W_i^*\otimes I_\cH$,  $i\in \{1,\ldots, n\}$, such that    $(T_1,\ldots, T_n)$ is jointly similar to  
$$(P_\cM(W_1\otimes I_\cH)|_\cM,\ldots, P_\cM(W_n\otimes I_\cH)|_\cM)$$
and   the map $\Phi_T:\cA(\boldsymbol\mu)\to B(F^2(H_n)$ defined by 
 $$\Phi_T(p(W_1,\ldots, W_n)):=p(T_1,\ldots, T_n)$$
  is completely bounded and $\|\Phi_T\|_{cb}\leq \frac{\pi}{\sqrt{6}} $.
In addition, if $T$ is nilpotent,  then $W$ can be chosen to be nilpotent  and the inequality above can be improved as in Theorem \ref{main1}.  
\end{corollary}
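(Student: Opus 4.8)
\emph{Proof proposal.} The plan is to read this corollary off from Theorem \ref{main1} by specializing to the case $r(T)=0$, treating the non-nilpotent and nilpotent alternatives separately. In both cases the weighted multi-shift $W$ and the invariant subspace $\cM$ will be exactly the ones produced in the proof of Theorem \ref{main1}; nothing new has to be constructed.

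First suppose $T=(T_1,\dots,T_n)$ is \emph{not} a nilpotent $n$-tuple, so that $\bigl\|\sum_{|\sigma|=k}T_\sigma T_\sigma^*\bigr\|\neq 0$ for every $k\in\NN$ and the weights $\{\mu_\beta\}_{|\beta|\geq 1}$ in \eqref{frac} are well defined. Apply Theorem \ref{main1}: it produces an injective weighted multi-shift $W=(W_1,\dots,W_n)$ satisfying properties (i)--(iv) of that theorem. By (ii) we have $r(W)=r(T)=0$, so $W$ is a quasi-nilpotent weighted multi-shift; by (iii), $T$ is jointly similar to $(P_\cM(W_1\otimes I_\cH)|_\cM,\dots,P_\cM(W_n\otimes I_\cH)|_\cM)$ for the joint invariant subspace $\cM\subset F^2(H_n)\otimes\cH$ under the operators $W_i^*\otimes I_\cH$ given as the range of the bounded below operator $K_{\boldsymbol\mu}$ constructed in the proofs of Theorems \ref{Rota} and \ref{main1}; and by (iv), $\Phi_T$ is a completely bounded algebra homomorphism with $\|\Phi_T\|_{cb}\leq\pi/\sqrt6$. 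This is precisely the assertion of the corollary in this case.

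Now suppose $T$ is nilpotent. If its index is $1$ then $T_1=\dots=T_n=0$, and taking $W=(0,\dots,0)$ and $\cM=e_{g_0}\otimes\cH$ gives the statement trivially with $\|\Phi_T\|_{cb}=1$. If the index is $m\geq 2$, the second half of Theorem \ref{main1} applies and yields a \emph{truncated} weighted multi-shift $W$ that is itself nilpotent of index $m$ -- hence quasi-nilpotent, as $r(W)=0$ -- for which (i)--(iv) hold together with the sharper estimate
$$
\|\Phi_T\|_{cb}\leq\sqrt{\sum_{k=0}^{m-1}\frac{1}{(k+1)^2}}\;<\;\frac{\pi}{\sqrt6},
$$
which proves the ``in addition'' clause. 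The two cases are exhaustive among quasi-nilpotent tuples: every nilpotent tuple is quasi-nilpotent, and every non-nilpotent quasi-nilpotent tuple falls under the first case.

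I do not expect a genuine obstacle here, since all the analytic content -- the choice of weights in \eqref{frac}, the norm estimate $\|\sum_{|\beta|=p}W_\beta W_\beta^*\|\le(p+1)\|\sum_{|\sigma|=p}T_\sigma T_\sigma^*\|$, the summability $\sum_k (k+1)^{-2}<\infty$, and the invariant-subspace/similarity construction via $K_{\boldsymbol\mu}$ and Corollary \ref{T1} -- has already been carried out in Theorems \ref{Rota} and \ref{main1}. The only points that warrant attention are purely organizational: one must check that the case split genuinely covers all quasi-nilpotent $T$, and that $r(W)=0$ is correctly inferred (from $r(W)=r(T)$ in the non-nilpotent case, and from nilpotency of $W$ in the nilpotent case). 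If one additionally wanted each $W_i$ to be compact -- the stronger form proved in Section~3 -- the weights of Theorem \ref{main1} would have to be damped by a sufficiently slowly decaying sequence so as not to destroy the similarity bounds, but that refinement is not needed for the present statement.
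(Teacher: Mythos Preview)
Your proposal is correct and matches the paper's intent: the corollary is stated without proof, as an immediate specialization of Theorem~\ref{main1} to the case $r(T)=0$, and your case split (non-nilpotent quasi-nilpotent versus nilpotent) is exactly how one reads it off. The only minor remark is that the trivial index-$1$ case you treat separately is implicitly absorbed in the paper's convention (the statement of Theorem~\ref{main1} addresses nilpotent tuples of index $m\geq 2$), but this is cosmetic.
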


\begin{corollary}  Every quasi-nilpotent  $n$-tuple of operators  is jointly  similar to a row contraction  of arbitrarily small norm.
\end{corollary}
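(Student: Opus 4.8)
The plan is to deduce this from the Rota-type results already proved, via a trivial rescaling. First I would fix $\varepsilon\in(0,1]$ and set $T':=\tfrac1\varepsilon T$. Since the joint spectral radius is positively homogeneous of degree one — immediate from $r(T)=\lim_k\|\sum_{|\alpha|=k}T_\alpha T_\alpha^*\|^{1/2k}$ — we have $r(T')=\tfrac1\varepsilon r(T)=0<1$. Corollary~\ref{Rota1} then provides a joint invariant subspace $\cM\subset F^2(H_n)\otimes\cH$ under the operators $S_i^*\otimes I_\cH$ and an invertible $X$ with $T'_i=X^{-1}P_\cM(S_i\otimes I_\cH)|_\cM X$ for $i=1,\dots,n$.

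Next I would scale back: the \emph{same} $X$ yields $T_i=\varepsilon T'_i=X^{-1}C_iX$, where $C_i:=\varepsilon\,P_\cM(S_i\otimes I_\cH)|_\cM=P_\cM\big((\varepsilon S_i)\otimes I_\cH\big)|_\cM$, so $T$ is jointly similar to $C=(C_1,\dots,C_n)\in B(\cM)^n$. It then only remains to bound the row norm of $C$: since $[S_1,\dots,S_n]$ is a row contraction and compression to a subspace does not increase the row norm, $\sum_{i=1}^n C_iC_i^*=\varepsilon^2\sum_{i=1}^n P_\cM(S_i\otimes I_\cH)|_\cM\big(P_\cM(S_i\otimes I_\cH)|_\cM\big)^*\le\varepsilon^2 I_\cM$, i.e. $\|[C_1,\dots,C_n]\|\le\varepsilon$. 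As $\varepsilon\in(0,1]$ was arbitrary, $T$ is jointly similar to row contractions of arbitrarily small norm. If $T$ is in addition nilpotent of index $m$, the same argument applied to a truncated shift instead of $S$ carries along the sharper constant from Theorem~\ref{main1}.

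I do not expect a genuine obstacle; this is essentially a one-paragraph corollary of Corollary~\ref{Rota1}, and the only points needing (elementary) verification are the homogeneity of $r(\cdot)$ and the fact that compressing a row contraction to a co-invariant subspace again gives a row contraction with the same norm bound. As an alternative route that bypasses the rescaling, one can invoke Corollary~\ref{T1} directly with the constant weight sequence $\mu_\beta\equiv\varepsilon$ (so $W=(\varepsilon S_1,\dots,\varepsilon S_n)$ and $\boldsymbol\mu(\alpha,g_0)=\varepsilon^{|\alpha|}$): the series $\Sigma_{\boldsymbol\mu}(T,T^*)=\sum_{k\ge0}\varepsilon^{-2k}\sum_{|\alpha|=k}T_\alpha T_\alpha^*$ converges in operator norm because $\big(\varepsilon^{-2k}\|\sum_{|\alpha|=k}T_\alpha T_\alpha^*\|\big)^{1/k}\to\varepsilon^{-2}r(T)^2=0$, so the hypothesis of Corollary~\ref{T1} is met and $T$ is jointly similar to the compression of $(\varepsilon S_1,\dots,\varepsilon S_n)\otimes I_\cH$ to a co-invariant subspace, which is a row contraction of norm $\le\varepsilon$.
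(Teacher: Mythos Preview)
Your proposal is correct, and the alternative route you sketch at the end---applying Corollary~\ref{T1} with the constant weight $\mu_\beta\equiv\varepsilon$ so that $W=(\varepsilon S_1,\dots,\varepsilon S_n)$ and checking that $\sum_k\varepsilon^{-2k}\|\sum_{|\alpha|=k}T_\alpha T_\alpha^*\|<\infty$ because $r(T)=0$---is exactly the paper's proof (with $\rho$ in place of $\varepsilon$). Your first route via rescaling $T':=\varepsilon^{-1}T$ and Corollary~\ref{Rota1} is a trivially equivalent repackaging of the same idea: scaling the operators up by $1/\varepsilon$ and using the unweighted shift is the same as leaving $T$ alone and using the weighted shift $\varepsilon S$.
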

\begin{proof} Since $r(T)=0$, the series
$\sum_{k=0}^\infty \left\|\sum\limits_{\sigma\in \FF_n^+, 
  |\sigma|=k}
 T_\sigma T_\sigma^*\right\|$ is convergent.
Let $\mu_\alpha=\rho\in (0,1)$ for any $\alpha\in \FF_n^+$ and let  $W=(W_1,\ldots, W_n)$  be the associated    weighted left multi-shift.  Taking into account that $\lim_{k\to \infty} \left(\frac{1}{\rho^{2k}} \left\|\sum\limits_{\sigma\in \FF_n^+, 
  |\sigma|=k}
 T_\sigma T_\sigma^*\right\|\right)^{1/k}=0$, we deduce that
 $$
 \left\|\sum_{k=0}^\infty\sum_{\alpha\in \FF_n^+, |\alpha|=k} \frac{1}{\boldsymbol\mu (\alpha,g_0)^2} T_\alpha  T_\alpha^*\right\|\leq 
 \sum_{k=0}^\infty \frac{1}{\rho^{2k}} \left\|\sum_{\alpha\in \FF_n^+, |\alpha|=k} T_\alpha  T_\alpha^*\right\|<\infty.
 $$
  Applying Corollary \ref{T1} and taking into account that   $\|W\|=\rho$, the result follows.
\end{proof}

 \bigskip
 
 \section{Multivariable   analogue of Foia\c s--Pearcy model for quasinilpotent operators}
 
 In this section we establish the existence   of a model (up to a similarity)  for every quasi-nilpotent $n$-tuple of operators on  a   Hilbert space. This is a multivariable noncommutative extension of  the Foia\c s--Pearcy model for quasinilpotent operators.

 \begin{theorem}\label{Foias-Pearcy}
 Let  $T=(T_1,\ldots, T_n)\in B(\cH)^n$ be  a quasi-nilpotent $n$-tuple of operators. Then there exists     a   quasi-nilpotent weighted multi-shift $W=(W_1,\ldots, W_n)$ of compact operators  and a joint invariant subspace $\cM\subset F^2(H_n)\otimes \cH$ under the operators $W_i^*\otimes I_\cH$,  $i\in \{1,\ldots, n\}$, such that    $(T_1,\ldots, T_n)$ is jointly similar to  
$$(P_\cM(W_1\otimes I_\cH)|_\cM,\ldots, P_\cM(W_n\otimes I_\cH)|_\cM).$$
 \end{theorem}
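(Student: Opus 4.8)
The plan is to apply Corollary \ref{T1} to a weight sequence chosen small enough that Proposition \ref{proprieties} forces the resulting model to be simultaneously quasi-nilpotent and of compact operators, yet not so small that the series $\Sigma_{\boldsymbol\mu}(T,T^*)$ fails to converge. Write $c_k:=\bigl\|\sum_{|\sigma|=k}T_\sigma T_\sigma^*\bigr\|^{1/2}$ for $k\ge 1$ and $c_0:=1$; submultiplicativity gives $c_k\le c_1^k$, so $\{c_k^{1/k}\}$ is bounded, while the hypothesis $r(T)=0$ means $c_k^{1/k}\to 0$. Set $\epsilon_k:=\sup_{j\ge k}c_j^{1/j}$, a non-increasing sequence with $\epsilon_k\to 0$ and $c_k\le\epsilon_k^k$, and define
\[
\rho_k:=2\max\{\epsilon_k,\,1/k\}\qquad(k\ge 1),
\]
so that $\rho_k>0$, $\rho_k\to 0$, and $\sup_k\rho_k<\infty$. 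Then I would let $W=(W_1,\ldots,W_n)$ be the weighted left multi-shift associated with the injective, bounded weight sequence $\mu_\beta:=\rho_{|\beta|}$, $|\beta|\ge 1$.

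First I would record the structure of $W$: since $W$ is injective and $\mu_\beta=\rho_{|\beta|}\to 0$ as $|\beta|\to\infty$, Proposition \ref{proprieties}(ii) gives $r(W)=0$, and since $\lim_{\alpha\to\infty}\mu_{g_i\alpha}=\lim_{k\to\infty}\rho_{k+1}=0$, Proposition \ref{proprieties}(iii) shows that each $W_i$ is a compact operator. For the similarity, observe that $\mu_\beta$ depends only on $|\beta|$, so $\boldsymbol\mu(\alpha,g_0)=\rho_1\cdots\rho_k$ whenever $|\alpha|=k$ (the empty product when $k=0$), whence
\[
\Bigl\|\sum_{|\alpha|=k}\frac{1}{\boldsymbol\mu(\alpha,g_0)^2}T_\alpha T_\alpha^*\Bigr\|=\frac{c_k^2}{(\rho_1\cdots\rho_k)^2}.
\]
Since $\epsilon_j\ge\epsilon_k$ for $j\le k$ and $\rho_j\ge 2\epsilon_j$, one gets $\rho_1\cdots\rho_k\ge 2^k\epsilon_k^k\ge 2^k c_k$, so the last norm is at most $4^{-k}$ and $\Sigma_{\boldsymbol\mu}(T,T^*)$ converges in norm, a fortiori in the weak operator topology. (When $T$ is nilpotent the $c_k$ eventually vanish and $\rho_k=2/k$ for large $k$; the weights remain positive and the series is a finite sum, so the argument is unchanged.) Corollary \ref{T1} then produces a joint invariant subspace $\cM\subset F^2(H_n)\otimes\cH$ under the operators $W_i^*\otimes I_\cH$, $i\in\{1,\ldots,n\}$, such that $(T_1,\ldots,T_n)$ is jointly similar to $(P_\cM(W_1\otimes I_\cH)|_\cM,\ldots,P_\cM(W_n\otimes I_\cH)|_\cM)$, which finishes the proof.

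The only real content, and the main obstacle, is the design of $\{\rho_k\}$: compactness and quasi-nilpotency of the model both require $\mu_\beta\to 0$, whereas convergence of $\Sigma_{\boldsymbol\mu}(T,T^*)$ forbids $\boldsymbol\mu(\alpha,g_0)$ from decaying too quickly relative to $c_k$. These demands are reconcilable precisely because $r(T)=0$ forces $c_k^{1/k}\to 0$, which is the slack exploited above; passing to $\epsilon_k=\sup_{j\ge k}c_j^{1/j}$ is what makes the estimate $\rho_1\cdots\rho_k\ge 2^k c_k$ clean, and adjoining the term $1/k$ is harmless and merely keeps every $\rho_k$ strictly positive, so that $W$ stays injective even when $T$ is nilpotent.
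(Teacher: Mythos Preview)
Your argument is correct. The key estimate $\rho_1\cdots\rho_k\ge 2^k\epsilon_k^k\ge 2^k c_k$ is valid because $\epsilon_j$ is non-increasing and $\rho_j\ge 2\epsilon_j$, and the appeal to Proposition~\ref{proprieties} parts (ii) and (iii) is legitimate since $\mu_\beta=\rho_{|\beta|}\to 0$.

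Your route is, however, genuinely different from the paper's and noticeably more direct. The paper proceeds in two stages: first it builds an auxiliary (not necessarily compact) multi-shift $V$ with weights $a_{|\beta|}/a_{|\beta|-1}$, where $a_k:=c_k^{1/2}$, and applies the Rota-type Corollary~\ref{T1} to get similarity of $T$ to a compression of $V$; then it manufactures a second multi-shift $W$ with a dyadically defined weight $\kappa_\beta=a_1^{1/2}\bigl(a_{2^{m_{|\beta|}}}\bigr)^{1/2^{m_{|\beta|}+1}}\to 0$, checks $\kappa_\beta\ge a_{|\beta|}^{1/|\beta|}$, and transfers the similarity from $V$ to $W$ via an explicit diagonal intertwiner $Y$. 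The nilpotent case is handled separately by deferring to Theorem~\ref{main1}, which produces a truncated (non-injective) multi-shift. Your construction collapses all this into a single application of Corollary~\ref{T1} by choosing the weights $\rho_k$ to decay to zero from the outset, and it treats the nilpotent and non-nilpotent cases uniformly with an injective $W$. The trade-off is that the paper's two-step construction produces an explicit factorisation $S=YX$ of the similarity, which is reused verbatim in the proofs of Corollary~\ref{Foias-Pearcy2} and Theorem~\ref{quasi}; your simpler operator $K_{\boldsymbol\mu}$ would serve the same purpose there, but those subsequent proofs would need cosmetic adjustment.
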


 \begin{proof}  First, we assume that $T$ is not a nilpotent $n$-tuple of operators  and consider  the completely positive map 
 $
 \varphi_T:B(\cH)\to B(\cH)$ defined by 
 $$
 \varphi_T(X):=T_1XT_n^*+\cdots + T_nXT_n^*,\qquad X\in B(\cH).
 $$
Seting   
 $a_k:=\|\varphi_T^k(I)\|^{1/4}$ if $k\in \NN$, we   note that
 \begin{equation}\label{ine-a}
 a_{k+m}\leq a_k a_m,\qquad k,m\in \NN,
 \end{equation}
 and 
 $$\lim_{k\to \infty} a_k^{1/k}= \lim_{k\to \infty}  \left\|\sum_{\alpha\in \FF_n^+, |\alpha|=k} T_\alpha  T_\alpha^*\right\|^{1/4k}=r(T)^{1/2}=0.
 $$
 Define the weight sequence 
 $\boldsymbol\mu=\{\mu_\beta\}_{|\beta|\geq 1}$ by  setting
 
  $$\mu_\beta:=\frac{a_{|\beta|}}{a_{|\beta|-1}},\qquad |\beta|\geq 1,
  $$
  where $a_{0}:=1$. The sequence $\{\mu_\beta\}_{|\beta|\geq 1}$ is bounded since  $\mu_\beta\leq \frac{a_{|\beta|-1} a_1}{a_{|\beta|-1}}=a_1$.  Let   $V=(V_1,\ldots, V_n)$  be the associated    weighted left multi-shift with $\{\mu_\beta\}_{|\beta|\geq 1}$.
  Due to the definition of the weights  $\{\mu_\beta\}_{|\beta|\geq 1}$, for any $\beta=g_{i_1}\cdots g_{i_k}\in \FF_n^+$,  we have 
\begin{equation}
\label{ab}
\boldsymbol\mu(\beta, g_0)=  \mu_{g_{i_1}\cdots g_{i_k}} \mu_{g_{i_2}\cdots g_{i_k}}\cdots  \mu_{ g_{i_k}}=a_{|\beta|}.
 \end{equation}
 Consequently,  taking into account that  that $r(T)=\lim_{k\to \infty} \left\|\sum_{\alpha\in \FF_n^+, |\alpha|=k} T_\alpha  T_\alpha^*\right\|^{1/2k} <1$, we deduce that 
  $$
\sum_{k=0}^\infty \left\|\sum_{\alpha\in \FF_n^+, |\alpha|=k} \frac{1}{\boldsymbol\mu (\alpha,g_0)^2} T_\alpha  T_\alpha^*\right\|\leq 
 \sum_{k=0}^\infty  \left\|\sum_{\alpha\in \FF_n^+, |\alpha|=k} T_\alpha  T_\alpha^*\right\|^{1/2}<\infty.
 $$
According to the proof of Theorem \ref{Rota} when $Q=I$,  we have 
\begin{equation}
\label{inv1}
XT_i^*= (V_i^*\otimes I)|_\cN X,\qquad i\in \{1,\ldots, n\},
\end{equation}
where the  operator $X:\cH\to \cN:=K_{\boldsymbol\mu} \cH$ is defined by $Xh:=K_{\boldsymbol\mu} h$ for  $h\in \cH$   and 
 $K_{\boldsymbol\mu}:\cH\to F^2(H_n)\otimes \cH$ is given  by
 $$
 K_{\boldsymbol\mu}h:= \sum_{\alpha\in \FF_n^+} \frac{1}{\boldsymbol\mu (\alpha,g_0)} e_\alpha \otimes  T_\alpha^* h,\qquad h\in \cH. 
 $$
 
 Every $k\in \NN$ has a unique representation $k=2^{m_k}+q_k$, where $m_k,q_k\in \NN_0$ and $0\leq q_k<2^{m_k}$. Note that $m_k$ is an  increasing sequence and   $m_k\to \infty$.
Now, we define the weight sequence $\boldsymbol\kappa=\{\kappa_\beta\}_{\beta\in \FF_n^+}$ by setting $\kappa_{g_0}:=1$ and 
 $$
 \kappa_\beta:=  a_1^{1/2}  \left(a_{2^{m_{|\beta|}}} \right)^{\frac{1}{ 2^{m_{|\beta|}+1}}} ,\qquad |\beta|\geq 1.$$
 Since  $\lim_{k\to \infty} a_k^{1/k}=0$, we have  $ \kappa_\beta\to 0$ as $|\beta|\to \infty$.
 Let  $W=(W_1,\ldots, W_n)$ be the weighted  multi-shift associated with the weight sequence $\boldsymbol\kappa$. Due to Proposition \ref{proprieties}, $r(W)=0$ and each operator $W_i$ is compact.
 In what follows, we prove that
 \begin{equation}\label{ka-ine}
  \kappa_\beta\geq  (a_{|\beta|})^{1/|\beta|}, \qquad |\beta|\geq 1.
 \end{equation}
 Indeed,  assume that $|\beta|=k=2^{m_k}+q_k$. Since $a_{2^{m_k}}\leq a_1^{2^{m_k}}$  and $\frac{2^{m_k}}{k}\geq \frac{1}{2}$, we deduce that
 \begin{equation*}
 \begin{split}
 \kappa_\beta&=a_1\left( \frac{a_{2^{m_k}}^{\frac{1}{2^{m_k}}}}{a_1}  \right)^{1/2}  
 \geq a_1\left( \frac{a_{2^{m_k}}^{\frac{1}{2^{m_k}}}}{a_1}  \right)^{\frac{2^{m_k}}{k}} \\
 &=a_{2^{m_k}}^{\frac{1}{ k}} a_1^{\frac{q_k}{k}}=\left(a_{2^{m_k}} a_1^{q_k}\right)^{1/k}
 \geq \left(a_{2^{m_k}} a_{q_k}\right)^{1/k}\\
 &\geq \left(a_{2^{m_k}+q_k}\right)^{1/k}=a_k^{1/k}. 
 \end{split}
 \end{equation*}
 The last two inequalities are due to inequality \eqref{ine-a}. 
 Therefore, inequality \eqref{ka-ine} holds.
 Now,  let
 \begin{equation}\label{be}
\sigma_\beta:=\frac{\mu_{g_{i_1}\cdots g_{i_k}} \mu_{g_{i_2}\cdots g_{i_k}} \cdots \mu_{g_{i_k}}}{\kappa_{g_{i_1}\cdots g_{i_k}} \kappa_{g_{i_2}\cdots g_{i_k}} \cdots \kappa_{g_{i_k}}},\qquad \text{ if } \beta=g_{i_1}\cdots g_{i_k},
 \end{equation}
 and $\sigma_{g_0}:=1$.
 We note that $\kappa_\alpha\leq \kappa_\beta$ if $|\beta|\leq  |\alpha|$. Indeed,  taking into account that  
 $a_{2^{p+1}}\leq a_{2^p}^2$, we deduce that
  $\left(a_{2^{p+1}}\right)^{\frac{1}{2^{p+1}}}\leq \left(a_{2^{p}}\right)^{\frac{1}{2^{p}}}$, which proves our assertion.  Using relation \eqref{ab}, we obtain 
  $$
 0<\sigma_\beta:= \frac{\mu_{g_{i_1}\cdots g_{i_k}} \mu_{g_{i_2}\cdots g_{i_k}} \cdots \mu_{g_{i_k}}}{\kappa_{g_{i_1}\cdots g_{i_k}} \kappa_{g_{i_2}\cdots g_{i_k}} \cdots \kappa_{g_{i_k}}} \leq \frac{a_{|\beta|}}{\kappa_\beta^k}\leq 1,\qquad \text{ if } \beta=g_{i_1}\cdots g_{i_k}. 
    $$
 Now, we define the   operator $Y\in B( F^2(H_n)\otimes \cH)$ by setting
$$
Y(e_\alpha\otimes h):=\sigma_\alpha e_\alpha\otimes h,\qquad \alpha\in \FF_n^+, h\in \cH.
$$ 
 Note that $Y$ is a positive contraction  and $\ker Y=\ker Y^*=\{0\}$.
 In what follows, we prove that
 $Y(V_i^*\otimes I)=(W_i^*\otimes I)Y$ for any $i\in \{1,\ldots, n\}$.
 Indeed, for any $\alpha\in \FF_n^+$ and $i\in \{1,\ldots, n\}$, we have
 \begin{equation*}
 \begin{split}
 YV_i^* e_\alpha& =\begin{cases}
 Y(\mu_\alpha e_\gamma) & \text{ if } \alpha=g_i\gamma\\
 0&  \text{ otherwise }
 \end{cases}\\
 &=\begin{cases}
  \mu_\alpha \sigma_\gamma e_\gamma & \text{ if } \alpha=g_i\gamma\\
 0&  \text{ otherwise}
\end{cases}
 \end{split}
 \end{equation*}
 and 
  \begin{equation*}
 \begin{split}
 W_iY^* e_\alpha = W_i^*(\sigma_\alpha e_\alpha)=\begin{cases}
  \sigma_\alpha \kappa_\alpha e_\gamma & \text{ if } \alpha=g_i\gamma\\
 0&  \text{ otherwise}.
 \end{cases}
  \end{split}
 \end{equation*}
 According to relation \eqref{be}, if $\alpha=g_i\gamma$, then 
 $\sigma_{g_i\gamma}=\sigma_\gamma \frac{\mu_{g_i\gamma}}{\mu_{g_i\gamma}}$.
 Consequently,  $Y(V_i^*\otimes I)=(W_i^*\otimes I)Y$ for any $i\in \{1,\ldots, n\}$.
 Combining this  with relation \eqref {inv1}, we obtain
 $$
 YXT_i^*=Y(V_i^*\otimes I)X=(W_i^*\otimes I)YX,\qquad i\in \{1,\ldots, n\}.
 $$
 On the other hand, the operator $YX:\cH\to F^2(H_n)\otimes \cH$  is bounded below by $1$. Indeed,  taking into account that  $\sigma_{g_0}=a_0=1$, we have
 $$
\| YXh\|=\left\| Y\left(\sum_{\alpha\in \FF_n^+} \frac{1}{a_{|\alpha|}} e_\alpha \otimes  T_\alpha^* h
\right) \right\|=\left\|  \sum_{\alpha\in \FF_n^+} \frac{1}{a_{|\alpha|}} \sigma_\alpha e_\alpha \otimes  T_\alpha^* h
  \right\|\geq \|h\|
 $$
 for any $h\in \cH$. Consequently, the operator $YX$ has closed range. Now, we can  define the operator
 $S:\cH\to \cM:=YX (\cH)$ by $Sh:=YXh$, $h\in \cH$, and note that
 $ T_i^*=S^{-1}(W_i^*\otimes I)|_\cM S$ for any $i\in \{1,\ldots, n\}$.
 
 Since the case when  $T$ is  a nilpotent $n$-tuple of operators of index $m$  was already discussed  in Theorem \ref{main1}, the proof is complete.
 \end{proof}
 
 In what follows, we assume that the Hilbert space $\cH$ is separable and infinite-dimensional.
 
  \begin{corollary}\label{Foias-Pearcy2}
 Let  $T=(T_1,\ldots, T_n)\in B(\cH)^n$ be  a quasi-nilpotent $n$-tuple of operators. Then there exists     a   quasi-nilpotent weighted multi-shift $K=(K_1,\ldots, K_n)$ of compact operators  in $B(\cH)$ and a joint invariant subspace $\cR\subset  \cH\otimes F^2(H_n)$ under the operators $K_i^*\otimes I_{F^2(H_n)}$,  $i\in \{1,\ldots, n\}$, such that    $(T_1,\ldots, T_n)$ is jointly similar to  
$$(P_\cR(K_1\otimes I_{F^2(H_n)})|_\cR,\ldots, P_\cR(K_n\otimes I_{F^2(H_n)})|_\cR).$$
 \end{corollary}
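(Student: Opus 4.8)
The plan is to deduce this from Theorem~\ref{Foias-Pearcy} by a unitary relabeling that swaps the two tensor factors. Since $\cH$ is separable and infinite-dimensional, it is unitarily isomorphic to $F^2(H_n)$; the first step is to fix a unitary $U\colon F^2(H_n)\to\cH$ and to transport the orthonormal basis $\{e_\alpha\}_{\alpha\in\FF_n^+}$ to the orthonormal basis $\{Ue_\alpha\}_{\alpha\in\FF_n^+}$ of $\cH$, so that any weighted left multi-shift on $F^2(H_n)$ becomes, via conjugation by $U$, an honest weighted left multi-shift on $\cH$ with the same weights.

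Next I would apply Theorem~\ref{Foias-Pearcy} to $T=(T_1,\ldots,T_n)$: it produces a quasi-nilpotent weighted left multi-shift $W=(W_1,\ldots,W_n)$ on $F^2(H_n)$ whose entries are compact, together with a subspace $\cM\subset F^2(H_n)\otimes\cH$ invariant under $W_1^*\otimes I_\cH,\ldots,W_n^*\otimes I_\cH$, such that $T$ is jointly similar to $(P_\cM(W_1\otimes I_\cH)|_\cM,\ldots,P_\cM(W_n\otimes I_\cH)|_\cM)$. I would then set $K_i:=UW_iU^*\in B(\cH)$; by the first paragraph $K=(K_1,\ldots,K_n)$ is a weighted left multi-shift on $\cH$ with the same weight sequence as $W$, so $r(K)=r(W)=0$ and each $K_i$, being unitarily equivalent to $W_i$, is compact.

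The final step is to carry $\cM$ over to $\cH\otimes F^2(H_n)$. Consider the unitary $\Theta:=U\otimes U^*\colon F^2(H_n)\otimes\cH\to\cH\otimes F^2(H_n)$; it satisfies $\Theta(W_i\otimes I_\cH)\Theta^*=K_i\otimes I_{F^2(H_n)}$ for each $i$. Put $\cR:=\Theta\cM$. Then $\cR$ is invariant under $K_i^*\otimes I_{F^2(H_n)}$, and because $\Theta$ maps $\cM$ unitarily onto $\cR$ it intertwines the corresponding compressions, giving $P_\cR(K_i\otimes I_{F^2(H_n)})|_\cR=(\Theta|_\cM)\bigl(P_\cM(W_i\otimes I_\cH)|_\cM\bigr)(\Theta|_\cM)^*$. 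Hence the $n$-tuple $(P_\cR(K_1\otimes I_{F^2(H_n)})|_\cR,\ldots,P_\cR(K_n\otimes I_{F^2(H_n)})|_\cR)$ is unitarily equivalent to $(P_\cM(W_1\otimes I_\cH)|_\cM,\ldots,P_\cM(W_n\otimes I_\cH)|_\cM)$, and therefore jointly similar to $T$, since composing a similarity with a unitary equivalence yields a similarity.

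I do not expect a genuine obstacle here: all the real work is done in Theorem~\ref{Foias-Pearcy}, and the present statement is just its reformulation under the identification $F^2(H_n)\cong\cH$. The only points needing a line of care are that the hypothesis ``$\cH$ separable and infinite-dimensional'' is precisely what guarantees the unitary $U$, and that relabeling the basis genuinely preserves the ``weighted left multi-shift'' structure (and hence the terminology and the compactness/quasi-nilpotency assertions in the statement). When $T$ is nilpotent one uses instead the finite-rank---hence compact---truncated multi-shift supplied by Theorem~\ref{main1}, and the argument goes through unchanged.
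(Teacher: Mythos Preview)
Your proof is correct and follows essentially the same route as the paper: fix a unitary identification $U:F^2(H_n)\to\cH$, transport the compact quasi-nilpotent multi-shift $W$ from Theorem~\ref{Foias-Pearcy} to $K_i:=UW_iU^*$ on $\cH$, and push $\cM$ forward under $U\otimes U^*$ to obtain $\cR$. The paper carries this out with the explicit basis map $e_\alpha\mapsto v_\alpha$ and writes the resulting similarity as $S=UYX$ using the operators $Y,X$ from the proof of Theorem~\ref{Foias-Pearcy}, but the content is identical to what you wrote.
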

 \begin{proof}
 Since $\cH$ is a separable infinite dimensional Hilbert space, we can consider an 
 orthonormal basis $\{v_\beta\}_{\beta\in \FF_n^+}$ and define  the weighted left multi-shift $K=(K_1,\ldots, K_n)$  by setting
 $$
 K_iv_\alpha:= \kappa_{g_i \alpha} v_{g_i\alpha},\qquad \alpha \in \FF_n^+, i\in \{1,\ldots, n\},
 $$
 where the weight sequence  $\boldsymbol\kappa=\{\kappa_\beta\}_{\beta\in \FF_n^+}$ was introduced in the proof of Theorem \ref{Foias-Pearcy}.
 Under the identification of $\cH$ with $F^2(H_n)$ via the mapping $v_\alpha\mapsto e_\alpha$, it is clear that $K=(K_1,\ldots, K_n)$ is unitarily equivalent to the weighted multi-shift $W=(W_1,\ldots, W_n)$ which was defined in Theorem \ref{Foias-Pearcy}. Consequently, each $K_i$ is a compact operator and $K$  is a quasi-nilpotent $n$-tuple.
 Define the unitary operator $U: F^2(H_n)\otimes \cH\to \cH\otimes F^2(H_n)$  by setting
 $U(e_\alpha\otimes v_\beta):= v_\alpha\otimes e_\beta$, $\alpha,\beta\in \FF_n^+$. It is straightforward to check that
 $U(W_i^*\otimes \cH)=(K_i^*\otimes I_{F^2(H_n)})U$ and, using the proof of Theorem  \ref{Foias-Pearcy}, that
 $$
 UYXT_i^*=(K_i^*\otimes I_{F^2(H_n)})UYX,\qquad i\in \{1,\ldots, n\}.
 $$
 Since the operator $UYX$ is bounded below from $0$, it has closed range $\cR:=UYX \cH\subset \cH\otimes F^2(H_n)$.  Define $S:\cH\to \cR$ by setting $Sh:=UYXh$, $h\in \cH$. The relations above show that  $(K_i^*\otimes I_{F^2(H_n)})\cR\subset \cR$ and 
 \begin{equation}\label{STS}
 ST_i^* S^{-1} =(K_i^*\otimes I_{F^2(H_n)})|_\cR, \qquad i\in \{1,\ldots, n\}.
 \end{equation}
 This completes the proof.
 \end{proof}

 \begin{theorem}  \label{quasi} Let  $T=(T_1,\ldots, T_n)\in B(\cH)^n$ be  a quasi-nilpotent $n$-tuple of operators. The $n$-tuple $(K_1\otimes  I_{F^2(H_n)} ,\ldots, K_n\otimes  I_{F^2(H_n)})$ of Corollary \ref{Foias-Pearcy2} is jointly quasi-similar to an $n$-tuple $(L_1,\ldots, L_n)$, where each 
 $L_i\in B(\cH\otimes F^2(H_n))$  is a compact operator, i.e there exist quasi-affinities $X$ and $Y$ in  $B(\cH\otimes F^2(H_n))$ such that
 \begin{equation*}
 \begin{split}
 (K_i^*\otimes  I_{F^2(H_n)})X&=XL_i^*\\
 Y(K_i^*\otimes  I_{F^2(H_n)})&=L_i^*Y
 \end{split}
 \end{equation*}
 for any $i\in \{1,\ldots, n\}$.
 \end{theorem}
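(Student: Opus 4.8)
The plan is to produce $(L_1,\ldots,L_n)$ by conjugating the tuple $(K_1\otimes I_{F^2(H_n)},\ldots,K_n\otimes I_{F^2(H_n)})$ with a single pair of diagonal positive quasi-affinities on $\cH\otimes F^2(H_n)$ that genuinely couple the two tensor factors (a quasi-affinity acting in the second factor alone commutes with $K_i\otimes I_{F^2(H_n)}$ and changes nothing, so the coupling is essential). Using the identification of $\cH$ with $F^2(H_n)$ from Corollary \ref{Foias-Pearcy2}, work on the orthonormal basis $\{v_\alpha\otimes e_\beta:\alpha,\beta\in\FF_n^+\}$, on which $(K_i\otimes I_{F^2(H_n)})(v_\alpha\otimes e_\beta)=\kappa_{g_i\alpha}\,v_{g_i\alpha}\otimes e_\beta$, where $\boldsymbol\kappa=\{\kappa_\beta\}$ is the weight sequence from Theorem \ref{Foias-Pearcy}; recall $\kappa_\beta\to 0$ as $|\beta|\to\infty$ and $\sup_\gamma\kappa_\gamma<\infty$. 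Fix any $\rho\in(0,1)$ and define positive diagonal operators $X,Y\in B(\cH\otimes F^2(H_n))$ by
$$
Y(v_\alpha\otimes e_\beta):=\rho^{\,|\beta|\,2^{1-|\alpha|}}\,v_\alpha\otimes e_\beta,\qquad X(v_\alpha\otimes e_\beta):=\rho^{\,|\beta|\,(2-2^{1-|\alpha|})}\,v_\alpha\otimes e_\beta .
$$
Since $2^{1-|\alpha|}\le 2$ and $2-2^{1-|\alpha|}\ge 0$ for every $\alpha$, both exponents are nonnegative, so $X$ and $Y$ are bounded by $1$; having strictly positive diagonal entries, they are injective with dense range, hence quasi-affinities, and the same $X,Y$ serve all $i$ simultaneously.

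Next define $L_i\in B(\cH\otimes F^2(H_n))$ by
$$
L_i(v_\alpha\otimes e_\beta):=\rho^{\,|\beta|\,2^{-|\alpha|}}\,\kappa_{g_i\alpha}\,v_{g_i\alpha}\otimes e_\beta,\qquad i\in\{1,\ldots,n\};
$$
this is bounded because its coefficient is at most $\kappa_{g_i\alpha}\le\sup_\gamma\kappa_\gamma<\infty$. A direct bookkeeping with the exponents of $\rho$, using $|g_i\alpha|=|\alpha|+1$ together with $2\cdot 2^{-|\alpha|}=2^{1-|\alpha|}$, yields the operator identities
$$
(K_i\otimes I_{F^2(H_n)})\,Y=Y\,L_i,\qquad X\,(K_i\otimes I_{F^2(H_n)})=L_i\,X,\qquad i\in\{1,\ldots,n\}.
$$
Taking adjoints and using $X=X^*$, $Y=Y^*$ turns these into precisely $Y(K_i^*\otimes I_{F^2(H_n)})=L_i^*Y$ and $(K_i^*\otimes I_{F^2(H_n)})X=XL_i^*$, the two intertwinings required in the statement.

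It remains to check that each $L_i$ is compact. Write $L_i=V_i\Delta_i$, where $V_i$ is the isometry $v_\alpha\otimes e_\beta\mapsto v_{g_i\alpha}\otimes e_\beta$ and $\Delta_i$ is the diagonal operator with entries $d_i(\alpha,\beta):=\rho^{\,|\beta|\,2^{-|\alpha|}}\kappa_{g_i\alpha}$; it suffices to show $d_i(\alpha,\beta)\to 0$ as $|\alpha|+|\beta|\to\infty$. Given $\epsilon>0$, pick $N$ with $\kappa_{g_i\alpha}<\epsilon$ whenever $|\alpha|\ge N$ (possible since $\kappa_\gamma\to 0$), so $d_i(\alpha,\beta)<\epsilon$ there because the $\rho$–factor is $\le 1$; and for $|\alpha|<N$ one has $d_i(\alpha,\beta)\le\rho^{\,|\beta|\,2^{1-N}}\sup_\gamma\kappa_\gamma$, which is $<\epsilon$ once $|\beta|$ is large enough. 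Hence $\{(\alpha,\beta): d_i(\alpha,\beta)\ge\epsilon\}$ is finite, so $\Delta_i$ is compact, and therefore so is $L_i=V_i\Delta_i$.

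The only genuinely delicate point is the choice of the coupling weight $\rho^{\,|\beta|\,2^{1-|\alpha|}}$ defining $Y$: it must decay fast enough in $|\beta|$, uniformly for each fixed word length $|\alpha|$, to force compactness of $L_i$ even on the region where $\kappa_{g_i\alpha}$ is not small, while at the same time its ``complement'' $\rho^{\,|\beta|\,(2-2^{1-|\alpha|})}$ defining $X$ must stay bounded so that $X$ is a bona fide quasi-affinity; getting this balance right is the crux, and everything else is the routine verification sketched above. (The case where $T$ is actually nilpotent has already been handled, via Theorem \ref{main1}, inside Corollary \ref{Foias-Pearcy2}, so nothing further is needed there.)
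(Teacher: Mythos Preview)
Your proof is correct, and your explicit intertwining/compactness verifications go through exactly as you claim. However, the route you take is genuinely different from the paper's.

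The paper does \emph{not} build $X,Y,L_i$ by hand. Instead it invokes a multivariable Rota model (Proposition~3.5 of \cite{Po-models}): since $r(K)=0$, for every $\alpha\in\FF_n^+$ one can find an invertible $Q_{(\alpha)}\in B(\cH)$ such that the similar tuple $L_i^{(\alpha)}:=Q_{(\alpha)}K_i^*Q_{(\alpha)}^{-1}$ satisfies $\|[L_1^{(\alpha)},\ldots,L_n^{(\alpha)}]\|\le 1/|\alpha|$. Each $L_i^{(\alpha)}$ is compact (similar to the compact $K_i^*$) and the norms tend to~$0$, so $L_i:=\bigoplus_\alpha L_i^{(\alpha)}$ is compact. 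One then picks scalars $c_\alpha,d_\alpha>0$ so that $X:=\bigoplus_\alpha c_\alpha Q_{(\alpha)}^{-1}$ and $Y:=\bigoplus_\alpha d_\alpha Q_{(\alpha)}$ are bounded; they are automatically quasi-affinities, and the block-diagonal intertwining relations assemble into the required identities on $\cH\otimes F^2(H_n)\simeq\bigoplus_\alpha\cH$.

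Your approach is more elementary and fully self-contained: by working directly with the weighted-shift structure of $K$ and exploiting only $\kappa_\beta\to 0$, you avoid the external Rota result entirely and get explicit positive diagonal $X,Y$. The price is that your argument is tailored to this specific $K$; the paper's argument, though it cites outside machinery, would work verbatim for \emph{any} quasi-nilpotent tuple $K$ of compact operators, not just a weighted multi-shift. Both are valid; yours is arguably cleaner for the case at hand.
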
 
 
  \begin{proof}
   Since  $K=(K_1,\ldots, K_n)$ is an   $n$-tuple of operators with $r(K)=0$,  we can apply Proposition 3.5 from  \cite{Po-models} (which is a noncommutative multivariable version of Rota's result \cite{R}) and   find, for each $\alpha\in \FF_n^+$, an invertible operator $Q_{(\alpha)}\in B(\cH)$ such that 
  $$
  \|[L^{(\alpha)}_1,\ldots, L_n^{(\alpha)}]\|\leq \frac{1}{|\alpha|},\qquad \alpha\in \FF_n^+,
  $$ 
  where  $L^{(\alpha)}_i:=Q_{(\alpha)} K_i^* Q_{(\alpha)}^{-1}$. Consequently, since  each   operator  $L_i^{(\alpha)}$ is compact, so is the operator defined by
  $$
  L_i:=\bigoplus_{\alpha\in \FF_n^+}  L_i^{(\alpha)},\qquad i\in \{1,\ldots, n\}.
  $$
  Choose some sequences of strictly positive numbers $\{c_\alpha\}_{\alpha\in \FF_n^+}$ and  $\{d_\alpha\}_{\alpha\in \FF_n^+}$ such that
  $$
  \sup_{\alpha\in \FF_n^+} \|c_\alpha Q_{(\alpha)}^{-1}\|<\infty \quad \text{and}\quad 
   \sup_{\alpha\in \FF_n^+} \|d_\alpha Q_{(\alpha)}\|<\infty
  $$
  and define the operators 
  $$
  X:=\bigoplus_{\alpha\in \FF_n^+} c_\alpha Q_{(\alpha)}^{-1}
  \quad \text{and}\quad 
 Y:=\bigoplus_{\alpha\in \FF_n^+} d_\alpha Q_{(\alpha)}.
  $$
  Note that $X$ and $Y$ are quasi-affinities and
  $$
  d_\alpha Q_{(\alpha)}K_i^*=d_\alpha (L_i^{(\alpha)})^* Q_\alpha,\qquad 
  c_\alpha  Q_{(\alpha)}^{-1}(L_i^{(\alpha)})^*=c_\alpha K_i^* Q_{(\alpha)}^{-1}
 $$
 for any $i\in \{1,\ldots, n\}$ and any $\alpha\in \FF_n^+$.
 Now, note that, under the identification of $\cH\otimes F^2(H_n)$ with $\bigoplus_{\alpha\in \FF_n^+} \cH$, we have 
 \begin{equation*}
 \begin{split}
(K_i^*\otimes I_{F^2(H_n)})X&= \left(\bigoplus_{\alpha\in \FF_n} K_i^*\right)X
 =\bigoplus_{\alpha\in \FF_n}   c_\alpha K_i^* Q_{(\alpha)}^{-1}\\
 &=\bigoplus_{\alpha\in \FF_n} c_\alpha  Q_{(\alpha)}^{-1}(L_i^{(\alpha)})^*
 =X\left(\bigoplus_{\alpha\in \FF_n} (L_i^{(\alpha)})^*\right)
 =XL_i^*
 \end{split}
 \end{equation*}
 and, similarly,  one can  show that $Y(K_i^*\otimes I_{F^2(H_n)})=L^*_i Y$ for any $i\in \{1,\ldots, n\}$. 
 The proof is complete.
   \end{proof}
   
  Due to Corollary  \ref{Foias-Pearcy2} and Theorem \ref{quasi}, one can   deduce the following result.
  
  \begin{corollary}  \label{KK} Let  $T=(T_1,\ldots, T_n)\in B(\cH)^n$ be  a nonzero quasi-nilpotent $n$-tuple of operators. Then there is a nonzero $n$-tuple $K=(K_1,\ldots, K_n)\in B(\cL)$ of compact operators  such that $K$ is a quasi-affine transform of $T$, i.e. there is a quasi-affinity $\Omega:\cL\to \cH$ such that $\Omega K_i=T_i\Omega$ for any $i\in \{1,\ldots, n\}$.
  \end{corollary}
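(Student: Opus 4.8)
The plan is to compose the joint similarity of Corollary~\ref{Foias-Pearcy2} with the quasi-similarity of Theorem~\ref{quasi} so as to obtain a single operator intertwining $T$ with an $n$-tuple of compact operators, and then to replace that operator by its restriction to the orthogonal complement of its kernel, which turns it into a genuine quasi-affinity.

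Concretely, I would first invoke Corollary~\ref{Foias-Pearcy2}: there are a quasi-nilpotent weighted multi-shift $K=(K_1,\dots,K_n)$ of compact operators on $\cH$, a joint invariant subspace $\cR\subset\cH\otimes F^2(H_n)$ under the operators $K_i^*\otimes I_{F^2(H_n)}$, and, by relation \eqref{STS}, a bounded-below operator $S\colon\cH\to\cR$ with $S T_i^*=(K_i^*\otimes I_{F^2(H_n)})|_\cR\,S$ for every $i$. By Theorem~\ref{quasi} there are compact operators $L_1,\dots,L_n\in B(\cH\otimes F^2(H_n))$ and a quasi-affinity $Y\in B(\cH\otimes F^2(H_n))$ with $Y(K_i^*\otimes I_{F^2(H_n)})=L_i^*Y$. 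Setting $\Psi:=YS\colon\cH\to\cH\otimes F^2(H_n)$ and using $Sh\in\cR$, one computes $\Psi T_i^*=Y(K_i^*\otimes I_{F^2(H_n)})S=L_i^*\Psi$ for all $i$; since $S$ is injective and $Y$ is a quasi-affinity, $\Psi$ is injective, hence $\Omega:=\Psi^*\colon\cH\otimes F^2(H_n)\to\cH$ has dense range and satisfies $\Omega L_i=T_i\Omega$ for all $i$.

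It then remains to repair the possible failure of injectivity of $\Omega$. From $\Omega L_i=T_i\Omega$ one sees that $\ker\Omega$ is invariant under each $L_i$; put $\cL:=(\ker\Omega)^\perp$ and $\widetilde K_i:=P_\cL L_i|_\cL\in B(\cL)$, which is compact, being a corner of the compact operator $L_i$. For $l\in\cL$ one has $L_il-\widetilde K_il=(I-P_\cL)L_il\in\cL^\perp=\ker\Omega$, so, writing $\widetilde\Omega:=\Omega|_\cL\colon\cL\to\cH$, we get $\widetilde\Omega\widetilde K_il=\Omega L_il=T_i\Omega l=T_i\widetilde\Omega l$, i.e. $\widetilde\Omega\widetilde K_i=T_i\widetilde\Omega$. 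Here $\widetilde\Omega$ is injective since $\cL\cap\ker\Omega=\{0\}$ and has dense range since $\ran\widetilde\Omega=\ran\Omega$, so it is a quasi-affinity; and if every $\widetilde K_i$ were $0$ then $T_i\widetilde\Omega=0$ would force $T_i=0$, contradicting $T\ne0$. Thus $(\widetilde K_1,\dots,\widetilde K_n)$ is the required nonzero $n$-tuple of compact operators and $\widetilde\Omega$ the required quasi-affinity. The only genuine (and minor) obstacle is precisely this loss of injectivity when composing a similarity with a quasi-affinity; the cut-down to $(\ker\Omega)^\perp$ is what fixes it, and the one thing that must be verified is that the compressed operators remain compact and still intertwine with $T$, which works because $\cL^\perp=\ker\Omega$.
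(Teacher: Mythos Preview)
Your proof is correct and essentially coincides with the paper's: the paper takes $\cL:=\overline{Y\cR}$, observes it is invariant under each $L_i^*$, sets $K_i:=(L_i^*|_\cL)^*$, and uses the quasi-affinity $\Omega:=(\Gamma S)^*$ with $\Gamma:=Y|_\cR\colon\cR\to\cL$. Since your $(\ker\Omega)^\perp=\overline{\ran(YS)}=\overline{Y\cR}$, your compressed operators $\widetilde K_i=P_\cL L_i|_\cL$ and restricted quasi-affinity $\widetilde\Omega$ are exactly the paper's $K_i$ and $\Omega$.
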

  \begin{proof} According to Theorem \ref{quasi}, there is a quasi-affinity  $Y$ in  $B(\cH\otimes F^2(H_n))$ such that
 \begin{equation*}
 Y(K_i^*\otimes  I_{F^2(H_n)})=L_i^*Y, \qquad  i\in \{1,\ldots, n\}.
 \end{equation*}
 Consequently, using relation \eqref{STS} and setting  $\cL:=\overline{Y\cR}$, we have $L_i^*\cL\subset \cL$ for any $i\in \{1,\ldots, n\}$. 
 It is clear that  $K_i^*:=L_i^*|_\cL:\cL\to \cL$ is a compact operator. Let $\Gamma:\cR\to \cL$ be the quasi-affinity defined by $\Gamma x:=Yx$ for any $x\in \cR$ and note that the relation above implies
   \begin{equation*}
\Gamma (K_i^*\otimes  I_{F^2(H_n)})|_\cR=L_i^*Y|_\cR=K_i^* \Gamma , \qquad  i\in \{1,\ldots, n\}.
 \end{equation*}
  Due to relation \eqref{STS}, we have
  \begin{equation*} 
 ST_i^* S^{-1} =(K_i^*\otimes I_{F^2(H_n)})|_\cR, \qquad i\in \{1,\ldots, n\}.
 \end{equation*}
Combining these relations,  and setting $\Omega:=(\Gamma S)^*$ , we conclude that    $T_i\Omega=\Omega K_i$ for any $i\in \{1,\ldots, n\}$, which proves that $K$ is a  quasi-affine transform of $T$.
  This completes the proof.
   \end{proof}
   
   We    remark that if  $(T_1^*,\ldots T_n^*)$ is quasi-nilpotent $n$-tuple, then  applying the later corollary we find   an   $n$-tuple $ (M_1,\ldots, M_n)$ of compact operators  such that $(T_1,\ldots, T_n)$ is a quasi-affine transform of $ (M_1,\ldots, M_n)$.
  We note that if $(T_1,\ldots, T_n)$ is a nilpotent $n$-tuple of operators, then so is $(T^*_1,\ldots, T^*_n)$.
  
  On the other hand,  we mention that, due to Corollary \ref{KK},  if  $\cM\subset  \cL$ is a joint invariant  subspace under $K_1,\ldots, K_n$, then 
  $\overline{\Omega\cM}$ is jointly invariant under $T_1,\ldots, T_n$. In particular, if $\cap_{i=1}^n \ker K_i\neq \{0\}$, then $\cap_{i=1}^n \Omega (\ker K_i)\neq \{0\}$  is a jointly invariant subspace  under $T_1,\ldots, T_n$.

 \bigskip

 \section{Noncommutative Hardy spaces associated with weighted shifts}

Let  $\boldsymbol\mu=\{\mu_\beta\}_{\beta\in \FF_n^+}$ be a sequence of strictly positive real numbers  with $\mu_{g_0}=1$
 and let $F^2(\boldsymbol\mu)$ be the Hilbert space of formal power series in indeterminates $Z_1,\ldots, Z_n$ with orthogonal basis $\{Z_\alpha:\ \alpha\in \FF_n^+\}$ such that $\|Z_\alpha\|_{\boldsymbol\mu}:=\boldsymbol \mu(\alpha, g_0)$. Note that
$$
F^2(\boldsymbol\mu)=\left\{ \zeta=\sum_{\alpha\in \FF_n^+}c_\alpha Z_\alpha: \   \|\zeta\|_{\boldsymbol\mu}^2:=\sum_{\alpha\in \FF_n^+}\boldsymbol \mu(\alpha, g_0)^2 |c_\alpha|^2<\infty, \ c_\alpha\in \CC\right\}
$$
and $\left\{\frac{1}{\boldsymbol \mu(\alpha, g_0)}Z_\alpha\right\}_{\alpha\in \FF_n^+}$ is an orthonormal basis for $F^2(\boldsymbol\mu)$.
The {\it left multiplication} operator  $L_{Z_i}$ on $F^2(\boldsymbol\mu)$ is  defined by  $L_{Z_i}\zeta:=Z_i\zeta$ for all $\zeta\in F^2(\boldsymbol\mu)$.  
One can easily see that $L_{Z_i}$ is a bounded operator  if and only if 
\begin{equation}  \label{bound1}
\sup_{\alpha\in \FF_n^+} \mu_{g_i\alpha}<\infty.
\end{equation}
Similarly, we define the {\it right multiplication} operator  $R_{Z_i}:F^2(\boldsymbol\mu)\to F^2(\boldsymbol\mu)$  by   setting $R_{Z_i}\zeta:=\zeta Z_i$ for all $\zeta\in F^2(\boldsymbol\mu)$.   Note that $R_{Z_i}$ is a  bounded operator  if and only if 
\begin{equation} \label{bound2}
\sup_{\alpha\in \FF_n^+} \frac{\boldsymbol \mu(\alpha g_i, g_0)}{\boldsymbol \mu(\alpha, g_0)}<\infty.
\end{equation}
 It is easy to see   that $L_{Z_i} R_{Z_j}=  R_{Z_j}L_{Z_i}$ for any $i,j\in \{1,\ldots, n\}$.
 
 We remark that  the map
 $ \mu_\alpha  \mapsto b_\alpha:=\frac{1}{\boldsymbol \mu(\alpha, g_0)^2}$  is a 
 one-to-one correspondence  between  the set
  \begin{equation*} 
{\bf M}:= \{\{\mu_\alpha\}_{\alpha\in \FF_n^+}: \  \mu_\alpha>0, \mu_{g_0}=1, \text{ and }  \sup_{\alpha\in \FF_n^+} \mu_\alpha<\infty\}
 \end{equation*}
 and the set
  \begin{equation*} 
{\bf B}:=\{\{b_\alpha\}_{\alpha\in \FF_n^+}: \  b_\alpha>0, b_{g_0}=1, \text{ and }  \sup_{\alpha\in \FF_n^+} \frac{b_\alpha}{b_{g_i\alpha}}<\infty\}.
 \end{equation*}
Moreover, we have $\mu_{g_i\alpha}=\sqrt{\frac{b_\alpha}{b_{g_i\alpha}}}$ for all $i\in \{1,\ldots, n\}$. Due to this correspondence,  all the results of the present  paper   apply in the particular setting   of the regular domains \cite{Po-domains}, the noncommutative weighted Bergman spaces  \cite{Po-Berezin}, \cite{Po-Bergman}, and the more recent extension to admissible domains \cite{Po-NC}.

\begin{example} \label{Ex1}
For each $s\in (0,\infty)$, consider the weights $\boldsymbol\mu^s:=\{\mu^s_\alpha\}_{\alpha\in \FF_n^+}$ defined by $\mu^s_{g_0}:=1$ and 
$$
\mu^s_\alpha:=\sqrt{\frac{|\alpha|}{s+|\alpha|-1}},\qquad  |\alpha|\geq 1.
$$
The associated Hilbert space is 
$$
F^2(\boldsymbol\mu^s)=\left\{ \zeta=\sum_{\alpha\in \FF_n^+}c_\alpha Z_\alpha: \   \|\zeta\|_{\boldsymbol\mu^s}^2:=\sum_{\alpha\in \FF_n^+} \frac{1}{\left(\begin{matrix} s+k-1 \\k \end{matrix}\right)} |c_\alpha|^2<\infty, \ c_\alpha\in \CC\right\}.
$$
If $s=1$, then the corresponding subspace $F^2(\boldsymbol \mu^1)$ is   the full Fock space on $n$ generators.
On the other hand, if $s=n$ (resp. $s=n+1$) we obtain the noncommutative Hardy (resp. Bergman)  space over the unit ball
 $[B(\cH)^n]_1$. In the particular case when $s\in \NN$, we obtain the noncommutative weighted Bergman space over $[B(\cH)^n]_1$, which was extensively studied in  \cite{Po-Berezin}, \cite{Po-domains}, and \cite{Po-Bergman}. One can easily check that $[W_1,\ldots, W_n]$ is a row contraction if $s\geq 1$.
     We remark that when $s\in (0,1]$,   the Hilbert  spaces $F^2(\boldsymbol \mu^s)$  are  noncommutative generalizations of the classical   Besov-Sobolev spaces on the unit ball $\BB_n\subset \CC^n$ with representing kernel $\frac{1}{(1-\left<z,w\right>)^s}$, $z,w\in \BB_n$.
\end{example}

\begin{example} \label{Ex2} Let $s\in \RR$ and consider the weights
$\boldsymbol\omega^s:=\{\omega^s_\alpha\}_{\alpha\in \FF_n^+}$ defined by $w^s_{g_0}:=1$ and 
$$
\omega^s_\alpha:=\sqrt{\left(\frac{|\alpha|}{|\alpha|+1}\right)^s},\qquad  |\alpha|\geq 1.
$$
The associated Hilbert space is 
$$
F^2(\boldsymbol\omega^s)=\left\{ \zeta=\sum_{\alpha\in \FF_n^+}c_\alpha Z_\alpha: \   \|\zeta\|_{\boldsymbol\omega^s}^2:=\sum_{\alpha\in \FF_n^+} \frac{1}{ (|\alpha|+1)^s} |c_\alpha|^2<\infty, \ c_\alpha\in \CC\right\}.
$$
We remark that the scale of spaces $\{F^2(\boldsymbol\omega^s):  s\in \RR\}$ contains  the noncommutative Dirichlet space  $F^2(\boldsymbol\omega^{-1})$over the unit ball
 $[B(\cH)^n]_1$. Note that  $[W_1,\ldots, W_n]$ is a row contraction if $s\geq 0$. When $s=0$, we recover again the full Fock space with $n$ generators.
\end{example}

\begin{example} \label{Ex3}
Let  $s\in [1,\infty)$ and let $\varphi=\sum_{|\alpha|\geq 1} d_\alpha Z_\alpha$ be a formal power series such that $d_\alpha \geq 0$ and $d_{g_i}>0$ for $i\in \{1,\ldots, n\}$. 
Consider the formal power series
\begin{equation*}
\begin{split}
g&= 1+\sum_{k=1}^\infty\left(\begin{matrix} s+k-1 \\k \end{matrix}\right) \varphi^k
=1+\sum_{\alpha\in \FF_n^+, |\alpha|\geq 1}\left(\sum_{j=1}^{|\alpha|} \left(\begin{matrix} s+j-1 \\j \end{matrix}\right)
\sum_{{\gamma_1\cdots \gamma_j=\alpha }\atop {|\gamma_1|\geq 1,\ldots,
 |\gamma_j|\geq 1}} d_{\gamma_1}\cdots d_{\gamma_j} \right)  
  Z_\alpha
\end{split}
\end{equation*}
and let $b_\alpha $ be the coefficient  of $Z_\alpha$.
Consider  the weights  $\boldsymbol \mu^\varphi=\{\mu^\varphi_\alpha\}_{\alpha\in \FF_n^+}$ defined  by $\mu^\varphi_{g_0}:=1$ and $\mu^\varphi_{g_i\alpha}:=\sqrt{\frac {b_\alpha}{b_{g_i\alpha}}}$. In this case, the associated Hilbert space is 
$$
F^2(\boldsymbol \mu^\varphi)=\left\{ \zeta=\sum_{\alpha\in \FF_n^+}c_\alpha Z_\alpha: \   \|\zeta\|_{\boldsymbol\omega^s}^2:=\sum_{\alpha\in \FF_n^+} \frac{1}{ b_\alpha} |c_\alpha|^2<\infty, \ c_\alpha\in \CC\right\}.
$$
Note that when $\varphi=Z_1+\cdots +Z_n$ we recover the weights considered in Example \ref{Ex1} when $s\geq1$. We remark that if $d_{g_i}\geq 1$ for all $i\in \{1,\ldots, n\}$, then $[W_1,\ldots, W_n]$ is a row contraction.  This is due to the fact that
$\mu^\varphi_{g_i\alpha}:=\sqrt{\frac {b_\alpha}{b_{g_i\alpha}}}\leq \frac{1}{\sqrt{d_{g_i}}}$ (see  \cite{Po-NC}).
\end{example}

We mention that for all the examples presented above the conditions  \eqref{bound1} and \eqref{bound2} are satisfied.
 We remark that  the operator $U:F^2(H_n)\to F^2(\boldsymbol\mu)$,  defined by $U(e_\alpha):=\frac{1}{\boldsymbol \mu(\alpha, g_0)}Z_\alpha$, $\alpha\in \FF_n^+$, is unitary and
 $UW_i=L_{Z_i}U, \  i\in \{1,\ldots,n\},
 $
 where $W=(W_1,\ldots, W_n)$ is the weighted left multi-shift associated with the weights 
 $\boldsymbol\mu=\{\mu_\beta\}_{\beta\in \FF_n^+ }$.  
We also have $U\Lambda_i=R_{Z_i}U$ for every $ i\in \{1,\ldots,n\}$, where
 ${\Lambda}:=(\Lambda_1,\ldots, \Lambda_n)$  is the weighted right multi-shift associated with the weights $\widetilde{\boldsymbol\mu}=\{\widetilde\mu_\beta\}_{\beta\in \FF_n^+ }$  defined  on the full Fock space $F^2(H_n)$  by  $\Lambda_i e_\alpha:=
\widetilde\mu_{\alpha g_i }
e_{\alpha g_i}$,  where 
 $$
 \widetilde\mu_{\alpha g_i}:=\frac{\boldsymbol \mu(\alpha g_i, g_0)}{\boldsymbol \mu(\alpha, g_0)},\qquad i\in \{1,\ldots, n\}, \alpha\in \FF_n^+,
 $$
 and $\widetilde\mu_{g_0}:=1$.   In view of the correspondence $ \mu_\alpha  \mapsto b_\alpha$ mentioned above,  we remark that $\widetilde\mu_{\alpha g_i}=\sqrt{\frac{b_\alpha}{b_{\alpha g_i}}}$ for all $i\in \{1,\ldots, n\}$ and 
  $b_\alpha=\frac{1}{\widetilde{\boldsymbol \mu}(\alpha, g_0)^2}$ for any $\alpha\in \FF_n^+$, where
  $$\widetilde{\boldsymbol\mu} (\beta,g_0)
  :=\begin{cases}\widetilde{\mu}_{g_{i_1}\cdots g_{i_p}} \widetilde{\mu}_{g_{i_1}\cdots g_{i_{p-1}}} \cdots \widetilde{\mu}_{g_{i_1}} & \text{if}\  \beta=g_{i_1}\cdots g_{i_p}\in \FF_n^+\\
 1&\text{if} \ \beta=g_0.
 \end{cases}
$$

The considerations above will allow us to represent  injective weighted left multi-shifts 
$W_1,\ldots, W_n$ as ordinary multiplications by  $Z_1,\ldots, Z_n$ on a Hilbert space of formal power series, which leads naturally to analytic function theory in several complex variables. We will use these two viewpoints interchangeable  as convenient. One of the goal for the remainder  of the  paper is to analyze the extent to which  our noncommutative formal power series  represent analytic functions in  several noncommutative (resp. commutative)  variables.

 A formal power series  $\varphi=\sum_{\alpha\in \FF_n^+} a_\alpha Z_\alpha $ is called   {\it right multiplier} on  $F^2(\boldsymbol\mu) $   and  we denote $\varphi\in \cM^r(F^2(\boldsymbol\mu)) $ if,  for any
$\zeta=\sum_{\beta\in \FF_n^+} d_\beta Z_\beta $ in $F^2(\boldsymbol\mu)$, we have
$$
\zeta\varphi:=\sum_{\gamma\in \FF_n^+}  \left( \sum_{\alpha,\beta\in\FF_n^+, \beta \alpha =\gamma}  d_{\beta} a_{\alpha}\right)Z_\gamma\in F^2(\boldsymbol\mu).
$$
Due to the closed graph theorem,  the right multiplication operator
$R_\varphi :F^2(\boldsymbol\mu) \to F^2(\boldsymbol\mu) $ defined by $R_\varphi \zeta:=\zeta\varphi$  is a  bounded  linear operator.   The (right) multiplier norm of $\varphi$ is  given by 
$\|\varphi\|^{\cM^r}:=\|R_\varphi\|$.  
    Note that  
$\cM^r(F^2(\boldsymbol\mu))$ becomes a Banach algebra with respect to the multiplier norm.
Similarly, we  introduce the the set of   {\it left multipliers} on $F^2(\boldsymbol\mu) $  and denote  it by $ \cM^\ell(F^2(\boldsymbol\mu))$. In this case, the left multiplication operator
$L_\varphi :F^2(\boldsymbol\mu) \to F^2(\boldsymbol\mu) $   is defined by $L_\varphi \zeta:=\varphi \zeta$   and it is a  bounded  linear operator. The (left) multiplier norm of $\varphi$ is  given by 
$\|\varphi\|^{\cM^\ell}:=\|L_\varphi\|$.

As in the proof  of Theorem 4.3 from \cite{Po-Bergman}, one can show that  a bounded operator $X:F^2(\boldsymbol\mu) \to F^2(\boldsymbol\mu)$ satisfies  the relation
$$
XL_{Z_i}=L_{Z_i} X,\qquad i\in \{1,\ldots, n\},
$$
if and only if   there is $\varphi\in \cM^r(F^2(\boldsymbol\mu)) $ such that
$X=R_\varphi$. Similarly, one can prove that a bounded operator $X:F^2(\boldsymbol\mu) \to F^2(\boldsymbol\mu)$ satisfies  the relation
$$
XR_{Z_i}=R_{Z_i} X,\qquad i\in \{1,\ldots, n\},
$$
if and only if the is  there is $\varphi\in \cM^l(F^2((\boldsymbol\mu)) $ such that
$X=L_\varphi$.
Consequently, we have 
\begin{equation*}
\begin{split}
\{W_1,\ldots, W_n\}' &=R^\infty(\boldsymbol \mu):=\{{U}^*  R_\varphi  U : \  \varphi\in \cM^r (F^2(\boldsymbol\mu))\}\\
\{\Lambda_1,\ldots, \Lambda_n\}' &=F^\infty(\boldsymbol \mu):=\{{U}^*  L_\varphi  U : \  \varphi\in \cM^l (F^2(\boldsymbol\mu))\}
\end{split}
\end{equation*}
where ${}'$ stands for the commutant. 
Note that a power series $\zeta=\sum_{\alpha\in \FF_n^+}c_\alpha Z_\alpha$ is in  $ \cM^l (F^2(\boldsymbol\mu))$  if and only if
$\zeta(W_1,\ldots, W_n)p:=\sum_{\alpha\in \FF_n^+}c_\alpha W_\alpha p\in F^2(H_n)$ for any  polynomial $p\in \cP:=\Span \{e_\alpha:\ \alpha\in \FF_n^+\}$ and 
$$
\|\zeta(W_1,\ldots, W_n)p\|\leq M\|p\|,\qquad p\in\cP,
$$
for some constant $M>0$. We  denote by $\zeta(W_1,\ldots, W_n)$ the unique bounded linear extension of the operator  to $F^2(H_n)$. Therefore $F^\infty(\boldsymbol\mu)=\{\zeta(W_1,\ldots, W_n):\ \zeta\in  \cM^l (F^2(\boldsymbol\mu))\}$ and  $\zeta(W_1,\ldots, W_n) =U^*L_\varphi U$.
In a similar manner, one can see that
 $R^\infty(\boldsymbol\mu)=\{\zeta(\Lambda_1,\ldots, \Lambda_n):\ \zeta\in  \cM^r (F^2(\boldsymbol\mu))\}$ and   $\zeta(\Lambda_1,\ldots, \Lambda_n) =U^*R_\varphi U$.

Similarly to the proof of Theorem 2.5 from \cite{Po-NC}, one can obtain the following result.

\begin{theorem}  \label{closure} Let  $\boldsymbol\mu=\{\mu_\beta\}_{\beta\in \FF_n^+}$ be a sequence of strictly positive real numbers  satisfying relations \eqref{bound1} and \eqref{bound2} and let $W=(W_1,\ldots, W_n)$ be the associated weighted left multi-shift.
Then   the noncommutative  Hardy algebra  $F^\infty(\boldsymbol\mu)$  satisfies the relation
  $$
  F^\infty(\boldsymbol\mu)=\overline{\cP({W})}^{SOT}=\overline{\cP({W})}^{WOT}=\overline{\cP({W})}^{w*},
  $$
where $\cP({W})$ stands for the algebra of all polynomials in $W_1,\ldots, W_n$ and the identity.
Moreover,  $F^\infty(\boldsymbol\mu)$ is the sequential SOT-(resp. WOT-, w*-) closure of   $\cP({W})$. In addition, if  $\zeta(W)\in F^\infty(\boldsymbol\mu)$ has the Fourier representation  $\sum_{\alpha\in \FF_n^+}c_\alpha W_\alpha$, then
$$
\zeta(W)=\text{\rm SOT-}\lim_{N\to\infty}\sum_{ |\alpha|\leq N} \left(1-\frac{|\alpha|}{N+1}\right) c_\alpha W_\alpha, 
$$
$$
\left\|\sum_{\alpha\in \FF_n^+, |\alpha|=k}  c_\alpha W_\alpha \right\|\leq \|\zeta(W)\|,\qquad k\in \NN,
$$
   and
$$
\left\|\sum_{ |\alpha|\leq N} \left(1-\frac{|\alpha|}{N+1}\right) c_\alpha W_\alpha   \right\|\leq \|\zeta(W)\|,\qquad N\in \NN.
$$

\end{theorem}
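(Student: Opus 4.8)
The plan is to run the classical Ces\`aro/Fej\'er summability argument for the Fourier series of an element of a noncommutative Hardy algebra, now carried out with weights. First I would record the soft inclusions. Since $L_{Z_i}R_{Z_j}=R_{Z_j}L_{Z_i}$, every polynomial in $W_1,\dots,W_n$ commutes with $\Lambda_1,\dots,\Lambda_n$, so $\cP(W)\subseteq F^\infty(\boldsymbol\mu)=\{\Lambda_1,\dots,\Lambda_n\}'$; this commutant is WOT-closed, hence SOT-closed, and also $w^*$-closed. Moreover $\cP(W)$ is a linear subspace, so its SOT- and WOT-closures coincide, and a norm-bounded SOT-convergent net is $w^*$-convergent. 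Hence each of the four closures in the statement is contained in $F^\infty(\boldsymbol\mu)$, and it suffices to exhibit, for each $A\in F^\infty(\boldsymbol\mu)$, a \emph{sequence} of polynomials in $W$ bounded by $\|A\|$ converging to $A$ strongly — the $w^*$-convergence being then automatic; the Ces\`aro means constructed below will be this sequence, and the three displayed formulas will drop out along the way.

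The engine is the gauge group. For $t\in\RR$ let $U_t\in B(F^2(H_n))$ be the unitary with $U_te_\alpha:=e^{i|\alpha|t}e_\alpha$; then $U_tW_iU_t^*=e^{it}W_i$ and $U_t\Lambda_iU_t^*=e^{it}\Lambda_i$, so $\gamma_t:=U_t(\cdot)U_t^*$ restricts to an isometric automorphism of $F^\infty(\boldsymbol\mu)$ that is point-norm continuous on vectors. Fix $A\in F^\infty(\boldsymbol\mu)$ and set $d_\alpha:=\langle Ae_{g_0},e_\alpha\rangle$, $c_\alpha:=d_\alpha/\boldsymbol\mu(\alpha,g_0)$, so that $\sum_\alpha c_\alpha W_\alpha$ is the Fourier representation of $A$ (indeed $\sum_\alpha c_\alpha W_\alpha e_{g_0}=\sum_\alpha d_\alpha e_\alpha=Ae_{g_0}$). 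Using $e_\beta=\widetilde{\boldsymbol\mu}(\beta,g_0)^{-1}\Lambda_\beta e_{g_0}$ and $A\Lambda_\beta=\Lambda_\beta A$, one checks that $A$ is degree-nondecreasing: $A\cE_j\subseteq\bigoplus_{i\ge j}\cE_i$, where $\cE_j:=\Span\{e_\alpha:|\alpha|=j\}$. I would then define the homogeneous components $A_k:=\int_0^{2\pi}e^{-ikt}\gamma_t(A)\,\frac{dt}{2\pi}$, the integral taken entrywise against the orthonormal basis $\{\boldsymbol\mu(\alpha,g_0)^{-1}e_\alpha\}$, and verify that $A_k$ raises degree by exactly $k$, belongs to $F^\infty(\boldsymbol\mu)$, and agrees with $\sum_{|\alpha|=k}c_\alpha W_\alpha$ on $e_{g_0}$; since an element of $F^\infty(\boldsymbol\mu)$ is determined by its value at $e_{g_0}$, this forces $A_k=\sum_{|\alpha|=k}c_\alpha W_\alpha$. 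As $A_k$ is block diagonal along the $k$-shifted diagonal, with blocks $P_{\cE_{j+k}}A|_{\cE_j}$, its norm is $\sup_j\|P_{\cE_{j+k}}A|_{\cE_j}\|\le\|A\|$, which is the second displayed inequality.

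Finally I would introduce the Fej\'er means $C_N(A):=\sum_{k=0}^{N}(1-\frac{k}{N+1})A_k=\sum_{|\alpha|\le N}(1-\frac{|\alpha|}{N+1})c_\alpha W_\alpha\in\cP(W)$. With $K_N$ the nonnegative Fej\'er kernel, the identity $U_tAU_t^*e_\beta=\sum_{k\ge0}e^{ikt}A_ke_\beta$ gives $C_N(A)=\int_0^{2\pi}K_N(t)\,\gamma_t(A)\,\frac{dt}{2\pi}$, and since $K_N\ge0$, $\int_0^{2\pi}K_N(t)\frac{dt}{2\pi}=1$ and $\|\gamma_t(A)\|=\|A\|$, this yields $\|C_N(A)\|\le\|A\|$, the third displayed inequality. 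For convergence, fix $\xi\in F^2(H_n)$; then $f(t):=\gamma_t(A)\xi$ is a norm-continuous $2\pi$-periodic $F^2(H_n)$-valued function with $f(0)=A\xi$ and $C_N(A)\xi=(K_N*f)(0)$, so the vector-valued Fej\'er theorem gives $C_N(A)\xi\to A\xi$; thus $C_N(A)\to A$ in SOT, which is the first displayed formula. Together with the first paragraph this gives all the asserted identities and the sequential density. The only genuinely fiddly points I anticipate are the identification $A_k=\sum_{|\alpha|=k}c_\alpha W_\alpha$ and the two operator-valued integral identities; both are settled by matching matrix coefficients against $\{\boldsymbol\mu(\alpha,g_0)^{-1}e_\alpha\}$, the weights entering only through the normalizations $\boldsymbol\mu(\alpha,g_0),\widetilde{\boldsymbol\mu}(\beta,g_0)$ and the boundedness hypotheses \eqref{bound1}, \eqref{bound2}, so the argument runs exactly as in \cite{Po-NC} and in the unweighted theory of Davidson--Pitts and the author.
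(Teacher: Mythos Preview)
Your proposal is correct and is precisely the standard Ces\`aro/Fej\'er argument via the gauge action that the paper is invoking when it says ``similarly to the proof of Theorem 2.5 from \cite{Po-NC}''; the paper does not spell out the details, and what you have written is exactly the proof one finds there. The only cosmetic point is that your identification $A_k=\sum_{|\alpha|=k}c_\alpha W_\alpha$ should be phrased as showing that the bounded operator $A_k$ agrees with the a priori unbounded sum on each $e_\beta$ (via $A_k\Lambda_\gamma=\Lambda_\gamma A_k$), thereby proving boundedness of the homogeneous sum as a consequence rather than assuming it --- but your sketch already handles this correctly.
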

We remark that a similar result holds for the weighted right multi-shift and the Hardy algebra $R^\infty(\boldsymbol\mu)$.

Let $\lambda=(\lambda_1,\ldots, \lambda_n)\in \CC^n$ and define the linear functional  $\varphi_\lambda:\CC\left<Z_1,\ldots, Z_n\right>\to \CC$ of evaluation  at $\lambda$ by setting
$$\varphi_\lambda(p):=p(\lambda),\qquad p\in \CC\left<Z_1,\ldots, Z_n\right>.$$

\begin{definition}  We say that $\lambda\in \CC^n$ is a  bounded point   evaluation on the Hilbert space $F^2(\boldsymbol\mu)$ if $\varphi_\lambda$ extends to a bounded linear functional   on
$F^2(\boldsymbol\mu)$, i.e. there is $C>0$ such that
$$
|\varphi_\lambda(p)|=|p(\lambda)|\leq C \|p\|_{\boldsymbol\mu}, \qquad p\in \CC\left<Z_1,\ldots, Z_n\right>.$$
Denote 
$$\cD(\boldsymbol\mu):=\{\lambda\in \CC^n:\   \varphi_\lambda \text{ is  a bounded  linear functional on } F^2(\boldsymbol\mu)\}.
$$
\end{definition}

\begin{definition}
The joint point spectrum of    $(L_{Z_1}^*,\ldots, L_{Z_n}^*)$ is the set  
$
\sigma_p(L_{Z_1}^*,\ldots, L_{Z_n}^*)$  of all  tuples $\zeta=(\zeta_1,\ldots, \zeta_n)\in \CC^n$ with the property that  there is  $z\in F^2(\boldsymbol\mu)$, $z\neq 0$ such that  $L_{Z_i}^*z=\zeta_i z$ for any $i\in \{1,\ldots, n\}$. In this case, we say that  $z$ is a   joint eigenvector for the operators    $L_{Z_1}^*,\ldots, L_{Z_n}^*$.
\end{definition}

 Given  a $k$-tuple  $\lambda=(\lambda_1,\ldots, \lambda_n)\in \CC^n$   we  set
$\lambda_{\alpha}:=\lambda_{j_1}\cdots \lambda_{j_m}$ if
$\alpha=g_{j_1}\cdots g_{j_m}\in \FF_{n}^+$ and $\lambda_{g_{0}}:=1$.

\begin{theorem}  \label{evaluation}  Let  $\boldsymbol\mu=\{\mu_\beta\}_{\beta\in \FF_n^+}$ be a sequence of strictly positive real numbers  with $\mu_{g_0}:=1$  and  $$\sup_{\alpha\in \FF_n^+, i\in \{1,\ldots,n\}} \mu_{g_i\alpha}<\infty$$
and  let $F^2(\boldsymbol\mu)$ be the Hilbert space of formal power series in indeterminates $Z_1,\ldots, Z_n$ associated with $\boldsymbol\mu$.  If $L_Z=(L_{Z_1},\ldots, L_{Z_n})$ is the left multi-shift associated with $\boldsymbol\mu$,
then the following statements hold.
\begin{enumerate}
\item[(i)]
 $$\cD(\boldsymbol\mu)=\left\{ (\lambda_1,\ldots, \lambda_n)\in \CC^n : \  \sum_{\alpha\in \FF_n^+} \frac{|\lambda_\alpha|^2}{\boldsymbol \mu(\alpha, g_0)^2} <\infty\right\}
 =\sigma_p(L_{Z_1}^*,\ldots, L_{Z_n}^*).
 $$
 \item[(ii)]  If    $\lambda \in   \cD(\boldsymbol\mu)$ and $f=\sum_{\alpha\in \FF_n^+}c_\alpha Z_\alpha$ is in the space $F^2(\boldsymbol\mu)$, then the series
 $f(\lambda):=\sum_{\alpha\in \FF_n^+}c_\alpha \lambda_\alpha$ is absolutely convergent to $\varphi_\lambda(f)$ and 
 $$
 |f(\lambda)|\leq \|f\|_{\boldsymbol\mu}\left(\sum_{\alpha\in \FF_n^+} \frac{|\lambda_\alpha|^2}{\boldsymbol \mu(\alpha, g_0)^2}  \right)^{1/2}.
 $$
 
 \item[(iii)] If  $\lambda=(\lambda_1,\ldots, \lambda_n)\in \CC^n$ and    the series   
 $\sum_{k=0}^\infty\sum_{\alpha\in \FF_n^+, |\alpha|=k}c_\alpha \lambda_\alpha$ is convergent for any  element $f=\sum_{\alpha\in \FF_n^+}c_\alpha Z_\alpha$ is 
   $F^2(\boldsymbol\mu)$, 
 then $\lambda\in \cD(\boldsymbol\mu)$.
 \item[(iv)]  If $\varphi=\sum_{\alpha\in \FF_n}c_\alpha Z_\alpha$ is a bounded free holomorphic function on the open ball of $B(\cH)^n$ of radius $\|L_Z\|$, then $\varphi\in  \cM^\ell(F^2(\boldsymbol\mu))$ and 
 $$
 \|L_\varphi\|=\|\varphi(L_{Z_1},\ldots, L_{Z_n})\|\leq \sup \{\|\varphi(X_1,\ldots, X_n)\|:\ \|(X_1,\ldots, X_n)\|<\|L_Z\|\}.
 $$
 
  \end{enumerate}
\end{theorem}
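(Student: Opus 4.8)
The plan is to dispatch (i)--(iii) as essentially Hilbert-space bookkeeping (Riesz representation, Cauchy--Schwarz, uniform boundedness) and to reserve the real work for (iv), which needs the theory of free holomorphic functions together with the commutant characterization of $\cM^\ell(F^2(\boldsymbol\mu))$ recalled just before the theorem. For (i): since $\{Z_\alpha/\boldsymbol\mu(\alpha,g_0)\}_{\alpha\in\FF_n^+}$ is an orthonormal basis of $F^2(\boldsymbol\mu)$ and $\varphi_\lambda(Z_\alpha)=\lambda_\alpha$, the Riesz theorem forces the representing vector of a bounded extension of $\varphi_\lambda$ to be the formal series $z_\lambda:=\sum_\alpha\frac{\overline{\lambda_\alpha}}{\boldsymbol\mu(\alpha,g_0)^2}Z_\alpha$ (read off from $\langle Z_\alpha,z_\lambda\rangle_{\boldsymbol\mu}=\lambda_\alpha$), and $\|z_\lambda\|_{\boldsymbol\mu}^2=\sum_\alpha|\lambda_\alpha|^2/\boldsymbol\mu(\alpha,g_0)^2$ gives the first equality. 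For the second equality I would check, directly on the basis, that for $\zeta\in\CC^n$ the vector $z_\zeta:=\sum_\alpha\frac{\zeta_\alpha}{\boldsymbol\mu(\alpha,g_0)^2}Z_\alpha$, whenever it lies in $F^2(\boldsymbol\mu)$, is a joint eigenvector with $L_{Z_i}^*z_\zeta=\zeta_i z_\zeta$; conversely, any joint eigenvector $z$ with eigenvalues $\zeta_i$ satisfies $\langle z,Z_\alpha\rangle=\langle L_{Z_\alpha}^*z,1\rangle=\zeta_\alpha\langle z,1\rangle$, so $z$ is a scalar multiple of $z_\zeta$ and is nonzero only if $\langle z,1\rangle\neq0$. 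Hence membership in $\sigma_p(L_{Z_1}^*,\dots,L_{Z_n}^*)$ is governed by the same summability condition, so it coincides with $\cD(\boldsymbol\mu)$.

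For (ii), if $\lambda\in\cD(\boldsymbol\mu)$ and $f=\sum c_\alpha Z_\alpha\in F^2(\boldsymbol\mu)$, then Cauchy--Schwarz applied to $\sum_\alpha|c_\alpha\lambda_\alpha|=\sum_\alpha\bigl(|c_\alpha|\boldsymbol\mu(\alpha,g_0)\bigr)\bigl(|\lambda_\alpha|/\boldsymbol\mu(\alpha,g_0)\bigr)$ yields absolute convergence of $f(\lambda)$ together with the displayed bound, and $f(\lambda)=\langle f,z_\lambda\rangle_{\boldsymbol\mu}=\varphi_\lambda(f)$ by continuity of the extended functional. For (iii), the partial sums $S_N(f):=\sum_{|\alpha|\le N}c_\alpha\lambda_\alpha=\langle f,z_{\lambda,N}\rangle_{\boldsymbol\mu}$, with $z_{\lambda,N}:=\sum_{|\alpha|\le N}\frac{\overline{\lambda_\alpha}}{\boldsymbol\mu(\alpha,g_0)^2}Z_\alpha\in F^2(\boldsymbol\mu)$ (finitely supported, hence bounded functionals), converge for every $f\in F^2(\boldsymbol\mu)$ by hypothesis; the uniform boundedness principle then gives $\sup_N\|z_{\lambda,N}\|_{\boldsymbol\mu}^2=\sum_\alpha|\lambda_\alpha|^2/\boldsymbol\mu(\alpha,g_0)^2<\infty$, i.e. $\lambda\in\cD(\boldsymbol\mu)$.

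For (iv), put $r:=\|L_Z\|=\|[L_{Z_1},\dots,L_{Z_n}]\|$ and $M:=\sup\{\|\varphi(X_1,\dots,X_n)\|:\ \|(X_1,\dots,X_n)\|<r\}<\infty$. For $0<t<1$ the row operator $tL_Z$ on $F^2(\boldsymbol\mu)$ has norm $tr<r$, so since $\varphi$ is a bounded free holomorphic function on the ball of radius $r$, the series $\sum_k\sum_{|\alpha|=k}c_\alpha t^{|\alpha|}L_{Z_\alpha}$ converges in operator norm to $\varphi(tL_Z)$ with $\|\varphi(tL_Z)\|\le M$. Being a norm limit of polynomials in $L_{Z_1},\dots,L_{Z_n}$, the operator $\varphi(tL_Z)$ commutes with every $R_{Z_j}$, hence by the commutant characterization recalled before the theorem it equals $L_{\varphi_t}$, where $\varphi_t:=\varphi(tL_Z)1=\sum_\alpha c_\alpha t^{|\alpha|}Z_\alpha\in\cM^\ell(F^2(\boldsymbol\mu))$. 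Now fix a polynomial $p$: the vectors $\varphi_t p=L_{\varphi_t}p$ satisfy $\|\varphi_t p\|_{\boldsymbol\mu}\le M\|p\|$, and as $t\uparrow1$ each coefficient of $\varphi_t p$ (a finite sum) converges to the corresponding coefficient of the formal product $\varphi p$; since a bounded net in $F^2(\boldsymbol\mu)$ converging coefficientwise converges weakly to a vector of $F^2(\boldsymbol\mu)$ with norm $\le\liminf$, we conclude $\varphi p\in F^2(\boldsymbol\mu)$ with $\|\varphi p\|_{\boldsymbol\mu}\le M\|p\|$. Running $p$ over the (dense) polynomials shows $\varphi\in\cM^\ell(F^2(\boldsymbol\mu))$ and that $L_\varphi=\varphi(L_{Z_1},\dots,L_{Z_n})$, in the sense of the Fourier-representation calculus of Theorem \ref{closure}, has $\|L_\varphi\|\le M$, which is the asserted inequality (the equality $\|L_\varphi\|=\|\varphi(L_{Z_1},\dots,L_{Z_n})\|$ being just notation).

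I expect the only genuine obstacle to be in (iv): one must pin down the precise meaning of ``bounded free holomorphic function on the open ball of $B(\cH)^n$ of radius $\|L_Z\|$'', justify that evaluation at the interior point $tL_Z$ produces a norm-convergent series bounded by $M$ (a standard but non-trivial fact about free holomorphic functions), and carry out the passage $t\uparrow1$. It is also worth recording, via the unitary $U$ of this section with $UW_i=L_{Z_i}U$, that $\|L_Z\|=\|W\|$, so the radius-$\|L_Z\|$ ball is indeed the natural domain; parts (i)--(iii) are entirely routine.
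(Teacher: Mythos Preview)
Your proofs of (i)--(iii) are correct and match the paper's argument almost verbatim: Riesz representation to identify $k_\lambda=\sum_\alpha\frac{\overline{\lambda_\alpha}}{\boldsymbol\mu(\alpha,g_0)^2}Z_\alpha$, direct computation of $L_{Z_i}^*k_\lambda$, Cauchy--Schwarz for the bound in (ii), and uniform boundedness of the truncated kernels for (iii). Your (ii) is in fact slightly cleaner than the paper's, which detours through $g:=\sum|c_\alpha|Z_\alpha$ and a Reinhardt-symmetry trick before arriving at absolute convergence; your one-line Cauchy--Schwarz does the job directly.

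Part (iv) is where you genuinely diverge. The paper rescales: it sets $\psi(Z):=\sum_\alpha c_\alpha\|L_Z\|^{|\alpha|}Z_\alpha$, a bounded free holomorphic function on the \emph{unit} ball, and $\Gamma:=\|L_Z\|^{-1}L_Z$, a weighted multi-shift of norm $1$; it then invokes Theorem~\ref{power bounded} (row power bounded $\Leftrightarrow$ pure) to conclude $\Gamma$ is a pure row contraction, and applies the $F^\infty(H_n)$-functional calculus for pure row contractions to get $\|\varphi(L_Z)\|=\|\psi(\Gamma)\|\le\|\psi(S)\|=\sup_{\|X\|<\|L_Z\|}\|\varphi(X)\|$. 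Your route---evaluate $\varphi$ at the interior point $tL_Z$, bound by $M$, then pass to the limit $t\uparrow1$ via weak convergence on polynomials---is more elementary and self-contained: it avoids both Theorem~\ref{power bounded} and the external $F^\infty(H_n)$-calculus. One small caveat: your appeal to the commutant characterization $\{R_{Z_j}\}'=\cM^\ell$ to identify $\varphi(tL_Z)=L_{\varphi_t}$ tacitly uses boundedness of $R_{Z_j}$ (condition~\eqref{bound2}), which the theorem does not assume; but this is harmless, since the identity $\varphi(tL_Z)p=\varphi_t\cdot p$ on polynomials follows directly from norm convergence of $\sum_\alpha c_\alpha t^{|\alpha|}L_{Z_\alpha}$ without any commutant statement. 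The paper's approach buys a tighter link to the noncommutative Hardy-space machinery developed elsewhere in the paper; yours buys independence from it.
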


\begin{proof} 
 
If $\lambda=(\lambda_1,\ldots, \lambda_n)\in   \cD(\boldsymbol\mu)$, then   $ \varphi_\lambda$  is  a bounded linear functional on   $F^2(\boldsymbol\mu)$ and the  Riesz representation theorem implies the existence  of an element 
$k_\lambda\in F^2(\boldsymbol\mu)$ such that $\varphi_\lambda(f)=\left<f, k_\lambda\right>_{\boldsymbol\mu}$ for any $f\in  F^2(\boldsymbol\mu)$. Since  $\left<Z_\alpha, k_\lambda\right>_{\boldsymbol\mu}=\lambda_\alpha$ for any $\alpha\in \FF_n^+$, we deduce that  
$k_\lambda=\sum_{\alpha\in \FF_n^+} \bar \lambda_\alpha \frac{1}{\boldsymbol \mu(\alpha, g_0)^2} Z_\alpha$.
Since $k_\lambda\in F^2(\boldsymbol\mu)$, we must have $\sum_{\alpha\in \FF_n^+}\frac{ |\lambda_\alpha|^2}{\boldsymbol \mu(\alpha, g_0)^2} <\infty$. The converse is obvious.

To prove the second equality in the part (i),  let   $\lambda\in   \cD(\boldsymbol\mu)$ and note that \begin{equation*}
\begin{split}
\left< f, L_{Z_i}^*k_\lambda\right>_{\boldsymbol\mu}&= \left< L_{Z_i}f, \sum_{\alpha\in \FF_n^+} \bar \lambda_\alpha b_\alpha Z_\alpha\right>_{\boldsymbol\mu}\\
&=\lambda_i\left<  Z_i f,  k_\lambda\right>_{\boldsymbol\mu}
=\left<  Z_i f, \bar\lambda_i,  k_\lambda\right>_{\boldsymbol\mu}
\end{split}
\end{equation*}
for any $f\in F^2(\boldsymbol\mu)$ and $i\in \{1,\ldots, n\}$. Consequently, $L_{Z_i}^* k_\lambda=
\bar\lambda_i,  k_\lambda$  which shows that 
$(\bar\lambda_1,\ldots, \bar\lambda_n)\in \sigma_p(L_{Z_1}^*,\ldots, L_{Z_n}^*)$ and 
$(\lambda_1,\ldots, \lambda_n)\in \sigma_p(L_{Z_1}^*,\ldots, L_{Z_n}^*)$. 

Conversely, assume that $(\bar\lambda_1,\ldots, \bar\lambda_n)\in \sigma_p(L_{Z_1}^*,\ldots, L_{Z_n}^*)$ and let $h\in F^2(\boldsymbol\mu)$, $h\neq 0$, with the property that   $L_{Z_i}^* h=
\bar\lambda_i h$ for any $i\in \{1,\ldots, n\}$. Consider the bounded linear functional
$\psi:F^2(\boldsymbol\mu)\to \CC$ given by $\psi(f):=\left<f, wh\right>_{\boldsymbol\mu}$ with $w\in \CC$, $w\neq 0$, to be determined. Note that
\begin{equation*}
\begin{split}
\psi(Z_if)=\left<Z_i f, wh\right>_{\boldsymbol\mu}=\left<  f, w L_{Z_i}^*h\right>_{\boldsymbol\mu}=\lambda_i\left<f, wh\right>_{\boldsymbol\mu}=\lambda_i\psi(f)
\end{split}
\end{equation*}
for any $i\in \{1,\ldots, n\}$
which implies   $\psi(Z_\alpha)=\lambda_\alpha \psi(1)$ for any $\alpha\in \FF_n^+$ with $|\alpha|\geq 1$. Since $\psi\neq 0$, we must have $\psi(1)\neq 0$. Setting $w:=\frac{1}{\overline{\left<1, h\right>_{\boldsymbol\mu}}}$, we have $\psi(1)=1$ and, consequently,
$\psi(p)=p(\lambda)$ for any $p\in  \CC\left<Z_1,\ldots, Z_n\right>$. Therefore, $(\lambda_1,\ldots, \lambda_n)\in \cD(\boldsymbol\mu)$ which completes the proof of part (i).

To prove part (ii),  note that
  $g:=\sum_{\alpha\in \FF_n^+}|c_\alpha| Z_\alpha$ is also in   $F^2(\boldsymbol\mu)$. If  $\lambda \in   \cD(\boldsymbol\mu)$, then, 
   setting 
 $g_m:=\sum_{\alpha\in \FF_n^+, |\alpha|\leq m} |c_\alpha| Z_\alpha$, we deduce that
 $g_m(\lambda)=\varphi_\lambda(g_m)=\left<g_m, k_\lambda\right>_{\boldsymbol\mu}\to  \varphi_\lambda(g)$ as $m\to \infty$. Consequently, the series  $\sum_{\alpha\in \FF_n^+}|c_\alpha| \lambda_\alpha$ is   convergent for any $\lambda \in   \cD(\boldsymbol\mu)$. Since $(\lambda_1 e^{i\theta_1},\ldots, \lambda_n e^{i\theta_n})\in \cD(\boldsymbol\mu)$ for any $\theta_i\in \RR$, we also deduce that
$\sum_{\alpha\in \FF_n^+}|c_\alpha| |\lambda_\alpha|$ is   convergent for any $\lambda \in   \cD(\boldsymbol\mu)$.  As above, we deduce that, for any $\lambda \in   \cD(\boldsymbol\mu)$,
$$
\sum_{\alpha\in \FF_n^+, |\alpha|\leq m} c_\alpha \lambda_\alpha=\varphi_\lambda\left(\sum_{\alpha\in \FF_n^+, |\alpha|\leq m} c_\alpha \lambda_\alpha\right)\to \varphi_\lambda(f)=\sum_{\alpha\in \FF_n^+}c_\alpha \lambda_\alpha
$$
as $m\to \infty$. Since $f(\lambda)=\varphi_\lambda(f)=\left<f, k_\lambda\right>_{\boldsymbol\mu}$ for any $f\in  F^2(\boldsymbol\mu)$ and $\|k_\lambda\|_{\boldsymbol\mu}=\left(\sum_{\alpha\in \FF_n^+}  \frac{|\lambda_\alpha|^2}{\boldsymbol \mu(\alpha, g_0)^2}\right)^{1/2}$, the Cauchy-Schwartz inequality implies
 $$
 |f(\lambda)|\leq \|f\|_{\boldsymbol\mu}\left(\sum_{\alpha\in \FF_n^+}  \frac{|\lambda_\alpha|^2}{\boldsymbol \mu(\alpha, g_0)^2}\right)^{1/2},\qquad \lambda \in   \cD(\boldsymbol\mu), 
 $$
for any  $f\in F^2(\boldsymbol\mu)$.

Now, we prove part (iii). Assume that $\lambda=(\lambda_1,\ldots, \lambda_n)\in \CC^n$ and    the series   
 $\sum_{k=0}^\infty\sum_{\alpha\in \FF_n^+, |\alpha|=k}c_\alpha \lambda_\alpha$ is convergent for any   $f=\sum_{\alpha\in \FF_n^+}c_\alpha Z_\alpha$ is   $F^2(\boldsymbol\mu)$. 
 Define the linear functional on $F^2(\boldsymbol\mu)$ by setting
 $\varphi_\lambda (f):=\sum_{k=0}^\infty\sum_{\alpha\in \FF_n^+, |\alpha|=k}c_\alpha \lambda_\alpha$. For each $m\in \NN$, set  $k^{(m)}_\lambda:= \sum_{\alpha\in \FF_n^+, |\alpha|\leq m} \bar \lambda_\alpha \frac{1}{\boldsymbol \mu(\alpha, g_0)^2} Z_\alpha$. Note that
 $$
 \left< f, k^{(m)}_\lambda\right>_{\boldsymbol\mu}=\sum_{\alpha\in \FF_n^+, |\alpha|\leq m}  c_\alpha\lambda_\alpha\to \varphi_\lambda(f)
 $$
 as $m\to \infty$. Applying the uniform boundedness principle we deduce that 
$k_\lambda:=\sum_{\alpha\in \FF_n^+} \bar \lambda_\alpha \frac{1}{\boldsymbol \mu(\alpha, g_0)^2} Z_\alpha$ is in the Hilbert $F^2(\boldsymbol\mu)$. Due to part (i), we conclude that $\lambda\in \cD(\boldsymbol\mu)$.

To prove part (iv), note that  $\psi:=\sum_{\alpha\in \FF_n}c_\alpha\|L_Z\|^{|\alpha|} Z_\alpha$
is a bounded free holomorphic function  on the open unit ball  of $B(\cH)^n$ and, consequently, $\Psi(S_1,\ldots, S_n)=\sum_{\alpha\in \FF_n}c_\alpha\|L_Z\|^{|\alpha|} S_\alpha$ is the Fourier representation of a unique element  $\Psi(S_1,\ldots, S_n)$ in the Hardy algebra $F^\infty(H_n)$.
Consider the weighted shift $\Gamma:=(\frac{1}{\|L_Z\|} L_{Z_1},\ldots, \frac{1}{\|L_Z\|} L_{Z_n})$ and note that $\|\Gamma\|=1$. According to Theorem \ref{power bounded}, 
$\Gamma$ is a pure row contraction. Due to the  $F^\infty(H_n)$-functional calculus for pure row contractions \cite{Po-funct}, we deduce that $\|\psi(\Gamma_1,\ldots \Gamma_n)\|
 \leq \|\psi(S_1,\ldots, S_n)\|$. Consequently, we have
\begin{equation*}
\begin{split}
 \|L_\varphi\|&=\|\varphi(L_{Z_1},\ldots, L_{Z_n})=\|\psi(\Gamma_1,\ldots \Gamma_n)\|\\
 &\leq \|\psi(S_1,\ldots, S_n)\|=\sup_{\|X\|<\|L_Z\|} \|\varphi(X_1,\ldots, X_n)\|.
\end{split}
\end{equation*}
The proof is complete.
\end{proof}

Let  $\boldsymbol\mu=\{\mu_\beta\}_{\beta\in \FF_n^+}$ be a sequence of strictly positive real numbers  satisfying relations \eqref{bound1} and \eqref{bound2}  and let 
 $W=(W_1,\ldots, W_n)$ and $\Lambda=(\Lambda_1,\ldots, \Lambda_n)$ be the associated weighted multi-shifs. According to  Theorem \ref{evaluation}, we have a complete description of the joint invariant subspaces  of $W_1,\ldots, W_n$ of co-dimension  one. However, we don't have such a description for all the invariant subspaces.  
 We remark that  a Beurling type  description of all the invariant subspaces  under  $W_1,\ldots, W_n$  was obtained  for certain particular classes of weights
  (see \cite{Po-domains}, \cite{Po-invariant}, \cite{Po-Bergman},  \cite{Po-NC}). 
  
  \begin{proposition} There are no proper joint invariant subspaces $\cM$  and $\cN$ under
  $W_1,\ldots, W_n$ such that   $F^2(H_n)=\cM\oplus \cN$ (direct sum).
  \end{proposition}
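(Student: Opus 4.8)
The plan is to reduce the statement to the fact that the commutant $\{W_1,\dots,W_n\}'$ contains no idempotent other than $0$ and $I$. Suppose, for contradiction, that $\cM$ and $\cN$ are proper joint invariant (closed) subspaces with $F^2(H_n)=\cM\oplus\cN$ as an internal direct sum. By the closed graph theorem the skew projection $P\in B(F^2(H_n))$ with $\ran P=\cM$ and $\ker P=\cN$ is bounded and idempotent, $P^2=P$. Since every $x\in F^2(H_n)$ decomposes as $x=Px+(I-P)x$ with $Px\in\cM$ and $(I-P)x\in\cN$, and each $W_i$ leaves both $\cM$ and $\cN$ invariant, one gets $PW_ix=W_iPx$ for all $x$; thus $P\in\{W_1,\dots,W_n\}'$.

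Next I would transport the problem to $F^2(\boldsymbol\mu)$. Since $\{W_1,\dots,W_n\}'=R^\infty(\boldsymbol\mu)=U^*\cM^r(F^2(\boldsymbol\mu))U$, we may write $P=U^*R_\varphi U$ for a right multiplier $\varphi\in\cM^r(F^2(\boldsymbol\mu))$; setting $R_\varphi:=UPU^*$ we have $R_\varphi^2=R_\varphi$, and $\varphi=R_\varphi 1$ belongs to $F^2(\boldsymbol\mu)$ because $1\in F^2(\boldsymbol\mu)$ (as $\mu_{g_0}=1$). From the definition $R_\varphi\zeta=\zeta\varphi$ and associativity of the Cauchy product,
$$\varphi^2=\varphi\cdot\varphi=R_\varphi(R_\varphi 1)=R_\varphi^2 1=R_\varphi 1=\varphi$$
as elements of $F^2(\boldsymbol\mu)\subset\CC\langle\langle Z_1,\dots,Z_n\rangle\rangle$.

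The last step is purely algebraic. The ring $\CC\langle\langle Z_1,\dots,Z_n\rangle\rangle$ of formal power series in the noncommuting indeterminates has no zero divisors: if $\varphi,\psi\neq 0$ have lowest nonzero homogeneous parts $\varphi_p$ (degree $p$) and $\psi_q$ (degree $q$), then the degree-$(p+q)$ part of $\varphi\psi$ is $\varphi_p\psi_q$, and this is nonzero since a word of length $p+q$ factors uniquely as (length $p$)(length $q$), so the monomials $Z_\alpha Z_\beta$ with $|\alpha|=p$, $|\beta|=q$ do not collapse. Hence $\varphi(\varphi-1)=0$ forces $\varphi=0$ or $\varphi=1$. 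If $\varphi=0$ then $R_\varphi=0$, so $P=0$ and $\cM=\{0\}$; if $\varphi=1$ then $R_\varphi=I$, so $P=I$ and $\cN=\{0\}$. Either way one of $\cM,\cN$ fails to be proper, a contradiction, which completes the proof.

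I expect no serious obstacle: the only real content is recognizing that the bounded skew projection onto $\cM$ along $\cN$ lies in the analytic commutant $R^\infty(\boldsymbol\mu)$, together with the integrality of the formal power series ring. It is worth emphasizing why this is not subsumed by Theorem \ref{irreducible}: that result rules out reducing invariant subspaces, i.e.\ \emph{orthogonal} direct-sum decompositions, whereas here $\cM$ and $\cN$ need not be mutually orthogonal, and it is precisely the non-self-adjoint structure (the description of $\{W_1,\dots,W_n\}'$ as a multiplier algebra) that is used.
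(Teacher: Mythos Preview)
Your proof is correct and follows the same overall strategy as the paper: reduce to a nontrivial idempotent $P$ in the commutant $\{W_1,\dots,W_n\}'=R^\infty(\boldsymbol\mu)$ and show this algebra contains no such idempotent. The difference lies in how the last step is carried out. The paper argues at the operator level: writing $P=\psi(\Lambda_1,\dots,\Lambda_n)$, it invokes the (cited) fact that every nonzero element of $R^\infty(\boldsymbol\mu)$ is an injective operator on $F^2(H_n)$, so $\psi(\Lambda)(\psi(\Lambda)-I)=0$ forces $\psi(\Lambda)=I$. You instead evaluate the idempotent equation at the vector $1$ to obtain $\varphi^2=\varphi$ for the symbol $\varphi=R_\varphi 1$, and then use the elementary fact that $\CC\langle\langle Z_1,\dots,Z_n\rangle\rangle$ is a domain (via unique factorization of words by length). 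Your route is more self-contained and avoids the operator-injectivity lemma; the paper's route, on the other hand, isolates a structural property of $R^\infty(\boldsymbol\mu)$ (injectivity of nonzero elements) that is of independent interest and slightly stronger than what is needed here.
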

  \begin{proof} Assume that the decomposition above holds. Then there a nontrivial idempotent  $P\in B(F^2(H_n))$, i.e. $P^2=P$, $P\neq I$, $P\neq 0$,   such that $PW_i=W_iP$ for any $i\in \{1,\ldots, n\}$.
  Consequently, $P=\psi(\Lambda_1,\ldots, \Lambda_n)\in R^\infty(\boldsymbol\mu)$.
  Since $\psi(\Lambda_1,\ldots, \Lambda_n)\neq 0$, one can easily show that it is injective.
  The proof is similar to that corresponding to the particular case when $\mu_\beta=1$ for any $\beta\in \FF_n^+$ (see \cite{DP1}).
  Consequently, the relation  $\psi(\Lambda_1,\ldots, \Lambda_n)(\psi(\Lambda_1,\ldots, \Lambda_n)-I)=0$ implies $\psi(\Lambda_1,\ldots, \Lambda_n)=I$, which is a contradiction. The proof is complete.
  \end{proof}

 \begin{proposition}  \label{joint} Let $W=(W_1,\ldots, W_n)\in B(F^2(H_n))$  be   the weighted left multi-shift  associated with  $\boldsymbol\mu=\{\mu_\beta\}_{\beta\in \FF_n^+, |\beta|\geq 1}$. Then the following statements hold.
 \begin{enumerate}
 \item[(i)]  The joint point spectrum 
$\sigma_p(W_1^*,\ldots, W_n^*)$ coincides with the set
$$
\{\lambda:=(\lambda_1,\ldots, \lambda_n)\in \CC^n:\  \sum_{\alpha\in \FF_n^+} \frac{|\lambda_\alpha|^2}{{\boldsymbol \mu(\alpha, g_0)^2}} <\infty\}.
$$
and the joint eigenvectors for the operators    $W_1^*,\ldots, W_n^*$ are precisely 
 the vectors 
 $$
 z_\lambda:=\sum_{\alpha\in \FF_n^+} \frac{\bar \lambda_\alpha}{{\boldsymbol \mu(\alpha, g_0)}} e_\alpha,
 $$
 where  $\lambda:=(\lambda_1,\ldots, \lambda_n)\in \sigma_p(W_1^*,\ldots, W_n^*)$.   In this case, we have  
 $$W_i^*z_\lambda=\bar\lambda_i z_\lambda,\qquad i\in \{1,\ldots, n\}.
 $$
 \item[(ii)] The joint point spectrum 
$\sigma_p(W_1^*,\ldots, W_n^*)$ is a complete Reinhardt set in  $\CC^n$ containing the origin and 
$$
 \sigma_p(W_1^*,\ldots, W_n^*) \subset \{(\lambda_1,\ldots, \lambda_n)\in \CC^n:\  
 (|\lambda_1|^2+\cdots + |\lambda_n|^2)^{1/2}\leq r(W)\},
 $$
 where $r(W)$ is the joint spectral radius of $W=(W_1,\ldots, W_n)$.
  \item[(iii)] If  $\lambda=(\lambda_1,\ldots, \lambda_n)\in \CC^n$ is such that $\|\lambda\|=\|W\|$, then $\lambda\notin \sigma_p(W_1^*,\ldots, W_n^*)$.
 \item[(iv)] The joint point spectrum   $\sigma_p(W_1^*,\ldots, W_n^*)$ contains a ball centered at zero if and only if   
  $$\limsup_{k\to \infty} \left(\sum_{\alpha\in \FF_n^+, |\alpha|=k} \frac{1}{{\boldsymbol \mu(\alpha, g_0)^2}}\right)^{1/k}<\infty.
  $$
     In particular, the later relation   holds if 
  the weights are bounded above from zero.
 
 \end{enumerate}
 \end{proposition}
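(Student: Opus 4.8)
The plan is to handle the four parts in order, everything resting on the explicit formula \eqref{WbWb} for the action of $W_i^*$ and on the fact, recorded in the proof of Proposition \ref{radius}, that $\sum_{j=1}^n W_jW_j^*$ is the diagonal operator sending $e_\alpha$ to $\mu_\alpha^2 e_\alpha$ when $|\alpha|\geq 1$ and $e_{g_0}$ to $0$. For part (i), I would first suppose that $z=\sum_\alpha c_\alpha e_\alpha\in F^2(H_n)$, $z\neq 0$, satisfies $W_i^*z=\bar\lambda_i z$ for all $i$; by \eqref{WbWb}, $W_i^*z=\sum_\gamma \mu_{g_i\gamma}c_{g_i\gamma}e_\gamma$, so the eigenvector equations read $\mu_{g_i\gamma}c_{g_i\gamma}=\bar\lambda_i c_\gamma$ for all $\gamma$ and $i$. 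Iterating this along a word $\alpha=g_{i_1}\cdots g_{i_k}$ gives $c_\alpha=\frac{\bar\lambda_\alpha}{\boldsymbol\mu(\alpha,g_0)}c_{g_0}$; normalizing $c_{g_0}=1$ identifies $z$ with $z_\lambda$, and $z_\lambda\in F^2(H_n)$ forces $\sum_\alpha |\lambda_\alpha|^2/\boldsymbol\mu(\alpha,g_0)^2<\infty$. Conversely, if that series is finite then $z_\lambda$ is a well-defined nonzero vector and a direct computation with \eqref{WbWb} shows $W_i^*z_\lambda=\bar\lambda_i z_\lambda$ (this also follows by transporting Theorem \ref{evaluation}(i) through the unitary $U$ with $UW_i=L_{Z_i}U$, noting $U^*k_\lambda$ is a scalar multiple of $z_\lambda$).

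For part (ii), the origin lies in $\sigma_p(W_1^*,\ldots,W_n^*)$ since $z_0=e_{g_0}$, and completeness of the Reinhardt set is immediate: if $|\zeta_i|\leq|\lambda_i|$ for all $i$ then $|\zeta_\alpha|\leq|\lambda_\alpha|$, so the defining series in (i) can only decrease. For the bound, write $\|\lambda\|:=(|\lambda_1|^2+\cdots+|\lambda_n|^2)^{1/2}$; by part (i) one has $W_\alpha^*z_\lambda=\bar\lambda_\alpha z_\lambda$ for every $\alpha$, hence $\langle(\sum_{|\alpha|=k}W_\alpha W_\alpha^*)z_\lambda,z_\lambda\rangle=\big(\sum_{|\alpha|=k}|\lambda_\alpha|^2\big)\|z_\lambda\|^2=\|\lambda\|^{2k}\|z_\lambda\|^2$, so $\|\lambda\|^{2k}\leq\|\sum_{|\alpha|=k}W_\alpha W_\alpha^*\|$, and letting $k\to\infty$ gives $\|\lambda\|\leq r(W)$ by the definition of the joint spectral radius. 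For part (iii) the key point is that $z_\lambda$ always has $e_{g_0}$-coefficient equal to $1$, while $\sum_j W_jW_j^*$ kills $e_{g_0}$ and acts on $e_\alpha$ ($|\alpha|\geq 1$) by $\mu_\alpha^2\leq\|W\|^2$; hence, with $z_\lambda=\sum_\alpha c_\alpha e_\alpha$ and $c_{g_0}=1$, $\|\lambda\|^2\|z_\lambda\|^2=\sum_j\|W_j^*z_\lambda\|^2=\langle(\sum_j W_jW_j^*)z_\lambda,z_\lambda\rangle=\sum_{|\alpha|\geq 1}\mu_\alpha^2|c_\alpha|^2\leq\|W\|^2(\|z_\lambda\|^2-1)<\|W\|^2\|z_\lambda\|^2$, so $\|\lambda\|<\|W\|$ for every joint eigenvalue, which is exactly (iii).

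For part (iv), set $a_k:=\sum_{|\alpha|=k}\boldsymbol\mu(\alpha,g_0)^{-2}$, so by part (i) membership $\lambda\in\sigma_p(W_1^*,\ldots,W_n^*)$ is equivalent to $\sum_{k\geq 0}\sum_{|\alpha|=k}|\lambda_\alpha|^2/\boldsymbol\mu(\alpha,g_0)^2<\infty$. If $\sigma_p$ contains a ball of radius $\rho>0$, choosing $\lambda_1=\cdots=\lambda_n=t$ with $t\sqrt n<\rho$ gives $\sum_k t^{2k}a_k<\infty$, whence the root test yields $\limsup_k a_k^{1/k}\leq t^{-2}<\infty$. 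Conversely, if $R:=\limsup_k a_k^{1/k}<\infty$, then whenever $\max_i|\lambda_i|^2<1/R$ (all $\lambda$ if $R=0$) the bound $\sum_{|\alpha|=k}|\lambda_\alpha|^2/\boldsymbol\mu(\alpha,g_0)^2\leq(\max_i|\lambda_i|)^{2k}a_k$ makes the series converge, so $\sigma_p$ contains the polydisc $\{\max_i|\lambda_i|<1/\sqrt R\}$ and in particular a ball about the origin; and if $\mu_\beta\geq m>0$ for all $\beta$ then $\boldsymbol\mu(\alpha,g_0)\geq m^{|\alpha|}$, so $a_k\leq(n/m^2)^k$ and $\limsup_k a_k^{1/k}\leq n/m^2<\infty$.

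Technically the whole proof is routine once \eqref{WbWb} and the diagonal form of $\sum_j W_jW_j^*$ are in place; the only step that needs a moment's care is the \emph{strict} inequality in part (iii), which comes precisely from the nonvanishing scalar term $c_{g_0}=1$ of $z_\lambda$ combined with the fact that $\sum_j W_jW_j^*$ annihilates $e_{g_0}$ — so one should keep the bookkeeping between $\|z_\lambda\|^2$ and $\sum_{|\alpha|\geq 1}|c_\alpha|^2$ visible. (The degenerate case $W=0$ does not arise here, since the weights are assumed positive.)
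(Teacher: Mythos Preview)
Your proof is correct. Parts (i), (ii), and (iv) follow essentially the same route as the paper's proof (the paper computes $c_\alpha=\langle z,\frac{1}{\boldsymbol\mu(\alpha,g_0)}W_\alpha 1\rangle$ in one stroke rather than iterating the recursion, but this is cosmetic).

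Part (iii) is where you take a genuinely different path. The paper argues by showing that the defining series from (i) must diverge when $\|\lambda\|=\|W\|$: since $\boldsymbol\mu(\alpha,g_0)^2\leq\|\sum_{|\beta|=k}W_\beta W_\beta^*\|\leq\|W\|^{2k}$ for $|\alpha|=k$, one gets $\sum_\alpha|\lambda_\alpha|^2/\boldsymbol\mu(\alpha,g_0)^2\geq\sum_{k\geq 0}(\|\lambda\|^2/\|W\|^2)^k=\infty$. You instead extract the strict inequality directly from the $k=1$ case of the eigenvalue computation in (ii), exploiting that $\sum_j W_jW_j^*$ annihilates $e_{g_0}$ while $z_\lambda$ always has $e_{g_0}$-coefficient $1$. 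Your argument is a bit more elementary and makes the source of the strictness transparent (the ``missing'' vacuum term), whereas the paper's argument has the minor advantage of reusing the estimate $\boldsymbol\mu(\beta,g_0)\leq\|[W_\alpha:|\alpha|=k]\|$ already established earlier and tying (iii) more visibly to the series criterion in (i).
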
 
 \begin{proof}  As mentioned before,  the operator $U:F^2(H_n)\to F^2(\boldsymbol\mu)$,  defined by $U(e_\alpha):=\frac{1}{\boldsymbol \mu(\alpha, g_0)}Z_\alpha$, $\alpha\in \FF_n^+$, is unitary and
 $UW_i=L_{Z_i}U, \  i\in \{1,\ldots,n\},
 $
 where $W=(W_1,\ldots, W_n)$ is the weighted left multi-shift associated with the weights $\{\mu_\beta\}_{\beta\in \FF_n^+ }$. Now item (i) can be extracted from Theorem \ref{evaluation}. For the benefit of the reader, we present another proof.
 
 Let $z=\sum_{\beta\in \FF_n^+} c_\beta e_\beta\in F^2(H_n)$, $z\neq 0$,  and assume that
 $W_i^*z=\bar\lambda_i z$ for any  $i\in \{1,\ldots, n\}$ for some $(\lambda_1,\ldots, \lambda_n)\in \CC^n$.
 Using the definition of the weighted left multi-shift, we deduce that
 \begin{equation*}
 \begin{split}
 c_\alpha&=\left< z,e_\alpha\right>= \left< z,e_\alpha\right>
 =\left<z, \frac{1}{{\boldsymbol \mu(\alpha, g_0)}} W_\alpha 1\right>\\
 &=\frac{1}{{\boldsymbol \mu(\alpha, g_0)}} \left< W_\alpha^*z,1\right>=
 \frac{1}{{\boldsymbol \mu(\alpha, g_0)}} \bar\lambda_\alpha\left<z,1\right>
 = \frac{c_{g_0}\bar\lambda_\alpha}{{\boldsymbol \mu(\alpha, g_0)}}  
 \end{split}
 \end{equation*} 
 for any $\alpha\in \FF_n^+$.
Consequently, $z=c_{g_0} \sum_{\alpha\in \FF_n^+} \frac{\bar \lambda_\alpha}{{\boldsymbol \mu(\alpha, g_0)}} e_\alpha\in  F^2(H_n)$, which implies 
$$ \sum_{\alpha\in \FF_n^+} \frac{|\lambda_\alpha|^2}{{\boldsymbol \mu(\alpha, g_0)^2}} <\infty.$$
Conversely, assume that   $\lambda=(\lambda_1,\ldots, \lambda_n)\in \CC^n$ satisfies the later relation. Then $z_\lambda:=\sum_{\alpha\in \FF_n^+} \frac{\bar \lambda_\alpha}{{\boldsymbol \mu(\alpha, g_0)}} e_\alpha$ is in the full Fock space $F^2(H_n)$.
 Using the fact that 
 $\boldsymbol \mu(g_i\gamma, g_0)=\mu_{g_i\gamma} \boldsymbol \mu(\gamma, g_0)$, we obtain
 \begin{equation*}
 \begin{split}
 W_i^*z_\lambda&= \sum_{\alpha\in \FF_n^+} \frac{\bar \lambda_\alpha}{{\boldsymbol \mu(\alpha, g_0)}} W_i^*e_\alpha
 =\sum_{\gamma\in \FF_n^+} \frac{\bar \lambda_{g_i\gamma}}{{\boldsymbol \mu(g_i\gamma, g_0)}} \mu_{g_i \gamma}e_\gamma\\
 &=\bar \lambda_i \sum_{\alpha\in \FF_n^+} \frac{\bar \lambda_\alpha}{{\boldsymbol \mu(\alpha, g_0)}} e_\alpha=\bar\lambda_iz_\lambda
 \end{split}
 \end{equation*}
 for any $i\in \{1,\ldots, n\}$. This completes the proof of item (i).

 To prove part (ii),
 fix $\lambda:=(\lambda_1,\ldots, \lambda_n)\in\sigma_p(W_1^*,\ldots, W_n^*)$ and
 note that 
 \begin{equation*}
 \begin{split}
 \left<\sum_{|\alpha|=k} W_\alpha W_\alpha^* z_\lambda, z_\lambda\right>
 &= \sum_{|\alpha|=k}\| W_\alpha^* z_\lambda\|^2\\
 &=\|z_\lambda\|^2 \sum_{|\alpha|=k} |\lambda_\alpha|^2=\|z_\lambda\|^2 (|\lambda_1|^2+\cdots + |\lambda_n|^2)^k
 \end{split}
 \end{equation*}
 
 and, consequently,
 $$
 (|\lambda_1|^2+\cdots + |\lambda_n|^2)^{1/2}\leq \left\|\sum_{|\alpha|=k} W_\alpha W_\alpha^*\right\|^{1/2k}
 $$
 for any $k\in \NN$. Hence, we deduce that  $(|\lambda_1|^2+\cdots + |\lambda_n|^2)^{1/2}\leq r(W)$.  The fact that  $\sigma_p(W_1^*,\ldots, W_n^*)$ is a complete Reinhardt set in  $\CC^n$ containing the origin is obvious due to part (i).

   To prove item (iii), let $\lambda:=(\lambda_1,\ldots, \lambda_n)\in \CC^n$. Since $$
\left\|\sum_{\beta\in \FF_n^+, |\beta|=k} W_\beta W_\beta^*\right\|=\sup_{\alpha, \beta\in \FF_n^+, |\beta|=k} \boldsymbol\mu (\beta,\alpha)^2\geq \boldsymbol\mu (\beta,g_0)^2
$$  
  we deduce that
\begin{equation*}
\begin{split}
\sum_{\alpha\in \FF_n^+} \frac{|\lambda_\alpha|^2}{{\boldsymbol \mu(\alpha, g_0)^2}} &\geq 
\sum_{\alpha\in \FF_n^+} \frac{|\lambda_\alpha|^2}{\left\|\sum_{\alpha\in \FF_n^+, |\alpha|=k} W_\alpha W_\alpha^*\right\|}\\
&\geq \sum_{k=0}^\infty \frac{ (|\lambda_1|^2+\cdots + |\lambda_n|^2)^k}{\left\|\sum_{i=1}^n W_iW_i^*\right\|^k}
\end{split}
\end{equation*}  
If   $\|\lambda\|=\|W\|$, the later series is divergent, which proves that $\lambda\notin \sigma_p(W_1^*,\ldots, W_n^*)$.

 If the joint point spectrum   $\sigma_p(W_1^*,\ldots, W_n^*)$ contains a ball centered at zero, then there is $r_0\in (0,1)$ such that 
 $$
 \sum_{k=0}^\infty\left(\sum_{|\alpha|=k} \frac{1}{{\boldsymbol \mu(\alpha, g_0)^2}}\right)z^k
 \leq  \sum_{k=0}^\infty\sum_{|\alpha|=k}\frac{r_0^{|\alpha|}}{{\boldsymbol \mu(\alpha, g_0)^2}}<\infty
 $$
 for any complex number $z$  with $|z|\leq r_0$.  Standard arguments  concerning the radius of convergence of a power series show that 
  $$\limsup_{k\to \infty} \left(\sum_{\alpha\in \FF_n^+, |\alpha|=k} \frac{1}{{\boldsymbol \mu(\alpha, g_0)^2}}\right)^{1/k}\leq \frac{1}{r_0}.
  $$
 Conversely, assume that the later relation  holds for some $r_0>0$. Then  there exists $t_0>0$ such that 
 $ \sum_{k=0}^\infty\left(\sum_{|\alpha|=k} \frac{1}{{\boldsymbol \mu(\alpha, g_0)^2}}\right)t_0^k<\infty$. Now, it is clear that, for any $\lambda=(\lambda_1,\ldots, \lambda_n)\in \CC^n$ with $|\lambda_i|\leq t_0$, we have  $ \sum_{\alpha\in \FF_n^+} \frac{|\lambda_\alpha|^2}{{\boldsymbol \mu(\alpha, g_0)^2}}<\infty$, which completes the proof of item (iv).

 Now,  assume that  there exists $M>0$ such that  $\mu_\alpha\geq M$ for any $\alpha\in \FF_n^+$.
  Then   and  $\boldsymbol \mu(\alpha, g_0)\geq M^k$ if $|\alpha|=k$ and 
  $$ \sum_{\alpha\in \FF_n^+} \frac{|\lambda_\alpha|^2}{{\boldsymbol \mu(\alpha, g_0)^2}}\leq \sum_{k=0}^\infty \frac{1}{M^{2k}}(|\lambda_1|^2+\cdots + |\lambda_n|^2)^k<\infty
  $$
   for any $(\lambda_1,\ldots, \lambda_k)$ with $|\lambda_1|^2+\cdots + |\lambda_n|^2<M^2$. Using item (i), we complete the proof.
 \end{proof}
 We remark that, in the particular case when $\mu_\beta=1$,  part (i) of Proposition \ref{joint} was proved in  \cite{ArPo} and \cite{DP1}.

 \begin{example}  Let $\mu_\beta=1$ for any $\beta\in \FF_n^+$.  Then 
 $\boldsymbol\mu(\alpha,g_0)=1$ and condition $\sum_{\alpha\in \FF_n^+} \frac{|\lambda_\alpha|^2}{{\boldsymbol \mu(\alpha, g_0)^2}} <\infty$ is equivalent to $\sum_{k=0}^\infty  (|\lambda_1|^2+\cdots + |\lambda_n|^2)^k<\infty$, which is equivalent to $ |\lambda_1|^2+\cdots + |\lambda_n|^2<1$. In this case we have 
 $$
 \sigma_p(W_1^*,\ldots, W_n^*)=\BB_n=\{\lambda\in \CC^n:\  \|\lambda\|_2<1\}.
 $$
 \end{example}
 
  \begin{example}  Let $\mu_\beta:=\left(\frac{|\beta|+1}{|\beta|}\right)^2$ for any $\beta\in \FF_n^+$ with $|\beta|\geq 1$, and $\mu_{g_0}:=1$. In this case, we have
  $\boldsymbol\mu(\alpha,g_0)=(|\alpha|+1)^2$ for any $\alpha\in \FF_n^+$, which implies 
  $$
  \sum_{\alpha\in \FF_n^+} \frac{|\lambda_\alpha|^2}{{\boldsymbol \mu(\alpha, g_0)^2}} = \sum_{k=0}^\infty \frac{(|\lambda_1|^2+\cdots + |\lambda_n|^2)^k}{(k+1)^4}.
  $$
  It is easy to see that
  $$
 \sigma_p(W_1^*,\ldots, W_n^*)=\overline{\BB}_n.
 $$
  \end{example}

 \begin{example}  Let $\mu_\beta:=\left(\frac{ 1}{|\beta|+1}\right)^2$ for any $\beta\in \FF_n^+$.  Then $\boldsymbol\mu(\alpha,g_0)=\frac{1}{(|\alpha|+1) !}$ and 
 $$
  \sum_{\alpha\in \FF_n^+} \frac{|\lambda_\alpha|^2}{{\boldsymbol \mu(\alpha, g_0)^2}} = \sum_{k=0}^\infty ((k+1) ! )^2 (|\lambda_1|^2+\cdots + |\lambda_n|^2)^k<\infty
  $$
 if and only if $\lambda_1=\cdots \lambda_n=0$. Therefore,
  $$
 \sigma_p(W_1^*,\ldots, W_n^*)=\{0\}.
 $$
\end{example} 
  
  We recall   that the joint  right Harte spectrum
  $\sigma_r(T_1,\ldots, T_n)$ of an   $n$-tuple
 $(T_1,\ldots, T_n)\in B(\cH)^n$ is the set of all $n$-tuples
    $(\lambda_1,\ldots, \lambda_n)$  of complex numbers such that the
     right ideal of $B(\cH)$  generated by the operators
     $\lambda_1I-T_1,\ldots, \lambda_nI-T_n$ does
      not contain the identity operator. The joint left Harte  spectrum $ \sigma_l(T^*_1,\ldots, T^*_n)$ is the complex conjugate of  $\sigma_r(T_1,\ldots, T_n)$.
We recall \cite{Po-disc} that
  $(\lambda_1,\ldots, \lambda_n)\notin \sigma_r(T_1,\ldots, T_n)$
  if and only if  there exists $\delta>0$ such that
  $\sum\limits_{i=1}^n (\lambda_iI-T_i)
  (\overline{\lambda}_iI-T_i^*)\geq \delta I$.

  The following result was proved in \cite{Ha} in a more general case. We include a proof in our particular setting.
 \begin{corollary}  $ \sigma_p(W_1^*,\ldots, W_n^*)\subset  \sigma_l(W^*_1,\ldots, W^*_n)$.
 \end{corollary}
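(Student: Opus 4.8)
The plan is to show the reverse inclusion: if $\lambda=(\lambda_1,\ldots,\lambda_n)\notin\sigma_l(W_1^*,\ldots,W_n^*)$, then $\lambda\notin\sigma_p(W_1^*,\ldots,W_n^*)$. By the characterization of the left Harte spectrum recalled just before the statement (with $T_i$ replaced by $W_i^*$, so that $T_i^*=W_i$), the hypothesis $\lambda\notin\sigma_l(W_1^*,\ldots,W_n^*)$ means, after complex conjugation, that $\overline\lambda\notin\sigma_r(W_1^*,\ldots,W_n^*)$; equivalently, there exists $\delta>0$ such that
$$
\sum_{i=1}^n (\lambda_i I-W_i^*)(\overline\lambda_i I-W_i)\geq \delta I.
$$
Wait — I should be careful with which tuple plays which role. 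The cleaner route: $\lambda\notin\sigma_l(W_1^*,\ldots,W_n^*)$ exactly when the left ideal generated by $\overline\lambda_i I-W_i^*$ contains the identity, which by \cite{Po-disc} is equivalent to the existence of $\delta>0$ with $\sum_{i=1}^n(\overline\lambda_i I-W_i^*)^*(\overline\lambda_i I-W_i^*)\geq\delta I$, i.e.
$$
\sum_{i=1}^n (\lambda_i I-W_i)^*(\lambda_i I-W_i)\geq \delta I.
$$

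From here the argument is immediate. Suppose, for contradiction, that $\lambda\in\sigma_p(W_1^*,\ldots,W_n^*)$. Then by Proposition \ref{joint}(i) there is a nonzero joint eigenvector $z_\lambda\in F^2(H_n)$ with $W_i^*z_\lambda=\overline\lambda_i z_\lambda$ for all $i$. Pair the displayed inequality against $z_\lambda$:
$$
\delta\|z_\lambda\|^2\leq \sum_{i=1}^n \big\langle (\lambda_i I-W_i)^*(\lambda_i I-W_i)z_\lambda, z_\lambda\big\rangle
=\sum_{i=1}^n \|(\lambda_i I-W_i)z_\lambda\|^2.
$$
But $(\lambda_i I-W_i)z_\lambda$ — hmm, the eigenvector equation is for $W_i^*$, not $W_i$, so this does not vanish directly. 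The correct pairing uses the adjoint form: since $\overline\lambda\notin\sigma_l(W_1^*,\ldots,W_n^*)^{\,}$ is equivalent to $\lambda\notin\sigma_r$-condition for the tuple $(W_1^*,\ldots,W_n^*)$ meaning $\sum_i(\overline\lambda_iI-W_i)(\lambda_iI-W_i^*)\geq\delta I$ — whichever normalization produces the factor $(\lambda_iI-W_i^*)$ on the right. Then pairing against $z_\lambda$ gives $\delta\|z_\lambda\|^2\leq\sum_i\|(\lambda_iI-W_i^*)z_\lambda\|^2$, and each term is $\|(\lambda_i-\overline\lambda_i)z_\lambda\|^2$ — still not zero unless $\lambda_i$ real. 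So the statement must actually be phrased with the eigenvalue $\overline\lambda_i$: indeed $W_i^*z_\lambda=\overline\lambda_i z_\lambda$ says $\overline\lambda=(\overline\lambda_1,\ldots,\overline\lambda_n)$ is a joint eigenvalue of $(W_1^*,\ldots,W_n^*)$, and $\overline\lambda\in\sigma_r$-obstruction means $\sum_i(\overline\lambda_iI-W_i^*)^*(\overline\lambda_iI-W_i^*)\not\geq\delta I$ automatically because applying it to $z_\lambda$ gives $0$. Concretely: if $\overline\lambda$ avoided $\sigma_l(W_1,\ldots,W_n)$, i.e. $\sum_i(\overline\lambda_iI-W_i^*)(\lambda_iI-W_i)\geq\delta I$ failed to be consistent with a null vector, contradiction.

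Given the bookkeeping delicacy, the clean writeup will simply be: \emph{Proof.} Let $\lambda\in\sigma_p(W_1^*,\ldots,W_n^*)$ and let $z_\lambda\neq 0$ be the joint eigenvector from Proposition \ref{joint}(i), so $W_i^*z_\lambda=\overline\lambda_i z_\lambda$, equivalently $(\overline\lambda_iI-W_i^*)z_\lambda=0$ for every $i$. Hence $\sum_{i=1}^n(\lambda_iI-W_i)(\overline\lambda_iI-W_i^*)z_\lambda=0$, so this positive operator is not bounded below; by the criterion from \cite{Po-disc} recalled above, $\overline\lambda\in\sigma_r(W_1,\ldots,W_n)$, and taking complex conjugates, $\lambda\in\sigma_l(W_1^*,\ldots,W_n^*)$. $\square$ The only genuine content is invoking Proposition \ref{joint}(i) for the existence of the eigenvector and the Harte-spectrum criterion; the main thing to get right is aligning the complex conjugations so that the eigenvector equation for $W_i^*$ matches the factor appearing in the left-spectrum characterization — that is the one spot where a sign or a bar in the wrong place would break the proof.
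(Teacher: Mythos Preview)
Your underlying argument is the paper's: exhibit the joint eigenvector and observe that the positive operator in the Harte criterion kills it, so it cannot be bounded below. The paper dresses this up slightly by plugging the whole family $\{z_\lambda:\lambda\in\cD(\boldsymbol\mu)\}$ into the assumed inequality $\sum_i\|(W_i-\mu_iI)^*h\|^2\geq\delta\|h\|^2$ to obtain $\sum_i|\lambda_i-\mu_i|^2\geq\delta$ for every $\lambda\in\cD(\boldsymbol\mu)$, and then notes the contradiction at $\lambda=\mu$; but that is exactly your single-eigenvector argument once one specializes $\lambda=\mu$.

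What cannot stand is the write-up. A proof is not three false starts, two ``hmm''s, and a closing paragraph that still has a bar in the wrong slot. Concretely: in your ``clean'' version, from $\sum_i(\lambda_iI-W_i)(\bar\lambda_iI-W_i^*)$ not bounded below the criterion gives $\lambda\in\sigma_r(W_1,\ldots,W_n)$, not $\bar\lambda\in\sigma_r$; conjugating then yields $\bar\lambda\in\sigma_l(W_1^*,\ldots,W_n^*)$, not $\lambda$. Also, $\lambda\in\sigma_p(W_1^*,\ldots,W_n^*)$ means by definition that $W_i^*z=\lambda_i z$ for some $z\neq0$; in the paper's notation that vector is $z_{\bar\lambda}$, not $z_\lambda$. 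Either fix the bars once and for all (start from $W_i^*z_{\bar\lambda}=\lambda_iz_{\bar\lambda}$, deduce $\sum_i(\bar\lambda_iI-W_i)(\lambda_iI-W_i^*)$ is not bounded below, hence $\bar\lambda\in\sigma_r(W)$, hence $\lambda\in\sigma_l(W^*)$), or else note explicitly that $\sigma_p(W_1^*,\ldots,W_n^*)=\cD(\boldsymbol\mu)$ is a Reinhardt set and in particular conjugation-invariant, which repairs any consistent choice of bars. Then delete the internal monologue.
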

 \begin{proof}
 Let  $\mu=(\mu_1,\ldots, \mu_n)\in \cD(\boldsymbol\mu)= \sigma_p(W_1^*,\ldots, W_n^*)$  and assume
that
 there is $\delta>0$ such that
 $$\sum_{i=1}^n \|(W_i-\mu_iI)^*h\|^2\geq \delta \|h\|^2 \quad
 \text{ for all } \ h\in F^2(H_n).
 $$
 Taking $h=z_\lambda:=\sum_{\alpha\in \FF_n^+} \frac{\bar \lambda_\alpha}{{\boldsymbol \mu(\alpha, g_0)}} e_\alpha$,
 $\lambda\in \cD(\boldsymbol\mu)$, in the
 inequality above, and using  Theorem \ref{joint}, we obtain
 $$
 \sum_{i=1}^n |\lambda_i-\mu_i|^2\geq \delta\quad \text{ for all }\
 \lambda\in \cD(\boldsymbol\mu),
 $$
 which is a contradiction. Due to the remarks preceding the corollary,
 we deduce that $\mu=(\mu_1,\ldots, \mu_n)\in \sigma_l(W^*_1,\ldots,
 W^*_n)$, which completes the proof.
 \end{proof}

 \begin{theorem}\label{w*-funct} A map $\Phi: F^\infty(\boldsymbol\mu)\to
\CC$ is a $w^*$-continuous multiplicative linear functional  if and
only if there exists $\lambda\in \cD(\boldsymbol\mu)$  such that
$$
\Phi(\psi(W))= \psi(\lambda), \qquad \psi(W)\in F^\infty(\boldsymbol\mu).
$$
In this case, we have
$$
\psi(\lambda)=\left<\psi(W)1, z_\lambda\right>=\left<\psi(W)u_\lambda, u_\lambda\right> \ \text{ and } \  |\psi(\lambda)|\leq \|\psi(W)\|,
$$
where $u_\lambda:=\frac{z_\lambda}{\|z_\lambda\|}$ and 
$$z_\lambda:=\sum_{\alpha\in \FF_n^+} \frac{\bar \lambda_\alpha}{{\boldsymbol \mu(\alpha, g_0)}} e_\alpha.$$
\end{theorem}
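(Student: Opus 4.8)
The plan is to establish the two implications separately, with the harder direction being that every $w^*$-continuous multiplicative linear functional on $F^\infty(\boldsymbol\mu)$ arises as evaluation at a point of $\cD(\boldsymbol\mu)$.

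First I would handle the easy direction. Suppose $\lambda\in\cD(\boldsymbol\mu)$. By Proposition \ref{joint}(i), the vector $z_\lambda=\sum_{\alpha\in\FF_n^+}\frac{\bar\lambda_\alpha}{\boldsymbol\mu(\alpha,g_0)}e_\alpha$ lies in $F^2(H_n)$ and satisfies $W_i^*z_\lambda=\bar\lambda_i z_\lambda$ for all $i$. Hence $z_\lambda$ is a joint eigenvector for $W_1^*,\ldots,W_n^*$, and consequently $W_\alpha^*z_\lambda=\bar\lambda_\alpha z_\lambda$ for every $\alpha\in\FF_n^+$. Define $\Phi(\psi(W)):=\langle\psi(W)u_\lambda,u_\lambda\rangle$ where $u_\lambda=z_\lambda/\|z_\lambda\|$. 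For a polynomial $\psi(W)=\sum_{|\alpha|\le m}c_\alpha W_\alpha$ one computes $\langle\psi(W)u_\lambda,u_\lambda\rangle=\sum c_\alpha\langle u_\lambda,W_\alpha^*u_\lambda\rangle=\sum c_\alpha\lambda_\alpha=\psi(\lambda)$, and similarly $\langle\psi(W)1,z_\lambda\rangle=\langle 1,\sum\bar c_\alpha W_\alpha^*z_\lambda\rangle$ — wait, more directly $\langle\psi(W)1,z_\lambda\rangle=\sum c_\alpha\langle W_\alpha 1,z_\lambda\rangle=\sum c_\alpha\boldsymbol\mu(\alpha,g_0)\langle e_\alpha,z_\lambda\rangle=\sum c_\alpha\lambda_\alpha=\psi(\lambda)$. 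That $\Phi$ is multiplicative on polynomials follows because the eigenvector structure gives $\langle\psi_1(W)\psi_2(W)u_\lambda,u_\lambda\rangle=\psi_1(\lambda)\psi_2(\lambda)$ after writing $\psi_2(W)^*u_\lambda=\overline{\psi_2(\lambda)}u_\lambda$. To pass from polynomials to all of $F^\infty(\boldsymbol\mu)$ I invoke Theorem \ref{closure}: $F^\infty(\boldsymbol\mu)$ is the sequential $w^*$-closure of $\cP(W)$, and $\psi(W)=\text{SOT-}\lim_N\sum_{|\alpha|\le N}(1-\frac{|\alpha|}{N+1})c_\alpha W_\alpha$ with the stated norm bounds; evaluating the Cesàro means against $u_\lambda$ and using that $\sum|c_\alpha\lambda_\alpha|<\infty$ (by Theorem \ref{evaluation}(ii), since $\lambda\in\cD(\boldsymbol\mu)$ and $\sum|c_\alpha|^2\boldsymbol\mu(\alpha,g_0)^2<\infty$) gives $\Phi(\psi(W))=\psi(\lambda)$ together with $|\psi(\lambda)|\le\|\psi(W)\|$. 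Multiplicativity and $w^*$-continuity of $\Phi$ persist under this limiting process.

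For the converse — the main obstacle — let $\Phi$ be a $w^*$-continuous multiplicative linear functional on $F^\infty(\boldsymbol\mu)$. Set $\lambda_i:=\Phi(W_i)$ for $i\in\{1,\ldots,n\}$ and $\lambda:=(\lambda_1,\ldots,\lambda_n)$. By multiplicativity, $\Phi(W_\alpha)=\lambda_\alpha$ for every $\alpha$, hence $\Phi(\psi(W))=\psi(\lambda)$ on polynomials, and it remains to show $\lambda\in\cD(\boldsymbol\mu)$, after which the $w^*$-continuity of $\Phi$ and the sequential $w^*$-density of $\cP(W)$ (Theorem \ref{closure}) extend the formula to all of $F^\infty(\boldsymbol\mu)$. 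The key point is that a $w^*$-continuous functional on $F^\infty(\boldsymbol\mu)\subset B(F^2(H_n))$ is a vector functional: since $F^\infty(\boldsymbol\mu)=\overline{\cP(W)}^{w*}$ sits inside $B(F^2(H_n))$ with its weak-$*$ topology, $\Phi$ extends (Hahn–Banach for the predual, or directly because $\Phi$ is $w^*$-continuous on the unit ball) to a $w^*$-continuous functional on $B(F^2(H_n))$, hence has the form $X\mapsto\sum_j\langle Xh_j,k_j\rangle$ for trace-class data; more efficiently, one argues as in Davidson–Pitts \cite{DP1} that a multiplicative such functional is implemented by a single vector. Concretely: because $\Phi$ is multiplicative and $w^*$-continuous, $\ker\Phi$ is a $w^*$-closed ideal of codimension one; the natural candidate is to find a unit vector $\xi$ with $\Phi(\cdot)=\langle\,\cdot\,\xi,\xi\rangle$, and multiplicativity forces $W_i^*\xi=\bar\lambda_i\xi$. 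Then Proposition \ref{joint}(i) identifies $\xi$ (up to scalar) with $z_\lambda$ and gives $\lambda\in\sigma_p(W_1^*,\ldots,W_n^*)=\cD(\boldsymbol\mu)$.

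The technical heart is therefore the representation of $\Phi$ by an eigenvector. I would carry this out by the Davidson–Pitts argument: take any $h\in F^2(H_n)$ with $\langle 1,h\rangle\ne 0$ realizing part of the functional, or better, consider the functional $\psi(W)\mapsto\langle\psi(W)1,\eta\rangle$ for a suitable $\eta$ in the predual representation of $\Phi$, normalize so it agrees with $\Phi$ on the constants, then use $\Phi(W_i\psi(W))=\lambda_i\Phi(\psi(W))$ to deduce $W_i^*\eta=\bar\lambda_i\eta$ coordinate-wise, exactly as in the computation $\langle\psi(W)Z_i f,wh\rangle$ appearing in the proof of Theorem \ref{evaluation}. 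Once $W_i^*\eta=\bar\lambda_i\eta$ with $\eta\ne0$ is established, $\lambda\in\cD(\boldsymbol\mu)$ is immediate from Proposition \ref{joint}(i), $\eta$ is a scalar multiple of $z_\lambda$, and the formulas $\psi(\lambda)=\langle\psi(W)1,z_\lambda\rangle=\langle\psi(W)u_\lambda,u_\lambda\rangle$ and $|\psi(\lambda)|\le\|\psi(W)\|$ follow as in the forward direction. The only genuine subtlety is justifying that $w^*$-continuity of $\Phi$ on $F^\infty(\boldsymbol\mu)$ (a weak-$*$ closed but non-selfadjoint subalgebra) really does yield a predual representation compatible with the SOT-convergent Cesàro expansions of Theorem \ref{closure}; this is where I would lean most heavily on the cited results, mirroring the $\mu_\beta=1$ case of \cite{DP1} and the regular-domain case of \cite{Po-domains}.
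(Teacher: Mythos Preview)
Your forward direction is correct and essentially identical to the paper's: both compute $\langle\psi(W)1,z_\lambda\rangle=\psi(\lambda)$ directly from the Fourier coefficients and deduce $\psi(W)^*z_\lambda=\overline{\psi(\lambda)}z_\lambda$, which yields the vector-state formula and the norm bound.

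The converse, however, has a genuine gap. You correctly note that $J:=\ker\Phi$ is a $w^*$-closed two-sided ideal of codimension one, but then you try to extract an eigenvector $\xi$ (or $\eta$) from a trace-class representation $\Phi(X)=\sum_j\langle Xh_j,k_j\rangle$ of the functional. That representation does not single out any one vector, and your claim that ``multiplicativity forces $W_i^*\xi=\bar\lambda_i\xi$'' presupposes that $\Phi$ already has the form $\langle\,\cdot\,\xi,\xi\rangle$ or $\langle\,\cdot\,1,\eta\rangle$, which is exactly what must be proved. Invoking \cite{DP1} or \cite{Po-domains} at this point is not a substitute for the argument. The paper avoids this difficulty altogether: it sets $\cM_J:=\overline{J(1)}\subset F^2(H_n)$ and shows, by a short approximation argument with polynomials, that $\cM_J+\CC 1=F^2(H_n)$ (if $y\perp\cM_J+\CC 1$ with $\|y\|=1$, pick a polynomial $p$ with $\|p(1)-y\|<1$; since $p-\Phi(p)I\in J$ one has $p(1)-\Phi(p)\in\cM_J$, and then $1=\langle y,y\rangle=\langle y-p(1),y\rangle<1$, a contradiction). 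Thus $\cM_J$ is a $W_i$-invariant subspace of codimension one, so $\cM_J^\perp$ is one-dimensional and $W_i^*$-invariant; by Proposition~\ref{joint} it equals $\CC z_\lambda$ for some $\lambda\in\cD(\boldsymbol\mu)$. This gives $\ker\Phi\subseteq\ker\Phi_\lambda$, and since both are maximal ideals of codimension one they coincide, whence $\Phi=\Phi_\lambda$. The eigenvector $z_\lambda$ is therefore produced \emph{a posteriori} from the invariant-subspace structure of $\ker\Phi$, not from a predual representation of $\Phi$.
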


\begin{proof} 
First, we prove  that  if $\lambda=(\lambda_1,\ldots, \lambda_n)\in \cD(\boldsymbol\mu)$, then the map  $\Phi_\lambda:
F^\infty(\boldsymbol\mu)\to \CC$  defined by
$$\Phi_\lambda(\psi(W)):=\psi(\lambda), \qquad  \psi(W)\in
F^\infty(\boldsymbol\mu),
$$ 
is a $w^*$-continuous multiplicative linear
functional, and 
$$\Phi_\lambda(\psi(W)) =\left<\psi(W)1, z_\lambda\right>, \qquad  \psi(W_1,\ldots, W_n)\in
F^\infty(\boldsymbol\mu).
$$ 
 Indeed, if $\psi(W):=\sum_{\beta\in \FF_n^+} c_\beta W_\beta$
is in the Hardy algebra $F^\infty(\boldsymbol\mu)$, then, due to Theorem \ref{evaluation} and the identification of multiplier algebra $\cM^l (F^2(\boldsymbol\mu))$  with $F^\infty(\boldsymbol\mu)$, we have   $\sum_{\beta\in
\FF_n^+} |c_\beta||\lambda_\beta|<\infty$ and 
\begin{equation*}
\begin{split}
\left<\psi(W)1, z_\lambda\right>&= \left<
\sum_{\beta\in \FF_n^+}c_\beta \boldsymbol\mu(\beta,g_0) e_\beta,
\sum_{\beta\in \FF_n^+} \frac{1}{\boldsymbol\mu(\beta,g_0)} \overline{\lambda}_\beta
e_\beta \right>\\
&=\sum_{\beta\in\FF_n^+} c_\beta \lambda_\beta
=\psi(\lambda).
\end{split}
\end{equation*}
Hence,  $\Phi_\lambda$ is a $w^*$-continuous multiplicative linear
functional.
Moreover, for each $\beta\in \FF_n^+$, we have
\begin{equation*}
\begin{split}
\left<\psi(W)^* z_\lambda,
\boldsymbol\mu(\beta,g_0)e_\beta\right>&= \left< z_\lambda,
\psi(W) W_\beta (1)\right>\\
&=\overline {\lambda_\beta \psi(\lambda)} =\left<\overline {
\psi(\lambda)} z_\lambda,\boldsymbol\mu(\beta,g_0)
e_\beta\right>.
\end{split}
\end{equation*}
Hence, we deduce that
$ \psi(W)^* z_\lambda=\overline {
\varphi(\lambda)} z_\lambda$ and, consequently,
\begin{equation*}
\begin{split}
\left< \psi(W)u_\lambda, u_\lambda\right>&=
\frac{1}{\|z_\lambda\|^2}\left< z_\lambda,\psi(W)^*
z_\lambda\right>\\
&=\frac{1}{\|z_\lambda\|^2}\left< z_\lambda,\overline {
\psi(\lambda)} z_\lambda\right>= \psi(\lambda).
\end{split}
\end{equation*}

Now, let $\Phi: F^\infty(\boldsymbol\mu)\to
\CC$ be  a $w^*$-continuous multiplicative linear functional.
Note that $J:=\ker \Phi$ is a $w^*$-closed two-sided ideal of
$F^\infty(\boldsymbol\mu)$  of  codimension one.  The subspace
$\cM_{J}:=\overline{J(1)}\subset F^2(H_n)$  has codimension one and $\cM_J+\CC
1=F^2(H_n)$. Indeed, assume that there exists a vector $y\in
(\cM_J+\CC 1)^\perp$, $\|y\|=1$. Then we can choose a polynomial
$p:=p(W_1,\ldots, W_n)\in F^\infty(\boldsymbol\mu)$ such that
$\|p(1)-y\|<1$. Since $p-\varphi(p)I\in \ker \varphi=J$, we have
$p(1)-\varphi(p)\in \cM_J$. Since $y\perp \cM_J+\CC 1$, we deduce
that
\begin{equation*}
\begin{split}
1&=\|y\|=\left<y-\varphi(p),y\right>\leq
|\left<h-p(1),y\right>|+|\left<p(1)-\varphi(p),y\right>|\\
&=|\left<h-p(1),y\right>|<1,
\end{split}
\end{equation*}
which is a contradiction. Therefore, $M_J$ is an invariant subspace
under $W_1,\ldots, W_n$  and has codimension one. Due to Theorem
\ref{joint}, there exists $\lambda\in \cD(\boldsymbol\mu)$ such
that $\cM_J=\{z_\lambda\}^\perp$.

Hence, $J:=\ker \Phi\subseteq \ker \Phi_\lambda\subset
F^\infty(\boldsymbol\mu)$.  Since both $\ker \Phi$ and $\ker
\Phi_\lambda$ are $w^*$-closed two-sided maximal ideals  of
$F^\infty(\boldsymbol\mu)$ of codimension one, we must have $\ker
\Phi=\ker \Phi_\lambda$. Therefore, $\Phi=\Phi_\lambda$
and the proof is complete.
\end{proof}

\bigskip

 \section{Functional calculus for  arbitrary  $n$-tuples of operators}

 Let  $\boldsymbol\mu=\{\mu_\beta\}_{\beta\in \FF_n^+}$ be a sequence of strictly positive real numbers  satisfying the boundedness conditions  \eqref{bound1} and \eqref{bound2} and let  
     $F^\infty(\boldsymbol\mu)$  be the  noncommutative  Hardy algebra  associated.
     Define the noncommutative set
     $$
     \cD_{\boldsymbol\mu}(\cH):=\left\{ (X_1,\ldots, X_n)\in B(\cH)^n:  \sum_{\alpha\in \FF_n^+} \frac{1}{\boldsymbol \mu(\alpha, g_0)^2} X_\alpha X_\alpha^* <\infty
     \right\}.
     $$
 We remark that if $$\limsup_{k\to \infty} \left(\sum_{\alpha\in \FF_n^+, |\alpha|=k} \frac{1}{{\boldsymbol \mu(\alpha, g_0)^4}}\right)^{1/2k}<\infty,
  $$
  then the noncommutative Reinhardt set $\cD_{\boldsymbol\mu}(\cH)$ contains a ball centered at the origin  in $B(\cH)^n$.
  Indeed, according to \cite{Po-holomorphic},   the condition above implies that the  formal series  $\sum_{\alpha\in \FF_n^+} \frac{1}{{\boldsymbol \mu(\alpha, g_0)^2}} Z_\alpha$ is a free holomorphic function on a neighborhood of the origin in $B(\cH)^n$. Consequently,   
if  $\rho$ denotes the reciprocal of the $\limsup$  above  and $0<t_0<\frac{\rho}{\sqrt{n}}$,
then $(t_0I_\cH,\ldots t_0 I_\cH)\in [B(\cH)^n]_\rho$ and
$\sum_{k=0}^\infty \sum_{|\alpha|=k} a_\alpha t_0^k<\infty$.  Take $0<\epsilon<\sqrt{t_0}$ and assume that $X=(X_1,\ldots, X_n)\in  [B(\cH)^n]_\epsilon$. Then we have
$$
\sum_{k=0}^\infty \sum_{|\alpha|=k}   \frac{1}{{\boldsymbol \mu(\alpha, g_0)^2}} \|X_\alpha X_\alpha^*\|\leq  \sum_{k=0}^\infty \sum_{|\alpha|=k} \frac{1}{{\boldsymbol \mu(\alpha, g_0)^2}} \epsilon^2k\leq  \sum_{k=0}^\infty \sum_{|\alpha|=k} a_\alpha t_0^k<\infty,
$$
 which proves our assertion.

  In what follows, we present a $w^*$-continuous $F^\infty(\boldsymbol\mu)$-functional calculus  for the elements in the noncommutative Reinhardt set $\cD_{\boldsymbol\mu}(\cH)$, where $\cH$ is a separable Hilbert space.
 \begin{theorem}\label{calculus}  Let $T=(T_1,\ldots, T_n)\in  \cD_{\boldsymbol\mu}(\cH)$ and let  $\Psi_T:F^\infty(\boldsymbol\mu)\to B(\cH)$  be defined  by 
 $$
 \Psi_T(\varphi(W))=\varphi(T)=: \text{\rm SOT-}\lim_{N\to\infty}\sum_{ |\alpha|\leq N} \left(1-\frac{|\alpha|}{N+1}\right) c_\alpha T_\alpha,
 $$
 where $\varphi(W)\in F^\infty(\boldsymbol\mu)$ has the Fourier representation $\sum_{\alpha\in \FF_n}c_\alpha W_\alpha$. Then $\Psi_T$ has the following properties.

 \begin{enumerate}
 \item[(i)]  
 $\Psi_T\left(\sum_{|\alpha|\leq m} c_\alpha W_\alpha\right)=\sum_{|\alpha|\leq m} c_\alpha T_\alpha$,  $m\in \NN.$
 \item[(ii)]   $\Psi_T$ is sequentially WOT-(resp. SOT-) continuous.
 \item[(iii)]   $\Psi_T$ is  a completely bounded  algebra homomorphism and 
 $$
 \|\Psi_T\|_{cb}\leq \left\|\sum_{k=0}^\infty\sum_{\alpha\in \FF_n^+, |\alpha|=k} \frac{1}{\boldsymbol\mu (\alpha,g_0)^2} T_\alpha T_\alpha^*\right\|^{1/2}.
 $$
 \item[(iv)]  $\Psi_T$ is $w^*$-continuous.
 \item[(v)]  $r(\varphi(T))\leq r(\varphi(W))$ for any $\varphi(W)\in F^\infty(\boldsymbol\mu)$,  where  $r(X)$ denotes the spectral radius of   $X$.
  \end{enumerate}
 \end{theorem}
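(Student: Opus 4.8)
The plan is to build, for a given $T=(T_1,\ldots,T_n)\in\cD_{\boldsymbol\mu}(\cH)$, the same Berezin-type intertwining operator used in the proof of Theorem~\ref{Rota}, namely
$K_{\boldsymbol\mu}\colon\cH\to F^2(H_n)\otimes\cH$, $K_{\boldsymbol\mu}h:=\sum_{\alpha\in\FF_n^+}\frac{1}{\boldsymbol\mu(\alpha,g_0)}\,e_\alpha\otimes T_\alpha^*h$. Convergence of $\|K_{\boldsymbol\mu}h\|^2=\bigl\langle\bigl(\sum_{k}\sum_{|\alpha|=k}\frac{1}{\boldsymbol\mu(\alpha,g_0)^2}T_\alpha T_\alpha^*\bigr)h,h\bigr\rangle$ is exactly the hypothesis $T\in\cD_{\boldsymbol\mu}(\cH)$, and the computation in Theorem~\ref{Rota} gives $K_{\boldsymbol\mu}T_i^*=(W_i^*\otimes I_\cH)K_{\boldsymbol\mu}$ for all $i$. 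The difference with Theorem~\ref{Rota} is that now $K_{\boldsymbol\mu}$ need not be bounded below (no positive $Q$ and lower bound $a$), so it is merely a bounded injective intertwiner; this is enough for the functional calculus, though it no longer yields a similarity.

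Next I would define $\Psi_T(\varphi(W)):=K_{\boldsymbol\mu}^*\,(\varphi(W)\otimes I_\cH)\,K_{\boldsymbol\mu}$ — wait, that is not quite right since the intertwining is on the adjoints; instead I would argue directly. Using $K_{\boldsymbol\mu}T_\alpha^*=(W_\alpha^*\otimes I)K_{\boldsymbol\mu}$ for every $\alpha\in\FF_n^+$, one checks that $K_{\boldsymbol\mu}^*(W_\alpha\otimes I)K_{\boldsymbol\mu}=K_{\boldsymbol\mu}^*K_{\boldsymbol\mu}T_\alpha$ on the dense range issues, and more to the point, the Cesàro sums $\varphi_N(W):=\sum_{|\alpha|\le N}(1-\tfrac{|\alpha|}{N+1})c_\alpha W_\alpha$ satisfy
$$
K_{\boldsymbol\mu}\,\varphi_N(T)^*=(\varphi_N(W)^*\otimes I_\cH)\,K_{\boldsymbol\mu}.
$$
Since by Theorem~\ref{closure} the sequence $\varphi_N(W)$ is uniformly bounded by $\|\varphi(W)\|$ and converges SOT to $\varphi(W)$, the right-hand side converges SOT; because $K_{\boldsymbol\mu}$ is bounded below on its range with closed range onto $\cM:=K_{\boldsymbol\mu}\cH$ (here one uses $\boldsymbol\mu(g_0,g_0)=1$, so $\|K_{\boldsymbol\mu}h\|\ge\|h\|$, giving $\|K_{\boldsymbol\mu}^{-1}\colon\cM\to\cH\|\le1$), the operators $\varphi_N(T)^*$ converge SOT to a limit $\varphi(T)^*$ and $K_{\boldsymbol\mu}\varphi(T)^*=(\varphi(W)^*\otimes I_\cH)K_{\boldsymbol\mu}$. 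Taking $X\colon\cH\to\cM$, $Xh:=K_{\boldsymbol\mu}h$, this reads $\varphi(T)=X^{-1}P_\cM(\varphi(W)\otimes I_\cH)|_\cM X$. From this single identity all of (i)--(v) drop out: (i) is immediate since for a polynomial the Cesàro limit is the polynomial itself; (iii) follows because $[p_{s,q}(T)]=X^{-1}[P_\cM(p_{s,q}(W)\otimes I)|_\cM]X$ componentwise, so $\|[p_{s,q}(T)]\|\le\|X\|\|X^{-1}\|\,\|[p_{s,q}(W)]\|$ and hence $\|\Psi_T\|_{cb}\le\|X\|\|X^{-1}\|\le\|K_{\boldsymbol\mu}\|=\|\Sigma_{\boldsymbol\mu}(T,T^*)\|^{1/2}$; (v) follows from the spectral-radius estimate $r(\varphi(T))\le r\bigl(P_\cM(\varphi(W)\otimes I)|_\cM\bigr)\le r(\varphi(W)\otimes I)=r(\varphi(W))$ exactly as at the end of the proof of Theorem~\ref{Rota}.

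For the continuity statements (ii) and (iv) I would proceed as follows. Sequential SOT/WOT continuity: if $\varphi^{(j)}(W)\to\varphi(W)$ in SOT with $\sup_j\|\varphi^{(j)}(W)\|<\infty$, then $(\varphi^{(j)}(W)\otimes I_\cH)\to(\varphi(W)\otimes I_\cH)$ SOT, so $(\varphi^{(j)}(W)^*\otimes I_\cH)K_{\boldsymbol\mu}h\to(\varphi(W)^*\otimes I_\cH)K_{\boldsymbol\mu}h$ — one should be slightly careful, since SOT convergence does not pass to adjoints in general; here I would instead test against $K_{\boldsymbol\mu}h$ and use that $\langle\varphi^{(j)}(T)h,k\rangle=\langle(\varphi^{(j)}(W)\otimes I)K_{\boldsymbol\mu}h',\,K_{\boldsymbol\mu}k'\rangle$-type identities reduce WOT-continuity of $\Psi_T$ to WOT-continuity of $\varphi\mapsto\varphi(W)\otimes I_\cH$, which is clear; SOT-continuity then follows from the uniform boundedness plus WOT-continuity on the unit ball by a standard argument. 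For $w^*$-continuity (iv), the Krein--Smulian theorem reduces it to showing $\Psi_T$ is $w^*$-continuous on bounded sets, and on bounded sets of $F^\infty(\boldsymbol\mu)$ the $w^*$ and WOT topologies coincide (as $F^\infty(\boldsymbol\mu)$ is a dual algebra, cf. Theorem~\ref{closure}), so bounded $w^*$-continuity follows from the sequential WOT-continuity established in (ii) together with metrizability of the $w^*$-topology on bounded sets of this separable predual. I expect the main obstacle to be exactly this point — handling the adjoint/SOT subtlety and assembling the topological statements cleanly — rather than the algebraic core, which is a direct transcription of Theorem~\ref{Rota} and Corollary~\ref{T1}; the homomorphism property and the norm bound are essentially free once the intertwining identity for all of $F^\infty(\boldsymbol\mu)$ is in hand.
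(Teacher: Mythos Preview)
Your approach is essentially the same as the paper's: apply Corollary~\ref{T1} (Theorem~\ref{Rota} with $Q=I$) to get the invertible intertwiner $X=K_{\boldsymbol\mu}$, define $\Phi_T(\varphi(W)):=X^*P_\cM(\varphi(W)\otimes I_\cH)|_\cM(X^*)^{-1}$, and read off (i)--(v); then identify $\Phi_T=\Psi_T$ via the Ces\`aro convergence of Theorem~\ref{closure}. Two small points: your initial worry that $K_{\boldsymbol\mu}$ ``need not be bounded below'' is unfounded (as you later note, $\|K_{\boldsymbol\mu}h\|\ge\|h\|$ since $\boldsymbol\mu(g_0,g_0)=1$, so Corollary~\ref{T1} applies verbatim), and your displayed similarity should read $\varphi(T)=X^*P_\cM(\varphi(W)\otimes I)|_\cM(X^*)^{-1}$, not $X^{-1}(\cdots)X$. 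For (iv) the paper's argument is simpler than your Krein--Smulian route: since $\Phi_T(\varphi(W))=X^*P_\cM(\varphi(W)\otimes I_\cH)|_\cM(X^*)^{-1}$, and tensoring with $I_\cH$ as well as pre/post-composing with fixed bounded operators preserves $w^*$-convergence of arbitrary nets, $w^*$-continuity is immediate.
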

 \begin{proof} 
 According to to Corollary \ref{T1}, if $T=(T_1,\ldots, T_n)\in   \cD_{\boldsymbol\mu}(\cH)$, then there is a joint invariant subspace $\cM\subset F^2(H_n)\otimes \cH$ under the operators $W_1^*,\ldots, W_n^*$ such that
 \begin{equation}\label{sim}
 T_i^*=X^{-1}(W_i^*\otimes I)|_\cM X ,\qquad i\in \{1,\ldots, n\}.
 \end{equation}
 Define the map $\Phi_T:F^\infty(\boldsymbol\mu)\to B(\cH)$ by setting 
 \begin{equation}
 \label{PhiT}
 \Phi_T(\varphi(W)):=X^* P_\cM(\varphi(W)\otimes I_\cH)|_\cM (X^*)^{-1},\qquad \varphi(W)\in  F^\infty(\boldsymbol\mu).
 \end{equation}
 Note that relation \eqref{sim} implies 
 $T_\alpha=X^* P_\cM( W_\alpha\otimes I_\cH)|_\cM (X^*)^{-1}$ for any 
 $\alpha\in \FF_n^+$. This implies 
  $\Psi_T\left(\sum_{|\alpha|\leq m} c_\alpha W_\alpha\right)=\sum_{|\alpha|\leq m} c_\alpha T_\alpha$ for any $ m\in \NN$.

Now, we prove that $\Phi_T$ is sequentially WOT-(resp.~SOT-) continuous. Let $\{\zeta_k(W)\}_k\subset F^\infty(\boldsymbol\mu)$ be a sequence such that 
WOT-$\lim_{k\to\infty} \zeta_k(W)=\varphi(W)$.
Since the WOT and $w^*$ topologies concide on bounded set, we have 
$w^*$-$\lim_{k\to\infty} \zeta_k(W)=\varphi(W)$ which implies 
$w^*$-$\lim_{k\to\infty} (\zeta_k(W)\otimes I_\cH)=\varphi(W)\otimes I_\cH$.
Hence,  WOT-$\lim_{k\to\infty} (\zeta_k(W)\otimes I_\cH)=\varphi(W)\otimes I_\cH$ and 
WOT-$\lim_{k\to\infty}\Phi_T(\zeta_k(W))=\Phi_T(\varphi(W))$.
Similarly, if  SOT-$\lim_{k\to\infty} \zeta_k(W)=\varphi(W)$, then  $\| \zeta_k(W)\|\leq M$, $k\in \NN$, for some $M>0$. Consequently, we have 
SOT-$\lim_{k\to\infty} (\zeta_k(W)\otimes I_\cK)=\varphi(W)\otimes I_\cH$ which implies that
SOT-$\lim_{k\to\infty}\Phi_T(\zeta_k(W))=\Phi_T(\varphi(W))$. This proves our assertion.

 Let $\{\varphi_\iota(W)\}_\iota$ be a net in $F^\infty(\boldsymbol\mu)$ such that
 $w^*$-$\lim_\iota  \varphi_\iota(W)=\varphi(W)\in F^\infty(\boldsymbol\mu)$. Then we deduce that  
 $w^*$\text{-}$\lim_\iota  \varphi_\iota(W)\otimes I_\cH=\varphi(W)\otimes I_\cH$ and, consequently,
 $$w^*\text{-}\lim_\iota  X^* P_\cM(\varphi_\iota(W)\otimes I_\cH)|_\cM (X^*)^{-1}=X^* P_\cM(\varphi(W)\otimes I_\cH)|_\cM (X^*)^{-1}.
 $$
  This proves that  
 $w^*$-$\lim_\iota  \Phi_T(\varphi_\iota(W))=\Phi_T(\varphi(W))$. Therefore, $\Phi_T$ is $w^*$-continuous.
 
 Let $[\varphi_{i,j}(W)]_{s\times s}$ be an $s\times s$ matrix with entries in the Hardy algebra $F^\infty(\boldsymbol\mu)$. Note that relation \eqref{PhiT} implies
  \begin{equation*}
 [\Phi_T(\varphi_{ij}(W))]_{s\times s}:=(\oplus_1^s X^* P_\cM) [\Phi_T(\varphi_{ij}(W))]_{s\times s}\otimes I_\cH)|_\cM]_{s\times s} (\oplus_1^s(X^*)^{-1}). 
  \end{equation*}
 Hence, 
 $$
 \| [\Phi_T(\varphi_{ij}(W))]_{s\times s}\|
 \leq \|X\|\|X^{-1}\| [\Phi_T(\varphi_{ij}(W))]_{s\times s}\|.
 $$
 Therefore, $\Phi_T$ is a completely bounded linear map and $\|\Phi_T\|_{cb}\leq \|X\|\|X^{-1}\|$ . Using relations \eqref{sim} and \eqref{PhiT}, it is easy to see that $\Phi_T$ is a  homomorphism on the polynomials in $W_1,\ldots, W_n$. Since these polynomials are $w^*$-dense in $F^\infty(\boldsymbol\mu)$ and $\Phi_T$ is $w^*$-continuous, an approximation argument shows that $\Phi_T$ is a homomorphism on $F^\infty(\boldsymbol\mu)$.
 
 Now,   let $\varphi(W)\in  F^\infty(\boldsymbol\mu)$ have the Fourier representation $\sum_{\alpha\in \FF_n}c_\alpha W_\alpha$ and define 
 $$p_N(W):=\sum_{ |\alpha|\leq N} \left(1-\frac{|\alpha|}{N+1}\right) c_\alpha W_\alpha.
 $$
 According to Theorem \ref{closure}, $ \text{\rm SOT-}\lim_{N\to\infty} p_N(W)=\varphi(W)$
 and $\| p_N(W)\|\leq\|\varphi(W)\|$ for any $N\in \NN$.
Then $ \text{\rm SOT-}\lim_{N\to\infty} (p_N(W)\otimes I_\cH)=\varphi(W)\otimes I_\cH$. 
On the other hand, due to relation \eqref{PhiT}, we have 
$$p_N(T)=X^* P_\cM(p_N(W)\otimes I_\cH)|_\cM (X^*)^{-1}.$$    
Hence, we deduce that
$$
  \text{\rm SOT-}\lim_{N\to\infty} p_N(T)= X^* P_\cM(\varphi(W)\otimes I_\cH)|_\cM (X^*)^{-1}=\Phi_T(\varphi(W)).
  $$
  Therefore, $ \Psi_T(\varphi(W))= \Phi_T(\varphi(W))$ for any $\varphi(W)\in  F^\infty(\boldsymbol\mu)$.
  
  It remains to prove part(v). To this end, note that due to the properties of the map $\Psi_T$, we have
  $$
  \varphi(T)^k=X^* P_\cM(\varphi(W)^k\otimes I_\cH)|_\cM (X^*)^{-1},\qquad k\in \NN.
  $$
  Hence we deduce that $\|\varphi(T)^k\|^{1/k}\leq \|X\|^{1/k} \|X^{-1}\|^{1/k}\|\varphi(W)^k\|^{1/k}$. Passing to the limit as $k\to \infty$, we conclude that    $r(\varphi(T))\leq r(\varphi(W))$.
  The proof is complete.
 \end{proof}

 \begin{theorem}  \label{calc2} If $T=(T_1,\ldots, T_n)\in B(\cH)^n$, then there exist a  weighted multi-shift $W=(W_1,\ldots, W_n)$ associated with  some  weights
 $\boldsymbol\mu=\{\mu_\beta\}_{\beta\in \FF_n^+}$    satisfying the conditions  \eqref{bound1}, \eqref{bound2},  and  $r(W)= r(T)$, and such that
   the map $\Psi_T: F^\infty(\boldsymbol\mu)\to B(\cH)$   defined  by 
 $$
 \Psi_T(\varphi(W))=\varphi(T)=: \text{\rm SOT-}\lim_{N\to\infty}\sum_{ |\alpha|\leq N} \left(1-\frac{|\alpha|}{N+1}\right) c_\alpha T_\alpha,
 $$
 where $\varphi(W)\in F^\infty(\boldsymbol\mu)$ has the Fourier representation $\sum_{\alpha\in \FF_n}c_\alpha W_\alpha$,  has all the properties listed in Theorem \ref{calculus} and  $\|\Psi_T\|_{cb}\leq  \frac{\pi}{\sqrt{6}} $. When $T=(T_1,\ldots, T_n)$ is a nilpotent $n$-tuple  of order $m\geq 2$, 
 we have 
$$\|\Psi_T\|_{cb}\leq  \sqrt{\sum_{k=0}^{m-1}\frac{1}{(k+1)^2}}.
$$ 
 \end{theorem}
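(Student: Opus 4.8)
The plan is to deduce Theorem \ref{calc2} from Theorem \ref{main1} together with Theorem \ref{calculus}. The point of Theorem \ref{main1} is that it produces, for an arbitrary $T=(T_1,\ldots,T_n)\in B(\cH)^n$, a (possibly truncated) injective weighted multi-shift $W=(W_1,\ldots,W_n)$ with $r(W)=r(T)$ and such that $T$ is jointly similar to a compression of $(W_1\otimes I_\cH,\ldots,W_n\otimes I_\cH)$ to a co-invariant subspace $\cM$, with $\|\Phi_T\|_{cb}\le \pi/\sqrt6$ (or the sharper nilpotent bound). The first step is to observe that the weight sequence $\boldsymbol\mu$ used in the proof of Theorem \ref{main1} — namely $\mu_\beta=\frac{|\beta|+1}{|\beta|}\,\|\sum_{|\sigma|=|\beta|}T_\sigma T_\sigma^*\|^{1/2}/\|\sum_{|\sigma|=|\beta|-1}T_\sigma T_\sigma^*\|^{1/2}$ — is bounded (indeed $\le 2\|T\|$), so the left weighted shift is bounded, i.e. \eqref{bound1} holds; and one must check that \eqref{bound2} also holds for this $\boldsymbol\mu$, i.e. that $\sup_\alpha \boldsymbol\mu(\alpha g_i,g_0)/\boldsymbol\mu(\alpha,g_0)<\infty$. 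This follows directly from the computation in the proof of Theorem \ref{main1}: $\boldsymbol\mu(\beta,g_0)=(|\beta|+1)\|\sum_{|\sigma|=|\beta|}T_\sigma T_\sigma^*\|^{1/2}$, so $\boldsymbol\mu(\alpha g_i,g_0)/\boldsymbol\mu(\alpha,g_0)=\frac{|\alpha|+2}{|\alpha|+1}\cdot\|\sum_{|\sigma|=|\alpha|+1}T_\sigma T_\sigma^*\|^{1/2}/\|\sum_{|\sigma|=|\alpha|}T_\sigma T_\sigma^*\|^{1/2}\le 2\|T\|$ by submultiplicativity of the maps $\varphi_T^k(I)$.

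The second step is to verify that $T\in\cD_{\boldsymbol\mu}(\cH)$, i.e. that $\sum_{\alpha}\frac{1}{\boldsymbol\mu(\alpha,g_0)^2}T_\alpha T_\alpha^*$ converges. But this is exactly the estimate already established in the proof of Theorem \ref{main1}: since $\boldsymbol\mu(\alpha,g_0)^2=(|\alpha|+1)^2\|\sum_{|\sigma|=|\alpha|}T_\sigma T_\sigma^*\|$, one gets
$$
\sum_{k=0}^\infty\Bigl\|\sum_{|\alpha|=k}\frac{1}{\boldsymbol\mu(\alpha,g_0)^2}T_\alpha T_\alpha^*\Bigr\|
=\sum_{k=0}^\infty\frac{1}{(k+1)^2}=\frac{\pi^2}{6}<\infty,
$$
which simultaneously shows $T\in\cD_{\boldsymbol\mu}(\cH)$ and gives the constant. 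In the nilpotent case of index $m$ the same weights truncated at level $m-1$ are used, and the sum runs only over $k\le m-1$, giving $\sum_{k=0}^{m-1}\frac{1}{(k+1)^2}$. Once $T\in\cD_{\boldsymbol\mu}(\cH)$ is known, Theorem \ref{calculus} applies verbatim: it delivers the map $\Psi_T:F^\infty(\boldsymbol\mu)\to B(\cH)$ with all five listed properties, and its bound on $\|\Psi_T\|_{cb}$ is $\|\sum_k\sum_{|\alpha|=k}\frac{1}{\boldsymbol\mu(\alpha,g_0)^2}T_\alpha T_\alpha^*\|^{1/2}\le\bigl(\sum_k\frac{1}{(k+1)^2}\bigr)^{1/2}=\pi/\sqrt6$.

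The final step is the bookkeeping: one should note that the $\Psi_T$ produced by Theorem \ref{calculus} agrees on polynomials with the map defined in the statement of Theorem \ref{calc2} (both send $\sum_{|\alpha|\le m}c_\alpha W_\alpha$ to $\sum_{|\alpha|\le m}c_\alpha T_\alpha$), and that the SOT-limit definition $\varphi(T)=\text{SOT-}\lim_N\sum_{|\alpha|\le N}(1-\frac{|\alpha|}{N+1})c_\alpha T_\alpha$ is exactly the one appearing in the proof of Theorem \ref{calculus} via Theorem \ref{closure}; hence the two maps coincide and the properties transfer. The assertion $r(W)=r(T)$ is item (ii) of Theorem \ref{main1}. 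I do not anticipate a serious obstacle here — the theorem is essentially a repackaging of Theorem \ref{main1} and Theorem \ref{calculus}. The only point requiring a little care is confirming that the specific weight sequence of Theorem \ref{main1} satisfies the right-boundedness condition \eqref{bound2} (so that the right weighted multi-shift is bounded and $F^\infty(\boldsymbol\mu)$ is genuinely the Hardy algebra of Section 4), and, in the nilpotent case, checking that the truncated weights still meet the hypotheses of Theorem \ref{calculus} with the sum correctly truncated; both are short computations using $\boldsymbol\mu(\beta,g_0)=(|\beta|+1)\|\sum_{|\sigma|=|\beta|}T_\sigma T_\sigma^*\|^{1/2}$ and the submultiplicativity inequality for $\|\sum_{|\sigma|=k}T_\sigma T_\sigma^*\|$.
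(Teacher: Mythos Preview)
Your proposal is correct and follows essentially the same route as the paper's own proof: define $\mu_\beta$ as in the proof of Theorem~\ref{main1}, verify \eqref{bound1} and \eqref{bound2} via the explicit formula $\boldsymbol\mu(\alpha,g_0)=(|\alpha|+1)\bigl\|\sum_{|\sigma|=|\alpha|}T_\sigma T_\sigma^*\bigr\|^{1/2}$ together with submultiplicativity, check $T\in\cD_{\boldsymbol\mu}(\cH)$ with the $\sum_k 1/(k+1)^2$ computation, and then invoke Theorem~\ref{calculus}. The only cosmetic difference is that you extract the bound $\|\Psi_T\|_{cb}\le\pi/\sqrt6$ directly from item~(iii) of Theorem~\ref{calculus}, whereas the paper phrases it via $\|\Psi_T\|_{cb}\le\|K_{\boldsymbol\mu}\|\,\|K_{\boldsymbol\mu}^{-1}\|$ and then refers back to the proof of Theorem~\ref{main1}; these are the same estimate.
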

\begin{proof}
Assume that $T=(T_1,\ldots, T_n)$ is not a nilpotent $n$-tuple of operators.  As in the proof of Theorem \ref{main1}, we define
  \begin{equation*}
  \mu_\beta:=\frac{|\beta|+1}{|\beta|}\frac {\left\|\sum\limits_{\sigma\in \FF_n^+, |\sigma|=
  |\beta|}T_\sigma T_\sigma^*\right\|^{1/2}} {\left\|\sum\limits_{\sigma\in \FF_n^+, |\sigma|=|\beta|-1}
 T_\sigma T_\sigma^*\right\|^{1/2}}, \qquad     |\beta|\geq 1. 
  \end{equation*}
In this case, we have 
\begin{equation*}
\begin{split}
\boldsymbol\mu(\alpha, g_0)&=(|\alpha|+1)   \left\|\sum\limits_{\sigma\in \FF_n^+, |\sigma|
=|\alpha|}T_\sigma T_\sigma^*\right\|^{1/2}\\
\frac{\boldsymbol \mu(\alpha g_i, g_0)}{\boldsymbol \mu(\alpha, g_0)} 
&=\frac{|\alpha|+2}{|\alpha|+1} \cdot
\frac{ \left\|\sum\limits_{\sigma\in \FF_n^+, |\sigma|
=|\alpha|+1}T_\sigma T_\sigma^*\right\|^{1/2}}{ \left\|\sum\limits_{\sigma\in \FF_n^+, |\sigma|
=|\alpha|}T_\sigma T_\sigma^*\right\|^{1/2}}
\leq  \frac{|\alpha|+2}{|\alpha|+1}\left\|\sum_{i=1}^nT_iT_i^*\right\|^{1/2}.
\end{split}
\end{equation*} 
Consequently, the conditions  \eqref{bound1}, \eqref{bound2} are satisfied.   
 Let $W=(W_1,\ldots, W_n)$ be the  weighted multi-shift associated with the weights  
 $\{\mu_\beta\}_{\beta\in \FF_n^+, |\beta|\geq 1}.$
The fact that   $r(W)= r(T)$ is due to Theorem \ref{main1}. On the other hand, Theorem
 \ref{calculus} shows that the map $\Psi_T$ has all the  properties listed in the same theorem.  It remains to prove that  $\|\Psi_T\|_{cb}\leq  \frac{\pi}{\sqrt{6}} $.  Note that, according to  the proof of Theorem \ref{calculus}, $\|\Psi_T\|_{cb}\leq \|X\|\|X^{-1}\|$, where $X$ is the invertible operator implementing    the similarity  of $T$ with the compression of $W$ to the appropriate subspace. In our case, we have $X=K_{\boldsymbol\mu}$ and 
 $\|K_{\boldsymbol\mu}\|\|K_{\boldsymbol\mu}^{-1}\|\leq \frac{\pi}{\sqrt{6}}$ (see the proof of Theorem \ref{main1}).
  The case when $T=(T_1,\ldots, T_n)$ is a nilpotent $n$-tuple of operators can be treated in a similar manner.
The proof is complete
\end{proof}

 \begin{corollary} \label{inclus} In the setting of Theorem  \ref{calc2}, the joint point  spectrum $\sigma_p(W_1^*,\ldots, W_n^*)$ satisfies the following relations
 $$
 (\CC^n)_{r(T)}\subset \sigma_p(W_1^*,\ldots, W_n^*)\subset \overline{(\CC^n)}_{r(W)}.
 $$
 
 \end{corollary}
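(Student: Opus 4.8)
The plan is to split the asserted chain of inclusions into its two halves and dispatch them independently, since the upper bound is essentially free from earlier results while the lower bound needs the explicit form of the weight sequence attached to $T$. For the upper inclusion $\sigma_p(W_1^*,\ldots,W_n^*)\subseteq\overline{(\CC^n)}_{r(W)}$ I would not do any new work: this is exactly the containment established in Proposition \ref{joint}(ii), which states that every joint eigenvalue $\lambda=(\lambda_1,\ldots,\lambda_n)$ of $(W_1^*,\ldots,W_n^*)$ satisfies $(|\lambda_1|^2+\cdots+|\lambda_n|^2)^{1/2}\le r(W)$.

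For the lower inclusion I would proceed as follows. If $r(T)=0$ then $(\CC^n)_{r(T)}=\emptyset$ and there is nothing to prove, so assume $r(T)>0$; then $T$ is non-nilpotent and, as in Theorem \ref{calc2} (which rests on the construction in the proof of Theorem \ref{main1}), the weight sequence is
\begin{equation*}
\mu_\beta=\frac{|\beta|+1}{|\beta|}\,\frac{\bigl\|\sum_{\sigma\in\FF_n^+,|\sigma|=|\beta|}T_\sigma T_\sigma^*\bigr\|^{1/2}}{\bigl\|\sum_{\sigma\in\FF_n^+,|\sigma|=|\beta|-1}T_\sigma T_\sigma^*\bigr\|^{1/2}},\qquad|\beta|\ge1,
\end{equation*}
so that $\boldsymbol\mu(\alpha,g_0)=(|\alpha|+1)\bigl\|\sum_{|\sigma|=|\alpha|}T_\sigma T_\sigma^*\bigr\|^{1/2}$ for every $\alpha\in\FF_n^+$. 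Fixing $\lambda\in\CC^n$ with $(|\lambda_1|^2+\cdots+|\lambda_n|^2)^{1/2}<r(T)$, Proposition \ref{joint}(i) reduces the problem to checking $\sum_{\alpha\in\FF_n^+}|\lambda_\alpha|^2/\boldsymbol\mu(\alpha,g_0)^2<\infty$; grouping the terms by length and using $\sum_{|\alpha|=k}|\lambda_\alpha|^2=(|\lambda_1|^2+\cdots+|\lambda_n|^2)^k$, this series becomes
\begin{equation*}
\sum_{k=0}^\infty\frac{(|\lambda_1|^2+\cdots+|\lambda_n|^2)^k}{(k+1)^2\,\bigl\|\sum_{\sigma\in\FF_n^+,|\sigma|=k}T_\sigma T_\sigma^*\bigr\|}.
\end{equation*}

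The one genuine point is the convergence of this last series, and here the definition of the joint spectral radius enters: since $\bigl\|\sum_{|\sigma|=k}T_\sigma T_\sigma^*\bigr\|^{1/2k}\to r(T)$, choosing $\varepsilon>0$ with $(|\lambda_1|^2+\cdots+|\lambda_n|^2)^{1/2}<r(T)-\varepsilon$ produces an $N$ with $\bigl\|\sum_{|\sigma|=k}T_\sigma T_\sigma^*\bigr\|\ge(r(T)-\varepsilon)^{2k}$ for all $k\ge N$, whence the tail of the series is dominated by a convergent geometric series. Thus $\lambda\in\cD(\boldsymbol\mu)=\sigma_p(W_1^*,\ldots,W_n^*)$, giving $(\CC^n)_{r(T)}\subseteq\sigma_p(W_1^*,\ldots,W_n^*)$, and combined with the first half this yields the stated chain (recalling $r(W)=r(T)$). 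I expect no real obstacle beyond bookkeeping: the only care needed is to read off $\boldsymbol\mu(\alpha,g_0)$ correctly from the construction in Theorem \ref{calc2} and to dispose of the nilpotent case (equivalently $r(T)=0$) separately, where the weights are truncated but the lower ball is empty so the claim is trivial.
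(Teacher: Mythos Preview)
Your proof is correct and follows essentially the same approach as the paper: both use Proposition \ref{joint}(ii) for the upper inclusion and, for the lower inclusion, read off the explicit formula $\boldsymbol\mu(\alpha,g_0)=(|\alpha|+1)\bigl\|\sum_{|\sigma|=|\alpha|}T_\sigma T_\sigma^*\bigr\|^{1/2}$ from the construction in Theorem \ref{main1}/\ref{calc2} and apply the root test via $r(T)=\lim_{k\to\infty}\bigl\|\sum_{|\sigma|=k}T_\sigma T_\sigma^*\bigr\|^{1/2k}$ to the resulting series. Your separate disposal of the case $r(T)=0$ is a harmless clarification the paper leaves implicit.
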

 \begin{proof}
 According to Corollary \ref{joint}, the joint point spectrum 
$\sigma_p(W_1^*,\ldots, W_n^*)$ coincides with the set
$$
\{\lambda=(\lambda_1,\ldots, \lambda_n)\in \CC^n:\  \sum_{\alpha\in \FF_n^+} \frac{|\lambda_\alpha|^2}{{\boldsymbol \mu(\alpha, g_0)^2}} <\infty\}
$$
and  $\sigma_p(W_1^*,\ldots, W_n^*)\subset \overline{(\CC^n)}_{r(W)}$.
Analysing the convergence of the series 
$$
\sum_{k=0}^\infty \frac{1}{(k+1)^2 \left\|\sum\limits_{\sigma\in \FF_n^+, |\sigma|
=|\alpha|}T_\sigma T_\sigma^*\right\|} (|\lambda_1|^2+\cdots +|\lambda_n|^2)^k
$$
and taking into account that 
$r(T)=\lim_{k\to \infty} \left\|\sum\limits_{\sigma\in \FF_n^+, |\sigma|
=k}T_\sigma T_\sigma^*\right\|^{1/2k}$, one can easily deduce  the inclusion  $ (\CC^n)_{r(T)}\subset \sigma_p(W_1^*,\ldots, W_n^*)$.
 \end{proof}

 In what follows we present a spectral  version of the Schwarz lemma for the noncommutative Hardy algebra $F^\infty(\boldsymbol\mu)$.

 \begin{theorem} \label{Schwarz} Let  $\boldsymbol\mu=\{\mu_\beta\}_{\beta\in \FF_n^+}$ be a sequence of strictly positive real numbers  with $\mu_\alpha\geq M>0 $ for any $\alpha\in \FF_n^+$ and  satisfying relations \eqref{bound1} and \eqref{bound2}.  
  If $\varphi(W)\in F^\infty(\boldsymbol\mu)$ has the properties that $\varphi(0)=0$ and $\|\varphi(W)\|\leq1$, then  
 $$
 r(\varphi(X))\leq r(X),\qquad X\in \cD_{\boldsymbol\mu}(\cH).
 $$
 \end{theorem}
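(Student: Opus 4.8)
The plan is to combine the $F^\infty(\boldsymbol\mu)$-functional calculus of Theorem~\ref{calculus} with the explicit model operator $K_{\boldsymbol\mu}$ produced in Corollary~\ref{T1} (i.e.\ the proof of Theorem~\ref{Rota} with $Q=I_\cH$), and then to estimate a \emph{tail} of the series $\Sigma_{\boldsymbol\mu}(X,X^*)$ using the lower bound $\mu_\alpha\geq M$ on the weights.

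\emph{Step 1: reduction to a norm estimate on powers.} Fix $X=(X_1,\dots,X_n)\in\cD_{\boldsymbol\mu}(\cH)$. By Theorem~\ref{calculus}, $\Psi_X$ is a $w^*$-continuous completely bounded algebra homomorphism, so $\varphi(X)^k=\Psi_X(\varphi(W)^k)=\varphi^k(X)$ for every $k\in\NN$, where $\varphi^k$ denotes the $k$-th power of $\varphi$ in $F^\infty(\boldsymbol\mu)$; hence $r(\varphi(X))=\lim_{k\to\infty}\|\varphi^k(X)\|^{1/k}$. Since $\zeta(W)\mapsto\zeta$ is an algebra isomorphism onto $F^\infty(\boldsymbol\mu)$ we have $\|\varphi^k(W)\|\leq\|\varphi(W)\|^k\leq1$, and since $\varphi(0)=0$ the Fourier representation of $\varphi^k$ has the form $\sum_{|\beta|\geq k}d^{(k)}_\beta W_\beta$; in particular $\varphi^k(W)\,e_\gamma$ lies in $\overline{\operatorname{span}}\{e_\delta:|\delta|\geq k\}$, so $\varphi^k(W)^*$ annihilates $e_\alpha$ whenever $|\alpha|<k$.

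\emph{Step 2: the model estimate.} By Corollary~\ref{T1}, the operator
\[
K_{\boldsymbol\mu}h=\sum_{\alpha\in\FF_n^+}\frac{1}{\boldsymbol\mu(\alpha,g_0)}\,e_\alpha\otimes X_\alpha^*h
\]
satisfies $K_{\boldsymbol\mu}X_i^*=(W_i^*\otimes I_\cH)K_{\boldsymbol\mu}$; iterating and then passing to the WOT-limit of the Ces\`aro means of $\varphi^k(W)$, one gets $K_{\boldsymbol\mu}\varphi^k(X)^*=(\varphi^k(W)^*\otimes I_\cH)K_{\boldsymbol\mu}$. Because $\varphi^k(W)^*$ kills $e_\alpha$ for $|\alpha|<k$, the vector $(\varphi^k(W)^*\otimes I_\cH)K_{\boldsymbol\mu}h$ depends only on the tail $\sum_{|\alpha|\geq k}\frac{1}{\boldsymbol\mu(\alpha,g_0)}e_\alpha\otimes X_\alpha^*h$, whose squared norm is $\langle\Sigma_k h,h\rangle$ with $\Sigma_k:=\sum_{j\geq k}\sum_{|\alpha|=j}\frac{1}{\boldsymbol\mu(\alpha,g_0)^2}X_\alpha X_\alpha^*$, the SOT-convergent tail of $\Sigma_{\boldsymbol\mu}(X,X^*)$ (finite because $X\in\cD_{\boldsymbol\mu}(\cH)$). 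Using $\|K_{\boldsymbol\mu}v\|\geq\|v\|$ (the $\alpha=g_0$ term, as $\boldsymbol\mu(g_0,g_0)=1$) together with $\|\varphi^k(W)\|\leq1$, I obtain $\|\varphi^k(X)^*h\|\leq\|K_{\boldsymbol\mu}\varphi^k(X)^*h\|=\|(\varphi^k(W)^*\otimes I_\cH)K_{\boldsymbol\mu}h\|\leq\langle\Sigma_k h,h\rangle^{1/2}$, and therefore $\|\varphi^k(X)\|=\|\varphi^k(X)^*\|\leq\|\Sigma_k\|^{1/2}$.

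\emph{Step 3: passage to the limit.} The hypothesis $\mu_\alpha\geq M$ gives $\boldsymbol\mu(\alpha,g_0)\geq M^{|\alpha|}$, so $\Sigma_k\leq\sum_{j\geq k}M^{-2j}\big(\sum_{|\alpha|=j}X_\alpha X_\alpha^*\big)$; combining this with $r(X)=\lim_{k}\big\|\sum_{|\alpha|=k}X_\alpha X_\alpha^*\big\|^{1/2k}$ and a radius-of-convergence argument, I expect $\limsup_{k}\|\Sigma_k\|^{1/2k}\leq r(X)$ (with the elementary bound $\|\varphi^k(X)\|\leq\|\Psi_X\|_{cb}\|\varphi^k(W)\|\leq\|\Psi_X\|_{cb}$ covering the residual range), whence $r(\varphi(X))=\lim_k\|\varphi^k(X)\|^{1/k}\leq\limsup_k\|\Sigma_k\|^{1/2k}\leq r(X)$. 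The main obstacle is precisely this last step: one must control the tail $\|\Sigma_k\|$ finely enough that its $(2k)$-th root converges exactly to $r(X)$, and the lower bound on the weights has to be used carefully here, since the homogeneous blocks $\sum_{|\alpha|=j}\frac{1}{\boldsymbol\mu(\alpha,g_0)^2}X_\alpha X_\alpha^*$ are in general only weakly (not norm-) summable.
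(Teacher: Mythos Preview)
Your Steps 1 and 2 are correct and give a clean inequality
\[
\|\varphi^k(X)\|^2\;\leq\;\|\Sigma_k\|,\qquad
\Sigma_k:=\sum_{j\geq k}\sum_{|\alpha|=j}\frac{1}{\boldsymbol\mu(\alpha,g_0)^2}X_\alpha X_\alpha^*.
\]
The problem is Step 3: the inequality you hope for, $\limsup_k\|\Sigma_k\|^{1/2k}\leq r(X)$, is in general \emph{false} when $M<1$, so this route cannot close. Indeed, since $\Sigma_k\geq A_k:=\sum_{|\alpha|=k}\frac{1}{\boldsymbol\mu(\alpha,g_0)^2}X_\alpha X_\alpha^*$, one always has $\|\Sigma_k\|^{1/2k}\geq\|A_k\|^{1/2k}$. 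Take the constant weight $\mu_\beta\equiv M$; then $\boldsymbol\mu(\alpha,g_0)=M^{|\alpha|}$, so $A_k=M^{-2k}\sum_{|\alpha|=k}X_\alpha X_\alpha^*$ and $\|A_k\|^{1/2k}\to r(X)/M$. Thus $\limsup_k\|\Sigma_k\|^{1/2k}\geq r(X)/M>r(X)$ whenever $M<1$ and $r(X)>0$. Your tail bound $\Sigma_k\leq\sum_{j\geq k}M^{-2j}B_j$ goes the same way: the factor $M^{-2k}$ survives and the best you can extract is $r(\varphi(X))\leq r(X)/M$, not $r(\varphi(X))\leq r(X)$. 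The obstacle you flagged is therefore not a matter of care in summation but a genuine loss of information: bounding $(\varphi^k(W)^*\otimes I)$ by its norm $\leq 1$ discards the fact that $\varphi^k(W)$ has a built-in factor of size $\boldsymbol\mu(\beta,\cdot)\geq M^k$ coming from the leading $W_\beta$'s.

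The paper takes a different route that keeps exactly this factor visible. It writes a Gleason-type factorization
\[
\varphi(W)=\sum_{i=1}^n W_i\,\Phi_i(W),\qquad \Phi_i(W)\in F^\infty(\boldsymbol\mu),
\]
(using $W_i=S_iD_i$ with $D_i\geq M I$ to see that each $\Phi_i(W)$ is bounded), transfers it via $\Psi_X$ to $\varphi(X)=\sum_i X_i\Phi_i(X)$, and then bounds $\|\varphi(X)\|$ by the row norm $\|[X_1,\dots,X_n]\|$ times the column norm $\|[\Phi_1(X),\dots,\Phi_n(X)]^t\|$, the latter controlled by complete boundedness of $\Psi_X$ together with $\|\sum_i\Phi_i(W)^*\Phi_i(W)\|\leq M^{-2}\|\varphi(W)^*\varphi(W)\|$. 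Iterating this Gleason step $k$ times yields a bound of the form $\|\varphi(X)^k\|\leq C\,\|[X_\alpha:|\alpha|=k]\|$, from which $r(\varphi(X))\leq r(X)$ follows by taking $k$-th roots. The point is that the paper factors $\varphi^k(X)$ \emph{explicitly} as (row of $X_\alpha$)$\times$(column), rather than trying to read this off from the tail $\Sigma_k$ of the Poisson kernel; your estimate $\|\varphi^k(X)\|^2\leq\|\Sigma_k\|$ has already thrown away the row/column structure, and that is precisely what is needed to land on $r(X)$ rather than $r(X)/M$.
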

 \begin{proof}
 Let $\varphi(W)\in  F^\infty(\boldsymbol\mu)$ have the Fourier representation 
 $\sum_{\beta\in \FF_n^+, |\alpha|\geq 1} c_\beta W_\beta$. Then for any polynomial  $p\in \Span \left\{ e_\alpha:\ \alpha\in \FF_n^+\right\}$, we have
 \begin{equation*}
 \varphi(W)p=\sum_{i=1}^n W_i\left(\sum_{\gamma\in \FF_n^+} c_{g_i\gamma} W_\gamma p\right)= \sum_{i=1}^n W_i\Phi_i(W)p
 \end{equation*}
 where $\Phi_i(W)p:=\sum_{\gamma\in \FF_n^+} c_{g_i\gamma} W_\gamma p$.
 Since the weights $\{\mu_\beta\}_{\beta\in \FF_n^+}$ are bounded away from zero, we have $W_i=S_iD_i$, where $D_i$ is an invertible positive diagonal operator. Note that
 $$
 \|W_i^*\varphi(W)\|\|p\|\geq \|W_i^*\varphi(W) p\|=\|D_i^2\Phi_i(W)p\|\geq \frac{1}{\|D_i^{-1}\|^2} \|\Phi_i(W)p\|
 $$
 for any  $p\in \Span \left\{ e_\alpha:\ \alpha\in \FF_n^+\right\}$. Hence, we deduce that $\Phi_i(W)\in F^\infty(\boldsymbol\mu)$, $i\in \{1,\ldots, n\}$, and we have the Gleason type decomposition 
 \begin{equation}
  \label{Gleason}
  \varphi(W)=\sum_{i=1}^n W_i\Phi_i(W).
 \end{equation}
Taking into account that the operators $W_1,\ldots, W_n$ have orthogonal ranges, we deduce that 
$$
\|\varphi(W)^*\varphi(W)\|=\|\sum_{i=1}^n \Phi_i(W)^*D_i^2 \Phi_i(W)\|\geq M \|\sum_{i=1}^n  \Phi_i(W)^* \Phi_i(W)\|
$$
 for some constant $M>0$.
 Due to Theorem \ref{calculus}, relation \eqref{Gleason} implies 
 $  \varphi(X)=\sum_{i=1}^n X_i\Phi_i(X)$ for any $X\in \cD_{\boldsymbol\mu}(\cH)$. The same theorem shows that the map $\Psi_T$ is completely  bounded, which  show that
 $$
 \left\| \left[ \begin{matrix} \Phi_1(X)\\ \vdots\\ \Phi_n(X)\end{matrix}\right]\right\|
 \leq K  \left\| \left[ \begin{matrix} \Phi_1(W)\\ \vdots\\ \Phi_n(W)\end{matrix}\right]\right\|
 $$
 for some constant $K>0$.  Using this inequality, we obtain
 \begin{equation*}
 \begin{split}
 \|\varphi(X)\|&\leq \|[X_1\cdots X_n]\| \left\| \left[ \begin{matrix} \Phi_1(X)\\ \vdots\\ \Phi_n(X)\end{matrix}\right]\right\|
 \leq K \|[X_1\cdots X_n]\| \left\| \left[ \begin{matrix} \Phi_1(W)\\ \vdots\\ \Phi_n(W)
  \end{matrix}\right]\right\|\\
  &=  K \|[X_1\cdots X_n]\|  \|\sum_{i=1}^n  \Phi_i(W)^* \Phi_i(W)\|^{1/2}\\
  &\leq \frac{K}{\sqrt{M}}  \|[X_1\cdots X_n]\| \|\varphi(W)^*\varphi(W)\|^{1/2}\\
  &\leq  \frac{K}{\sqrt{M}}  \|[X_1\cdots X_n]\| 
 \end{split}
 \end{equation*}
 In a similar manner, we can show that
 $$
 \|\varphi(X)^k\|\leq C\|[X_\alpha:\ |\alpha|=k]||
 $$
 for some constant $C>0$ which depends only on $\boldsymbol\mu$ and $X$. Hence, we deduce that
 $$
 \ \|\varphi(X)^k\|^{1/k}\leq C^{1/k} \left\|\sum_{|\alpha|=k}X_\alpha X_\alpha^*\right\|^{1/2k},\qquad k\in \NN.
 $$
 Taking $k\to \infty$, we obtain  
 $
 r(\varphi(X))\leq r(X) 
 $
 and complete the proof.
 \end{proof}

  \begin{theorem} \label{ext}  Let $W=(W_1,\ldots, W_n)$  be   the weighted left multi-shift  associated with  $\boldsymbol\mu=\{\mu_\beta\}_{ |\beta|\geq 1}$, where $\mu_\beta> 0$ and   conditions  \eqref{bound1}, \eqref{bound2} are satisfied.  If $W$  is   a row power bounded  with bound $M>0$, then the map $\Phi_W:\cA_n\to B(F^2(H_n))$ defined by $\Phi_W(p(S_1,\ldots, S_n)):=p(W_1,\ldots, W_n)$ can be extended to a $w^*$-continuous completely bounded map  $\Phi_W:F^\infty(H_n)\to F^\infty(\boldsymbol\mu)$  with  $\|\Phi_W\|_{cb}\leq M$ such that
  $$
  \Phi_W(\varphi(S_1,\ldots, S_n))= \varphi(W_1,\ldots, W_n)
  $$ for any 
   $\varphi(S_1,\ldots, S_n)\in F^\infty(H_n)$.
      \end{theorem}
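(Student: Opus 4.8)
The plan is to deduce Theorem \ref{ext} from the joint similarity machinery of Theorem \ref{power bounded} together with the $w^*$-density of polynomials established in Theorem \ref{closure}. First I would invoke Theorem \ref{power bounded}: since $W=(W_1,\ldots,W_n)$ is row power bounded with bound $M$, there is an invertible operator $A$ and a (pure) row contraction $V=(V_1,\ldots,V_n)$ with $W_i = A^{-1}V_i A$ for all $i$, and $A$ can be chosen so that $\|A\|\|A^{-1}\|\le M$. For a row contraction there is the $F^\infty(H_n)$-functional calculus (Popescu's functional calculus for pure row contractions, already cited in the excerpt), giving a $w^*$-continuous completely contractive homomorphism $\varphi(S)\mapsto \varphi(V)$ on $F^\infty(H_n)$. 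Composing with the similarity by $A$ yields a candidate map $\Phi_W(\varphi(S)):=A^{-1}\varphi(V)A$, which is completely bounded with $\|\Phi_W\|_{cb}\le \|A\|\|A^{-1}\|\le M$, is $w^*$-continuous (conjugation by a fixed invertible operator is $w^*$-continuous and preserves bounded nets), and is multiplicative.

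The second step is to check that $\Phi_W$ genuinely lands in $F^\infty(\boldsymbol\mu)$ and agrees with $\varphi\mapsto\varphi(W)$ on polynomials, hence extends $\Phi_W:\cA_n\to B(F^2(H_n))$ as claimed. On polynomials $p(S)$ we have $A^{-1}p(V)A = p(A^{-1}VA)=p(W)$, so $\Phi_W(p(S))=p(W)\in \cP(W)\subset F^\infty(\boldsymbol\mu)$. Since polynomials are $w^*$-dense in $F^\infty(H_n)$ and $\Phi_W$ is $w^*$-continuous, and since $F^\infty(\boldsymbol\mu)$ is $w^*$-closed (indeed $F^\infty(\boldsymbol\mu)=\overline{\cP(W)}^{w*}$ by Theorem \ref{closure}), the image of $\Phi_W$ is contained in $F^\infty(\boldsymbol\mu)$. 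Thus $\Phi_W:F^\infty(H_n)\to F^\infty(\boldsymbol\mu)$ is a well-defined $w^*$-continuous completely bounded homomorphism with $\|\Phi_W\|_{cb}\le M$.

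The third step is to identify $\Phi_W(\varphi(S))$ with $\varphi(W)$ for a general $\varphi\in F^\infty(H_n)$, i.e. to show the notation is consistent. Write $\varphi(S)=\sum_\alpha c_\alpha S_\alpha$ with Cesàro partial sums $p_N(S)=\sum_{|\alpha|\le N}(1-\tfrac{|\alpha|}{N+1})c_\alpha S_\alpha$; by the analogue of Theorem \ref{closure} for $F^\infty(H_n)$ these converge SOT (hence $w^*$) to $\varphi(S)$ and are uniformly bounded by $\|\varphi(S)\|$. Applying $\Phi_W$ and using $w^*$-continuity, $\Phi_W(\varphi(S))=\text{SOT-}\lim_N p_N(W)$, which by Theorem \ref{closure} is exactly $\varphi(W)\in F^\infty(\boldsymbol\mu)$, the element with Fourier representation $\sum_\alpha c_\alpha W_\alpha$. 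One should also note that $\Phi_W$ is independent of the particular choice of $A$ and $V$, since any two such extensions agree on the $w^*$-dense subalgebra of polynomials and are $w^*$-continuous.

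The main obstacle is the interplay between the topologies: one must be careful that the $F^\infty(H_n)$-functional calculus for the row contraction $V$ is genuinely $w^*$-continuous on bounded sets (so that bounded $w^*$-convergent nets are preserved), and that conjugation by $A$ does not destroy this — which is fine because $A$ is a fixed bounded invertible operator, so $X\mapsto A^{-1}XA$ is $w^*$-$w^*$-continuous and maps bounded sets to bounded sets. A secondary point requiring care is that the decomposition $W=A^{-1}VA$ furnished by Theorem \ref{power bounded} may a priori only be stated for the truncated/injective pieces in the Wold-type decomposition of Theorem \ref{decompo}; but under the standing hypothesis $\mu_\beta>0$ the multi-shift $W$ is injective, so no decomposition is needed and $A$ is a single invertible operator on $F^2(H_n)$. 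Once these topological bookkeeping points are settled, the rest is the routine approximation argument sketched above.
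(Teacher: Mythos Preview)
Your proposal is correct and follows essentially the same approach as the paper's proof: invoke Theorem \ref{power bounded} to obtain a similarity $W_i=A^{-1}V_iA$ with $V$ a pure row contraction and $\|A\|\|A^{-1}\|\le M$, then compose the $w^*$-continuous completely contractive $F^\infty(H_n)$-functional calculus for $V$ with conjugation by $A$. The paper's proof is very terse (it simply states the similarity and says ``one can easily complete the proof''), whereas you have carefully spelled out the topological bookkeeping --- that the range lands in $F^\infty(\boldsymbol\mu)$ via $w^*$-density of polynomials and Theorem \ref{closure}, and that the extension agrees with $\varphi(W)$ via the Ces\`aro approximation --- which is exactly what the paper leaves implicit.
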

 \begin{proof}
 Since $W$  is   a  row power bounded multi-shift with bound $M>0$, we can apply Theorem 
 \ref{power bounded} and deduce that $W$  is jointly similar to a pure row contraction and the opearator $A$  implementing  the similarity can be chosen such that $\|A\|\|A^{-1}\|\leq  M$.  Consequently, $p(W_1,\ldots, W_n)=A p(S_1,\ldots, S_n) A^{-1}$ for any noncommutative polynomial $p$.   Now, using the $w^*$-continuous $F^\infty(H_n)$-functional calculus for pure row contractions, one can easily complete the proof.
 \end{proof}

 \begin{corollary} \label{calculus2}   Let $\boldsymbol\mu=\{\mu_\beta\}_{ |\beta|\geq 1}$ be a sequence of strictly positive weights such that the associated  the weighted left multi-shift   $W=(W_1,\ldots, W_n)$ is    row power bounded  with bound $M>0$. 
 If $T=(T_1,\ldots, T_n)\in  \cD_{\boldsymbol\mu}(\cH)$, then the  map $\Gamma_T:F^\infty(H_n)\to B(\cH)$    defined  by 
 $$
 \Gamma_T(\varphi(S))=\varphi(T)=: \text{\rm SOT-}\lim_{N\to\infty}\sum_{ |\alpha|\leq N} \left(1-\frac{|\alpha|}{N+1}\right) c_\alpha T_\alpha,
 $$
 where $\varphi(S)$ has the Fourier representation $\sum_{\alpha\in \FF_n}c_\alpha S_\alpha$,  has the following properties.

 \begin{enumerate}
 \item[(i)]  
 $\Gamma_T\left(\sum_{|\alpha|\leq m} c_\alpha S_\alpha\right)=\sum_{|\alpha|\leq m} c_\alpha T_\alpha,\qquad m\in \NN.
 $
 \item[(ii)]   $\Gamma_T$ is sequentially WOT-(resp. SOT-)continuous.
 \item[(iii)]   $\Gamma_T$ is  a completely bounded  algebra homomorphism and 
 $$
 \|\Gamma_T\|_{cb}\leq M \left\|\sum_{k=0}^\infty\sum_{\alpha\in \FF_n^+, |\alpha|=k} \frac{1}{\boldsymbol\mu (\alpha,g_0)^2} T_\alpha T_\alpha^*\right\|^{1/2}.
 $$
 \item[(iv)]  $\Gamma_T$ is $w^*$-continuous.
 \item[(v)]  $r(\varphi(T))\leq r(\varphi(S))$ for any $\varphi\in F^\infty(H_n)$,  where  $r(X)$ denotes the spectral radius of   $X$.
  \end{enumerate}
 \end{corollary}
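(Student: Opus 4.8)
The plan is to realize $\Gamma_T$ as the composition of two functional calculi that are already available, namely $\Gamma_T=\Psi_T\circ\Phi_W$, where $\Phi_W\colon F^\infty(H_n)\to F^\infty(\boldsymbol\mu)$ is the $w^*$-continuous completely bounded map of Theorem \ref{ext} (it exists because $W$ is row power bounded with bound $M$, it satisfies $\|\Phi_W\|_{cb}\le M$, and it sends $\varphi(S)$ to $\varphi(W)$), and $\Psi_T\colon F^\infty(\boldsymbol\mu)\to B(\cH)$ is the $F^\infty(\boldsymbol\mu)$-functional calculus of Theorem \ref{calculus}, which applies since $T\in\cD_{\boldsymbol\mu}(\cH)$. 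If $\varphi(S)\in F^\infty(H_n)$ has Fourier representation $\sum_{\alpha}c_\alpha S_\alpha$, then $\Phi_W(\varphi(S))=\varphi(W)$ has Fourier representation $\sum_{\alpha}c_\alpha W_\alpha$, so $\Gamma_T(\varphi(S))=\Psi_T(\varphi(W))=\varphi(T)$, in agreement with the stated defining formula; and property (i) follows by applying $\Phi_W$ to the polynomial $\sum_{|\alpha|\le m}c_\alpha S_\alpha$ and then Theorem \ref{calculus}(i) to $\Psi_T$.

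Next I would record that $\Phi_W$ is a completely bounded algebra homomorphism: it coincides on the polynomials $\cP(S)$ with the homomorphism $p(S)\mapsto p(W)$, these are $w^*$-dense in $F^\infty(H_n)$, and the usual two-step $w^*$-continuity argument (fix a polynomial, then fix an arbitrary element, using that one-sided multiplication by a fixed element is $w^*$-continuous) propagates multiplicativity from $\cP(S)$ to all of $F^\infty(H_n)$. Granting this, property (iii) is immediate: $\Gamma_T=\Psi_T\circ\Phi_W$ is a composition of completely bounded algebra homomorphisms, hence one itself, with
$$
\|\Gamma_T\|_{cb}\le\|\Psi_T\|_{cb}\,\|\Phi_W\|_{cb}\le M\left\|\sum_{k=0}^\infty\sum_{\alpha\in\FF_n^+,\,|\alpha|=k}\frac{1}{\boldsymbol\mu(\alpha,g_0)^2}T_\alpha T_\alpha^*\right\|^{1/2}
$$
by Theorem \ref{calculus}(iii). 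Property (iv) is just that a composition of $w^*$-continuous maps is $w^*$-continuous. For (v), since $\Gamma_T$ is a bounded homomorphism, $\varphi(T)^k=\Gamma_T(\varphi(S)^k)$ gives $\|\varphi(T)^k\|\le\|\Gamma_T\|_{cb}\,\|\varphi(S)^k\|$ for every $k\in\NN$; taking $k$-th roots and letting $k\to\infty$ yields $r(\varphi(T))\le r(\varphi(S))$.

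It remains to establish the sequential WOT- and SOT-continuity in (ii). The WOT case is formal: a WOT-convergent sequence in $F^\infty(H_n)$ is norm-bounded by the uniform boundedness principle, on bounded sets the WOT and $w^*$ topologies coincide in both $F^\infty(H_n)$ and $B(\cH)$, and $\Gamma_T$ is $w^*$-continuous and completely bounded, so it carries a WOT-convergent sequence to a bounded, $w^*$-convergent, hence WOT-convergent, sequence. For the SOT case I would instead use the similarity structure underlying Theorem \ref{ext}: $W$ is jointly similar to a pure row contraction $C$ via an invertible $A$ with $\|A\|\,\|A^{-1}\|\le M$, so $\Phi_W(\varphi(S))=\varphi(W)=A\,\varphi(C)\,A^{-1}$ with $\varphi(C)$ given by the (sequentially SOT-continuous) $F^\infty(H_n)$-functional calculus for pure row contractions; since conjugation by the fixed invertible operator $A$ preserves SOT-convergence of sequences, $\Phi_W$ is sequentially SOT-continuous, and composing with the sequentially SOT-continuous $\Psi_T$ from Theorem \ref{calculus}(ii) gives the claim. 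I expect this last point — upgrading the plain $w^*$-continuity of $\Phi_W$ to sequential SOT-continuity — to be the only genuinely delicate step; it rests on the known sequential SOT-continuity of the functional calculus for pure row contractions together with boundedness of $A^{-1}$, while everything else is routine bookkeeping with the composition $\Psi_T\circ\Phi_W$.
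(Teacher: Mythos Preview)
Your proposal is correct and is exactly the approach the paper intends: the corollary is placed immediately after Theorem \ref{ext} precisely because $\Gamma_T$ is the composition $\Psi_T\circ\Phi_W$ of the $w^*$-continuous completely bounded homomorphism $\Phi_W:F^\infty(H_n)\to F^\infty(\boldsymbol\mu)$ with the functional calculus $\Psi_T:F^\infty(\boldsymbol\mu)\to B(\cH)$ from Theorem \ref{calculus}. The paper leaves the proof implicit, and your handling of the only nonroutine point (sequential SOT-continuity via the similarity $\varphi(W)=A\,\varphi(C)\,A^{-1}$ with $C$ a pure row contraction and the known SOT-continuity of the $F^\infty(H_n)$-calculus for pure row contractions) is the natural way to fill it in.
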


According to \cite{Po-holomorphic}, 
a formal power series $\sum_{\alpha\in \FF_n^+} c_\alpha Z_\alpha$  is   a free holomorphic function in a neighborhood of the origin if there is  $r>0$ such that the series $\sum_{k=0}^\infty \sum_{|\alpha|=k} c_\alpha X_\alpha$ is convergent in norm for any  Hilbert space $\cH$ and any $n$-tuple 
$(X_1,\ldots, X_n)\in B(\cH)^n$ with $\|X_1X_1^*+\cdots +X_nX_n^*\|<r^2$.  This  condition is equivalent to the relation
\begin{equation}\label{sup}
 \limsup_{k\to \infty}
\left(\sum_{|\alpha|=k} |c_\alpha|^2\right)^{1/2k}<\infty.
\end{equation}
We denote by ${\rm Hol}_0(Z)$ the algebra of all free holomorphic function $f$ satisfying the  condition above.
 
\begin{theorem} \label{funct-calc-nil}
Let $T=(T_1,\ldots, T_n)\in B(\cH)^n$ be a quasi-nilpotent $n$-tuple of operators. Then there exists     a   quasi-nilpotent weighted multi-shift $W=(W_1,\ldots, W_n)$ with the following properties.
\begin{enumerate}
\item[(i)] $\|f(T_1,\ldots, T_n)\|\leq \frac{\pi}{\sqrt{6}}\|f(W_1,\ldots, W_n)\|$ for any $f\in {\rm Hol}_0(Z)$.
\item[(ii)] $r(f(T_1,\ldots, T_n))\leq r(f(W_1,\ldots, W_n))$ for any $f\in {\rm Hol}_0(Z)$.
\item[(iii)] Let $\{f_k\},  f\in  {\rm Hol}_0(Z)$ be such that $\{\|f_k(W_1,\ldots, W_n)\|\}_k$ is a bounded sequence and 
$$f_k(W_1,\ldots, W_n)\to  f(W_1,\ldots, W_n),\qquad \text {as } \ k\to \infty,
$$  in 
the operator norm (resp. WOT-, $w^*$, SOT-) topology, then  $f_k(T_1,\ldots, T_n)\to  f(T_1,\ldots, T_n)$   in the corresponding topology, respectively.
\end{enumerate}
In particular, if   $f(S_1,\ldots, S_n)\in F^\infty(H_n)$ then
$$
\|f(T_1,\ldots, T_n)\|\leq \frac{\pi}{\sqrt{6}} M\|f(S_1,\ldots, S_n)\|,
$$
where $M$ is the power bound of $W=(W_1,\ldots, W_n)$.
\end{theorem}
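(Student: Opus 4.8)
The plan is to build a quasi-nilpotent weighted multi-shift $W=(W_1,\ldots,W_n)$ adapted to $T$ exactly as in Theorem \ref{Foias-Pearcy} (the multivariable Foia\c s--Pearcy construction), and then to run the $F^\infty(\boldsymbol\mu)$-functional calculus of Theorem \ref{calculus} through the similarity $T_i^*=S^{-1}(W_i^*\otimes I)|_\cM S$ produced there. Since $r(T)=0$, Theorem \ref{Foias-Pearcy} gives a quasi-nilpotent $W$ of compact operators, a joint invariant subspace $\cM\subset F^2(H_n)\otimes\cH$ under $W_i^*\otimes I$, and an invertible $S:\cH\to\cM$ with $T_i^*=S^{-1}(W_i^*\otimes I)|_\cM S$; moreover, as in the proof of Theorem \ref{calc2}, the norm estimate $\|S\|\,\|S^{-1}\|\le\frac{\pi}{\sqrt 6}$ holds because the operator implementing the similarity is built from $K_{\boldsymbol\mu}$ with $\|K_{\boldsymbol\mu}\|\le\sqrt{\sum_{k\ge0}1/(k+1)^2}=\frac{\pi}{\sqrt6}$ and $\|K_{\boldsymbol\mu}^{-1}\|\le1$.

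First I would show that for $f=\sum_\alpha c_\alpha Z_\alpha\in{\rm Hol}_0(Z)$ the series $\sum_{k}\sum_{|\alpha|=k}c_\alpha W_\alpha$ converges in norm: condition \eqref{sup} together with $r(W)=0$ (Proposition \ref{proprieties}(ii)) forces $\limsup_k(\sum_{|\alpha|=k}|c_\alpha|^2)^{1/2k}\|\sum_{|\alpha|=k}W_\alpha W_\alpha^*\|^{1/2k}=0$, so by Cauchy--Schwarz the partial sums form a norm-Cauchy sequence and $f(W)$ is a well-defined bounded operator; the same $\limsup$ computation applied to $T$ (note $r(T)=0$ too) shows $f(T)$ exists in norm. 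Then, because $T_\alpha=S^{*}P_\cM(W_\alpha\otimes I)|_\cM(S^{*})^{-1}$ for every word $\alpha$ (this is the content of relation \eqref{sim}/\eqref{PhiT} read off the similarity), passing to the norm limit gives
\begin{equation*}
f(T)=S^{*}P_\cM\big(f(W)\otimes I_\cH\big)|_\cM (S^{*})^{-1},
\end{equation*}
whence $\|f(T)\|\le\|S\|\,\|S^{-1}\|\,\|f(W)\|\le\frac{\pi}{\sqrt6}\|f(W)\|$, proving (i). Applying the same identity to $f^k$ and taking $k$-th roots yields $r(f(T))\le\|S\|^{1/k}\|S^{-1}\|^{1/k}\|f(W)^k\|^{1/k}\to r(f(W))$, which is (ii). For (iii), the conjugation formula is continuous in each of the norm, WOT, $w^*$, and SOT topologies (on bounded sets, exactly as in the proof of Theorem \ref{calculus} where sequential WOT/SOT and $w^*$-continuity of $\Phi_T$ were checked), so $f_k(W)\to f(W)$ in a given topology with $\{\|f_k(W)\|\}$ bounded forces $f_k(T)\to f(T)$ in the same topology. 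Finally, the special case $f(S_1,\ldots,S_n)\in F^\infty(H_n)$ is handled by factoring the calculus through $F^\infty(H_n)$: since $W$ is row power bounded with bound $M$ (here $W$ being quasi-nilpotent of compact operators is in particular pure, hence row power bounded — one must verify that the $\boldsymbol\mu$ from Theorem \ref{Foias-Pearcy} indeed yields a bounded row with an explicit bound $M$, taking $M$ to be that bound), Theorem \ref{ext} extends $p(S)\mapsto p(W)$ to a completely bounded $w^*$-continuous map $F^\infty(H_n)\to F^\infty(\boldsymbol\mu)$ with cb-norm $\le M$, so $\|f(W)\|\le M\|f(S)\|$ and combining with (i) gives $\|f(T)\|\le\frac{\pi}{\sqrt6}M\|f(S)\|$.

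The main obstacle I anticipate is matching the hypotheses of Theorems \ref{calculus}, \ref{ext}, and \ref{Foias-Pearcy} simultaneously: the Foia\c s--Pearcy weight sequence $\boldsymbol\kappa$ (built from $a_k=\|\varphi_T^k(I)\|^{1/4}$ and the dyadic exponents $2^{m_k}$) must be shown to satisfy the boundedness conditions \eqref{bound1}, \eqref{bound2}, to give $T\in\cD_{\boldsymbol\mu}(\cH)$ so that Theorem \ref{calculus} applies verbatim, and to give a genuinely row power bounded $W$ with a controllable bound $M$ so that Theorem \ref{ext} applies. The first two are essentially in the proof of Theorem \ref{Foias-Pearcy} (the convergence $\sum_k\|\sum_{|\alpha|=k}T_\alpha T_\alpha^*\|^{1/2}<\infty$ from $r(T)=0$ is exactly what is needed), but extracting an explicit $M$ from the structure of $\boldsymbol\kappa$ may require a short separate estimate on $\sup_{|\beta|=k}\boldsymbol\kappa(\beta,\alpha)$; alternatively one can simply invoke that any quasi-nilpotent tuple is similar to a row contraction (Corollary following Theorem \ref{main1}), absorbing the similarity into $S$, which keeps $M$ bounded. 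The nilpotent case $T_\alpha=0$ for $|\alpha|=m$ is already dispatched by Theorem \ref{main1}, giving the sharper constant $\sqrt{\sum_{k=0}^{m-1}1/(k+1)^2}$ in place of $\frac{\pi}{\sqrt6}$, and in that case ${\rm Hol}_0(Z)$ acts through finite sums so no convergence issue arises.
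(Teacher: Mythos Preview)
Your mechanics—the similarity formula $f(T)=S^{*}P_\cM(f(W)\otimes I)|_\cM(S^{*})^{-1}$, the norm-convergence of $f(W)$ and $f(T)$ from $r(W)=r(T)=0$, and the passage to (i), (ii), (iii)—are correct and are exactly what the paper does. The gap is in your choice of $W$: the paper invokes Theorem~\ref{main1}, not Theorem~\ref{Foias-Pearcy}, and this matters for the constant $\frac{\pi}{\sqrt6}$.

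The weights in Theorem~\ref{Foias-Pearcy} are $\mu_\beta=a_{|\beta|}/a_{|\beta|-1}$ with $a_k=\|\varphi_T^k(I)\|^{1/4}$ (followed by the further dyadic weights $\boldsymbol\kappa$). There is no $(|\beta|+1)/|\beta|$ factor, so $\boldsymbol\mu(\alpha,g_0)^2=a_{|\alpha|}^2=\|\varphi_T^{|\alpha|}(I)\|^{1/2}$ and
\[
\|K_{\boldsymbol\mu}\|^2\le\sum_{k\ge0}\frac{1}{\boldsymbol\mu(\alpha,g_0)^2}\Bigl\|\sum_{|\alpha|=k}T_\alpha T_\alpha^*\Bigr\|
=\sum_{k\ge0}\|\varphi_T^k(I)\|^{1/2},
\]
which is finite (since $r(T)=0$) but depends on $T$ and is not bounded by $\pi^2/6$. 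Your sentence ``as in the proof of Theorem~\ref{calc2}, $\|K_{\boldsymbol\mu}\|\le\frac{\pi}{\sqrt6}$'' is therefore false for the Foia\c s--Pearcy weights; that bound is specific to the weights of Theorem~\ref{main1}, where $\boldsymbol\mu(\alpha,g_0)=(|\alpha|+1)\|\sum_{|\sigma|=|\alpha|}T_\sigma T_\sigma^*\|^{1/2}$ produces exactly $\sum_k 1/(k+1)^2$.

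Once you replace Theorem~\ref{Foias-Pearcy} by Theorem~\ref{main1}—note the statement only asks for a quasi-nilpotent $W$, not one of compact operators—your proof is the paper's proof. The last part then follows because any $W$ with $r(W)=0$ is automatically row power bounded (indeed $\|\sum_{|\beta|=k}W_\beta W_\beta^*\|\to0$), so Theorem~\ref{ext} applies with some $T$-dependent bound $M$; this also resolves the ``obstacle'' you flagged.
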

\begin{proof}  According to Theorem 4.1 from \cite{Po-holomorphic}, if $f=\sum_{\alpha\in \FF_n^+} c_\alpha Z_\alpha$ is a free holomorphic function  on a neighborhood of zero, then, for any $n$-tuple $T=(T_1,\ldots, T_n)$ with $r(T)=0$, the series 
$$
f(T_1,\ldots, T_n):=\sum_{k=0}^\infty \sum_{|\alpha|=k} c_\alpha T_\alpha
$$
is convergent in norm. Applying Theorem \ref{main1} we find  a weighted multi-shift $W=(W_1,\ldots, W_n)$ with $r(W)=r(T)=0$ and such 
$$
T_i=X^*P_\cM(W_i\otimes I)|_\cM (X^*)^{-1},\qquad i\in \{1,\ldots, n\},
$$
where $X$ is an invertible operator with $\|X\|\leq \frac{\pi}{\sqrt{6}}$. Now, it is easy to see 
$$
f(T_1,\ldots, T_n)=X^*P_\cM(f(W_1,\ldots, W_n)\otimes I)|_\cM (X^*)^{-1} .
$$
As in the the proof of Theorem \ref{calculus} and Theorem \ref{calc2}, one can prove items (i), (ii), and (ii).
 To prove the last part of the theorem, note that $W$ is power bounded and, consequently, one can apply Theorem \ref{ext}   and the first part of this theorem to complete the proof.
\end{proof}

\bigskip

\section{Function theory on Reinhardt domains and multiplier algebras}
  
Let  $\boldsymbol\mu=\{\mu_\beta\}_{ |\beta|\geq 1}$ be a  weight sequence, where  
$\mu_\beta>0$ and    conditions  \eqref{bound1}, \eqref{bound2} are satisfied.  
 For each $\lambda=(\lambda_1,\ldots,
\lambda_n)$ and each $n$-tuple ${\bf k}:=(k_1,\ldots, k_n)\in
\NN_0^n$, where $\NN_0:=\{0,1,\ldots \}$, let $\lambda^{\bf
k}:=\lambda_1^{k_1}\cdots \lambda_n^{k_n}$. For each ${\bf k}\in
\NN_0$, we denote
$$
\Lambda_{\bf k}:=\{\alpha\in \FF_n^+: \ \lambda_\alpha =\lambda^{\bf
k} \text{ for all } \lambda\in \CC^n\}.
$$
If ${\bf k}\in \NN_0^n$,  we define the vector
$$
y^{(\bf k)}:=\frac{1}{\omega_{\bf k}} \sum_{\alpha \in \Lambda_{\bf
k}} \frac{1}{\boldsymbol\mu(\alpha, g_0)} e_\alpha\in F^2(H_n), \quad  \text{ where } \
\omega_{\bf k}:=\sum_{\alpha\in \Lambda_{\bf k}} \frac{1}{\boldsymbol\mu(\alpha, g_0)^2}
$$
and $\boldsymbol\mu(\alpha, g_0)$ is defined in Proposition \ref{radius}.
   Note that the set  $\{y^{(\bf k)}:\ {\bf
k}\in \NN_0^n\}$ consists  of orthogonal vectors in $F^2(H_n)$ and
$\|y^{(\bf k)}\|=\frac{1}{\sqrt{\omega_{\bf k}}}$. We denote by
$F_s^2(\boldsymbol\mu)$ the closed span of these vectors, and call it the
{\it symmetric weighted  Fock space} associated with the weight sequence 
$\boldsymbol\mu=\{\mu_\beta\}_{|\beta|\geq 1}$.

\begin{theorem} \label{symmetric}  Let  $\boldsymbol\mu=\{\mu_\beta\}_{|\beta|\geq 1}$ be a  weight  sequence 
 with the property that $\cD(\boldsymbol\mu)$ contains a neighborhood of the origin and let $W=(W_1.\ldots, W_m)$ be the associated weighted left multi-shift.
If
$J_c$ is the $w^*$-closed two-sided ideal of the Hardy algebra
$F^\infty(\boldsymbol\mu)$ generated by the commutators
$$W_iW_j-W_j W_i,\qquad i,j\in\{1,\ldots, n\},
$$
 then the following statements hold.
 \begin{enumerate}
 \item[(i)]
 $
 F_s^2(\boldsymbol\mu)=\overline{\text{\rm span}}\{z_\lambda: \ \lambda\in
\cD(\boldsymbol\mu)\}= F^2(H_n)\ominus \overline{J_c(1)}$, where $z_\lambda$ is defined in Proposition \ref{joint}.
\item[(ii)] The symmetric weighted Fock space $F_s^2(\boldsymbol\mu)$ can be
identified with the Hilbert space $\HH^2(\cD(\boldsymbol\mu))$ of all
functions $\varphi:\cD(\boldsymbol\mu)\to \CC$ which admit a power
series representation $\varphi(\lambda)=\sum_{{\bf k}\in \NN_0}
c_{\bf k} \lambda^{\bf k}$ with
$$
\|\varphi\|_2=\sum_{{\bf k}\in \NN_0}|c_{\bf
k}|^2\frac{1}{\omega_{\bf k}}<\infty.
$$
More precisely, every  element  $\varphi=\sum_{{\bf k}\in \NN_0}
c_{\bf k} y^{(\bf k)}$ in $F_s^2(\boldsymbol\mu)$  has a functional
representation on $\cD(\boldsymbol\mu)$ given by
$$
\varphi(\lambda):=\left<\varphi, z_\lambda\right>=\sum_{{\bf k}\in
\NN_0} c_{\bf k} \lambda^{\bf k}, \quad \lambda=(\lambda_1,\ldots,
\lambda_n)\in \cD(\boldsymbol\mu),
$$
and
$$
|\varphi(\lambda)|\leq
 \|\varphi\|_2\left(\sum_{\alpha\in \FF_n^+}  \frac{|\lambda_\alpha|^2}{\boldsymbol \mu(\alpha, g_0)^2}\right)^{1/2},\quad \lambda=(\lambda_1,\ldots, \lambda_n)\in
\cD(\boldsymbol\mu).
$$
\item[(iii)] $\HH^2(\cD(\boldsymbol\mu))$ is the reproducing kernel Hilbert space with kernel
  $\boldsymbol\kappa:\cD(\boldsymbol\mu)\times
\cD(\boldsymbol\mu)\to \CC$ defined by
$$
\boldsymbol\kappa(\zeta,\lambda):= \left<z_\lambda,
z_\zeta\right>=\sum_{\beta\in \FF_n^+} \frac{1}{\boldsymbol\mu(\beta,g_0)^2}   \zeta_\beta\overline{\lambda}_\beta
\quad \text{ for all }\ \lambda,\zeta\in
\cD(\boldsymbol\mu).
$$
\end{enumerate}
\end{theorem}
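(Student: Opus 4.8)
The plan is to use the joint eigenvectors $z_\lambda=\sum_{\alpha\in\FF_n^+}\frac{\overline{\lambda}_\alpha}{\boldsymbol\mu(\alpha,g_0)}e_\alpha$ of $(W_1^*,\dots,W_n^*)$ provided by Proposition \ref{joint} as the bridge between the combinatorial description of $F_s^2(\boldsymbol\mu)$ (via the orthogonal vectors $y^{({\bf k})}$ and the commutator ideal $J_c$) and its function-theoretic description (via $\HH^2(\cD(\boldsymbol\mu))$ and its reproducing kernel). I would prove (i), (ii), (iii) in that order, each step using the previous one.

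For part (i), I would first regroup the defining series of $z_\lambda$ according to the classes $\Lambda_{\bf k}$: since $\lambda_\alpha=\lambda^{\bf k}$ for $\alpha\in\Lambda_{\bf k}$, this yields $z_\lambda=\sum_{{\bf k}\in\NN_0^n}\overline{\lambda^{\bf k}}\,\omega_{\bf k}\,y^{({\bf k})}$, which converges in $F^2(H_n)$ precisely because $\lambda\in\cD(\boldsymbol\mu)$ (Proposition \ref{joint}); hence $z_\lambda\in F_s^2(\boldsymbol\mu)$ and $\overline{\text{\rm span}}\{z_\lambda:\lambda\in\cD(\boldsymbol\mu)\}\subseteq F_s^2(\boldsymbol\mu)$. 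For the reverse inclusion I would use the hypothesis that $\cD(\boldsymbol\mu)$ contains a ball about the origin: restricting $\lambda\mapsto z_\lambda$ to that ball and applying iterated partial derivatives at $\lambda=0$ (equivalently, inverting the finite Vandermonde-type system coming from linear independence of the monomials $\lambda^{\bf k}$ on a neighbourhood of $0$) produces each $\omega_{\bf k}y^{({\bf k})}$ inside $\overline{\text{\rm span}}\{z_\lambda\}$, giving equality. To identify this span with $F^2(H_n)\ominus\overline{J_c(1)}$, I would use that the vacuum $1$ is cyclic for $W$ and $W_\beta 1=\boldsymbol\mu(\beta,g_0)e_\beta$, so that $\overline{J_c(1)}=\overline{\text{\rm span}}\{W_\gamma(W_iW_j-W_jW_i)e_\beta:\gamma,\beta\in\FF_n^+,\ i,j\}$; a vector $h=\sum_\alpha h_\alpha e_\alpha$ lies in its orthocomplement iff $(W_j^*W_i^*-W_i^*W_j^*)W_\gamma^*h=0$ for all $\gamma,i,j$, and unwinding this with $W_i^*e_{g_i\alpha}=\mu_{g_i\alpha}e_\alpha$ shows it is equivalent to $\boldsymbol\mu(\alpha,g_0)h_\alpha$ being invariant under every adjacent transposition of the letters of $\alpha$, i.e. constant on each class $\Lambda_{\bf k}$ — which is exactly the condition $h\in F_s^2(\boldsymbol\mu)$. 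Since $W_i^*z_\lambda=\overline{\lambda}_iz_\lambda$ makes each $z_\lambda$ satisfy these relations, both inclusions follow.

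For part (ii), I would define $U_s\colon F_s^2(\boldsymbol\mu)\to\{\text{functions on }\cD(\boldsymbol\mu)\}$ by $(U_s\varphi)(\lambda):=\langle\varphi,z_\lambda\rangle$ and compute $\langle y^{({\bf k})},z_\lambda\rangle=\frac{1}{\omega_{\bf k}}\sum_{\alpha\in\Lambda_{\bf k}}\frac{\lambda_\alpha}{\boldsymbol\mu(\alpha,g_0)^2}=\lambda^{\bf k}$; for $\varphi=\sum_{\bf k}c_{\bf k}y^{({\bf k})}$ this gives $(U_s\varphi)(\lambda)=\sum_{\bf k}c_{\bf k}\lambda^{\bf k}$ once the interchange of sum and inner product is justified. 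Because $\{y^{({\bf k})}\}$ is orthogonal with $\|y^{({\bf k})}\|^2=1/\omega_{\bf k}$, we get $\|\varphi\|_2^2=\sum_{\bf k}|c_{\bf k}|^2/\omega_{\bf k}$, so $U_s$ is isometric; since every function with $\sum_{\bf k}|c_{\bf k}|^2/\omega_{\bf k}<\infty$ is the image of the convergent series $\sum_{\bf k}c_{\bf k}y^{({\bf k})}\in F_s^2(\boldsymbol\mu)$, and uniqueness of the coefficients follows from linear independence of the monomials near $0$, the map $U_s$ is an isometry onto $\HH^2(\cD(\boldsymbol\mu))$. The pointwise bound is then Cauchy–Schwarz, $|(U_s\varphi)(\lambda)|=|\langle\varphi,z_\lambda\rangle|\le\|\varphi\|_2\,\|z_\lambda\|$, together with $\|z_\lambda\|^2=\sum_{\alpha\in\FF_n^+}\frac{|\lambda_\alpha|^2}{\boldsymbol\mu(\alpha,g_0)^2}$ from Proposition \ref{joint}.

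For part (iii), fix $\zeta\in\cD(\boldsymbol\mu)$; by part (i) $z_\zeta\in F_s^2(\boldsymbol\mu)$, and $U_s z_\zeta$ is the function $\lambda\mapsto\langle z_\zeta,z_\lambda\rangle=\sum_{\beta\in\FF_n^+}\frac{1}{\boldsymbol\mu(\beta,g_0)^2}\zeta_\beta\overline{\lambda}_\beta=\boldsymbol\kappa(\lambda,\zeta)$, so $\boldsymbol\kappa(\cdot,\zeta)\in\HH^2(\cD(\boldsymbol\mu))$; for every $\varphi\in F_s^2(\boldsymbol\mu)$ we then have $\langle U_s\varphi,\boldsymbol\kappa(\cdot,\zeta)\rangle_{\HH^2}=\langle\varphi,z_\zeta\rangle=(U_s\varphi)(\zeta)$, which is the reproducing property, while Hermitian symmetry and positive semidefiniteness of $\boldsymbol\kappa$ are automatic because it is the Gram kernel of the family $\{z_\lambda:\lambda\in\cD(\boldsymbol\mu)\}$. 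The main obstacle is the orthogonality identification in part (i), $F^2(H_n)\ominus\overline{J_c(1)}=F_s^2(\boldsymbol\mu)$, where the $w^*$-closed commutator ideal, cyclicity of the vacuum, and the "sufficiently many eigenvectors" guaranteed by $\cD(\boldsymbol\mu)$ containing a neighbourhood of $0$ must be combined carefully; once that is in place, (ii) and (iii) reduce to routine manipulations with orthogonal expansions and the Cauchy–Schwarz inequality.
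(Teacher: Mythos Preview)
Your proposal is correct and matches the paper's proof in all essential computations for parts (ii) and (iii). For part (i) there is one organizational difference worth noting: the paper proves the three-way equality via the cyclic chain
\[
\overline{\text{\rm span}}\{z_\lambda\}\subseteq F_s^2(\boldsymbol\mu)\subseteq F^2(H_n)\ominus\overline{J_c(1)}\subseteq \overline{\text{\rm span}}\{z_\lambda\},
\]
closing the loop by taking $v\in F^2(H_n)\ominus\overline{J_c(1)}$ orthogonal to every $z_\lambda$ and combining the power-series identity $\sum_{\beta\in\Lambda_{\bf k}}c_\beta/\boldsymbol\mu(\beta,g_0)=0$ with the transposition invariance $c_{\beta_0}\boldsymbol\mu(\beta_0,g_0)=c_\gamma\boldsymbol\mu(\gamma,g_0)$ (exactly your characterization of $(\overline{J_c(1)})^\perp$) to force $v=0$. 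You instead prove the two equalities $\overline{\text{\rm span}}\{z_\lambda\}=F_s^2(\boldsymbol\mu)$ and $F_s^2(\boldsymbol\mu)=F^2(H_n)\ominus\overline{J_c(1)}$ separately, recovering each $y^{({\bf k})}$ from the $z_\lambda$'s by a differentiation/Vandermonde argument. The paper's sandwich is slightly more economical since it bypasses that recovery step, while your direct characterization of $(\overline{J_c(1)})^\perp$ makes the structure of $F_s^2(\boldsymbol\mu)$ more explicit; the underlying computations (the transposition argument and the use of uniqueness of power series on a neighbourhood of $0$) are identical in both.
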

\begin{proof}   First, we note that  $\overline{\text{\rm span}}\{z_\lambda: \ \lambda\in
\cD(\boldsymbol\mu)\}\subset F^2_s(\boldsymbol\mu)$ due to the fact that
$$z_\lambda:=\sum_{\alpha\in \FF_n^+} \frac{\bar \lambda_\alpha}{{\boldsymbol \mu(\alpha, g_0)}} e_\alpha=\sum_{{\bf k}\in \NN_0} \bar\lambda^{\bf k} \omega_{\bf k} y^{({\bf k})}.
$$
  On the other hand, for any $\gamma,\beta\in \FF_n^+$, $i,j\in \{1,\ldots, n\}$, and ${\bf
k}\in \NN_0^n$, we have
   \begin{equation*}
   \begin{split}
   &\left<y^{({\bf k})}, W_\gamma(W_jW_i-W_iW_j)W_\beta 1\right>\\
& \qquad= 
   \frac{1}{\omega_{\bf k}}\left< \sum_{\alpha \in \Lambda_{\bf
k}} \frac{1}{\boldsymbol\mu(\alpha, g_0)} e_\alpha, 
\boldsymbol\mu(\gamma g_jg_i \beta, g_0) e_{\gamma g_jg_i \beta}-
\boldsymbol\mu(\gamma g_ig_j \beta, g_0) e_{\gamma g_ig_j \beta}\right>=0.
\end{split}
\end{equation*}
Hence, we deduce that      $F_s^2(\boldsymbol\mu)\subset  F^2(H_n)\ominus \overline{J_c(1)}$. Now, assume that  there is a vector $v=\sum_{\beta\in \FF_n^+} c_\beta e_\beta\in F^2(H_n)\ominus \overline{J_c(1)}$ such that $\left<v, z_\lambda\right>=0$ for any $\lambda\in \cD(\boldsymbol \mu)$. Then we have
$$
\sum_{{\bf k}\in \NN_0}\left(\sum_{\beta\in \Lambda_{\bf k}}c_\beta \frac{1}{\boldsymbol\mu(\beta,g_0)}\right) \lambda^{\bf k}=0,\qquad \lambda\in \cD(\boldsymbol \mu).
$$
 Since    $\cD(\boldsymbol \mu)$ contains a ball centered at the origin, the uniqueness of the power series representations of holomorphic functions  on domains in $\CC^n$  implies 
 $\sum_{\beta\in \Lambda_{\bf k}}c_\beta \frac{1}{\boldsymbol\mu(\beta,g_0)}=0$ for any ${\bf k}\in \NN_0^n$.
   
 Let $\beta_0=\gamma g_j g_i\omega \in \Lambda_{\bf k}$ and let $\beta=\gamma g_i g_j\omega\in \Lambda_{\bf k}$, where
  $\gamma,\omega\in \FF_n^+$
and $i\neq j$. Since $v\in
 F^2(H_n)\ominus\overline{J_c(1)}$, we   have
$$
\left<v,W_\gamma(W_jW_i-W_iW_j)W_\omega(1)\right>=0,
$$
which implies
$c_{\beta_0}\boldsymbol\mu(\beta_0,g_0)= c_\beta\boldsymbol\mu(\beta,g_0)$.
Since any element $\gamma\in \Lambda_{\bf k}$ can be obtained from
$\beta_0$  by successive  transpositions, repeating the above
argument, we deduce that
$
c_{\beta_0}\boldsymbol\mu(\beta_0,g_0)= c_\gamma\boldsymbol\mu(\gamma,g_0)$
 for all $\gamma\in \Lambda_{\bf k}.
$
Since $\sum_{\beta\in \Lambda_{\bf k}}c_\beta \frac{1}{\boldsymbol\mu(\beta,g_0)}=0$ for any ${\bf k}\in \NN_0^n$, we conclude that 
  $c_\beta=0$ for any $\beta\in
\Lambda_{\bf k}$ and ${\bf k}\in \NN_0^n$, so $v=0$. Consequently,
we have $\overline{\text{\rm span}}\{z_\lambda: \ \lambda\in
\cD(\boldsymbol\mu)\}=F^2(H_n)\ominus \overline{J_c(1)}$, which completes the proof of part (i).

  To prove part (ii), we remark that
  \begin{equation*}
  \begin{split}
  \left<y^{({\bf k})}, z_\lambda\right>
  &=  \frac{1}{\omega_{\bf k}}\left< \sum_{\alpha \in \Lambda_{\bf
k}} \frac{1}{\boldsymbol\mu(\alpha, g_0)} e_\alpha, z_\lambda\right>\\
&= \frac{1}{\omega_{\bf k}}  \sum_{\alpha \in \Lambda_{\bf
k}} \frac{1}{\boldsymbol\mu(\alpha, g_0)} \lambda_\alpha=\lambda^{\bf k}
  \end{split}
  \end{equation*}
for any ${\bf k}\in \NN_0^n$ and $\lambda\in \cD(\boldsymbol\mu)$.   
 This shows that every element $\psi=  \sum_{{\bf k}\in \NN_0}
c_{\bf k} y^{({\bf k})}\in F_s^2(\boldsymbol\mu)$ has a functional
representation on $\cD(\boldsymbol\mu)$ given by
$
\psi(\lambda):=\left<\psi, z_\lambda\right>=\sum_{{\bf k}\in
\NN_0} c_{\bf k} \lambda^{\bf k} 
$
and, due to Cauchy-Schwartz inequality,
$
|\psi(\lambda)|\leq
 \|\psi\|_2\left(\sum_{\alpha\in \FF_n^+}  \frac{|\lambda_\alpha|^2}{\boldsymbol \mu(\alpha, g_0)^2}\right)^{1/2}$.  We remark that any function  $\varphi \in \HH^2(\cD(\boldsymbol\mu))$ is uniquely determined by the coefficients of its power series representation  
 $\varphi(\lambda)=\sum_{{\bf k}\in \NN_0}
c_{\bf k} \lambda^{\bf k}$ due  to the uniqueness of the power series representations of holomorphic functions  on domains in $\CC^n$.
  Now, it is clear that the map $U:F_s^2(\boldsymbol\mu)\to \HH^2(\cD(\boldsymbol\mu))$ defined by 
 $$
 U\left(\sum_{{\bf k}\in \NN_0}
c_{\bf k} y^{({\bf k})} \right):=\sum_{{\bf k}\in
\NN_0} c_{\bf k} \lambda^{\bf k} 
 $$
 is a unitary operator.    This completes the proof of part (ii).  Note that part (iii) follows easily from (ii).
 \end{proof}

We define the operator  $B_i\in B(F^2_s(\boldsymbol\mu))$ by setting $B_i:=P_{F^2_s(\boldsymbol\mu)} W_i|_{F^2_s(\boldsymbol\mu)}$,  for each $i\in \{1,\ldots, n\}$.

 \begin{theorem} \label{commutant} Let  $\boldsymbol\mu=\{\mu_\beta\}_{|\beta|\geq 1}$ be a weight sequence   
 with the property that $\cD(\boldsymbol\mu)$ contains a neighborhood of the origin and let $B_1, \ldots, B_n$ be the  compressions of $W_1,\ldots, W_n$  to the symmetric weighted Fock space $F_s^2(\boldsymbol\mu)$, respectively.
 Then the following statements hold.
 \begin{enumerate}
 \item[(i)]    The $n$-tuple $(B_1,\ldots, B_n)$ is unitarily equivalent to $(M_{\lambda_1},\ldots, M_{\lambda_n})$, where   $M_{\lambda_i}$  is  the multiplication on $\HH^2(\cD(\boldsymbol\mu))$ by the coordinate   function $\lambda_i$.

 \item[(ii)] $\sigma_p(B_1^*,\ldots, B_n^*)=\sigma_p(W_1^*,\ldots, W_n^*)=\cD(\boldsymbol\mu)$
 \item [(iii)]  $A\in \{B_1,\ldots, B_n\}'$ if and only if  there is a unique  multiplier $\varphi$ of 
 $\HH^2(\cD(\boldsymbol\mu))$ 
 such that $$M_\varphi=UAU^*,$$
  where the unitary operator $U:F_s^2(\boldsymbol\mu)\to \HH^2(\cD(\boldsymbol\mu))$ is defined by $U(y^{({\bf k})}):=\lambda^{\bf k}$.
 \item[(iv)] Each  $A\in \{B_1,\ldots, B_n\}'$  is uniquely determined  by
 the vector
 $$A1=\sum_{{\bf k}\in \NN_0^n} c_{\bf k} y^{({\bf k})}\in F_s^2(\boldsymbol\mu)
 $$
 and 
 $$A\zeta=\sum_{{\bf p}\in \NN_0^n}\sum_{{\bf k}\in \NN_0^n} c_{\bf k} a_{\bf p} y^{({\bf k}+{\bf p})},\qquad \text{ if } \  \zeta=\sum_{{\bf p}\in \NN_0^n}a_{\bf p} y^{({\bf p})}\in F_s^2(\boldsymbol\mu).
$$ 
\item[(v)]  $\left<A1, z_\lambda\right>=\varphi(\lambda):=\sum_{{\bf k}\in \NN_0^n} c_{\bf k}\lambda^{\bf k}$ and 
$A^*z_\lambda=\overline{\varphi(\lambda)}z_\lambda$ for any $\lambda \in \cD(\boldsymbol\mu)$.
 \end{enumerate}
  \end{theorem}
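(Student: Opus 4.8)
The proof will rest on the unitary $U\colon F_s^2(\boldsymbol\mu)\to\HH^2(\cD(\boldsymbol\mu))$, $U(y^{({\bf k})}):=\lambda^{\bf k}$, of Theorem \ref{symmetric}, together with the joint eigenvector analysis of Proposition \ref{joint}. First I would record two elementary remarks. Since $J_c$ is a \emph{two-sided} ideal of $F^\infty(\boldsymbol\mu)$ and $W_i\in F^\infty(\boldsymbol\mu)$, we have $W_i\,\overline{J_c(1)}\subseteq\overline{J_c(1)}$, so $F_s^2(\boldsymbol\mu)=F^2(H_n)\ominus\overline{J_c(1)}$ is invariant under $W_1^*,\dots,W_n^*$ and therefore $B_i^*=W_i^*|_{F_s^2(\boldsymbol\mu)}$. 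Secondly, under $U$ the eigenvector $z_\lambda$ corresponds to the reproducing kernel $\boldsymbol\kappa(\cdot,\lambda)$ of $\HH^2(\cD(\boldsymbol\mu))$, since $\langle U^*\varphi,z_\lambda\rangle=\varphi(\lambda)=\langle\varphi,\boldsymbol\kappa(\cdot,\lambda)\rangle$ for all $\varphi$.

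With these in hand, part (i) follows quickly: because $z_\lambda\in F_s^2(\boldsymbol\mu)$ and $W_i^*z_\lambda=\bar\lambda_i z_\lambda$ by Proposition \ref{joint}, we get $UB_i^*U^*\boldsymbol\kappa(\cdot,\lambda)=\bar\lambda_i\boldsymbol\kappa(\cdot,\lambda)$, and passing to the adjoint and evaluating against reproducing kernels gives $(UB_iU^*\varphi)(\lambda)=\lambda_i\varphi(\lambda)$, i.e. $UB_iU^*=M_{\lambda_i}$ (which in particular shows each coordinate function is a multiplier). Part (ii) then drops out: the equality $\sigma_p(W_1^*,\dots,W_n^*)=\cD(\boldsymbol\mu)$ is Proposition \ref{joint}(i); the inclusion $\sigma_p(B_1^*,\dots,B_n^*)\subseteq\sigma_p(W_1^*,\dots,W_n^*)$ is immediate from $B_i^*=W_i^*|_{F_s^2(\boldsymbol\mu)}$, and the reverse inclusion holds because $z_\zeta\in F_s^2(\boldsymbol\mu)$ is a joint eigenvector of the $B_i^*$ whenever $\zeta\in\cD(\boldsymbol\mu)$.

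For part (iii) — which I expect to be the main point — the unitary equivalence of (i) reduces the statement to: a bounded operator $N$ on $\HH^2(\cD(\boldsymbol\mu))$ commutes with $M_{\lambda_1},\dots,M_{\lambda_n}$ if and only if $N=M_\varphi$ for a unique multiplier $\varphi$. Given such an $N$, its adjoint $N^*$ commutes with each $M_{\lambda_i}^*$ and so leaves invariant the joint $(\bar\lambda_1,\dots,\bar\lambda_n)$-eigenspace of $(M_{\lambda_1}^*,\dots,M_{\lambda_n}^*)$; by Proposition \ref{joint}(i) this eigenspace equals $\CC\,\boldsymbol\kappa(\cdot,\lambda)$ and is nonzero for every $\lambda\in\cD(\boldsymbol\mu)$ (indeed $\boldsymbol\kappa(\lambda,\lambda)\ge1$), so $N^*\boldsymbol\kappa(\cdot,\lambda)=\overline{\varphi(\lambda)}\,\boldsymbol\kappa(\cdot,\lambda)$ for a scalar $\varphi(\lambda)$. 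Computing $(Nf)(\lambda)=\langle f,N^*\boldsymbol\kappa(\cdot,\lambda)\rangle=\varphi(\lambda)f(\lambda)$ identifies $N=M_\varphi$ with $\varphi=N1\in\HH^2(\cD(\boldsymbol\mu))$ a multiplier, and uniqueness is forced by $\varphi=M_\varphi 1$; the converse is trivial. The only care needed is the appeal to the closed graph theorem (boundedness of $M_\varphi$) and to the one-dimensionality of the joint eigenspaces, both already available from the earlier sections.

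Finally, parts (iv) and (v) are bookkeeping in the basis $\{y^{({\bf k})}\}$. Writing the multiplier of (iii) as $\varphi=\sum_{{\bf k}}c_{\bf k}\lambda^{\bf k}$, one has $A1=U^*M_\varphi U1=U^*\varphi=\sum_{{\bf k}}c_{\bf k}y^{({\bf k})}$, and for $\zeta=\sum_{{\bf p}}a_{\bf p}y^{({\bf p})}$ the product $\varphi\cdot(U\zeta)=\sum_{{\bf k},{\bf p}}c_{\bf k}a_{\bf p}\lambda^{{\bf k}+{\bf p}}$ gives $A\zeta=\sum_{{\bf k},{\bf p}}c_{\bf k}a_{\bf p}y^{({\bf k}+{\bf p})}$, so $A$ is determined by $A1$. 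Using $\langle y^{({\bf k})},z_\lambda\rangle=\lambda^{\bf k}$ from the proof of Theorem \ref{symmetric}, $\langle A1,z_\lambda\rangle=\sum_{{\bf k}}c_{\bf k}\lambda^{\bf k}=\varphi(\lambda)$, and $A^*z_\lambda=U^*M_\varphi^*\boldsymbol\kappa(\cdot,\lambda)=\overline{\varphi(\lambda)}\,U^*\boldsymbol\kappa(\cdot,\lambda)=\overline{\varphi(\lambda)}\,z_\lambda$, which completes the proof.
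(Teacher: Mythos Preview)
Your proof is correct and follows the same overall strategy as the paper: use the co-invariance of $F_s^2(\boldsymbol\mu)$ under $W_i$ and the eigenvector identity $W_i^*z_\lambda=\bar\lambda_i z_\lambda$ to establish (i) and (ii), then characterize the commutant, and read off (iv)--(v) from the basis $\{y^{({\bf k})}\}$.

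The one organizational difference worth noting is in part (iii). The paper first proves the explicit formula $B_i\,y^{({\bf k})}=y^{({\bf k}+e_i)}$ (by pairing against $z_\lambda$), hence $y^{({\bf k})}=B^{\bf k}1$, and then uses commutation to obtain $A\,y^{({\bf k})}=B^{\bf k}A1$, so that $A$ is determined by $A1$ and one sets $\varphi:=U(A1)$. You instead transfer to $\HH^2(\cD(\boldsymbol\mu))$ and run the standard reproducing-kernel argument: $N^*$ preserves the one-dimensional joint eigenspaces $\CC\,\boldsymbol\kappa(\cdot,\lambda)$, giving $N^*\boldsymbol\kappa(\cdot,\lambda)=\overline{\varphi(\lambda)}\boldsymbol\kappa(\cdot,\lambda)$ and $N=M_\varphi$. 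Both routes are short and rely on the same ingredient (Proposition \ref{joint}(i)); the paper's version has the small bonus of delivering the formula $B_iy^{({\bf k})}=y^{({\bf k}+e_i)}$ en route, which it then reuses for (iv), while your RKHS argument is the one the paper later invokes for Theorem \ref{reflexivity}. Your parenthetical about the closed graph theorem is unnecessary here, since $N$ is bounded by hypothesis and $M_\varphi=N$ inherits that boundedness directly.
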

  
  \begin{proof}  According to the proof of Theorem \ref{w*-funct} , for any $\varphi(W)\in F^\infty(\boldsymbol\mu)$ and $\lambda=(\lambda_1,\ldots, \lambda_n)\in \cD(\boldsymbol\mu)$, we have 
 $\varphi(W)^* z_\lambda=\overline {\varphi(\lambda)} z_\lambda$. Due to the fact that  $z_\lambda\in F^2 (\boldsymbol\mu)$, for any  $f\in F_s^2(\boldsymbol\mu)$, we have
 \begin{equation*}
 \begin{split}
 \left<P_{F^2_s(\boldsymbol\mu)} \varphi(W)|_{F^2_s(\boldsymbol\mu)} f,z_\lambda\right>
 &=\left< f, \varphi(W)^*z_\lambda\right>\\
 &=\varphi(\lambda)\left<f,z_\lambda\right>=\varphi(\lambda) f(\lambda).
 \end{split}
 \end{equation*}
 Consequently,  $P_{F^2_s(\boldsymbol\mu)} \varphi(W)|_{F^2_s(\boldsymbol\mu)}$ is unitarily equivalent to the multiplication operator $M_\varphi\in B(\HH^2(\cD(\boldsymbol\mu)))$. In particular,  $(B_1,\ldots, B_n)$ is unitarily equivalent to $(M_{\lambda_1},\ldots, M_{\lambda_n})$.

  Taking into account  that  $W_i^*z_\lambda=\bar \lambda_i z_\lambda$ for any $\lambda=(\lambda_1,\ldots, \lambda_n)\in \cD(\boldsymbol\mu)$ and, due to Theorem \ref{symmetric}, 
  $F_s^2(\boldsymbol\mu)=\overline{\text{\rm span}}\{z_\lambda: \ \lambda\in
\cD(\boldsymbol\mu)\}$, we deduce   that $W_i^* F_s^2(\boldsymbol\mu)\subset F_s^2(\boldsymbol\mu)$ and   $B_i^* z_\lambda=\bar \lambda_i z_\lambda$.
Conversely, assume that  $\lambda=(\lambda_1,\ldots, \lambda_n)\in \CC^n$ and there is $z\in F_s^2(\boldsymbol\mu)$, $z\neq 0$, such that  $B_i^* z =\bar \lambda_i z $ for any $i\in \{1,\ldots, n\}$. Since $B_i^*=W_i^*|_{F_s^2(\boldsymbol\mu)}$, we also have  $W_i^*z=\bar \lambda_i z$ which, due to Proposition \ref{joint}, implies $\lambda\in 
\sigma_p(W_1^*,\ldots, W_n^*) $. This proves item (ii).

 To prove part (iii), note that,  for any ${\bf k}=(k_1,\ldots, k_n)\in \NN_0^n$, we have
\begin{equation*}
\begin{split}
\left< B_i y^{({\bf k})}, z_\lambda\right>
&=\left<   y^{({\bf k})}, W_i^* z_\lambda\right>=\left<   y^{({\bf k})}, \bar\lambda_i z_\lambda\right>\\
&=\lambda_i \lambda^{({\bf k})}=\left<y^{( k_1,\ldots, k_j+1,\ldots k_n)}, z_\lambda\right>
\end{split}
\end{equation*}
for any $\lambda \in \cD(\boldsymbol\mu)$. Hence, $B_iy^{({\bf k})}=y^{( k_1,\ldots, k_j+1,\ldots k_n)}$ which implies
$A(y^{({\bf k})})=AB^{{\bf k}} 1= B^{{\bf k}} A 1$. This proves that any 
$A\in \{B_1,\ldots, B_n\}'$ is uniquely determined  by $A1\in F_s^2(\boldsymbol\mu)$.
Setting $\varphi:=UA1\in \HH^2(\cD(\boldsymbol\mu))$, we complete the proof of part (iii).

To prove item (iv), assume that  $A1=\sum_{{\bf k}\in \NN_0^n} c_{\bf k} y^{({\bf k})}\in F_s^2(\boldsymbol\mu)$.
 Note that  
$$A(y^{({\bf p})})=B^{{\bf p}} A 1=\sum_{{\bf k}\in \NN_0^n} c_{\bf k} B^{{\bf p}}y^{({\bf k})}
=\sum_{{\bf k}\in \NN_0^n} c_{\bf k}  y^{({\bf p}+{\bf k})}, \qquad {\bf p}\in \NN_0^n.
$$
Now, one can easily deduce part (iv). The first part of item (v) is clear. To prove the second part, note that
\begin{equation*}
\begin{split}
\left<A^* z_\lambda, y^{({\bf k})}\right> &=\left<  z_\lambda,  Ay^{({\bf k})}\right>
=\left<  z_\lambda,  B^{{\bf k}} A 1\right>\\
&=\left<(B^{{\bf k})^ *} z_\lambda, A 1\right>=\left<\bar \lambda^{\bf k}z_\lambda, \sum_{{\bf k}\in \NN_0^n} c_{\bf k} y^{({\bf k})}\right>\\
&=\left<\overline{\varphi(\lambda)} z_\lambda, y^{({\bf k})}\right>
\end{split}
\end{equation*}
for any ${\bf k}=(k_1,\ldots, k_n)\in \NN_0^n$. Consequently, 
$A^*z_\lambda=\overline{\varphi(\lambda)}z_\lambda$ for any $\lambda \in \cD(\boldsymbol\mu)$.
This completes the proof.
\end{proof}
 
 Under the notation of the theorem above, we  call the formal series  $\sum_{{\bf k}\in \NN_0^n} c_{\bf k} B^{\bf k}$ the Fourier representation of $A$ and note that
 $\sum_{{\bf k}\in \NN_0^n} c_{\bf k} B^{\bf k} y^{({\bf p})}=Ay^{({\bf p})}$  for any ${\bf p}\in \NN_0^n$. This shows that $A$ is uniquely determined by its Fourier representation.
 
 Similarly to the proof of Theorem 9.2 from \cite{Po-univ2}, one can show that if  $A\in \{B_1,\ldots, B_n\}'$ has the Fourier representation  $\sum_{{\bf k}\in \NN_0^n} c_{\bf k} B^{\bf k}$, then 
 $$
 A=\text{SOT-}\lim_{N\to \infty}\sum_{{\bf k}\in \NN_0^n, |{\bf k}|\leq N} \left(1-\frac{|{\bf k}|}{N+1}\right)c_{\bf k}B^{\bf k},
 $$
 $$
 \left\|\sum_{{\bf k}\in \NN_0^n, |{\bf k}|\leq N} \left(1-\frac{|{\bf k}|}{N+1}\right)c_{\bf k}B^{\bf k}\right\|\leq \|A\|,\qquad N\in \NN,
 $$
 and $$ \left\|\sum_{{\bf k}\in \NN_0^n, |{\bf k}|=m}  c_{\bf k}B^{\bf k}\right\|\leq \|A\|,\qquad m\in \NN.
 $$
 
 We introduce the Hardy algebra $F_s^\infty(\boldsymbol\mu)$  to be the $w^*$-closed non-self-adjoint algebra generated by  the operators $B_1,\ldots, B_n$ and the identity. Since $B_iB_j=B_jB_i$ for any $i,j\in \{1,\ldots, n\}$, one can use the results above to show that
 $$
 F_s^\infty(\boldsymbol\mu)=\{B_1,\ldots, B_n\}'=\overline{\cP(B)}^{SOT}=\overline{\cP(B)}^{WOT}=\overline{\cP(B)}^{w^*},
 $$
 where $\cP(B)$  stands for the algebra of all polynomials in $B_1,\ldots, B_n$ and the identity. Moreover,  $F_s^\infty(\boldsymbol\mu)$ is the sequential SOT-(resp. WOT-, w*) closure of $\cP(B)$.
 
 Using Theorem \ref{commutant} and the above-mentioned results, one can easily prove the following.
 \begin{theorem} \label{SOT-B}
 The Hardy algebra $F_s^\infty(\boldsymbol\mu)$ can be identified with the multiplier algebra $\cM(\HH^2(\boldsymbol\mu))$. Moreover, each $\varphi(B)\in F_s^\infty(\boldsymbol\mu)$ has a unique Fourier representation  $\sum_{{\bf k}\in \NN_0^n} c_{\bf k} B^{\bf k}$ and 
 $$
 \varphi(B)=\text{\rm SOT-}\lim_{N\to \infty}\sum_{{\bf k}\in \NN_0^n, |{\bf k}|\leq N} \left(1-\frac{|{\bf k}|}{N+1}\right)c_{\bf k}B^{\bf k}.
 $$
 \end{theorem}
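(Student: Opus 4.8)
The plan is to assemble the statement from Theorem \ref{commutant} together with the Ces\`aro summability result recorded just before the theorem, so that essentially no new computation is required.

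First I would fix the unitary $U:F_s^2(\boldsymbol\mu)\to \HH^2(\cD(\boldsymbol\mu))$ of Theorem \ref{commutant}, determined by $Uy^{({\bf k})}=\lambda^{\bf k}$, under which $UB_iU^*=M_{\lambda_i}$ for $i\in\{1,\ldots,n\}$. Since $B_iB_j=B_jB_i$, the polynomial algebra $\cP(B)$ lies in the commutant $\{B_1,\ldots,B_n\}'$, and taking the (sequential) $\mathrm{SOT}$/$\mathrm{WOT}$/$w^*$ closure and invoking the identity $F_s^\infty(\boldsymbol\mu)=\{B_1,\ldots,B_n\}'$ established in the discussion preceding the theorem, we obtain $F_s^\infty(\boldsymbol\mu)=\{B_1,\ldots,B_n\}'$. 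By Theorem \ref{commutant}(iii) the map $A\mapsto UAU^*$ is then an isometric algebra isomorphism from $F_s^\infty(\boldsymbol\mu)$ onto $\{M_\varphi:\varphi\in\cM(\HH^2(\cD(\boldsymbol\mu)))\}$: every $A$ in the commutant is carried to a multiplication operator $M_\varphi$ with $\varphi=U(A1)$ the unique associated multiplier, and conversely every multiplier $\varphi$ of $\HH^2(\cD(\boldsymbol\mu))$ gives a bounded $M_\varphi$ commuting with each $M_{\lambda_i}$, so $U^*M_\varphi U\in\{B_1,\ldots,B_n\}'=F_s^\infty(\boldsymbol\mu)$. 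Since the multiplier algebra is, as usual, identified with the algebra of its multiplication operators (the multiplier being recovered as $M_\varphi(1)$), this proves the first assertion.

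Next, for the Fourier representation: given $\varphi(B)\in F_s^\infty(\boldsymbol\mu)$, Theorem \ref{commutant}(iv) shows that $\varphi(B)$ is determined by $\varphi(B)1=\sum_{{\bf k}\in\NN_0^n}c_{\bf k}y^{({\bf k})}$ and that $\varphi(B)y^{({\bf p})}=\sum_{{\bf k}}c_{\bf k}y^{({\bf k}+{\bf p})}=\bigl(\sum_{\bf k}c_{\bf k}B^{\bf k}\bigr)y^{({\bf p})}$ for every ${\bf p}\in\NN_0^n$; hence $\sum_{\bf k}c_{\bf k}B^{\bf k}$ is a Fourier representation of $\varphi(B)$ in the sense defined after Theorem \ref{commutant}. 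Uniqueness of the coefficients follows since the corresponding multiplier $\varphi=U(\varphi(B)1)=\sum_{\bf k}c_{\bf k}\lambda^{\bf k}$ is a holomorphic function on $\cD(\boldsymbol\mu)$, which by hypothesis contains a neighborhood of the origin, and the uniqueness of power series expansions of holomorphic functions of several complex variables then pins down the $c_{\bf k}$.

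Finally, the $\mathrm{SOT}$-reconstruction formula is exactly the statement recorded just above the theorem, namely the analogue for the commuting tuple $(B_1,\ldots,B_n)$ of Theorem 9.2 of \cite{Po-univ2}: for any $A\in\{B_1,\ldots,B_n\}'$ with Fourier representation $\sum_{\bf k}c_{\bf k}B^{\bf k}$ the Ces\`aro means $\sigma_N(A):=\sum_{|{\bf k}|\le N}\bigl(1-\tfrac{|{\bf k}|}{N+1}\bigr)c_{\bf k}B^{\bf k}$ satisfy $\|\sigma_N(A)\|\le\|A\|$ and $\mathrm{SOT}\text{-}\lim_{N\to\infty}\sigma_N(A)=A$; applying this with $A=\varphi(B)$ yields the displayed identity. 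Most of the genuine work has already been absorbed into Theorem \ref{commutant} and into this summability result (which themselves rest on the Davidson--Pitts description of the symmetric-Fock commutant and its weighted analogue in \cite{Po-univ2}); the only step in the present argument that deserves a moment's care is the surjectivity asserted in the first paragraph --- that $U$ conjugates $F_s^\infty(\boldsymbol\mu)$ onto \emph{all} multiplication operators $M_\varphi$, $\varphi\in\cM(\HH^2(\cD(\boldsymbol\mu)))$ --- which is precisely what Theorem \ref{commutant}(iii) combined with $F_s^\infty(\boldsymbol\mu)=\{B_1,\ldots,B_n\}'$ delivers.
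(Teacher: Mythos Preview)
Your proposal is correct and follows essentially the same approach as the paper, which simply states that the theorem follows from Theorem \ref{commutant} and the results preceding it (the commutant identity $F_s^\infty(\boldsymbol\mu)=\{B_1,\ldots,B_n\}'$ and the Ces\`aro summability facts). You have merely filled in the details the paper omits.
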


 \bigskip

   If $A\in B(\cH)$ then the set of all
invariant subspaces of $A$ is denoted by $\text{\rm Lat}A$. For
any $\cU\subset B(\cH)$ we define
$
\text{\rm Lat}\,\cU=\bigcap_{A\in\cU}\text{\rm Lat}\,A.
$
 If $\cS$ is any collection of subspaces of $\cH$,
then we define $\text{\rm Alg}\,\cS$ by setting $\text{\rm
Alg}\,\cS:=\{A\in B(\cH):\ \cS\subset\text{\rm Lat}\,A\}.$ We recall
that the algebra $\cU\subset B(\cH)$ is {\it reflexive} if
$\cU=\text{\rm Alg Lat}\,\cU$.

\begin{theorem}\label{reflexivity}
The algebra $F_s^\infty(\boldsymbol\mu)$ is reflexive.
\end{theorem}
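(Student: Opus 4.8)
The plan is to show $\text{\rm Alg Lat}\,F_s^\infty(\boldsymbol\mu)\subseteq F_s^\infty(\boldsymbol\mu)$, the reverse inclusion being automatic for any algebra. Since $F_s^\infty(\boldsymbol\mu)$ coincides with the commutant $\{B_1,\ldots,B_n\}'$, it suffices to take $X\in\text{\rm Alg Lat}\,F_s^\infty(\boldsymbol\mu)\subseteq B(F_s^2(\boldsymbol\mu))$ and prove that $X$ commutes with each $B_i$; then $X\in\{B_1,\ldots,B_n\}'=F_s^\infty(\boldsymbol\mu)$ by Theorem \ref{commutant}(iii). The whole Section~6 hypothesis that $\cD(\boldsymbol\mu)$ contains a neighborhood of the origin will be used only through Theorem \ref{symmetric}, which supplies a spanning family of joint eigenvectors.

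First I would produce a large supply of subspaces in $\text{\rm Lat}\,F_s^\infty(\boldsymbol\mu)$. For $\lambda\in\cD(\boldsymbol\mu)$ the vector $z_\lambda=\sum_{\alpha\in\FF_n^+}\frac{\bar\lambda_\alpha}{\boldsymbol\mu(\alpha,g_0)}e_\alpha$ is nonzero (its $e_{g_0}$-coefficient is $1$) and lies in $F_s^2(\boldsymbol\mu)$, and Theorem \ref{commutant}(v) gives $\varphi(B)^*z_\lambda=\overline{\varphi(\lambda)}\,z_\lambda$ for every $\varphi(B)\in F_s^\infty(\boldsymbol\mu)$. Hence $\CC z_\lambda$ is invariant under $\varphi(B)^*$ for all $\varphi(B)$, equivalently $\cN_\lambda:=(\CC z_\lambda)^\perp\in\text{\rm Lat}\,F_s^\infty(\boldsymbol\mu)$. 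Consequently $X\cN_\lambda\subseteq\cN_\lambda$, and by the elementary fact that $X\cN\subseteq\cN$ iff $X^*\cN^\perp\subseteq\cN^\perp$ (applied to $\cN=\cN_\lambda$), we get $X^*z_\lambda\in\CC z_\lambda$; write $X^*z_\lambda=\overline{c(\lambda)}\,z_\lambda$ with $c(\lambda)\in\CC$.

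Next I would use that the $z_\lambda$ are simultaneous eigenvectors of $B_1^*,\ldots,B_n^*$. Since $B_i^*z_\lambda=\bar\lambda_i z_\lambda$ and $X^*z_\lambda=\overline{c(\lambda)}z_\lambda$, for every $\lambda\in\cD(\boldsymbol\mu)$ and every $i$ we obtain
$$X^*B_i^*z_\lambda=\bar\lambda_i\,\overline{c(\lambda)}\,z_\lambda=B_i^*X^*z_\lambda.$$
By Theorem \ref{symmetric}(i), $F_s^2(\boldsymbol\mu)=\overline{\text{\rm span}}\{z_\lambda:\lambda\in\cD(\boldsymbol\mu)\}$, so $X^*B_i^*=B_i^*X^*$ on $F_s^2(\boldsymbol\mu)$, i.e. $B_iX=XB_i$ for all $i\in\{1,\ldots,n\}$. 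Thus $X\in\{B_1,\ldots,B_n\}'=F_s^\infty(\boldsymbol\mu)$, which completes the proof.

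The argument is short because all the heavy lifting has already been done: the identification of $F_s^\infty(\boldsymbol\mu)$ with the commutant $\{B_1,\ldots,B_n\}'$, the explicit form of the eigenvectors $z_\lambda$ of the $B_i^*$, and the density of their span. The only point requiring care — and the place I expect a referee to look — is the passage from ``$\CC z_\lambda$ is $\varphi(B)^*$-invariant for all $\varphi(B)\in F_s^\infty(\boldsymbol\mu)$'' to ``$\cN_\lambda\in\text{\rm Lat}\,F_s^\infty(\boldsymbol\mu)$'' and back to ``$X^*z_\lambda\in\CC z_\lambda$''; once that adjoint/orthocomplement bookkeeping is spelled out, the rest is immediate.
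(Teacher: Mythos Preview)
Your proof is correct and follows essentially the same approach as the paper: both arguments use that $(\CC z_\lambda)^\perp\in\text{Lat}\,F_s^\infty(\boldsymbol\mu)$ to deduce $X^*z_\lambda\in\CC z_\lambda$, and then exploit the density of $\{z_\lambda\}$ in $F_s^2(\boldsymbol\mu)$. The only cosmetic difference is that the paper passes through the functional representation to identify $X$ as a multiplier, whereas you verify $[X^*,B_i^*]z_\lambda=0$ directly and invoke the commutant description $F_s^\infty(\boldsymbol\mu)=\{B_1,\ldots,B_n\}'$; these amount to the same thing via Theorems~\ref{commutant} and~\ref{SOT-B}.
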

\begin{proof}  Let $A\in B(F_s^2(\boldsymbol\mu))$ be an operator   that leaves
invariant all the invariant subspaces under the operators
$B_1,\ldots, B_n$. According to Theorem \ref{commutant}, for any $\lambda\in \cD(\boldsymbol\mu)$, the subspace   $\CC z_\lambda$ is invariant under $B_1^*,\ldots, B_n^*$. Consequently, $A^*z_\lambda=\overline{\varphi(\lambda)} z_\lambda$ for some function 
$\varphi:\cD(\boldsymbol\mu)\to \CC$.   Note that, if $f\in F_s^2(\boldsymbol\mu)$,
then, due to Theorem \ref{symmetric}, $Af$ has the functional
representation
$$
 \left<Af,z_\lambda\right>=\left<\psi,A^*z_\lambda\right>=
\varphi(\lambda)\psi(\lambda)\quad \text{ for all }\ \lambda\in
\cD(\boldsymbol\mu).
$$
In particular,  
$\left<A(1),z_\lambda\right>=\varphi(\lambda)$.
  Therefore $\varphi$   can be
identified with $A(1)\in F_s^2(\boldsymbol\mu)$ and the relation above  shows that  
$\varphi\psi\in \HH^2(\cD(\boldsymbol \mu))$ for any  
$\psi\in F_s^2(\boldsymbol\mu)$. Therefore, $A\in \cM(F_s^2(\boldsymbol\mu))$ and $\varphi\in \cM(\HH^2(\cD(\boldsymbol \mu)))$. Since the Hardy algebra $F_s^\infty(\boldsymbol\mu)$ is identified with  the multiplier algebra  $\cM(\HH^2(\cD(\boldsymbol \mu)))$, the proof is complete.
\end{proof}

\bigskip

 \section{Functional calculus for   commuting   $n$-tuples of operators}

Throughout this section, we assume that 
   $\boldsymbol\mu=\{\mu_\beta\}_{|\beta|\geq 1}$ is a  weight  sequence  of strictly positive numbers satisfying the boundedness conditions  \eqref{bound1} and \eqref{bound2}  and having 
   the property that $\cD(\boldsymbol\mu)$  is a subset of $\CC^n$ containing a neighborhood of the origin. 
 Let $B_1, \ldots, B_n$ be the  compressions of $W_1,\ldots, W_n$  to the symmetric weighted Fock space $F_s^2(\boldsymbol\mu)$, respectively.  We recall that $F_s^\infty(\boldsymbol\mu)$ is the $w^*$-closed non-self-adjoint algebra generated by  the operators $B_1,\ldots, B_n$ and the identity, which   is identified with the multiplier algebra $\cM(\HH^2(\cD(\boldsymbol\mu)))$.  Under this identification,  the $n$-tuple $(B_1,\ldots, B_n)$ is unitarily equivalent to $(M_{\lambda_1},\ldots, M_{\lambda_n})$, where   $M_{\lambda_i}$  is  the multiplication on $\HH^2(\cD(\boldsymbol\mu))$ by the coordinate   function $\lambda_i$.

We recall from Section 5 that  $$
\Lambda_{\bf k}:=\{\alpha\in \FF_n^+: \ \lambda_\alpha =\lambda^{\bf
k} \text{ for all } \lambda\in \CC^n\},\qquad  {\bf k}\in \NN_0^n,
$$
and 
$$
y^{(\bf k)}:=\frac{1}{\omega_{\bf k}} \sum_{\alpha \in \Lambda_{\bf
k}} \frac{1}{\boldsymbol\mu(\alpha, g_0)} e_\alpha\in F^2(H_n), \quad  \text{ where } \
\omega_{\bf k}:=\sum_{\alpha\in \Lambda_{\bf k}} \frac{1}{\boldsymbol\mu(\alpha, g_0)^2}.
$$
The set  $\{y^{(\bf k)}:\ {\bf
k}\in \NN_0^n\}$ consists  of orthogonal vectors in $F^2(H_n)$ and
$\|y^{(\bf k)}\|=\frac{1}{\sqrt{\omega_{\bf k}}}$, and the symmetric weighted  Fock space $F_s^2(\boldsymbol\mu)$  is the closed span of these vectors.

 We denote by 
     $
     \cD^c_{\boldsymbol\mu}(\cH)$ the set all all $n$-tuples $X=(X_1,\ldots, X_n)\in  \cD_{\boldsymbol\mu}(\cH)$ such that $X_iX_j=X_jX_i$ for any $i,j\in \{1,\ldots, n\}$.  Note that  $X\in \cD_{\boldsymbol\mu}(\cH)$ if and only if  $\sum_{{\bf k}\in \NN_0^n} \omega_{\bf k} T^{\bf k} {T^{\bf k}}^*$ is WOT-convergent.
     Let $\cA_s(\boldsymbol\mu)$ be the norm closed non-self-adjoint algebra  generated by $B_1,\ldots, B_n$ and the identity.

 \begin{theorem} \label{Rota-com}    If $T=(T_1,\ldots, T_n) \in B(\cH)^n$ is an $n$-tuple of commuting operators  and there is a positive invertible operator $Q\in B(\cH)$  and positive constants $0<a\leq b$ such that 
 $$
 a I\leq  \sum_{{\bf k}\in \NN_0^n} \omega_{\bf k} T^{\bf k}Q {T^{\bf k}}^*\leq bI,
 $$
 then 
  $(T_1,\ldots, T_n)$ is jointly similar to  $$(P_\cM(B_1\otimes I_\cH)|_\cM,\ldots, P_\cM(B_n\otimes I_\cH)|_\cM),$$
 where  $\cM\subset F_s^2(\boldsymbol\mu)\otimes \cH$  is a joint invariant subspace under the operators $B_i^*\otimes I_\cH$,  $i\in \{1,\ldots, n\}$.
 In this case, we have $r(T)\leq r(B)$ and   the map $\Phi_T:\cA_s(\boldsymbol\mu)\to B(F^2(H_n)$ defined by 
 $$\Phi_T(p(B_1,\ldots, B_n)):=p(T_1,\ldots, T_n)$$
  is completely bounded and $\|\Phi_T\|_{cb}\leq \sqrt{\frac{b}{a}} $.
 \end{theorem}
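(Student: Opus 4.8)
The plan is to mimic the proof of Theorem \ref{Rota} (the noncommutative Rota-type result), replacing the full Fock space construction by its symmetric analogue. First I would define a ``symmetric Poisson kernel'' operator $K_{\boldsymbol\mu}^s:\cH\to F_s^2(\boldsymbol\mu)\otimes\cD$, where $\cD:=\overline{Q^{1/2}\cH}$, by setting
\begin{equation*}
K_{\boldsymbol\mu}^s h:=\sum_{{\bf k}\in\NN_0^n} \sqrt{\omega_{\bf k}}\, y^{({\bf k})}\otimes Q^{1/2}{T^{\bf k}}^* h,\qquad h\in\cH.
\end{equation*}
Using $\|y^{({\bf k})}\|^2=1/\omega_{\bf k}$, one computes $\|K_{\boldsymbol\mu}^s h\|^2=\sum_{{\bf k}\in\NN_0^n}\omega_{\bf k}\langle {T^{\bf k}}Q{{T^{\bf k}}}^* h,h\rangle$, so the hypothesis $aI\leq \sum_{{\bf k}}\omega_{\bf k}T^{\bf k}Q{T^{\bf k}}^*\leq bI$ gives both boundedness of $K_{\boldsymbol\mu}^s$ and that it is bounded below (hence has closed range), with $\|K_{\boldsymbol\mu}^s\|\le\sqrt b$ and $\|(K_{\boldsymbol\mu}^s)^{-1}\|\le 1/\sqrt a$ on its range.

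The key intertwining computation is $K_{\boldsymbol\mu}^s T_i^*=(B_i^*\otimes I)K_{\boldsymbol\mu}^s$ for $i\in\{1,\ldots,n\}$. Here I would use that $B_i^*=W_i^*|_{F_s^2(\boldsymbol\mu)}$ together with the identity $B_i y^{({\bf k})}=y^{({\bf k}+{\bf e}_i)}$ established in the proof of Theorem \ref{commutant}, which dualizes to $B_i^* y^{({\bf k}+{\bf e}_i)}=\tfrac{\omega_{{\bf k}+{\bf e}_i}}{\omega_{\bf k}}y^{({\bf k})}$ and $B_i^*y^{({\bf k})}=0$ when $k_i=0$; the weight factor $\omega_{{\bf k}+{\bf e}_i}/\omega_{\bf k}$ is exactly what is absorbed by the $\sqrt{\omega_{\bf k}}$ normalization, paralleling the role of $\boldsymbol\mu(g_i\gamma,g_0)=\mu_{g_i\gamma}\boldsymbol\mu(\gamma,g_0)$ in Theorem \ref{Rota}. (Commutativity of $T$ is what makes the ${\bf k}$-indexed reindexing well defined.) Then $\cM:=K_{\boldsymbol\mu}^s\cH$ is invariant under each $B_i^*\otimes I$, and defining $X:\cH\to\cM$ by $Xh:=K_{\boldsymbol\mu}^s h$ gives an invertible $X$ with $T_i^*=X^{-1}(B_i^*\otimes I)|_\cM X$, equivalently $T_i=X^*P_\cM(B_i\otimes I)|_\cM(X^*)^{-1}$, and $\|X\|\|X^{-1}\|\le\sqrt{b/a}$.

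From here the remaining assertions follow as in Theorem \ref{Rota} and Theorem \ref{calculus}: the map $\Phi_T(p(B_1,\ldots,B_n)):=p(T_1,\ldots,T_n)$ factors as $\Phi_T(\cdot)=X^*P_\cM(\,\cdot\otimes I)|_\cM(X^*)^{-1}$ on polynomials, hence extends to $\cA_s(\boldsymbol\mu)$ and is completely bounded with $\|\Phi_T\|_{cb}\le\|X\|\|X^{-1}\|\le\sqrt{b/a}$; and the inequality $r(T)\le r(B)$ comes from $\sum_{|\alpha|=k}T_\alpha T_\alpha^*=X^*\big(\sum_{|\alpha|=k}P_\cM W_\alpha|_\cM (X^*)^{-1}X^{-1}P_\cM W_\alpha^*|_\cM\big)X$ compressed appropriately, giving $\|\sum_{|\alpha|=k}T_\alpha T_\alpha^*\|\le\|X\|^2\|X^{-1}\|^2\|\sum_{|\alpha|=k}B_\alpha B_\alpha^*\|$ and taking $k$-th roots. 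I expect the main obstacle to be purely bookkeeping: verifying the intertwining relation cleanly in the ${\bf k}$-multi-index notation, in particular keeping track of the $\omega_{\bf k}$ normalization factors and checking that the sum defining $K_{\boldsymbol\mu}^s$ lands in $F_s^2(\boldsymbol\mu)\otimes\cD$ (rather than merely $F^2(H_n)\otimes\cD$), which is where the hypothesis $\cD(\boldsymbol\mu)$ containing a neighborhood of the origin, and the symmetric Fock space structure of Theorem \ref{symmetric}, are used.
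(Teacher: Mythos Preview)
Your approach is essentially the paper's, but there is a normalization slip that propagates. With your coefficient $\sqrt{\omega_{\bf k}}$ you get
\[
\|K_{\boldsymbol\mu}^s h\|^2=\sum_{{\bf k}}(\sqrt{\omega_{\bf k}})^2\|y^{({\bf k})}\|^2\,\|Q^{1/2}{T^{\bf k}}^*h\|^2
=\sum_{{\bf k}}\omega_{\bf k}\cdot\frac{1}{\omega_{\bf k}}\,\langle T^{\bf k}Q{T^{\bf k}}^*h,h\rangle
=\sum_{{\bf k}}\langle T^{\bf k}Q{T^{\bf k}}^*h,h\rangle,
\]
which is \emph{not} the hypothesized quadratic form. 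The correct coefficient is $\omega_{\bf k}$, giving $\omega_{\bf k}^2\cdot(1/\omega_{\bf k})=\omega_{\bf k}$ as needed. Relatedly, your dualized formula is inverted: from $B_i y^{({\bf k})}=y^{({\bf k}+{\bf e}_i)}$ and $\|y^{({\bf k})}\|^2=1/\omega_{\bf k}$ one gets $B_i^* y^{({\bf k}+{\bf e}_i)}=\dfrac{\omega_{\bf k}}{\omega_{{\bf k}+{\bf e}_i}}\,y^{({\bf k})}$, not the reciprocal. With the coefficient $\omega_{\bf k}$ in place, the intertwining computation goes through cleanly: the ${\bf k}={\bf m}+{\bf e}_i$ term contributes $\omega_{{\bf m}+{\bf e}_i}\cdot\dfrac{\omega_{\bf m}}{\omega_{{\bf m}+{\bf e}_i}}\,y^{({\bf m})}\otimes Q^{1/2}{T^{\bf m}}^*T_i^*h$, and you recover $K_{\boldsymbol\mu}^s T_i^*h$.

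The paper avoids this bookkeeping altogether: it simply reuses the noncommutative kernel $K_{\boldsymbol\mu}$ from Theorem~\ref{Rota} and observes that when $T_1,\ldots,T_n$ commute, $T_\alpha^*$ depends only on the ${\bf k}$ for which $\alpha\in\Lambda_{\bf k}$, so
\[
K_{\boldsymbol\mu}h=\sum_{\alpha\in\FF_n^+}\frac{1}{\boldsymbol\mu(\alpha,g_0)}\,e_\alpha\otimes Q^{1/2}T_\alpha^*h
=\sum_{{\bf k}\in\NN_0^n}\omega_{\bf k}\,y^{({\bf k})}\otimes Q^{1/2}{T^{\bf k}}^*h\in F_s^2(\boldsymbol\mu)\otimes\cD.
\]
Then the intertwining $K_{\boldsymbol\mu}T_i^*=(W_i^*\otimes I)K_{\boldsymbol\mu}$ is already known from Theorem~\ref{Rota}, and since $W_i^*|_{F_s^2(\boldsymbol\mu)}=B_i^*$ it becomes $K_{\boldsymbol\mu}T_i^*=(B_i^*\otimes I)K_{\boldsymbol\mu}$ for free. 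Everything else follows exactly as you outlined.
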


 \begin{proof}  A close look at the proof of Theorem \ref{Rota} reveals that, if 
  $T_1,\ldots, T_n$ are commuting operators,  then the  operator
 $K_{\boldsymbol\mu}:\cH\to F^2(H_n)\otimes \cD$   has the range in  $F_s^2(\boldsymbol\mu)\otimes \cD$. Indeed, we have  
$$ K_{\boldsymbol\mu} h=
  \sum_{{\bf k}\in \NN_0^n} \omega_{\bf k}y^{(\bf k)}\otimes Q^{1/2}{T^{\bf k}}^*h\in F_s^2(\boldsymbol\mu)\otimes \cD.
  $$
Moreover, since $K_{\boldsymbol\mu} T_i^*=(W_i^*\otimes I)K_{\boldsymbol\mu}$ and $W_i^*|_{F^2_s(\boldsymbol\mu)}=B_i^*$ for any  $ i\in \{1,\ldots, n\}$, one can see that the subspace $\cM:=
 K_{\boldsymbol\mu}\cH\subset F_s^2(\boldsymbol\mu)\otimes \cD$ is invariant under  all $B_i^*\otimes I$ and, consequently,
  $K_{\boldsymbol\mu} T_i^*=(B_i^*\otimes I)K_{\boldsymbol\mu}$.
Defining $X:\cH\to \cM$ by   $Xh:=K_{\boldsymbol\mu} h$, $h\in \cH$, it  is clear that $X$ is an invertible operator  and $T_i^*=X^{-1}(B_i^*\otimes I)|_\cM X$  for any $i\in \{1,\ldots, n\}$. 
The remaining of the proof is similar to that  of Theorem \ref{Rota}. This completes the proof.
\end{proof}

We remark  that if $Q=I$ in Theorem \ref{Rota-com}, we obtain a Rota type  similarity  result for the elements   $T\in \cD^c_{\boldsymbol\mu}(\cH)$.
In this case, we have   $\|\Phi_T\|_{cb}\leq \left\|\sum_{{\bf k}\in \NN_0^n} \omega_{\bf k} T^{\bf k} {T^{\bf k}}^*\right\|^{1/2}$.

 In what follows we present a $w^*$-continuous $F_s^\infty(\boldsymbol\mu)$-functional calculus  for the elements in the commutative Reinhardt set $\cD^c_{\boldsymbol\mu}(\cH)$, where $\cH$ is a separable Hilbert space.
 \begin{theorem}\label{calculus-com}  Let $T=(T_1,\ldots, T_n)\in  \cD^c_{\boldsymbol\mu}(\cH)$ and let  $\Psi_T:F_s^\infty(\boldsymbol\mu)\to B(\cH)$  be defined  by 
 $$
 \Psi_T(\varphi(B))=\varphi(T)=: \text{\rm SOT-}\lim_{N\to \infty}\sum_{{\bf k}\in \NN_0^n, |{\bf k}|\leq N} \left(1-\frac{|{\bf k}|}{N+1}\right)c_{\bf k}T^{\bf k}.
 $$
 where $\varphi(B)\in F_s^\infty(\boldsymbol\mu)$ has the Fourier representation  $\sum_{{\bf k}\in \NN_0^n} c_{\bf k} B^{\bf k}$. Then $\Psi_T$ has the following properties.

 \begin{enumerate}
 \item[(i)]  
 $\Psi_T\left(\sum_{|\alpha|\leq m} c_\alpha B_\alpha\right)=\sum_{|\alpha|\leq m} c_\alpha T_\alpha,\qquad m\in \NN.
 $
 \item[(ii)]   $\Psi_T$ is sequentially WOT-(resp. SOT-)continuous.
 \item[(iii)]   $\Psi_T$ is  a completely bounded  algebra homomorphism and 
 $$
 \|\Psi_T\|_{cb}\leq \left\|\sum_{{\bf k}\in \NN_0^n} \omega_{\bf k} T^{\bf k} {T^{\bf k}}^*\right\|^{1/2}.
 $$
 \item[(iv)]  $\Psi_T$ is $w^*$-continuous.
 \item[(v)]  $r(\varphi(T))\leq r(\varphi(B))$ for any $\varphi(B)\in  F_s^\infty(\boldsymbol\mu)$,  where  $r(X)$ denotes the spectral radius of   $X$.
  \end{enumerate}
 \end{theorem}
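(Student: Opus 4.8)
The plan is to reduce Theorem~\ref{calculus-com} to the noncommutative functional calculus of Theorem~\ref{calculus} together with the Rota type similarity result Theorem~\ref{Rota-com}. First I would record the key commutative bookkeeping identity: for a commuting $n$-tuple $T=(T_1,\dots,T_n)$, grouping the words $\alpha\in\FF_n^+$ according to the multi-index ${\bf k}\in\NN_0^n$ they realize gives
\begin{equation*}
\sum_{\alpha\in\FF_n^+}\frac{1}{\boldsymbol\mu(\alpha,g_0)^2}\,T_\alpha T_\alpha^*=\sum_{{\bf k}\in\NN_0^n}\left(\sum_{\alpha\in\Lambda_{\bf k}}\frac{1}{\boldsymbol\mu(\alpha,g_0)^2}\right)T^{\bf k}{T^{\bf k}}^*=\sum_{{\bf k}\in\NN_0^n}\omega_{\bf k}\,T^{\bf k}{T^{\bf k}}^*,
\end{equation*}
so that $T\in\cD^c_{\boldsymbol\mu}(\cH)$ is exactly the statement that $T\in\cD_{\boldsymbol\mu}(\cH)$ and the $T_i$ commute, and the series $\Sigma_{\boldsymbol\mu}(T,T^*)$ of Corollary~\ref{T1} converges in WOT.

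Next I would invoke Theorem~\ref{Rota-com} with $Q=I$ to obtain an invertible $X:\cH\to\cM$, where $\cM\subset F_s^2(\boldsymbol\mu)\otimes\cH$ is jointly invariant under the $B_i^*\otimes I_\cH$, with $T_i^*=X^{-1}(B_i^*\otimes I_\cH)|_\cM X$ for all $i$, and $\|X\|\|X^{-1}\|\le\|\sum_{{\bf k}}\omega_{\bf k}T^{\bf k}{T^{\bf k}}^*\|^{1/2}$. Then define, in parallel with \eqref{PhiT},
\begin{equation*}
\Phi_T(\varphi(B)):=X^*P_\cM(\varphi(B)\otimes I_\cH)|_\cM(X^*)^{-1},\qquad \varphi(B)\in F_s^\infty(\boldsymbol\mu),
\end{equation*}
using that $W_i^*|_{F_s^2(\boldsymbol\mu)}=B_i^*$ and that $F_s^2(\boldsymbol\mu)$ is coinvariant so compressions of polynomials behave multiplicatively. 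Then I would run through items (i)--(v) essentially verbatim as in the proof of Theorem~\ref{calculus}: item (i) from $T_\alpha=X^*P_\cM(B_\alpha\otimes I_\cH)|_\cM(X^*)^{-1}$; sequential WOT/SOT continuity in (ii) because the WOT and $w^*$ topologies agree on bounded sets of $F_s^\infty(\boldsymbol\mu)$ and tensoring with $I_\cH$ and compressing preserve these limits; complete boundedness with the stated norm bound in (iii) from the amplified form of the displayed identity for $\Phi_T$ on matrices, giving $\|\Phi_T\|_{cb}\le\|X\|\|X^{-1}\|$, together with the fact that polynomials in $B_1,\dots,B_n$ are $w^*$-dense in $F_s^\infty(\boldsymbol\mu)$ so $\Phi_T$ is an algebra homomorphism; (iv) $w^*$-continuity directly from the formula; and (v) the spectral radius inequality from $\varphi(T)^k=X^*P_\cM(\varphi(B)^k\otimes I_\cH)|_\cM(X^*)^{-1}$ and letting $k\to\infty$. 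Finally I would identify $\Phi_T$ with the map $\Psi_T$ of the statement: by the Ces\`aro-type approximation for $F_s^\infty(\boldsymbol\mu)$ (the analogue of Theorem~\ref{closure}/Theorem~\ref{SOT-B}), the partial sums $p_N(B)=\sum_{|{\bf k}|\le N}(1-\frac{|{\bf k}|}{N+1})c_{\bf k}B^{\bf k}$ converge SOT to $\varphi(B)$ with $\|p_N(B)\|\le\|\varphi(B)\|$, hence $p_N(B)\otimes I_\cH\to\varphi(B)\otimes I_\cH$ SOT, so $p_N(T)=X^*P_\cM(p_N(B)\otimes I_\cH)|_\cM(X^*)^{-1}\to\Phi_T(\varphi(B))$ SOT, which shows $\Psi_T(\varphi(B))=\Phi_T(\varphi(B))$ is well defined with all the listed properties.

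The one genuinely nontrivial point --- and the step I expect to need the most care --- is making sure that the compression map $\varphi(B)\mapsto P_\cM(\varphi(B)\otimes I_\cH)|_\cM$ is multiplicative on $F_s^\infty(\boldsymbol\mu)$, i.e.\ that $\cM$ behaves like a backward-invariant (coinvariant) subspace for the amplified commuting algebra. This follows because $\cM^\perp$ (inside $F_s^2(\boldsymbol\mu)\otimes\cH$) is invariant under $B_i\otimes I_\cH$ --- equivalently $\cM$ is invariant under $B_i^*\otimes I_\cH$ --- which is exactly what Theorem~\ref{Rota-com} delivers; then for polynomials $P_\cM(p(B)q(B)\otimes I)|_\cM=P_\cM(p(B)\otimes I)P_\cM(q(B)\otimes I)|_\cM$ by the standard coinvariance argument, and one passes to all of $F_s^\infty(\boldsymbol\mu)$ by $w^*$-density and the already-established $w^*$-continuity of $\Phi_T$. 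Everything else is a transcription of Section~5 with $W_i$ replaced by $B_i$, $F^2(H_n)$ by $F_s^2(\boldsymbol\mu)$, and the free-semigroup indexing $\FF_n^+$ by the abelian indexing $\NN_0^n$ via the $\omega_{\bf k}$ identity above.
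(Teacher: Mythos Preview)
Your proposal is correct and follows essentially the same route as the paper: apply Theorem~\ref{Rota-com} with $Q=I$ to get the similarity $T_i=X^*P_\cM(B_i\otimes I)|_\cM(X^*)^{-1}$, define $\Phi_T(\varphi(B)):=X^*P_\cM(\varphi(B)\otimes I_\cH)|_\cM(X^*)^{-1}$, and then transcribe the proof of Theorem~\ref{calculus} using Theorem~\ref{SOT-B} (and the remarks preceding it) in place of Theorem~\ref{closure}. Your extra bookkeeping identity $\sum_\alpha\boldsymbol\mu(\alpha,g_0)^{-2}T_\alpha T_\alpha^*=\sum_{\bf k}\omega_{\bf k}T^{\bf k}{T^{\bf k}}^*$ and the explicit coinvariance argument for multiplicativity are helpful clarifications but do not depart from the paper's approach.
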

\begin{proof}  Applying  Theorem \ref{Rota-com}   in the particular case when  $Q=I$, we find a subspace 
 $\cM\subset F_s^2(\boldsymbol\mu)\otimes \cH$ which  is  jointly invariant subspace under all the operators $B_i^*\otimes I_\cH$ and an invertible operator $X:\cH\to \cM$ such that $T_i=X^* P_\cM(B_i\otimes I)|_\cM (X^*)^{-1}$,   $i\in \{1,\ldots, n\}$, and $\|X\|= \left\|\sum_{{\bf k}\in \NN_0^n} \omega_{\bf k} T^{\bf k} {T^{\bf k}}^*\right\|^{1/2}$.
 Define the map $\Phi_T:F_s^\infty(\boldsymbol\mu)\to B(\cH)$ by setting 
 \begin{equation*}
  \Phi_T(\varphi(B)):=X^* P_\cM(\varphi(B)\otimes I_\cH)|_\cM (X^*)^{-1},\qquad \varphi(B)\in  F_s^\infty(\boldsymbol\mu).
 \end{equation*}
 Following the lines of the proof of Theorem \ref{calculus}, one can similarly show that the map $\Phi_T$ has all the required properties and coincides with $\Psi_T$.  In addition, we need to use   Theorem \ref{SOT-B} and the remarks preceding it. 
 \end{proof}

 \begin{theorem}  \label{main1-com} If $T=(T_1,\ldots, T_n)\in B(\cH)^n$ is an $n$-tuple of commuting operators, then there exists    a weight sequence  $\boldsymbol\mu=\{\mu_\beta\}_{|\beta|\geq 1}$   with  the following properties:
 \begin{enumerate}
\item[(i)] $\|\sum_{|\alpha|=k} B_\alpha B_\alpha^*\|^{1/2}\leq (k+1) \|\sum_{|\alpha|=k} T_\alpha T_\alpha^*\|^{1/2}$ for  any $k\in \NN$;
\item[(ii)] $r(B)= r(T)$;
\item[(iii)]    $(T_1,\ldots, T_n)$ is jointly similar to  $$(P_\cM(B_1\otimes I_\cH)|_\cM,\ldots, P_\cM(B_n\otimes I_\cH)|_\cM),$$
 where  $\cM\subset F_s^2(\boldsymbol\mu )\otimes \cH$  is a joint invariant subspace under the operators $B_i^*\otimes I_\cH$,  $i\in \{1,\ldots, n\}$;
  \item[(iv)] $r(p(T_1,\ldots, T_n))\leq r(p(B_1,\ldots, B_n))$ and    the map $\Phi_T:\cA_s(\boldsymbol\mu)\to B(F^2(H_n)$ defined by 
 $$\Phi_T(p(B_1,\ldots, B_n)):=p(T_1,\ldots, T_n)$$
  is completely bounded and $\|\Phi_T\|_{cb}\leq \frac{\pi}{\sqrt{6}} $.
 \end{enumerate}
 If  $T=(T_1,\ldots, T_n)$ is a nilpotent $n$-tuple of  order $m\geq 2$, then there is a weighted commuting multi-shift $B=(B_1,\ldots, B_n)$ which is nilpotent of order $m$ such that all the properties above hold and 
 $$\|\Phi_T\|_{cb}\leq \sqrt{\sum_{k=0}^{m-1}\frac{1}{(k+1)^2}}.
 $$
 \end{theorem}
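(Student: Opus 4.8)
The plan is to run the proof of Theorem \ref{main1} inside the symmetric weighted Fock space $F_s^2(\boldsymbol\mu)$, with Theorem \ref{Rota-com} replacing Corollary \ref{T1} and Theorem \ref{calculus-com} replacing Theorem \ref{calculus}. First I would handle the case in which $T$ is not a nilpotent $n$-tuple, so that $\Sigma_k:=\sum_{|\sigma|=k}T_\sigma T_\sigma^*\neq 0$ for every $k$, and take the weights $\mu_\beta$ given by \eqref{frac}, namely $\mu_\beta:=\frac{|\beta|+1}{|\beta|}\cdot\frac{\|\Sigma_{|\beta|}\|^{1/2}}{\|\Sigma_{|\beta|-1}\|^{1/2}}$ for $|\beta|\geq1$. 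These are positive, bounded by $\|\sum_i T_iT_i^*\|^{1/2}$, and as in the proof of Theorem \ref{calc2} satisfy \eqref{bound1} and \eqref{bound2}. Since $T$ commutes, $T_\sigma$ depends only on the abelianization of $\sigma$, so exactly as in the proof of Theorem \ref{main1} one gets $\boldsymbol\mu(\alpha,g_0)=(|\alpha|+1)\|\Sigma_{|\alpha|}\|^{1/2}$. Grouping the series $\Sigma_{\boldsymbol\mu}(T,T^*)$ of Corollary \ref{T1} by abelianization and using $T_\alpha=T^{\bf k}$ for $\alpha\in\Lambda_{\bf k}$, one obtains
$$
\Sigma_{\boldsymbol\mu}(T,T^*)=\sum_{{\bf k}\in\NN_0^n}\omega_{\bf k}T^{\bf k}{T^{\bf k}}^{*}=\sum_{k\geq0}\frac{1}{(k+1)^2\|\Sigma_k\|}\,\Sigma_k,
$$
whence $I\leq\Sigma_{\boldsymbol\mu}(T,T^*)\leq\big(\sum_{k\geq0}(k+1)^{-2}\big)I=\tfrac{\pi^2}{6}\,I$.

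Next I would apply Theorem \ref{Rota-com} with $Q=I$, $a=1$, $b=\pi^2/6$: this produces a joint invariant subspace $\cM\subset F_s^2(\boldsymbol\mu)\otimes\cH$ under the operators $B_i^*\otimes I_\cH$ and an invertible $X:\cH\to\cM$ with $T_i=X^{-1}(P_\cM(B_i\otimes I_\cH)|_\cM)X$ and $\|X\|\,\|X^{-1}\|\leq\pi/\sqrt6$, which is item (iii); Theorem \ref{calculus-com} then supplies items (ii) and (v) for the induced $w^*$-continuous $F_s^\infty(\boldsymbol\mu)$-functional calculus and, restricting to $\cA_s(\boldsymbol\mu)$, the complete boundedness of $\Phi_T$ with $\|\Phi_T\|_{cb}\leq\pi/\sqrt6$ in (iv). For (i) and the equality $r(B)=r(T)$ in (ii), the key observation is that $F_s^2(\boldsymbol\mu)$ is co-invariant under $W_1,\ldots,W_n$: a short computation gives $W_i^*y^{(\bf k)}=\frac{\omega_{{\bf k}'}}{\omega_{\bf k}}\,y^{({\bf k}')}$, where ${\bf k}'$ is obtained from ${\bf k}$ by subtracting $1$ from the $i$-th entry (and $W_i^*y^{(\bf k)}=0$ if that entry is $0$), so that $B_\alpha^*=W_\alpha^*|_{F_s^2(\boldsymbol\mu)}$ for every $\alpha\in\FF_n^+$ and hence $\sum_{|\alpha|=k}B_\alpha B_\alpha^*=P_{F_s^2(\boldsymbol\mu)}\big(\sum_{|\alpha|=k}W_\alpha W_\alpha^*\big)|_{F_s^2(\boldsymbol\mu)}$. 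Combined with item (i) of Theorem \ref{main1} this yields $\|\sum_{|\alpha|=k}B_\alpha B_\alpha^*\|^{1/2}\leq\|\sum_{|\alpha|=k}W_\alpha W_\alpha^*\|^{1/2}\leq(k+1)\|\Sigma_k\|^{1/2}$, which is (i), together with $r(B)\leq r(W)=r(T)$; the reverse inequality $r(T)\leq r(B)$ comes from the similarity in (iii) as in the proof of Theorem \ref{Rota}.

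For the nilpotent case I would take $\mu_\beta$ as in \eqref{frac} for $1\leq|\beta|\leq m-1$ and $\mu_\beta=0$ for $|\beta|\geq m$, let $W$ be the resulting truncated weighted multi-shift (nilpotent of index $m$ by Theorem \ref{main1}) and set $B_i=P_{F_s^2(\boldsymbol\mu)}W_i|_{F_s^2(\boldsymbol\mu)}$; then $B$ is a weighted commuting multi-shift, nilpotent of index $\leq m$, and in fact of index exactly $m$ because the similarity forces $B_\beta\neq0$ for some $\beta$ with $|\beta|=m-1$. Here $\Sigma_{\boldsymbol\mu}(T,T^*)=\sum_{k=0}^{m-1}\frac{1}{(k+1)^2\|\Sigma_k\|}\Sigma_k$ is a finite sum with $I\leq\Sigma_{\boldsymbol\mu}(T,T^*)\leq\big(\sum_{k=0}^{m-1}(k+1)^{-2}\big)I$, so Theorems \ref{Rota-com} and \ref{calculus-com}, applied over $F_s^2(\boldsymbol\mu)\otimes\cH$, give the improved bound $\|\Phi_T\|_{cb}\leq\big(\sum_{k=0}^{m-1}(k+1)^{-2}\big)^{1/2}$, while (i)--(iii) are obtained verbatim.

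The step I expect to require the most care is the standing hypothesis of Section 7 that $\cD(\boldsymbol\mu)$ contain a neighborhood of the origin: the weights above satisfy it when $r(T)>0$ but may collapse $\cD(\boldsymbol\mu)$ to $\{0\}$ when $r(T)=0$ and $T$ is not nilpotent. The resolution is that the only features of the symmetric construction actually used in Theorems \ref{Rota-com} and \ref{calculus-com} are the commutativity of $T$ and the co-invariance of $F_s^2(\boldsymbol\mu)$ under $W_1,\ldots,W_n$, both of which hold unconditionally; alternatively, for such $T$ one may first pass through the quasi-nilpotent model of Theorem \ref{Foias-Pearcy}, whose weights can be arranged so that $\cD(\boldsymbol\mu)$ contains a ball, and then recover the sharp constant $\pi/\sqrt6$ from the tight weights \eqref{frac}. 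Apart from this point the result is genuinely a symmetrization of Theorem \ref{main1}, and the remaining work is the bookkeeping of which estimates survive compression to $F_s^2(\boldsymbol\mu)$.
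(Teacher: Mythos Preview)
Your proposal is correct and follows essentially the same route as the paper: choose the weights of Theorem~\ref{main1}, apply Theorem~\ref{Rota-com} with $Q=I$ to obtain (iii) and (iv), and then use the co-invariance $B_\alpha^*=W_\alpha^*|_{F_s^2(\boldsymbol\mu)}$ together with item (i) of Theorem~\ref{main1} to get (i) and the inequality $r(B)\le r(W)=r(T)$. Your explicit verification that $F_s^2(\boldsymbol\mu)$ is co-invariant under the $W_i$ and your discussion of the standing hypothesis on $\cD(\boldsymbol\mu)$ are both more careful than the paper's own proof, which simply invokes Theorems~\ref{Rota-com} and~\ref{main1} without commenting on either point; your observation that Theorem~\ref{Rota-com} only uses commutativity of $T$ and co-invariance of $F_s^2(\boldsymbol\mu)$, and not the identification with $\HH^2(\cD(\boldsymbol\mu))$, is exactly the right way to dispose of that concern.
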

\begin{proof}  Assume that $T=(T_1,\ldots, T_n)$ is not nilpotent.  Consider the weight sequence $\boldsymbol\mu=\{\mu_\beta\}_{|\beta|\geq 1}$  defined in the proof of Theorem \ref{main1} and let $W=(W_1,\ldots, W_n)$ and $B=(B_1,\ldots, B_n)$ be the corresponding multi-shifts associated with  $\boldsymbol\mu$. According to the same proof, we have $r(W)=r(T)$ and 
$$
 \left\|\sum_{{\bf k}\in \NN_0^n} \omega_{\bf k} T^{\bf k} {T^{\bf k}}^*\right\|\leq \sum_{k=0}^\infty \frac{1}{(k+1)^2}=\frac{\pi^2}{6}.
 $$
Since $T_1,\ldots, T_n$ are commuting, we apply Theorem \ref{Rota-com}  when $Q=I$ and deduce (iii) and (iv). In particular, we have $r(T)\leq r(B)$. On the other hand, since $B_i^*=W_i^*|_{F_s^2(\boldsymbol\mu)}$, it is clear that
$\|\sum_{|\alpha|=k} B_\alpha B_\alpha^*\| \leq  \|\sum_{|\alpha|=k} W_\alpha W_\alpha^*\|$
for any $k\in \NN$ and, consequently, $r(B)\leq r(W)$. Using again Theorem \ref{main1}, one can complete the proof. The case when $T$ is  a commuting nilpotent  tuple  can be treated in a similar manner.
\end{proof}

 \begin{theorem}  \label{calc2-com} If $T=(T_1,\ldots, T_n)\in B(\cH)^n$ is an $n$-tuple of commuting operators, then there   exists    a weight sequence  $\boldsymbol\mu=\{\mu_\beta\}_{|\beta|\geq 1}$   such that 
 \begin{enumerate} 
 \item[(i)]  $r(B)= r(T)$;
 \item[(ii)]     the conditions  \eqref{bound1}, \eqref{bound2} are satisfied;
 \item[(iii)]   if $T$ is not nilpotent,   $\cD(\boldsymbol\mu)$    contains a neighborhood of the origin;   
 \item[(iv)]
   the map $\Psi_T:F_s^\infty(\boldsymbol\mu)\to B(\cH)$   defined  by 
 $$
 \Psi_T(\varphi(B))=\varphi(T)=:  \text{\rm SOT-}\lim_{N\to \infty}\sum_{{\bf k}\in \NN_0^n, |{\bf k}|\leq N} \left(1-\frac{|{\bf k}|}{N+1}\right)c_{\bf k}T^{\bf k},
 $$
 where $\varphi(B)\in F_s^\infty(\boldsymbol\mu)$ has the Fourier representation  $\sum_{{\bf k}\in \NN_0^n} c_{\bf k} B^{\bf k}$,  has all the properties listed in Theorem \ref{calculus-com} and  $\|\Psi_T\|_{cb}\leq  \frac{\pi}{\sqrt{6}} $. 
 \end{enumerate}
 
 When $T=(T_1,\ldots, T_n)$ is a nilpotent $n$-tuple  of order $m\geq 2$, 
 we have 
$$\|\Psi_T\|_{cb}\leq  \sqrt{\sum_{k=0}^{m-1}\frac{1}{(k+1)^2}}.
$$ 
 \end{theorem}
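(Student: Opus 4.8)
The plan is to follow the proof of Theorem~\ref{calc2} almost verbatim, with Theorem~\ref{main1} and Theorem~\ref{calculus} replaced by their commutative counterparts Theorem~\ref{main1-com} and Theorem~\ref{calculus-com}. First I would treat the case when $T=(T_1,\dots,T_n)$ is not nilpotent. Take the weight sequence $\boldsymbol\mu=\{\mu_\beta\}_{|\beta|\ge1}$ defined by \eqref{frac}, which is legitimate since, $T$ being non-nilpotent, no denominator vanishes. As computed in the proof of Theorem~\ref{calc2} one has $\boldsymbol\mu(\alpha,g_0)=(|\alpha|+1)\big\|\sum_{|\sigma|=|\alpha|}T_\sigma T_\sigma^*\big\|^{1/2}$, and the quantities $\mu_{g_i\alpha}$ and $\boldsymbol\mu(\alpha g_i,g_0)/\boldsymbol\mu(\alpha,g_0)$ are bounded in terms of $\|\sum_i T_iT_i^*\|^{1/2}$, so conditions \eqref{bound1} and \eqref{bound2} hold; this is item (ii). Letting $W$ and $B$ be the weighted left multi-shift and its compression to $F_s^2(\boldsymbol\mu)$ associated with $\boldsymbol\mu$, Theorem~\ref{main1-com} gives $r(B)=r(T)$, which is item (i).

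Next I would verify item (iii). By Proposition~\ref{joint}(iv) (equivalently Theorem~\ref{evaluation}(i)), $\cD(\boldsymbol\mu)$ contains a ball about the origin precisely when $\limsup_k\big(\sum_{|\alpha|=k}\boldsymbol\mu(\alpha,g_0)^{-2}\big)^{1/k}<\infty$; since $\boldsymbol\mu(\alpha,g_0)$ depends only on $|\alpha|$ and there are $n^k$ words of length $k$, the $k$-th root of $\sum_{|\alpha|=k}\boldsymbol\mu(\alpha,g_0)^{-2}=n^k\big[(k+1)^2\|\sum_{|\sigma|=k}T_\sigma T_\sigma^*\|\big]^{-1}$ tends to $n/r(T)^2$, which is finite as soon as $r(T)>0$. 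With (i)--(iii) in hand, the standing hypotheses of Section~7 are met, and item (iv) follows by applying Theorem~\ref{calculus-com} to this $\boldsymbol\mu$: the requirement $T\in\cD^c_{\boldsymbol\mu}(\cH)$ is the convergence of $\sum_{\bf k}\omega_{\bf k}T^{\bf k}{T^{\bf k}}^*$, which is exactly what Corollary~\ref{T1} provides (as in Theorem~\ref{main1}), so $\Psi_T$ inherits properties (i)--(v) of Theorem~\ref{calculus-com} with $\|\Psi_T\|_{cb}\le\big\|\sum_{\bf k}\omega_{\bf k}T^{\bf k}{T^{\bf k}}^*\big\|^{1/2}$. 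Finally, as in the proof of Theorem~\ref{main1-com}, the identity $\boldsymbol\mu(\alpha,g_0)^2=(|\alpha|+1)^2\|\sum_{|\sigma|=|\alpha|}T_\sigma T_\sigma^*\|$ forces $\sum_{\bf k}\omega_{\bf k}T^{\bf k}{T^{\bf k}}^*\le\sum_{k\ge0}(k+1)^{-2}I=\tfrac{\pi^2}{6}I$, whence $\|\Psi_T\|_{cb}\le\pi/\sqrt6$. Throughout I would also invoke Theorem~\ref{SOT-B} and the Ces\`aro/Abel remarks preceding it to identify $\Psi_T(\varphi(B))$ with the SOT-limit written in the statement.

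For the nilpotent case, say $T$ of index $m\ge2$, I would instead set $\mu_\beta$ by \eqref{frac} for $1\le|\beta|\le m-1$ and $\mu_\beta:=0$ for $|\beta|\ge m$, so that $B$ is a commuting nilpotent tuple of index $m$ and $F_s^2(\boldsymbol\mu)$ is finite dimensional. The argument above then goes through with every series in $k$ truncated at $m-1$; the operator $K_{\boldsymbol\mu}$ of Theorem~\ref{Rota-com} satisfies $\|K_{\boldsymbol\mu}\|^2\le\sum_{k=0}^{m-1}(k+1)^{-2}$ and $\|K_{\boldsymbol\mu}^{-1}\|\le1$, giving the sharper bound $\|\Psi_T\|_{cb}\le\sqrt{\sum_{k=0}^{m-1}(k+1)^{-2}}$.

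The step I expect to be the real obstacle is item (iii) when $T$ is quasi-nilpotent but not nilpotent: there $r(B)=r(T)=0$, so by Theorem~\ref{commutant}(ii) together with the joint-spectral-radius estimate on the joint point spectrum one gets $\cD(\boldsymbol\mu)=\sigma_p(B_1^*,\dots,B_n^*)=\{0\}$, and \emph{no} weight sequence compatible with (i) can make $\cD(\boldsymbol\mu)$ a neighborhood of the origin. One must therefore either read (iii) under the standing assumption $r(T)>0$, or --- paralleling Theorem~\ref{funct-calc-nil} --- replace $F_s^\infty(\boldsymbol\mu)$ by the commutative analogue of ${\rm Hol}_0(Z)$ and establish $\|f(T)\|\le(\pi/\sqrt6)\|f(B)\|$ via the same similarity implemented by $K_{\boldsymbol\mu}$. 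Settling the correct formulation here is the delicate point; everything else is bookkeeping on Theorems~\ref{main1-com}, \ref{Rota-com}, and \ref{calculus-com}.
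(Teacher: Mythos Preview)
Your approach is essentially the paper's: take the weights from \eqref{frac}, invoke Theorem~\ref{main1-com} for (i)--(ii), verify (iii) via the joint point spectrum, and apply Theorem~\ref{calculus-com} for (iv); the only cosmetic difference is that the paper cites Corollary~\ref{inclus} (which gives $(\CC^n)_{r(T)}\subset\cD(\boldsymbol\mu)$ directly) rather than recomputing the $\limsup$ as you do.

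Your concern about the quasi-nilpotent non-nilpotent case is well taken and is not an error on your part: the paper's own proof opens with ``Assume that $r(T)>0$'' and handles only that case plus the nilpotent case, so it too leaves the intermediate regime $r(T)=0$, $T$ not nilpotent, unaddressed for item (iii). Your observation that $r(B)=r(T)=0$ forces $\cD(\boldsymbol\mu)\subset\overline{(\CC^n)}_0=\{0\}$ is exactly the obstruction, and it shows that (iii) as literally stated cannot hold for such $T$ with any choice of $\boldsymbol\mu$ satisfying (i). The intended reading is evidently $r(T)>0$; your proposed workaround via the commutative analogue of Theorem~\ref{funct-calc-nil} is the natural way to cover the remaining case, and indeed the paper supplies that separately as Theorem~\ref{funct-calc-nil-com}.
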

 
\begin{proof}  Assume that $r(T)>0$. According to Theorem \ref{main1-com}, we find a weight sequence  $\boldsymbol\mu=\{\mu_\beta\}_{|\beta|\geq 1}$ such that items (i) and (ii) hold.
 Due to Corollary \eqref{inclus}, we have 
 $
 (\CC^n)_{r(T)}\subset \sigma_p(W_1^*,\ldots, W_n^*)=\cD(\boldsymbol\mu)
 $
which proves item (iii). Now, we can apply Theorem \ref{calculus-com} and Theorem 
\ref{main1-com} to deduce item (iv). The case when $T$ is nilpotent can be treated in a similar manner.  
\end{proof}

Recall that ${\rm Hol}_0(Z)$ denotes  the algebra of all free holomorphic functions $f$ satisfying    condition \eqref{sup}.

\begin{theorem} \label{funct-calc-nil-com}
Let $T=(T_1,\ldots, T_n)\in B(\cH)^n$ be a quasi-nilpotent commuting  $n$-tuple of operators. Then there exists     a   quasi-nilpotent commuting weighted multi-shift $B=(B_1,\ldots, B_n)$ with the following properties.
\begin{enumerate}
\item[(i)] $\|f(T_1,\ldots, T_n)\|\leq \frac{\pi}{\sqrt{6}}\|f(B_1,\ldots, B_n)\|$ for any $f\in {\rm Hol}_0(Z)$.
\item[(ii)] $r(f(T_1,\ldots, T_n))\leq r(f(B_1,\ldots, B_n))$ for any $f\in {\rm Hol}_0(Z)$.
\item[(iii)] Let $\{f_k\},  f\in  {\rm Hol}_0(Z)$ be such that $\{\|f_k(B_1,\ldots, B_n)\|\}_k$ is a bounded sequence and 
$$f_k(B_1,\ldots, B_n)\to  f(B_1,\ldots, B_n),\qquad \text {as } \ k\to \infty,
$$  in 
the operator norm (resp. WOT-, $w^*$, SOT-) topology, then  $f_k(T_1,\ldots, T_n)\to  f(T_1,\ldots, T_n)$   in the corresponding topology, respectively.
\end{enumerate}
 \end{theorem}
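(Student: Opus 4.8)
The plan is to mirror the proof of Theorem \ref{funct-calc-nil}, substituting the commutative ingredients established in Section 7. First I would recall from Theorem 4.1 of \cite{Po-holomorphic} that, since $r(T)=0$, for every $f=\sum_{\alpha\in\FF_n^+}c_\alpha Z_\alpha\in{\rm Hol}_0(Z)$ the series $f(T)=\sum_{k=0}^\infty\sum_{|\alpha|=k}c_\alpha T_\alpha$ converges in the operator norm, so all the expressions in (i)--(iii) are meaningful. Then, applying Theorem \ref{main1-com} to the commuting quasi-nilpotent tuple $T$, I obtain a weight sequence $\boldsymbol\mu$, the associated commuting weighted multi-shift $B=(B_1,\ldots,B_n)$ on the symmetric weighted Fock space $F_s^2(\boldsymbol\mu)$ with $r(B)=r(T)=0$ --- so $B$ is again a quasi-nilpotent commuting $n$-tuple --- a joint invariant subspace $\cM\subset F_s^2(\boldsymbol\mu)\otimes\cH$ under the operators $B_i^*\otimes I_\cH$, and an invertible operator $X$ with $\|X\|\,\|X^{-1}\|\le\frac{\pi}{\sqrt{6}}$ satisfying
$$
T_i=X^*P_\cM(B_i\otimes I_\cH)|_\cM(X^*)^{-1},\qquad i\in\{1,\ldots,n\}.
$$

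The key step is to propagate this similarity from polynomials to all of ${\rm Hol}_0(Z)$. Since $\cM$ is co-invariant for the operators $B_i\otimes I_\cH$, the compression $Y\mapsto P_\cM(Y\otimes I_\cH)|_\cM$ is multiplicative on $\cP(B)$, exactly as in the proof of Theorem \ref{calculus-com}, whence $p(T)=X^*P_\cM(p(B)\otimes I_\cH)|_\cM(X^*)^{-1}$ for every polynomial $p$. Given $f\in{\rm Hol}_0(Z)$, its partial sums $p_N$ converge to $f(T)$ in norm and, because $r(B)=0$, to $f(B)$ in norm, and conjugation by the fixed bounded invertible $X^*$ combined with the bounded compression is norm-continuous; passing to the limit yields
$$
f(T)=X^*P_\cM\bigl(f(B)\otimes I_\cH\bigr)|_\cM(X^*)^{-1},\qquad f\in{\rm Hol}_0(Z).
$$
From here (i) is immediate, since $\|f(T)\|\le\|X\|\,\|X^{-1}\|\,\|f(B)\otimes I_\cH\|\le\frac{\pi}{\sqrt{6}}\|f(B)\|$, and (ii) follows by iterating the displayed identity to get $f(T)^k=X^*P_\cM(f(B)^k\otimes I_\cH)|_\cM(X^*)^{-1}$, so that $\|f(T)^k\|^{1/k}\le(\|X\|\,\|X^{-1}\|)^{1/k}\|f(B)^k\|^{1/k}$ and letting $k\to\infty$ gives $r(f(T))\le r(f(B))$.

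For the continuity assertion (iii), if $\{\|f_k(B)\|\}_k$ is bounded and $f_k(B)\to f(B)$ in one of the listed topologies, then $f_k(B)\otimes I_\cH\to f(B)\otimes I_\cH$ in the same topology --- using the uniform bound in the WOT-, SOT-, and $w^*$-cases, where these topologies agree on bounded sets and ampliation is continuous --- so applying the fixed maps $P_\cM(\cdot)|_\cM$ and conjugation by the fixed invertible $X^*$ preserves convergence, giving $f_k(T)\to f(T)$ in the corresponding topology; this is the commutative counterpart of the argument used in Theorem \ref{calc2-com}. The main obstacle I expect is precisely the middle step: justifying that one may pull the norm-convergent series defining $f(T)$ and $f(B)$ through the compression-and-conjugation and that $f(B)$ is genuinely the norm limit of the $p_N(B)$; both rest on $r(B)=0$, which is supplied by Theorem \ref{main1-com}, after which the remaining estimates are routine and follow the templates of Theorems \ref{calculus-com} and \ref{calc2-com}. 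Finally, if $T$ is in addition nilpotent of order $m\ge2$, then by Theorem \ref{main1-com} one may take $B$ nilpotent of order $m$ and replace $\frac{\pi}{\sqrt{6}}$ by $\sqrt{\sum_{k=0}^{m-1}(k+1)^{-2}}$ throughout.
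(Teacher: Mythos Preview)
Your proposal is correct and follows essentially the same route as the paper: the paper's proof is the single sentence ``The proof is similar to the proof of Theorem~\ref{funct-calc-nil}, but uses Theorem~\ref{main1-com},'' and you have spelled out precisely those details---invoking norm convergence of $f(T)$ and $f(B)$ from the quasi-nilpotence $r(T)=r(B)=0$, using the similarity $T_i=X^*P_\cM(B_i\otimes I_\cH)|_\cM(X^*)^{-1}$ from Theorem~\ref{main1-com}, and then arguing as in Theorems~\ref{calculus-com} and \ref{calc2-com}. Your added remark on the nilpotent case is not part of the stated theorem but is consistent with Theorem~\ref{main1-com}.
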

\begin{proof} The proof is similar to the proof of Theorem \ref{funct-calc-nil}, but uses Theorem \ref{main1-com}.
\end{proof}

\bigskip

      \bigskip

       %


\begin{thebibliography}{99}


\bibitem{Ag2} {\sc J.~Agler},
Hypercontractions and subnormality, {\it J. Operator Theory} {\bf
13} (1985), 203--217.

\bibitem{AEM}
  {\sc C.G.~Ambrozie, M.~Englis, V.~ Müller},  Operator tuples and analytic models over general domains in $\CC^n$,  {\it J. Operator Theory}  {\bf 47} (2002), no. 2, 287--302. 
   
\bibitem{AE}
{\sc J.~Arazy, M.~ Englis}, Analytic models for commuting operator tuples on bounded symmetric domains,  {Trans. Amer. Math. Soc.} {\bf 355} (2003), no. 2, 837--864.

 

\bibitem{ArPo} {\sc  A.~Arias and G.~Popescu},
Factorization and reflexivity on Fock spaces, {\it  Integr. Equat.
Oper. Th.}
 %
 {\bf  23} (1995),  268--286.


\bibitem{APo1} {\sc A.~Arias and G.Popescu},  Noncommutative interpolation and Poisson transforms. II, {\it Houston J. Math.}  {\bf 25} (1999), no. 1, 79--98. 

\bibitem{A} {\sc A.~Arias}, 
  Projective modules on Fock spaces, {\it  J. Operator Theory}  {\bf 52} (2004), no. 1, 139--172. 
  
  \bibitem{AL} {\sc A.~Arias and F.~ Latr\' emoli\` ere},  Isomorphisms of non-commutative domain algebras,  {\it J. Operator Theory} {\bf 66} (2011), no. 2, 425--450.

\bibitem{Arv-book}  {\sc  W.B.~Arveson},
{\it An invitation to $C^*$-algebras}, Graduate Texts in Math., {\bf
39}. Springer-Verlag, New-York-Heidelberg, 1976.

 
\bibitem{Arv3-acta}    {\sc W.B.~Arveson},
     {Subalgebras of $C^*$-algebras III: Multivariable operator theory,}
{\it Acta Math.} {\bf 181} (1998), 159--228.

 
 \bibitem{Ba} {\sc J.A.~Ball},  Rota's theorem for general functional Hilbert spaces,
{\it Proc. Amer, Math. Soc.} {\bf 64} (1977), 55--61.
 
 
 


 \bibitem{BR} {\sc L.~de Branges, J.~ Rovnyak},
   Canonical models in quantum scattering theory,
   {\it Perturbation Theory and its Applications in Quantum
   Mechanics}, 1966,
     pp. 295--392 Wiley, New York.
  
 
 \bibitem{C} {\sc D.N.~Clark}, On commuting contractions, {\it J. Math. Anal. Appl.}
 {\bf 32} (1970), 590--596.





 \bibitem{Co} {\sc  L.A.~Coburn}, The $C^*$-algebra generated by an isometry,
 {\it Bull. Amer. Math. Soc.}  {\bf 73} (1967), 722--726.

 

\bibitem{Cu} {\sc J.~Cuntz},
 Simple $C^*$--algebras generated by isometries,
 {\it  Commun. Math. Phys.}
 {\bf 57} (1977), 173--185.


            \bibitem{DP1} {\sc K.R.~Davidson and D. ~Pitts},
      {Invariant subspaces and hyper-reflexivity for free semigroup
      algebras,}
      {\it Proc. London Math. Soc.} {\bf 78} (1999), 401--430.

\bibitem{DP3} {\sc K.~R.~Davidson and D.~Pitts}, Nevanlinna-Pick interpolation for non-commutative analytic Toeplitz algebras, {\it  Integral Equations Operator Theory}  {\bf 31} (1998), no. 3, 321--337. 
 

 
 \bibitem{Dr} {\sc S.~Drury}, A generalization of von Neumann's inequality
 to the complex
 ball,
 {\it Proc. Amer. Math. Soc.} {\bf 68} (1978), 300-304.

  
   
 
 \bibitem{Fo} {\sc C.~Foia\c s}, A remark on the universal model for contractions of G. C. Rota.
 (Romanian) {\it Com. Acad. R. P. Rom\^ine} {\bf  13} 1963,  349--352.


\bibitem{Fo-Pe} {\sc C.~Foia\c s and C.~Pearcy}, A model for quasinilpotent operators,
{\it Michigan Math. J.} {\bf 21} 1974, 399--404.

\bibitem{Ge1}{\sc  R. A.~Gellar},  {\it Shift operators in Banach space}, Dissertation, Columbia University, New York, N. Y., 1968.
 
\bibitem{Ge2}{\sc  R. A.~Gellar}, Operators commuting with a weighted shift, {\it  Proc. Amer. Math. Soc.} {\bf 23} (1969), 538--545.


\bibitem{H1} {\sc P.R.~Halmos},
{\it  A Hilbert space problem book}, Van Nostrand (New York, 1967).

\bibitem{H2} {\sc P.R.~Halmos}, Ten problems in Hilbert space,
{\it  Bull. Amer. Math. Soc.} {\bf 76} 1970, 887--933.

\bibitem{Ha} {\sc R.E.~Harte},  Spectral mapping theorems,
{Proc. Roy. Irish Acad. Sect. A}  {\bf 72} (1972), 89--107.

\bibitem{He} {\sc D.N.~Herrero}, A Rota universal model for operators with multiply connected spectrum,
    {\it Rev. Roumaine Math. Pures Appl.} {\bf 21} (1976), 15--23.

\bibitem{J} {\sc N.P~Jewell},  Multiplication by the coordinate functions on the Hardy space of the unit sphere in $C^n$,  {\it Duke Math. J.} {\bf  44} (1977), no. 4, 839--851.


\bibitem{JL} {\sc N.P~Jewell and A. R.~ Lubin},  Commuting weighted shifts and analytic function theory in several variables,  {it J. Operator Theory} {\bf 1} (1979), no. 2, 207--223.


\bibitem{Ke} {\sc R. L.~Kelley}, {\it Weighted shifts on Hilbert space}, Dissertation, University of Michigan, Ann Arbor, Mich., 1966.



\bibitem{Kr1} {\sc  D.W.~Kribs},  Inductive limit algebras from periodic weighted shifts on Fock space, {\it New York J. Math.} {\bf 8} (2002), 145--159.


\bibitem{M} {\sc V.~M\"uller}, Models for operators using weighted
shifts, {\it J. Operator Theory} {\bf 20} (1988), 3--20.

\bibitem{MV} {\sc V.~M\"uller and F.H.~Vasilescu}, Standard models for
some commuting multioperators, {\it Proc. Amer. Math. Soc.} {\bf
117} (1993), 979--989.

\bibitem{Ni}  {\sc N. K.~Nikol'skii},  Invariant subspaces of weighted shift operators, (Russian) {\it Mat. Sb.} (N.S.) {\bf 74} (116) 1967 172--190. 

\bibitem{O2} {\sc A.~Olofsson},  An operator-valued Berezin transform and
the class of $n$-hypercontractions,  {\it Integral Equations Operator Theory}  {\bf 58}  (2007),  no. 4, 503--549.


\bibitem{Pa} {\sc V.I.~Paulsen}, Every completely polynomially bounded
operator is similar to a contraction, {\it J. Funct. Anal.}, {\bf 55} (1984),
1--17.


\bibitem{Pa-book} {\sc V.I.~Paulsen},
 {\it Completely Bounded Maps and Dilations},
Pitman Research Notes in Mathematics, Vol.146, New York, 1986.



\bibitem{Pi} {\sc G.~Pisier}, A polynomially bounded operator on Hilbert space which is not
similar to a contraction, {\it  J. Amer. Math. Soc.} {\bf 10}
(1997), no. 2, 351--369.


 
\bibitem{Pi-book} {\sc G.~Pisier}, {\it Similarity problems and completely bounded
maps},
 Second, expanded edition. Includes the solution to ``The Halmos problem''.
 Lecture Notes in Mathematics, 1618. Springer-Verlag, Berlin, 2001. viii+198 pp.


\bibitem{Pi-book2} {\sc G.~Pisier},  {\it Introduction to operator space theory},  London Mathematical Society Lecture Note Series, 294. Cambridge University Press, Cambridge, 2003. viii+478 pp.
 




\bibitem{Po-models} {\sc G.~Popescu}, Models for infinite sequences of
noncommuting operators, {\it Acta. Sci. Math. (Szeged)} {\bf 53}
(1989), 355--368.
 
 
\bibitem{Po-isometric} {\sc G.~Popescu}, Isometric dilations for infinite
sequences of noncommuting operators, {\it Trans. Amer. Math. Soc.}
{\bf 316} (1989), 523--536.

      \bibitem{Po-von} {\sc G.~Popescu},
{Von Neumann inequality for $(B(H)^n)_1$,}
      {\it Math.  Scand.} {\bf 68} (1991), 292--304.



 \bibitem{Po-funct} {\sc G.~Popescu},
     {Functional calculus for noncommuting operators,}
       {\it Michigan Math. J.} {\bf 42} (1995), 345--356.


\bibitem{Po-disc} {\sc G.~Popescu}, Noncommutative disc algebras and their
  representations, {\it Proc. Amer. Math.} {\bf 124} (1996), 2137--2148.




   \bibitem{Po-similarity} {\sc G.~Popescu},
    Similarity and ergodic theory of positive linear maps,
   {\it J. Reine Angew. Math.}
   {\bf 561} (2003), 87--129.
   
   
      \bibitem{Po-holomorphic} {\sc G.~Popescu},
      {Free holomorphic functions on the unit ball of $B(\cH)^n$},
      {\it J. Funct. Anal.}  {\bf 241} (2006), 268--333.


 \bibitem{Po-Berezin} {\sc G.~Popescu},
Noncommutative Berezin transforms and multivariable operator model
theory, {\it J.  Funct.  Anal. } {\bf  254} (2008), 1003-1057.


\bibitem{Po-domains} {\sc  G.~Popescu},
Operator theory on noncommutative domains, {\it Mem. Amer. Math. Soc.}  {\bf 205}  (2010),  no. 964, vi+124 pp.

\bibitem{Po-invariant} {\sc  G.~Popescu}, Invariant subspaces and operator model theory on noncommutative varieties, {\it  Math. Ann.} {\bf 372} (2018), no. 1-2, 611--650.



\bibitem{Po-Bergman} {\sc  G.~Popescu},  Bergman spaces over noncommutative domains and commutant  lifting, {\it  J. Funct. Anal.} {\bf 280}  (2021), no. 8, 108943.

 
\bibitem{Po-NC} {\sc  G.~Popescu}, Noncommutative domains, universal operator models, and operator algebras, submitted for publication. 


\bibitem{Po-univ2} {\sc  G.~Popescu}, Noncommutative varieties, universal operator models, and operator algebras,  {\it J. Operator Theory}, to appear.



 \bibitem{Pot} {\sc S.~Pott},
 Standard models under polynomial positivity conditions,
 {\it J. Operator Theory} {\bf 41} (1999), no. 2, 365--389.


 
   

\bibitem{R} {\sc G.C.~Rota}, On models for linear operators,
{\it Comm. Pure Appl. Math} {\bf 13} (1960), 469--472.


\bibitem{Sh} {\sc A.L.~Shields},
{Weighted shift operators and analytic function theory}, {\it Topics
in operator theory}, pp. 49--128. Math. Surveys, No. 13, {\it Amer.
Math. Soc., Providence, R.I.,} 1974


 


\bibitem{Va}{\sc F.H.~Vasilescu},
{An operator-valued Poisson kernel},
{\it J. Funct. Anal.}
  {\bf 110} (1992), 47--72.
     %

  
\bibitem{Vo} {\sc D.V.~Voiculescu}, Norm-limits of algebraic operators,
    {\it Rev. Roumaine Math. Pures Appl.} {\bf 19} (1974), 371--378.



          \bibitem{vN}  {\sc J.~von Neumann},
      {Eine Spectraltheorie f\"ur allgemeine Operatoren eines unit\"aren
      Raumes,}
      {\it Math. Nachr.} {\bf 4} (1951), 258--281.
    
    
\bibitem{W}{\sc H.~Wold}, {\it A study in the analysis of stationary time series}, Stockholm, 19338, 2nd ed. 1954.
 

       \end{thebibliography}
      \end{document}